\documentclass{article}
\usepackage{graphicx} 
\usepackage{amsmath} 
\usepackage[colorlinks=true,bookmarks=false,linkcolor=blue,urlcolor=blue,citecolor=blue,breaklinks=true]{hyperref}
\usepackage[T1]{fontenc} 
\usepackage{breakcites} 

\usepackage{doi}
\usepackage{graphicx}
\usepackage{amssymb} 
\usepackage{amsfonts} 
\usepackage{amsthm} 
\usepackage[shortlabels]{enumitem} 
\usepackage{comment} 
\usepackage{bbm}
\usepackage{mathtools}
\usepackage{algorithm}
\usepackage{algpseudocode}
\usepackage[margin=1in]{geometry} 
\usepackage{subcaption}
\usepackage{multirow} 
\usepackage{pifont} 
\usepackage{booktabs} 
\usepackage{mathrsfs}
\usepackage{tikz-cd}
\usetikzlibrary{calc, positioning}
\usepackage[numbers,sort&compress]{natbib}

\usepackage{multicol}

\makeatletter
\renewcommand{\paragraph}{%
  \@startsection{paragraph}{4}%
  {\z@}{1ex \@plus 1ex \@minus .2ex}{-.5em}%
  {\normalfont\normalsize\bfseries}%
}
\makeatother

\newtheorem{theorem}{Theorem}[section]
\newtheorem{lemma}[theorem]{Lemma}
\newtheorem{proposition}[theorem]{Proposition}
\newtheorem{corollary}[theorem]{Corollary}
\newtheorem{example}[theorem]{Example}
\newtheorem{definition}[theorem]{Definition}
\newtheorem{remark}[theorem]{Remark}

\newcommand{\be}{\begin{equation}}
\newcommand{\ee}{\end{equation}}
\newcommand{\bthm}{\begin{theorem}}
\newcommand{\ethm}{\end{theorem}}
\newcommand{\blem}{\begin{lemma}}
\newcommand{\elem}{\end{lemma}}
\newcommand{\bpof}{\begin{proof}}
\newcommand{\epof}{\end{proof}}
\newcommand{\bcor}{\begin{corollary}}
\newcommand{\ecor}{\end{corollary}}
\newcommand{\bprop}{\begin{proposition}}
\newcommand{\eprop}{\end{proposition}}

\newcommand{\mc}[1]{\mathcal{#1}}
\newcommand{\mbb}[1]{\mathbb{#1}}
\newcommand{\mfk}[1]{\mathfrak{#1}}
\newcommand{\mscr}[1]{\mathscr{#1}}
\newcommand{\msf}[1]{\mathsf{#1}}

\newcommand\VC[1]{{\color{blue}Venkat: ``#1''}}

\newcommand{\dist}{\mathrm{dist}}

\newcommand{\RR}{\mathbb{R}}

\newcommand{\NN}{\mathbb{N}}

\newcommand{\vct}[1]{\mathbb{#1}}  

\newcommand{\FS}{\mathrm{FinSet}}

\newcommand{\coFS}{\mathrm{FinSet}^{\mathrm{op}}}

\usepackage{authblk}

\title{Any-Dimensional Learning by Sampling}
\author{Eitan Levin$^\dag$ and Venkat Chandrasekaran$^{\ddag}$ \thanks{Emails: \texttt{eitanl@uchicago.edu},  \texttt{venkatc@caltech.edu}} \vspace{0.1in} \\ $^\dag$ Department of Statistics\\ University of Chicago \\ Chicago, IL 60637 \vspace{0.1in}\\ $^\ddag$ Department of Computing and Mathematical Sciences\\ Department of Electrical Engineering \\ California Institute of Technology \\ Pasadena, CA 91125}
\date{September 8, 2026}

\begin{document}

\maketitle

\begin{abstract}

Many machine learning models are defined for inputs of different sizes, such as point clouds containing different numbers of points, sequences of tokens of different lengths, and graphs on different numbers of nodes.  Such models are trained on finitely many examples of necessarily limited sizes. How well do these models generalize from inputs of small size to larger inputs of size not seen during training?  
Furthermore, evaluating such models on large inputs is often expensive.
How can we sketch large inputs to obtain smaller ones on which the model takes similar values?
At the heart of both questions is the need to compare inputs of different sizes and to approximate large inputs by small ones.
We present a unified approach to address these questions by using random sampling maps to compare inputs of different sizes. 
The sampling maps we consider are generalizations of sampling with replacement, random binning, and species sampling. 
We characterize the application domains in which each type of sampling is appropriate in terms of the symmetries and relations between problem instances of different sizes in the domain.
Our framework yields explicit generalization and sketching rates for function classes continuous with respect to a chosen notion of sampling, encompassing large families of functions defined on sequences, graphs, and tensors of different sizes. Specific examples include moment polynomials on measures, homomorphism densities and numbers of graphs, permutation-invariant transformers, and graph neural networks.

\vspace{0.5cm}

\noindent \textbf{Keywords.} de Finetti, distribution shift, exchangeability, generalization, partition models, random binning, sketching, species sampling

\end{abstract} 
\section{Introduction}\label{sec:intro}


Out-of-sample generalization is a core challenge in machine learning, particularly when there are qualitative differences between the training and test data.  An important source for such differences is the dimensionality of the data, which can vary from training to test time in applications involving point clouds containing different numbers of points, graphs on different numbers of vertices, and sequences of tokens of different lengths. In these and other domains, we are given training data consisting of inputs of bounded size, and we aim to learn a function that can be applied at test time to inputs of arbitrary size.  In particular, such a learned function must exhibit \emph{any-dimensional generalization} so that it performs well on inputs of sizes that are not seen during training.


A related problem is one in which we are given a function that is well-defined on inputs of any size, and we wish to evaluate the function on a high-dimensional input.  
When such a high-dimensional function evaluation is expensive, we seek to suitably subsample, or sketch, the input to obtain a low-dimensional object on which the function can be evaluated efficiently and whose value is close to that of the original input.
We call the problem of producing such low-dimensional sketches \emph{any-dimensional sketching}, as our goal is to produce small approximations of arbitrarily sized inputs.



Our objective in this paper is to show that the above two problems of any-dimensional generalization and sketching are closely related to each other, and to develop a systematic approach to tackle them. The fundamental challenge in addressing both of these questions is that of comparing inputs of different sizes.
%
In other words, in what sense can a small object be an approximation to a large one?  Broadly, we develop a sampling-based approach to represent and compare objects of different sizes.  We associate to an object of arbitrary size a sequence of random variables representing increasingly fine subsamples of the object. 
By defining an appropriate metric over such sequences, we then obtain a convenient method for comparing objects of different sizes.  
Building on this sampling-based approach, we identify classes of any-dimensional functions that can be learned from, or approximated on, low-dimensional inputs, and we quantify how small this dimensionality needs to be for a given accuracy.  We also discuss applications to a number of domains of contemporary interest.  We next outline these contributions in more detail.



\subsection{Our Contributions}

\subsubsection*{Formalizing Any-Dimensional Learning (Section~\ref{sec:compact})}

To frame our discussion concretely, let $(\vct V_n)$ be a sequence of vector spaces, with the index $n$ specifying object size, e.g., vectors of length $n$, adjacency matrices of graphs on $n$ vertices, and so on.  We consider the following two running examples, although our framework also encompasses many others:
\begin{itemize}
    \item Sequences of features or tokens, represented as $n$ vectors in $\RR^d$. Here we set $\vct V_n=\RR^{d\times n}$ for a fixed $d \in \mbb N$ and growing $n$.

    \item Weighted and directed graphs on $n$ vertices, represented by their adjacency matrices. Here we set $\vct V_n = \RR^{n\times n}$ and let $n$ grow.
\end{itemize}
In many applications, it is often the case that the objects of interest belong to some proper subset $\Omega_n \subseteq \vct V_n$; for instance, an unweighted graph is precisely one whose adjacency matrix belongs to $\Omega_n = \{0,1\}^{n \times n}$.  In the sequel, we will see that various structural properties of this subset, and indeed of the disjoint union $\bigsqcup_n \Omega_n \subseteq \bigsqcup_n \vct V_n$ of subsets containing objects of all possible sizes, play a central role in our framework.

In order to formulate the any-dimensional generalization problem precisely, let $\Omega_{\leq n} = \bigsqcup_{i\leq n}\Omega_i$ be the set of objects of size at most $n$.  For two functions $f,g \colon \bigsqcup_n \Omega_n \to \RR$ defined on objects of all sizes, i.e., any-dimensional functions, define $\mathrm{e}_n(f,g)=\sup_{x\in\Omega_{\leq n}}|f(x)-g(x)|$ to be the maximum discrepancy between $f$ and $g$ on objects of size $n$, and $\mathrm{e}_{\infty}(f,g)=\sup_n\mathrm{e}_n(f,g)$ to be the maximum discrepancy on objects of any size. By definition, we have $\lim_n\mathrm{e}_n(f,g)=\mathrm{e}_{\infty}(f,g)$, but it is possible that this limit is infinite. We then investigate the following question:

\vspace{0.25cm}

\noindent \emph{Any-dimensional generalization}: Under what conditions on $\bigsqcup_n \Omega_n$ and on any-dimensional functions $f,g : \bigsqcup_n \Omega_n \to \RR$ do we have $\mathrm{e}_{\infty}(f,g)<\infty$? At what rate does $\mathrm{e}_n(f,g)$ converge to $\mathrm{e}_{\infty}(f,g)$?
Are there hypothesis classes of any-dimensional functions over which the convergence rate is uniform?

\vspace{0.25cm}


If $\mathrm{e}_n(f,g)$ converges to a finite $ \mathrm{e}_{\infty}(f,g)$ as $n \rightarrow \infty$, then we can control the error $\mathrm{e}_\infty(f, g)$ between a function $f$ and its estimate $g$ on objects of arbitrarily large size by the error $\mathrm{e}_n(f, g)$ on objects of size at most $n$.  If we further have explicit and uniform rates of convergence over some hypothesis class, then we can obtain \emph{a priori} bounds on the input size $n$ needed to accurately approximate the target function, and precise bounds on $\mathrm{e}_{\infty}(f,g)$ as a function of $\mathrm{e}_n(f,g)$.  We would thereby reduce the problem of approximating a target function on arbitrary-size inputs to approximating it on inputs of a fixed (sufficiently large) size, which belongs to the realm of traditional generalization theory on which there is a substantial literature.



We have stated any-dimensional generalization in terms of uniform error over inputs of each size. If instead we wish to quantify generalization in terms of the average error with respect to some data distribution, it is essential that we formalize the idea of an \emph{any-dimensional data distribution}. Merely considering distributions $\mu$ on $\bigsqcup_n\Omega_n$ is not sufficient, since any such distribution will be concentrated on bounded-dimensional inputs because $\sum_n\mu(\Omega_n)=1$.   Instead, a more appropriate notion for any-dimensional data distributions is that of a weakly-convergent sequence of distributions $(\mu_n\in\mc P(\Omega_{\leq n}))$ supported on increasingly larger objects, as we show in Proposition~\ref{prop:avg_error}.

%

The second question we consider involves approximating the value of an any-dimensional function on a large input by its value on a small sketch of this input.

\vspace{0.25cm}

\noindent \emph{Any-dimensional sketching}: Under what conditions on $\bigsqcup_n \Omega_n$ and on an any-dimensional function $f : \bigsqcup_n \Omega_n \to \RR$ can we find a (potentially random) map $\msf S_k\colon\bigsqcup_n\Omega_n\to\Omega_k$ satisfying 
\begin{equation*}
    \lim_{k\to\infty}\mathrm{e}_{\infty}(f, f\circ\msf S_k)=0\quad \textrm{almost surely?}
\end{equation*}
At what rate does this limit converge?  Are there hypothesis classes of any-dimensional functions over which the convergence rate is uniform?  

\vspace{0.25cm}

As with any-dimensional generalization, if we have explicit and uniform rates of convergence for the above limit over some hypothesis class, then we can provide \emph{a priori} bounds on the sketch size required for a desired accuracy in evaluating a function on inputs of any size. In particular, such uniform rates would yield sketch sizes that do \emph{not} depend on the ambient dimension of the object being sketched. 

Already at this stage, it is clear that any attempt to address the above two questions would require some way to evaluate similarity between objects of different sizes in $\bigsqcup_n \Omega_n$, and for the any-dimensional functions under consideration to be well-behaved with respect to this similarity notion.  
Indeed, if a model performs well on large inputs of unseen size, then these inputs must in some sense be similar to smaller inputs from the training set.  
Similarly, if a function takes on similar values on a low-dimensional sketch of a high-dimensional object, then the object and its sketch must be similar to each other.
More precisely, we need a distance $d$ between elements of $\bigsqcup_n \Omega_n$ that allows us to quantitatively compare objects of different sizes and to consider any-dimensional functions that are continuous in $d$.  We show in Theorem~\ref{thm:general_equivalence} that precompactness of the metric space $(\bigsqcup_n \Omega_n, d)$ plays a central role in addressing the above questions.

\subsubsection*{A Sampling-Based Approach for Comparing Objects of Different Sizes (Section~\ref{sec:sampling_metric})}

To address the preceding questions on any-dimensional learning and sketching, we describe a sampling-based approach to represent and compare objects of different sizes.
Specifically, we fix a collection of random sampling maps $(\msf S_k\colon\bigsqcup_n \vct V_n \to \vct V_k)_{k\in\NN}$, and we associate to each $x \in \vct V_n$ the sequence of random variables
\begin{equation*}
    x \mapsto \left(\msf S_k(x)\right)_{k\in\NN}.
\end{equation*} 
In words, we form random samples of the object $x$ of increasing sizes, which we view as increasingly-fine sketches of $x$.
Observe that this map is well-defined for inputs of any size (i.e., $x \in \vct V_n$ for any $n$), and its output is a sequence of random variables taking values in the vector spaces $(\vct V_k)$.  We then compare any two objects $x,y\in\bigsqcup_n\vct V_n$ by comparing the distributions of their samples $\msf S_k(x)$ and $\msf S_k(y)$ of each size $k$.
To quantify this comparison, we define the \emph{sampling metric} between $x,y \in \bigsqcup_n \vct V_n$ by
\begin{equation}\label{eq:sampling_metric}
    d_{\mathrm{samp}}(x,y)=\sum_{k\geq 1}2^{-k}W_1(\msf S_k(x), \msf S_k(y)),
\end{equation}
where $W_1$ denotes the Wasserstein-1 distance between two distributions of random variables\footnote{Wasserstein distances are defined between distributions, but to avoid notational clutter, we define these distances between random variables with the understanding that they apply to the laws of these variables.} with respect to some norm on $\vct V_k$, assumed to satisfy $\sup_k\mbb E\|\msf S_k(x)\|<\infty$ for each $x$ so that $d_{\mathrm{samp}}(x,y)<\infty$. The coefficients $2^{-k}$ were chosen for convenience, and any other positive summable sequence would suit.
An appropriately bounded sequence $(x_n)$ converges with respect to the sampling metric~\eqref{eq:sampling_metric} if and only if the sequence of fixed-dimensional samples $(\msf S_k (x_n))_n$ converges weakly for each size $k$, see Proposition~\ref{prop:compact_under_sampling}.  We next present several illustrations of sampling maps.






\begin{example}[Sampling columns]\label{ex:sampling_entries_intro}
    Suppose $\vct V_n=\RR^{d\times n}$ for a fixed $d \in \mbb N$.  Consider the map $\msf S_k$ which samples $k$ columns from $x=[x_1,\ldots,x_n]\in\vct V_n$ uniformly at random with replacement, i.e., sample indices $J_1,\dots,J_k \overset{iid}{\sim}\mathrm{Unif}([n])$ and set $\msf S_k(x)=[x_{J_1},\ldots,x_{J_k}]\in\vct V_k$.  
    We obtain a sampling metric~\eqref{eq:sampling_metric} by using the $\ell_{\infty}$ norm to define the $W_1$ distances.  Given a compact set $\Theta \subseteq \RR^d$, convergence of a sequence $(x_n \in \Theta^n)$ in $d_{\mathrm{samp}}$ is equivalent to weak convergence of the uniformly random columns $(\msf S_1(x_n))$ because the distribution of the size-$k$ sample is $\mathrm{Law}(\msf S_k(x))=\mathrm{Law}(\msf S_1(x))^{\otimes k}$. 
\end{example}

\begin{example}[Sampling vertices]\label{ex:sampling_vertices_intro}
    Consider $\vct V_n=\RR^{n\times n}$, viewed as the space of weighted and directed graphs on $n$ vertices.  Let $\msf S_k$ be the map that samples $k$ vertices with replacement and extracts the corresponding induced subgraph, i.e., for $x \in \vct V_n$, we have $(\msf S_k(x))_{i,j}=x_{J_i,J_j}$ for $J_1,\ldots,J_k\overset{iid}{\sim}\mathrm{Unif}([n])$. Once again, we obtain a sampling metric~\eqref{eq:sampling_metric} with the entrywise $\ell_{\infty}$ norm defining the $W_1$ distances.  Convergence in sampling metric corresponds precisely to the dense graph limits studied in~\cite{LOVASZ2006933,convergent_seqs1,diaconis2007graph, probability_graphons}. 
\end{example}

\begin{example}[Random binning, hashing]\label{ex:binning_intro}
    Suppose $\vct V_n=\RR^{d\times n}$ and consider the sampling map $\msf S_k$ that randomly assigns the $n$ columns of $x \in \vct V_n$ into $k$ bins and sums the columns in each bin, i.e., we sample $J_1,\ldots,J_n \overset{iid}{\sim}\mathrm{Unif}([k])$ and set the $i$th column of $\msf S_k(x)\in\vct V_k$ to $(\msf S_k(x))_i=\sum_{\ell: J_\ell=i}x_{\ell}$ for $i\in[k]$.  This random binning map, also called hashing, has found uses in machine learning as a dimensionality reduction method~\cite{CORMODE200558,shi2009hash} and in information theory as part of source and channel coding protocols~\cite{yassaee2014achievability}.  We will consider the associated sampling metric~\eqref{eq:sampling_metric} defined with the $\ell_2$ norm.
\end{example}

\begin{example}[Species sampling, random partitions]\label{ex:species_intro}
    Suppose $\vct V_n=\RR^n$.  A vector $x\in\Delta^{n}$ defines a random partition of $[k]$ for each $k\in\NN$ as follows. We view $x$ as a distribution on indices in $[n]$ and sample $k$ indices $I_1,\ldots,I_k\in[n]$ iid from this distribution. This sample defines a partition of $[k]$ in which $i,j\in[k]$ belong to the same block if $I_i=I_j$, and we set the entries of $\msf S_k(x) \in\Delta^k$ to be the fraction of indices in $[k]$ belonging to each block of the partition. Explicitly, we randomly order the $\ell\leq k$ distinct indices $\{I_1,\ldots,I_k\}$ sampled in this way as $t_1,\ldots,t_{\ell}$, and set $\msf S_k (x)_j=|\{i: I_i=t_j\}|/k$; the $t_j$'s are sometimes called `species'~\cite[Chap.~7]{rodriguez2013nonparametric}.  
    This sampling map extends to a general $x\in\RR^n$ by homogeneity, so $\msf S_k (x)_j=\|x\|_1\mathrm{sign}(x_{t_j})\msf S_k(|x|/\|x\|_1)_j$ for $x\neq0$ and $\msf S_k(0)=0$.  
    This notion of sampling and the associated random partition models have been studied extensively, see~\cite{pitman1995exchangeable,kingman1,kingman2,rodriguez2013nonparametric} for example. 

\end{example}

The sampling metric and associated notion of convergence have been previously widely used in the graph limits literature~\cite{lovasz2012large} to compare graphs of different sizes and to take their limits, as highlighted in Example~\ref{ex:sampling_vertices_intro}.  We demonstrate in this paper that the underlying idea is much more broadly relevant to comparing objects of different sizes in many other applications.  In particular, we go well beyond the preceding specific illustrations by describing next a combinatorial perspective on sampling based on random maps between finite sets.  This viewpoint yields three broad classes of sampling maps that represent a significant generalization of Examples~\ref{ex:sampling_entries_intro}-\ref{ex:species_intro}.  As we shall see, the sampling perspective is useful both analytically and methodologically, as it provides sketching maps with which to approximate large inputs by small ones and yields explicit rates for any-dimensional generalization and sketching. 

\subsubsection*{General Sampling Maps (Section~\ref{sec:sampling_general})}

We generalize the sampling maps used in Examples~\ref{ex:sampling_entries_intro}-\ref{ex:species_intro} and unify their analysis by using maps between finite sets. 
To motivate our generalization, we note that any map $f\colon[k]\to[n]$ between finite sets defines a (linear) map $\rho(f)\colon\RR^{d\times n}\to\RR^{d\times k}$ extracting the columns $f(1),\ldots,f(k)$ specified by $f$, or more formally $[\rho(f) x]_i=x_{f(i)}$ for $i\in[k]$ where subscripts index columns.
Moreover, if we replace a fixed such map $f$ by a uniformly random map $F_{n,k} : [k] \to [n]$, with each $F_{n,k}(i)$ drawn independently and uniformly from $[n]$, then $\rho(F_{n,k})$ is precisely the random map sampling $k$ columns as in Example~\ref{ex:sampling_entries_intro}.  

Likewise, a map $f\colon[k]\to[n]$ acts on pairs by $(i,j)\mapsto(f(i),f(j))$, and therefore defines a (linear) map $\rho(f)\colon\RR^{n\times n}\to\RR^{k\times k}$ between matrices indexed by these pairs via $[\rho(f)x]_{i,j}=x_{f(i),f(j)}$ for $i,j\in[k]$.  
Once again, applying a uniformly random map $\rho(F_{n,k})x$ 
recovers the vertex sampling map of Example~\ref{ex:sampling_vertices_intro}.

More generally, we consider a sequence of index sets $(\mathcal{I}_n)$ along with actions $\theta(f)\colon\mc I_k\to\mc I_n$ associated to each map $f\colon[k]\to[n]$ between finite sets, and we assume that these actions satisfy certain compatibility conditions (see Definition~\ref{def:FS_index}).  
We then consider the sequence of vector spaces $(\vct V_n = \RR^{\mathcal{I}_n})$ consisting of vectors indexed by $(\mc I_n)$, and associate a (linear) map $\rho(f)\colon\vct V_n\to\vct V_k$ to each map $f\colon[k]\to[n]$ between finite sets given by
\begin{equation}\label{eq:sampling_action}
    [\rho(f)x]_i=x_{\theta(f)(i)} \quad \textrm{for } x\in\vct V_n \textrm{ and } i\in\mc I_k.
\end{equation}
Finally, we define sampling with replacement maps $(\msf R_k)$ on these spaces by
\begin{equation}\label{eq:sampling_general}
    \msf R_k(x) = \rho(F_{n,k})x\quad  \textrm{where } F_{n,k}\colon[k]\to[n] \textrm{ is uniformly random},
\end{equation}
and $x \in \vct V_n$. 
In this manner, we generalize sampling with replacement to more complicated objects such as graph signals (by setting $\mc I_n=[n]^2\sqcup[n]$), hypergraphs and tensors (by setting $\mc I_n=[n]^d$), and polynomials (by setting $\mc I_n=[n]^d/\mfk S_d$ to be the collection of multisets of $[n]$ containing $d$ elements, where $\mfk S_d$ is the group of permutations on $d$ letters permuting indices in a tuple).
Our perspective enables a single unified analysis for general actions $\theta$, and thereby yields rates in a broad array of applications.


Our generalization of random binning also proceeds via an action of maps on index sets. Indeed, any function $f\colon[n]\to[k]$ can be viewed as binning $n$ indices into $k$ bins, and defines a map $\beta(f)\colon\RR^{d\times n}\to\RR^{d\times k}$ by binning columns in this way and summing the columns in each bin $[\beta(f) x]_i=\sum_{j\in f^{-1}(i)}x_j$.
Applying a uniformly random map $\beta(F_{k,n})$ in this way yields the random binning map of Example~\ref{ex:binning_intro}.  
More generally, we consider a sequence of index sets $(\mc I_n)$ with action $\theta$ and associated sequence of vector spaces $(\vct V_n=\RR^{\mc I_n})$ as above. We associate a (linear) map $\beta(f)\colon\vct V_n\to\vct V_k$ to each map $f\colon[n]\to[k]$ between finite sets via
\begin{equation}\label{eq:binning_action}
    [\beta(f)x]_i = \sum_{j\in\theta(f)^{-1}(i)}x_j \quad \textrm{for } x\in\vct V_n \textrm{ and } i\in\mc I_k.
\end{equation}
We remark that the above two maps $\rho$ and $\beta$ from~\eqref{eq:sampling_action} and~\eqref{eq:binning_action} are adjoints of each other, in the sense that $\beta(f)=\rho(f)^\star$ with respect to the usual inner products on $\RR^{\mc I_n}$.
Finally, we define random binning maps $(\msf B_k)$ on these vector spaces by
\begin{equation}\label{eq:binning_general}
    \msf B_k(x) = \beta(F_{k,n})x\quad \textrm{where } F_{k,n}\colon[n]\to[k] \textrm{ is uniformly random},
\end{equation}
and $x \in \vct V_n$.  
By varying the index sets $(\mathcal{I}_n)$, we generalize random binning to more general objects, and once again they can all be analyzed in a unified manner. For example, setting $\mc I_n=[n]^2$ recovers the random quotients of graphs studied in~\cite{levin2025graphs}.  



Lastly, we generalize the species sampling map from Example~\ref{ex:species_intro}. We consider a particular class of index sets $(\mc I_n)$ that can be derived from tuples $[n]^d$ (see Definition~\ref{def:FS_index_degree}), and view vectors $x\in\Delta^{\mc I_n}$ as distributions over the index sets $\mc I_n$. We can then repeat the construction in Example~\ref{ex:species_intro} to form more general species sampling maps $(\msf E_k)$, see Section~\ref{sec:species}.


Given a sequence of index sets $(\mc I_n)$, we have defined three sampling maps on the same underlying vector spaces $\RR^{\mc I_n}$.  When is it appropriate to use a particular sampling map to compare objects of different sizes?  Each of the above sampling maps induces an equivalence between different inputs, meaning there exist $x\neq y$ possibly of different sizes with $d_{\mathrm{samp}}(x,y)=0$. Therefore, a choice of sampling maps is appropriate only if pairs $x,y$ at sampling distance zero can indeed be viewed as equivalent in the context of the given application domain.
We characterize these equivalences in Section~\ref{sec:sampling_general}.

Informally, for sampling with replacement $(\msf R_k)$ as defined above, the distribution of $\msf R_k(x)$ is unchanged if and only if we duplicate or permute the entries of $x$, appropriately defined (see Propositions~\ref{prop:symmetry_coFS} and~\ref{prop:relations_coFS}). For example, if $x\in\RR^{d\times n}$ and we permute the columns of $x$ or duplicate them $m$ times (so $x\mapsto x\otimes\mathbbm{1}_m^\top$), then the distribution of $\msf R_k(x)$ is unchanged for all $k\in\NN$. 
Such permutation and duplication are natural when $x$ represents a point cloud, strategies in a symmetric game converging to a mean-field limit, or dense graphs, see Example~\ref{ex:duplication}.


For random binning $(\msf B_k)$ and species sampling $(\msf E_k)$, the distributions of $(\msf B_k(x))$ and $(\msf E_k(x))$ are unchanged if and only if we zero-pad or permute the entries of $x$, again appropriately defined. 
For example, if $x\in\RR^n$ and we append $m$ zeros to $x$, then the distributions of $\msf B_k(x)$ and $\msf E_k(x)$ are unchanged for all $k$.
Such permutation and zero-padding are natural when $x$ represents a distribution on unlabelled items or a sparse graph, see Example~\ref{ex:zero_padding}.



\subsubsection*{Rates for Sampling with Replacement (Section~\ref{sec:sampling_wrep})}

We derive rates for any-dimensional sketching and generalization using the sampling metric~\eqref{eq:sampling_metric} defined with respect to general sampling with replacement maps $(\msf R_k\colon\bigsqcup_n\vct V_n\to\vct V_k)$.
To state our results, we fix a sequence of compact subsets $(\Omega_n \subseteq \vct V_n)$ such that $\msf R_k (\bigsqcup_n \Omega_n) \subseteq \Omega_k$ almost surely for all $k$, i.e., the subsets $\Omega_n$ are closed under sampling with replacement. For example, the hypercubes $\Omega_n=[-1,1]^n\subseteq\RR^n$ and the collection of unweighted graphs $\Omega_n=\{0,1\}^{n\times n}\subseteq\RR^{n\times n}$ are closed under sampling with replacement.
\begin{theorem}[informal, see Theorem~\ref{thm:W1_bound_sampling}]\label{thm:informal_sampling} 
We have the following rates for sampling with replacement $(\msf R_k\colon\bigsqcup_n\Omega_n\to\Omega_k)$ acting on compact sets $(\Omega_n)$ closed under sampling.
\begin{enumerate}[font=\emph, align=left, labelwidth=!, labelindent=0pt]
    \item[(Any-dimensional sketching)] For any function $f\colon\bigsqcup_n\Omega_n\to\RR$ that is $L$-Lipschitz in $d_{\mathrm{samp}}$ and any $x\in\bigsqcup_n\Omega_n$, with probability at least $1-e^{-2\epsilon^2}$ we have
    \begin{equation*} 
        |f(x)-f(\msf R_n(x))| \leq Lc_1 \exp\Big[-(c_2^{-1}\log n)^{\frac{1}{1+D}}\Big] + L\epsilon\sqrt{\frac{c_3}{n}}.
    \end{equation*}
    
    \item[(Any-dimensional generalization)] For any other $L$-Lipschitz function $\widehat f$, we have that:
    \begin{equation*} 
        \mathrm{e}_{\infty}(f,\widehat f)\leq\mathrm{e}_n(f,\widehat f)+2L c_1\exp\left[-(c_2^{-1}\log n)^{\frac{1}{1+D}}\right].
    \end{equation*}
\end{enumerate}
The constants $c_1,c_2,c_3 >0$ are explicit and depend on the collection $(\Omega_n)$, and the parameter $D$ is a `degree' that quantifies the complexity of the action $\theta$ underlying the sampling maps $(\msf R_k)$.
\end{theorem}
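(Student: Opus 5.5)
The whole argument reduces to one estimate: a bound on $d_{\mathrm{samp}}(x,\msf R_n(x))$ that is uniform in the size of $x$. Both parts of the statement then follow from Lipschitz continuity. Fix $x\in\Omega_m$ and let $F=F_{m,n}$, so that $y=\msf R_n(x)=\rho(F)x\in\Omega_n$. For each $k$ I compare the law of $\msf R_k(x)$ with the law of $\msf R_k(y)$, conditionally on $F$. Composing uniform maps gives $\msf R_k(y)=\rho(F\circ G)x$ with $G\colon[k]\to[n]$ uniform and independent of $F$. Functoriality of $\theta$ gives $\rho(F\circ G)=\rho(G)\rho(F)$. Now $F\circ G$ is distributed like $F_{m,k}$ conditioned on a ``coupon'' event, since its values are i.i.d.\ draws from the empirical measure of $F(1),\dots,F(n)$. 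The proof has two steps.

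\textbf{Step 1 (expected distance, bias term).} Averaging over $F$, the map $F\circ G$ is exactly uniform from $[k]$ to $[m]$ whenever $G$ is injective. I couple $G$ with an injective map, which succeeds with probability $\geq 1-k^2/n$ by the birthday bound. This gives
\[
\mbb E_F\, W_1(\msf R_k(x),\msf R_k(y))\lesssim \operatorname{diam}(\Omega_k)\,k^2/n .
\]
Here the degree $D$ of the action enters: an entry of $\mc I_k$ depends on at most $D$ of the indices, and the diameter of $\Omega_k$ in the chosen norm grows in a controlled way. I then split the sum $\sum_k 2^{-k}W_1$ at a cutoff $K$:
\begin{itemize}
\item terms with $k\le K$ contribute $\lesssim \mathrm{poly}(K)/n$;
\item terms with $k>K$ contribute at most $2^{-K}\sup_k\operatorname{diam}\Omega_k$.
\end{itemize}
This crude split gives a polynomial rate. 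The stated rate $\exp[-(c_2^{-1}\log n)^{1/(1+D)}]$ suggests a sharper estimate: $W_1$ on $\Omega_k$ is controlled through the covering numbers of $\Omega_k$ under sampling. These behave like $\exp(k^{D}\cdot\text{const})$, and the best $K$ then scales like $(\log n)^{1/(1+D)}$. I expect this step to be the main obstacle, namely getting the right dependence of $W_1$ on $k$ through $D$ and the constants $c_1,c_2$. What I would try first is to bound $W_1$ by a finite-$\epsilon$-net comparison of empirical laws, using the compactness of $(\Omega_k)$ closed under sampling.

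\textbf{Step 2 (concentration, fluctuation term).} View $d_{\mathrm{samp}}(x,\rho(F)x)$ as a function of the $n$ independent values $F(1),\dots,F(n)$. Changing one value $F(i)$ changes only the coordinates of $y$ involving index $i$. Under the $\ell_\infty$-type norm, it changes each $W_1(\msf R_k(x),\msf R_k(y))$ by at most $\operatorname{diam}\cdot\Pr[i\in \mathrm{im}\,G]\le \operatorname{diam}\cdot k/n$. The bounded difference constant is therefore $c/n$ after summing against $2^{-k}$. McDiarmid's inequality then gives a deviation of $\epsilon\sqrt{c_3/n}$ with probability $\ge 1-e^{-2\epsilon^2}$. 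Multiplying the two terms by $L$ proves the sketching bound.

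\textbf{Generalization.} Take any $x$ of arbitrary size. By Step 1 its expectation bound is uniform in $m$, so some realization $y=\msf R_n(x)\in\Omega_n$ satisfies $d_{\mathrm{samp}}(x,y)\le c_1\exp[-(c_2^{-1}\log n)^{1/(1+D)}]$. Then
\[
|f(x)-\widehat f(x)|\le |f(y)-\widehat f(y)|+2L\,d_{\mathrm{samp}}(x,y)\le \mathrm{e}_n(f,\widehat f)+2Lc_1\exp[\cdots].
\]
Taking the supremum over $x$ gives the claim. This uses the closure $\msf R_n(\bigsqcup\Omega)\subseteq\Omega_n$ to ensure $y$ is a legitimate input of size at most $n$.
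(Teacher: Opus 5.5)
Your Step~2 is fine and matches the paper: a bounded difference of at most $2rk/n$ at level $k$, hence $4r/n$ after weighting by $2^{-k}$, followed by McDiarmid. Your generalization reduction is also fine; picking a realization $y$ of $\msf R_n(x)$ with $d_{\mathrm{samp}}(x,y)\le \mbb E\, d_{\mathrm{samp}}(x,\msf R_n(x))$ is equivalent to the paper's averaging over $\msf R_n$ in the triangle inequality.

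The gap is Step~1, which is where the whole rate comes from. The birthday coupling of $G$ with a random injection only controls, for each \emph{fixed} bounded test function $g$, the bias $\mbb E_F Z_g$, where $Z_g=\mbb E_G\, g(\rho(F\circ G)x)-\mbb E\, g(\msf R_k(x))$. Equivalently, it bounds the distance from $\mathrm{Law}(\msf R_k(x))$ to the law of $\msf R_k(\msf R_n(x))$ \emph{averaged over} $F$. But $d_{\mathrm{samp}}(x,\msf R_n(x))$ is computed for a fixed realization of $F$, so you need $\mbb E_F\sup_g Z_g$, not $\sup_g \mbb E_F Z_g$.

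Your claimed bound $\mbb E_F W_1(\msf R_k(x),\msf R_k(y))\lesssim k^2/n$ already fails for $\vct V_n=\RR^n$ and $k=1$. If $x\in\{0,r\}^m$ has half its entries equal to $r$, then $W_1(\msf R_1(x),\msf R_1(\msf R_n(x)))=r|\hat p-\tfrac12|$ with $\hat p$ a binomial proportion, whose mean is of order $r/\sqrt n$. For point clouds in $\RR^d$ with $d>2$ the correct order is $n^{-1/d}$. Your remark that the crude split gives a polynomial rate while the stated rate ``suggests a sharper estimate'' is also backwards. The rate $\exp[-(c_2^{-1}\log n)^{1/(1+D)}]$ decays more slowly than every power of $n$, so beating it with an elementary argument should have signalled an error.

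The missing idea is to control $\{Z_g\}$ as an empirical process, uniformly over the class $\mc F_k$ of $1$-Lipschitz functions on $[-r,r]^{\mc I_k}$ vanishing at $0$.
\begin{itemize}
\item By the same bounded-difference argument as your Step~2, each $Z_g-\mbb E Z_g$ is $\frac{4k^2}{n}\|g\|_\infty^2$-subgaussian, and $Z_g-Z_h=Z_{g-h}$.
\item So Dudley's entropy integral applies, with the Kolmogorov--Tikhomirov bound $\log N(\mc F_k,\|\cdot\|_\infty,\epsilon)\lesssim (5r/\epsilon)^{|\mc I_k|}$. Together with the bias, this gives $\mbb E_F W_1\le r\frac{k(k-1)}{n}+228r(k^2/n)^{1/|\mc I_k|}$.
\item This is where $D$ enters: through the dimension $|\mc I_k|\le Mk^D$ of the cube carrying the test functions. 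It does not enter through the diameter, which is $2r$ for every $k$ in $\ell_\infty$.
\item Truncating $\sum_k 2^{-k}W_1$ at $K=\lceil(\log_2 n/M)^{1/(D+1)}\rceil$ balances $(K^2/n)^{1/(MK^D)}$ against $2^{-K}$ and yields the stated rate.
\end{itemize}

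Your suggested $\epsilon$-net discretization could replace chaining: bias plus an $O(k/\sqrt n)$ fluctuation in each of roughly $(r/\epsilon)^{|\mc I_k|}$ cells, then optimize $\epsilon$. As written, though, Step~1 asserts a false bound and defers the actual argument. So neither the first term of the sketching bound nor the generalization bound is established.
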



The proof of these bounds relies on a concentration result in which we show that the distance $d_{\mathrm{samp}}(x,\msf R_n(x))$ is $c/n$-subgaussian, for a constant $c > 0$.
For any-dimensional sketching, we are able to approximate the value of a Lipschitz function on an arbitrarily large input by evaluating the function on a random sample of the input, with the size of the sketch depending only on the desired accuracy. 
For any-dimensional generalization, our results show that if $\widehat f\approx f$ on inputs of size $n$, then $\widehat f\approx f$ on inputs of all sizes up to a slack that decays to zero as $n$ increases. In other words, approximating a target function on sufficiently large input sizes guarantees a good approximation on inputs of all sizes.  Moreover, Theorem~\ref{thm:informal_sampling} reduces the analysis of any-dimensional generalization error to generalization error on a finite-dimensional compact set, a classical problem (see~\cite{vapnik1999nature} for example).

The rates in Theorem~\ref{thm:informal_sampling} are clearly quite slow.  On the other hand, these rates are uniform in the sense that they hold for objects of any size.  
Moreover, these slow uniform rates are to be expected in general. For example, when $\Omega_n$ is the collection of $n\times n$ adjacency matrices of simple graphs, a rate of $\exp[-\frac{1}{2}\log\log n]$ was shown in~\cite[Thm~2.9]{convergent_seqs1} for the closely related cut metric as a consequence of Szemer\'edi's regularity lemma (see~\cite[Exer.~10.33]{lovasz2012large} for the connection between the cut and sampling metrics). 
Nevertheless, we can substantially improve the above uniform rates using additional structure in the function $f$.  
For example, we will see in Section~\ref{sec:sampling_wrep} that many any-dimensional functions $f$ depend on their arbitrarily large inputs only via a fixed-dimensional random sample, i.e., $f(x)$ depends on its input only via $\mathrm{Law}(\msf R_k(x))$ for some fixed $k$.


We give two illustrations here of some of these improved rates in specific applications. 
The first pertains to polynomials that are unchanged by suitably defined duplication of the entries of their inputs. Specifically, a function $p\colon\bigsqcup_n\vct V_n\to\RR$ is a polynomial if all the restrictions $p|_{\vct V_n}$ are polynomials of some fixed degree, denoted $\deg(p)$.


\begin{corollary}[informal, see Corollary~\ref{cor:moment_polys}]\label{cor:moment_polys_intro}
    Suppose that $(\Omega_n)$ is a sequence of compact sets closed under sampling, and let $p\colon\bigsqcup_n\Omega_n\to\RR$ be a polynomial that is symmetric and unchanged by duplication. Then for any $x\in\bigsqcup_n\Omega_n$ with probability at least $1-2e^{-2\epsilon^2}$ we have
    \begin{equation*}
        |p(x)-p(\msf R_n(x))|\leq \frac{c_1\deg(p)^2}{n}+\frac{c_2\deg(p)\epsilon}{\sqrt{n}},
    \end{equation*}
    and if $\widehat p$ is another such polynomial of the same degree,
    \begin{equation*}
        \mathrm{e}_{\infty}(p,\widehat p)\leq \mathrm{e}_n(p,\widehat p) + \frac{c_3\deg(p)^2}{n}.
    \end{equation*}
    Above, the constants $c_1,c_2,c_3>0$ only depend on $(\Omega_n)$ and the polynomials $p,\widehat p$.
\end{corollary}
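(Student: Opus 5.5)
The plan is to show that a symmetric, duplication-invariant polynomial $p$ of degree $D'=\deg(p)$ depends on its input only through $\mathrm{Law}(\msf R_{D'}(x))$, and in fact through a fixed expectation. Concretely, I will prove there is a fixed polynomial $q$ on $\vct V_{D'}$ (independent of $n$) such that
\begin{equation*}
    p(x)=\mbb E\, q(\msf R_{D'}(x))\quad\text{for all } x\in\textstyle\bigsqcup_n\vct V_n.
\end{equation*}
To get this, I will expand $p|_{\vct V_n}$ in monomials $x_{i_1}\cdots x_{i_m}$ with $m\le D'$ and $i_j\in\mc I_n$. Each monomial involves at most $D'\cdot(\text{degree of }\theta)$ underlying elements of $[n]$, so it is pulled back from a monomial on a bounded-size index set along some map $[k]\to[n]$. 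Symmetry under permutations then groups the monomials into orbits. Duplication invariance (Propositions~\ref{prop:symmetry_coFS} and~\ref{prop:relations_coFS}) forces the coefficients to be consistent across $n$, so that $p$ equals an average over all maps $[k]\to[n]$. This average is exactly $\mbb E\, q(\rho(F_{n,k})x)$, a homomorphism-density-type expression. Here $k$ should be taken to be the number of distinct base indices, which is $O(\deg p)$; I will write the degree this way and absorb the constants.

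The key step, and the one I expect to be the main obstacle, is this representation. Duplication invariance must pin down the dependence on $n$. The approach I would try first is to evaluate $p$ on the $m$-fold duplicate $x\otimes\mathbbm 1_m$ and let $m\to\infty$. In that limit the maps $[k]\to[nm]$ are injective with probability $1-O(k^2/(nm))$, so $p(x)$ equals the limit of the injective-density average, which is a fixed polynomial in $\mathrm{Law}(\msf R_k(x))$.

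Given the representation, the sketching bound follows in two parts.

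\emph{Bias.} $\mbb E\,p(\msf R_n(x))=\mbb E\,q(\msf R_k(\msf R_n(x)))$, and the composite $\msf R_k\circ\msf R_n$ equals $\rho(F_{n,k}\circ G)$ with $G\colon[k]\to[n]$ uniform. Conditioned on $G$ being injective, which fails with probability at most $\binom{k}{2}/n$, this has the law of $\msf R_k(x)$. Since $q$ is bounded on the compact $\Omega_k$, this gives $|\mbb E\,p(\msf R_n(x))-p(x)|\le c_1k^2/n$.

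\emph{Fluctuation.} As a function of the $n$ independent coordinates $F_{n,n}(1),\dots,F_{n,n}(n)$, the quantity $p(\msf R_n(x))$ is a $U$-statistic-like average of $q$ over $k$-tuples. Changing one coordinate affects only a fraction $k/n$ of the tuples, so the bounded differences are $c\,k/n$. McDiarmid's inequality then gives deviation $c_2k\epsilon/\sqrt n$ with probability at least $1-2e^{-2\epsilon^2}$.

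For generalization, take any $x$ of arbitrary size and choose a realization $y=\msf R_n(x)\in\Omega_n$ (closure under sampling) at which both $|p(x)-p(y)|$ and $|\widehat p(x)-\widehat p(y)|$ are at most their expectations. Such a $y$ exists by averaging, using $\mbb E|p(x)-p(\msf R_n x)|$. An exact variance computation is cleaner here: the variance of the $U$-statistic is $O(k^2/n)$, which gives only $O(k/\sqrt n)$ for the absolute deviation. Instead I will use the exact identity $p(x)=\mbb E\,p(\msf R_N(x))+O(k^2/N)$ and average over the sample. This means comparing $p(x)-\widehat p(x)$ to $\mbb E[(p-\widehat p)(\msf R_n x)]+O(k^2/n)$, which is at most $\mathrm{e}_n(p,\widehat p)+c_3k^2/n$ because each realization lies in $\Omega_n$. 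This uses only the bias bound, with no fluctuation term, and concludes the proof.
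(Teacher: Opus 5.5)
\textbf{Where your proof matches the paper.} Apart from the representation step, your argument is the paper's. The paper writes $p(x)=\mbb E\,g_{Dd}(\msf R_{Dd}(x))$, so $k=Dd$. It then applies Theorem~\ref{thm:sampling_rates_funcs} (fixed-dimensional moments, $\ell=1$, $\sigma=\mathrm{id}$). That theorem's proof consists of exactly your three steps: the injectivity coupling for the bias $k(k-1)\|g\|_\infty/n$, the bounded-difference estimate with increments $2k\|g\|_\infty/n$, and the bias-only three-term triangle inequality for $\mathrm{e}_\infty$. Your discarded first idea, choosing one realization where both deviations are below their means, does not follow from averaging, but you do not use it. The paper does not prove the representation itself; it cites~\cite[Sec.~5]{levin2025deFin}. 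You propose to derive it, and that derivation has a gap.

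\textbf{The gap in the representation step.} Propositions~\ref{prop:symmetry_coFS} and~\ref{prop:relations_coFS} only give the easy direction: functions $x\mapsto\mbb E\,q(\msf R_k(x))$ are symmetric and duplication invariant. You need the converse. In your $m\to\infty$ argument, $p|_{\vct V_{nm}}$ is an injective-density average whose coefficients $a_t(nm)$ depend on $nm$. So you only obtain $p(x)=\lim_m\sum_t a_t(nm)\,\mbb E\,m_t(\rho(\kappa_{n,nm}\circ\Phi_{nm,j_t})x)$, with $\Phi_{nm,j_t}$ a uniformly random injection. Nothing you say bounds the $a_t(nm)$. Without such a bound, the $O(k^2/(nm))$ discrepancy between $\kappa_{n,nm}\circ\Phi_{nm,j_t}$ and a uniform map need not vanish, and you do not get a single polynomial $q$ independent of $n$. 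The statement that ``duplication invariance forces the coefficients to be consistent across $n$'' is exactly what must be proved.

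\textbf{The missing basis argument.} For a monomial $m_t$ on $\vct V_j$ that uses all $j\le k$ base indices, set $\tau_t(x)=\mbb E\,m_t(\rho(F_{n,j})x)$ for $x\in\vct V_n$.
\begin{itemize}
\item Split according to whether $F_{n,j}$ is injective. This shows $\tau_t$ is a positive multiple of the $\mfk S_n$-orbit sum of $m_t$, plus orbit sums of monomials with fewer base indices.
\item Hence, for $n\ge k$, the $\tau_t$ (over $\mfk S_j$-classes of such $m_t$) form a triangular basis of the symmetric polynomials of degree at most $d$ on $\vct V_n$.
\item Each $\tau_t$ is exactly duplication invariant, since $\kappa_{n,N}\circ F_{N,j}\overset{d}{=}F_{n,j}$. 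Write $p|_{\vct V_N}=\sum_t b_t(N)\tau_t$ and compose with $\rho(\kappa_{n,N})$. Uniqueness of coordinates then gives $b_t(N)=b_t(n)$ whenever $k\le n\mid N$.
\item Comparing any $n,n'\ge k$ through $nn'$ shows the $b_t$ are constant. Sizes $n<k$ follow by duplicating up to $nk$.
\item Then $g(z)=\sum_t b_t\,m_t(\rho(\iota_{k,j_t})z)$ satisfies $p(x)=\mbb E\,g(\msf R_k(x))$ for all $x$.
\end{itemize}
With this argument, or with the citation the paper uses, the rest of your proof is complete and gives the stated rates.
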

Examples of polynomials satisfying the above conditions are polynomials in moments of measures, graph homomorphism densities, and polynomial graph neural networks; see Corollary~\ref{cor:moment_polys} and the discussion following it.
The proof uses a result of~\cite{levin2025deFin} stating that such polynomials compute moments of fixed-sized samples of their inputs. 
We remark that the above $O(n^{-1})$ any-dimensional generalization rates are a substantial improvement over previous rates in the literature, as the latter were proved for larger function classes and exploit less structure. For example, the framework of~\cite{levin2025transferring} yields the rates $O(n^{-1/d})$ for moment polynomials on $\RR^d$, and $O((\log n)^{-1/4})$ for graph homomorphism densities; see~\cite[Cors.~F.6,~G.4]{levin2025transferring}.

The second consequence we highlight is to sketching and generalization for transformers. 
\begin{corollary}[informal, see Corollary~\ref{cor:transformers}]\label{cor:transformers_intro}
    Suppose $\vct V_n=\RR^{d\times n}$ with $d>2$, and $\Omega_n=\Theta^n$ for compact $\Theta\subseteq\RR^d$. If $T\colon\bigsqcup_n\vct V_n\to\RR^d$ is a permutation-invariant transformer,\footnote{We assume infinite-precision self-attention and mean pooling for the last layer, see Corollary~\ref{cor:transformers}.} 
    then for any $x\in\bigsqcup_n\Omega_n$ with probability at least $1-e^{-2\epsilon^2}$ we have
    \begin{equation}\label{eq:transformer_sampling_intro}
        \|T(x)-T(\msf R_n(x))\|_2\leq \frac{c_1(1+2\epsilon)}{\sqrt{n}}.
    \end{equation}
    For any other such transformer $\widehat T$, we have
    \begin{equation*}
        \mathrm{e}_{\infty}(T,\widehat T)\leq \mathrm{e}_n(T,\widehat T) + \frac{c_2}{\sqrt{n}},
    \end{equation*}
    where $\mathrm{e}_n(T,\widehat T)=\sup_{x\in\Omega_{\leq n}}\|T(x)-\widehat T(x)\|_2$ and $\mathrm{e}_{\infty}(T,\widehat T)=\sup_n\mathrm{e}_n(T,\widehat T)$. 
    Furthermore, if $G(x)=\bar G(\mathrm{Law}(\msf R_1(x)))$ for an $L$-Lipschitz function $\bar G\colon\mc P(\Theta)\to\RR^d$, then
    \begin{equation*}
        \mathrm{e}_{\infty}(T,G)\leq \mathrm{e}_n(T,G) + \frac{c_3}{n^{1/d}}.
    \end{equation*}
    Above, the constants $c_1,c_2,c_3>0$ depend on $L$, the architectures of $T$ and $\widehat T$, and on $\Theta$. 
\end{corollary}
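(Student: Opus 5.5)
The plan is to show that a permutation-invariant transformer with mean pooling depends on its input $x=[x_1,\ldots,x_n]\in\Theta^n$ only through the empirical measure $\mu_x=\mathrm{Law}(\msf R_1(x))=\frac1n\sum_i\delta_{x_i}$. Writing $T(x)=\bar T(\mu_x)$, I would then show that $\bar T$ is Lipschitz with respect to an appropriate metric on $\mc P(\Theta)$, and control the relevant distance between $\mu_x$ and $\mu_{\msf R_n(x)}$. Infinite-precision self-attention on a point cloud without positional encodings acts as a measure-to-measure map. Concretely, a token $y$ is updated to
\begin{equation*}
y\mapsto y+\frac{\int V z\, e^{\langle Qy,Kz\rangle}\,d\mu(z)}{\int e^{\langle Qy,Kz\rangle}\,d\mu(z)},
\end{equation*}
and the MLP layers act pointwise. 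Each layer is therefore a pushforward $\mu\mapsto (\Phi_\mu)_\#\mu$, and the final mean pooling is $\mu\mapsto\int z\,d\mu$. Duplicating or permuting columns does not change $\mu_x$, which is consistent with the equivalences in Proposition~\ref{prop:symmetry_coFS}.

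Step one is Lipschitz continuity in $W_1$ on compact sets. All intermediate measures stay supported in compact sets $\Theta_\ell$, since the layers are continuous maps of a compact support. On such a set the kernel $e^{\langle Qy,Kz\rangle}$ is bounded above and below and is Lipschitz. Hence $\Phi_\mu(y)$ is Lipschitz in $y$, and it is also Lipschitz in $\mu$ with respect to $W_1$, because the numerator and denominator are integrals of Lipschitz test functions. The pushforward bound
\begin{equation*}
W_1\bigl((\Phi_\mu)_\#\mu,(\Phi_\nu)_\#\nu\bigr)\le \mathrm{Lip}(\Phi)\,W_1(\mu,\nu)+\sup_y\|\Phi_\mu(y)-\Phi_\nu(y)\|
\end{equation*}
then gives a per-layer Lipschitz constant. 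Composing the layers makes $\bar T$ $L_T$-Lipschitz in $W_1$, and the same holds for $\widehat T$.

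The main obstacle is the $1/\sqrt n$ rate. A plain $W_1$ Lipschitz bound combined with empirical-measure rates would only give $n^{-1/d}$; this is exactly the third bound, obtained by invoking Theorem~\ref{thm:W1_bound_sampling}-type rates for $W_1(\mu_x,\mu_{\msf R_n(x)})\lesssim n^{-1/d}$ when $d>2$, and then $\mathrm{e}_\infty\le \mathrm{e}_n+2Ln^{-1/d}$ via the usual triangle-inequality argument. For the first two bounds I would exploit the finer structure of $T$. Its dependence on $\mu$ is only through finitely many smooth integrals, so I would prove Lipschitz continuity with respect to a weaker integral probability metric $d_{\mc F}(\mu,\nu)=\sup_{h\in\mc F}|\int h\,d(\mu-\nu)|$. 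The class $\mc F$ consists of functions of the form $z\mapsto e^{\langle a,Kz\rangle}(1,Vz)$ with $a$ ranging over a compact parameter set; these are smooth and finitely parametrized. After the first layer, the next layers' test functions compose with $\Phi_\mu$; I would handle this by induction, showing each layer's perturbation is controlled by $d_{\mc F_\ell}$ of the input for a parametric, Lipschitz-in-parameter class $\mc F_\ell$.

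Since $\msf R_n(x)$ consists of $n$ iid draws from $\mu_x$, a chaining or Rademacher bound for a parametric Lipschitz class gives $\mbb E\,d_{\mc F}(\mu_x,\mu_{\msf R_n(x)})\le c/\sqrt n$, uniformly in $x$. McDiarmid's inequality then gives a deviation of order $\epsilon/\sqrt n$ with probability at least $1-e^{-2\epsilon^2}$; this matches the form $c_1(1+2\epsilon)/\sqrt n$ in~\eqref{eq:transformer_sampling_intro}. The generalization bound follows from an existence argument. For every $x$ there is a realization of $\msf R_n(x)\in\Omega_n$ with $\|T(x)-T(\msf R_n(x))\|$ and $\|\widehat T(x)-\widehat T(\msf R_n(x))\|$ both at most $c/\sqrt n$. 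The triangle inequality then gives $\|T(x)-\widehat T(x)\|\le \mathrm{e}_n(T,\widehat T)+2c/\sqrt n$.
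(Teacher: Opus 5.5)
Your proposal is correct, and its skeleton matches the paper's. The third bound is $W_1$-Lipschitzness of the measure-level map $\bar T$ (the paper gets the constant $[L_\phi(L_1+L_2)]^\ell$) combined with the $k=1$ case of Theorem~\ref{thm:W1_bound_sampling}. The $1/\sqrt{n}$ bounds come from a layerwise induction in which each layer adds a concentration term to a Lipschitz-propagated error.

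Your classes $\mc F_\ell$ are the pullback form of the paper's bookkeeping. Write $\Gamma_\ell$ for the token-wise map computed by the first $\ell$ layers on the full input, so $T_1^{\circ\ell}(x)_j=\Gamma_\ell(x_j)$. The paper controls the defect $\Delta_\ell$ between $\msf R_n\circ T_1^{\circ\ell}$ and $T_1^{\circ\ell}\circ\msf R_n$ by concentrating, on the attention test functions, the empirical measure $\mu_n$ of the sampled intermediate tokens around the true intermediate distribution. Since $\int h\,d\mu_n=\int h\circ\Gamma_\ell\,d\mu_{\msf R_n(x)}$, this is exactly your $d_{\mc F_\ell}$.

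These classes depend on $\mu_x$. So what you actually need is a bound $\|\bar T(\mu_x)-\bar T(\nu)\|_2\le C\max_\ell d_{\mc F_\ell}(\mu_x,\nu)$, with $C$ and the parameter sets, ranges and Lipschitz constants of the $\mc F_\ell$ uniform in $x$. You do not need Lipschitzness of $\bar T$ in a single metric for all pairs of measures, and your induction already has the right form.

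The real difference is the lemma giving uniformity over the queries.
\begin{itemize}
\item You use chaining for a Lipschitz-parametrized class.
\item The paper writes $e^{\langle Qz,Ky\rangle}=\langle G(Qz),G(Ky)\rangle_{\mc H}$ with $G(u)=(u^{\otimes m})_{m\geq 0}$. By Cauchy--Schwarz, the supremum over $z\in B_t$ in~\eqref{eq:barA_sampling} then reduces to the error of one Hilbert-space mean embedding, which concentrates by the bounded-difference inequality alone.
\end{itemize}
The paper's trick needs no entropy estimate and gives constants free of the head dimension. It does rely on the softmax score being a positive-definite kernel with a bounded feature map on $B_t$. Your route works for any score Lipschitz in the query, at the cost of a dimension-dependent factor from Dudley's integral.

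Two smaller points. Taking a single supremum over the finite union of your classes and the pooling function lets you apply McDiarmid once and obtain probability $1-e^{-2\epsilon^2}$, as in the informal statement; Corollary~\ref{cor:transformers} pays a union-bound factor $2(\ell+1)$. Your existence-of-a-good-realization argument for the generalization bound is a valid substitute for the paper's bound on $\|T(x)-\mbb E\,T(\msf R_n(x))\|_2$.
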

The proof is based on the measure-theoretic in-context mapping of~\cite{furuya2025transformers}; see Section~\ref{sec:proofs_sampling}. The $O(1/\sqrt{n})$ sketching rate for a single-layer transformer was previously proved in~\cite[Prop.~D.13]{bohbot2025token}.
We remark that evaluating $T(x)$ exactly on an input $x\in\vct V_N$ consisting of $N$ tokens requires $O(N^2)$ operations (see~\eqref{eq:attention}), but evaluating it to a desired accuracy $\delta>0$ can be done with high probability in $O(\delta^{-4})$ time (independent of the number of tokens $N$) after sampling $n=O(\delta^{-2})$ columns from $x$ uniformly at random and using~\eqref{eq:transformer_sampling_intro}. 
The first generalization bound above is relevant for model distillation for example, where a small transformer is trained to imitate a much larger one~\cite{miniLM,tinybert}. In another direction, transformers have been shown to approximate functions on measures to arbitrary precision~\cite{furuya2025transformers,lavenant2026continuous}. The second generalization bound quantifies the quality of this approximation on all measures from the quality of approximation on measures obtained by sampling from fixed-size vectors.







\subsubsection*{Rates for Random Binning and Species Sampling (Section~\ref{sec:quotients})}
Next we present our results on any-dimensional sketching and generalization using the sampling metric $d_{\mathrm{samp}}$ defined with respect to general random binning maps $(\msf B_k\colon\bigsqcup_n\vct V_n\to\vct V_k)$.  Once again, we fix a sequence of compact subsets $(\Omega_n \subseteq \vct V_n)$ closed under sampling, this time under both random binning and under species sampling so $\msf B_k (\bigsqcup_n \Omega_n) \subseteq \Omega_k$ almost surely for all $k$ and similarly for $(\msf E_k)$. Examples include the simplex $\Omega_n=\Delta^n$ and entrywise $\ell_1$ balls.

As in Theorem~\ref{thm:informal_sampling}, we would like to use the maps $(\msf B_k)$ for sketching high-dimensional inputs.  The challenge with this approach is that the distance $d_{\mathrm{samp}}(x,\msf B_n(x))$ does not concentrate as well as in the case of sampling with replacement (see Example~\ref{ex:anti_concentration}).  Fortunately, our generalization of the species sampling map $(\msf E_k)$ exhibits the requisite concentration, as $d_{\mathrm{samp}}(x,\msf E_n(x))$ is $c/n$-subgaussian for a constant $c > 0$ depending only on $(\Omega_n)$.  Thus, species sampling furnishes the necessary sketching map.


\begin{theorem}[informal, see Theorem~\ref{thm:W1_rates_FS}]\label{thm:W1_rates_FS_intro}
    We have the following results for random binning $(\msf B_k)$ and species sampling $(\msf E_k)$ acting on compact sets $(\Omega_k)$ closed under both sampling maps. Below, we let $d_{\mathrm{samp}}$ be the sampling metric defined by $(\msf B_k)$.
    \begin{enumerate}[font=\emph, align=left,labelwidth=!, labelindent=0pt]
        \item[(Any-dimensional sketching)] For any function $f\colon\bigsqcup_n\Omega_n\to\RR$ that is $L$-Lipschitz in $d_{\mathrm{samp}}$ and any $x\in\bigsqcup_n\Omega_n$, we have with probability at least $1-e^{-2\epsilon^2}$ that
        \begin{equation*}
            |f(x)-f(\msf E_n(x))| \leq \frac{L(c_1+ c_2\epsilon)}{\sqrt{n}}.
        \end{equation*}

        \item[(Any-dimensional generalization)] If $\widehat f\colon\bigsqcup_n\Omega_n\to\RR$ is also $L$-Lipschitz in $d_{\mathrm{samp}}$, then
        \begin{equation*}
            \mathrm{e}_{\infty}(f,\widehat f)\leq \mathrm{e}_n(f,\widehat f) + \frac{2Lc_1}{\sqrt{n}}.
        \end{equation*}
    \end{enumerate}
    The constants $c_1,c_2 >0$ are explicit and depend only on the collection $(\Omega_n)$.
\end{theorem}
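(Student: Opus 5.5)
The plan is to reduce both parts to a single concentration estimate for the random variable $Z(x)=d_{\mathrm{samp}}(x,\msf E_n(x))$, where $d_{\mathrm{samp}}$ is built from the binning maps $(\msf B_k)$. The estimate to aim for is
\begin{equation*}
\mbb E\, Z(x)\leq \frac{c_1}{\sqrt n}, \qquad \Pr\Big[Z(x)\geq \mbb E Z(x)+\frac{c_2\epsilon}{\sqrt n}\Big]\leq e^{-2\epsilon^2},
\end{equation*}
holding uniformly over $x\in\bigsqcup_N\Omega_N$.

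Given this, sketching is immediate. Since $f$ is $L$-Lipschitz, $|f(x)-f(\msf E_n(x))|\leq L Z(x)$.

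Generalization follows by a probabilistic existence argument. Fix $x\in\Omega_N$. Because $\Omega_n$ is closed under species sampling, $\msf E_n(x)\in\Omega_{\le n}$ almost surely. Then
\begin{equation*}
|f(x)-\widehat f(x)|\le |f(\msf E_n x)-\widehat f(\msf E_n x)| + 2L\,Z(x)\le \mathrm{e}_n(f,\widehat f)+2LZ(x).
\end{equation*}
Taking expectations, or simply picking a realization with $Z\le \mbb E Z$, gives the bound with slack $2Lc_1/\sqrt n$. Taking the supremum over $N$ and $x$ finishes this part.

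For the mean bound, the goal is to compare $\mathrm{Law}(\msf B_k(x))$ with $\mathrm{Law}(\msf B_k(\msf E_n(x)))$ for each $k$, and then sum against the weights $2^{-k}$. I would use the characterization from Section~\ref{sec:sampling_general} that binning laws are invariant under zero-padding and permutation. This means $\mathrm{Law}(\msf B_k(x))$ depends only on the multiset of nonzero entries of $x$, the ``species masses'' $x/\|x\|_1$.

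The key coupling is this. Species sampling $n$ draws from $x$, viewed as a distribution over $\mc I_N$, and yields the empirical measure $\hat x$ on the observed indices. Then $\msf B_k(\hat x)$ and $\msf B_k(x)$ can be built from the same random map $F_{k,N}$, because binning acts linearly: $\msf B_k(x)=\beta(F)x$ and $\msf B_k(\hat x)$ equals $\beta(F)\hat x$ up to the zero-padding equivalence. Hence
\begin{equation*}
W_1(\msf B_k x,\msf B_k\hat x)\le \mbb E_F\|\beta(F)(x-\hat x)\|.
\end{equation*}
Here $\beta(F)$ is a contraction in $\ell_1$, and with the $\ell_2$ norm it is better still. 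So it suffices to bound $\mbb E\|\beta(F)(x-\hat x)\|_2$. Conditioning on $F$, each bin coordinate of $\beta(F)\hat x$ is an average of $n$ i.i.d.\ bounded indicators with mean $[\beta(F)x]_i$. Its variance is at most $p_i/n$, where $p_i$ is the mass of bin $i$, and summing over bins gives $\mbb E\|\cdot\|_2^2\le \|x\|_1^2/n$ after homogeneity and the sign convention. Jensen then gives the $1/\sqrt n$ rate with $c_1$ depending only on $\sup\|x\|_1$ over $\Omega$, which is finite by compactness and the closure assumptions. For general index sets of degree $D$ the same argument goes through with indices in $\mc I_N$ sampled via $[N]^D$, at a cost in constants only.

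For the tail bound, I would apply McDiarmid's inequality to $Z$ as a function of the i.i.d.\ draws $I_1,\dots,I_n$. Changing one draw moves $\hat x$ by at most $2\|x\|_1/n$ in $\ell_1$. Since each $W_1$ term is $1$-Lipschitz in $\hat x$ under the coupling above, and $\sum_k 2^{-k}\le1$, $Z$ has bounded differences $c/n$. This yields $c/n$-subgaussianity, which is the concentration claimed in the text before the theorem.

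The main obstacle I expect is the reduction step for the mean bound. One has to make rigorous that $\msf E_n(x)$, which lives in $\Omega_n$ with relabelled species and random ordering, has the same binning laws as $\hat x$ padded into $\mc I_N$. This requires the zero-padding and permutation invariance of $\msf B_k$ for general index sets with action $\theta$, together with checking that species with multiple coordinates in degree $D$ still bin compatibly. I would first verify this for $\mc I_n=[n]$ and then lift it through Definition~\ref{def:FS_index_degree}.
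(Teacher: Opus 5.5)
Your argument follows the paper's proof. The paper likewise replaces $\msf E_n(x)$ by the unrelabelled empirical sample $\mscr E_n(x)$ (Lemma~\ref{lem:empirical_species_equivalent}). It couples $\msf B_k(x)$ and $\msf B_k(\mscr E_n(x))$ through the same random map $F_{k,N}$ and bounds the conditional second moment by $\|x\|_1^2/n$ (Lemma~\ref{lem:W1_rate_finite_x}). It gets the tail bound from bounded differences with $\ell_1$-jumps of size $2r/n$, and finishes with the Lipschitz and triangle-inequality arguments of Theorem~\ref{thm:binning_rates_funcs}.

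One step fails as written. $\msf E_n$ maps into $\vct V_{nD}$, so closure under species sampling only gives $\msf E_n(x)\in\Omega_{nD}$, which is not contained in $\Omega_{\le n}$ when $D\ge 2$. In the generalization step you must sketch with $\msf E_m$ for $m=\lfloor n/D\rfloor$, so that $\msf E_m(x)\in\Omega_{mD}\subseteq\Omega_{\le n}$. This gives the slack $2Lr/\sqrt{\lfloor n/D\rfloor}$ for $n\ge D$, as in the paper, and the informal constant $c_1$ absorbs the resulting factor of order $\sqrt D$.

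There are also a few smaller corrections.

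\begin{itemize}
\item Invariance under permutations and zero-padding does not make $\mathrm{Law}(\msf B_k(x))$ a function of the multiset of nonzero entries once $D\ge 2$. For $\mc I_n=[n]^2$, take the path $1\to2\to3$ and the matching $\{1\to2,\ 3\to4\}$, all weights $1/2$. They have the same nonzero entries. After binning, the path's support always contains a pair $(a,b),(b,c)$, while the matching's does so only with probability $O(1/k)$.
\item The invariance you actually have is under $\beta(\phi)$ for injections $\phi$ of the ground set, and it removes your ``main obstacle'' directly, with no separate $[n]$ case. Zero-pad $x$ so that $N\ge nD$, and let $\Phi\colon[nD]\to[N]$ send $i\mapsto t_i$ for $i\le k$, extended injectively. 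Then $\mscr E_n(x)=\beta(\Phi)\msf E_n(x)$ holds identically by~\eqref{eq:species_sampling_general}, so~\eqref{eq:injection_equivalence} gives equal binning laws.
\item The bound $\mbb E\, W_1(\msf B_k(x),\msf B_k\circ\msf E_n(x))\le\|x\|_1/\sqrt n$ has no dependence on $D$, contrary to your ``cost in constants''. The degree enters only through the dimension of $\msf E_n(x)$ noted above.
\item $\beta(F)$ is not an $\ell_2$ contraction, since it sums entries. Your argument never needs this: the conditional variance computation and $\|\beta(F)z\|_2\le\|\beta(F)z\|_1\le\|z\|_1$ suffice.
\end{itemize}
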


In words, we always get $n^{-1/2}$ rates for random binning, regardless of the complexity of the action $\beta$ in~\eqref{eq:binning_general}. We obtain the same rates also for functions $f$ exhibiting latent low-complexity structure.
The following are some of the function classes satisfying the sketching and generalization rates of Theorem~\ref{thm:W1_rates_FS_intro}.
\begin{enumerate}[align=left, font=\emph,labelwidth=!, labelindent=0pt,wide]
    \item[(Polynomials)] In analogy to Corollary~\ref{cor:moment_polys_intro}, there is a large class of polynomial functions that are continuous with respect to random binning.  Specifically, Theorem~\ref{thm:W1_rates_FS} applies to any polynomial $p\colon\bigsqcup_n\vct V_n\to\RR$ that is symmetric and unchanged by zero-padding, appropriately defined (Corollary~\ref{cor:FS_polynomials}). In fact, we obtain a faster $O(1/n)$ generalization rate for such polynomials in Corollary~\ref{cor:FS_polynomials}. Our results apply in particular to multi-symmetric functions on sequences of vectors and homomorphism numbers for graphs of arbitrary size.

    \item[(Symmetric neural networks)] Many permutation-invariant neural network architectures are continuous with respect to random binning, including DeepSets~\cite{deepsets} (Corollary~\ref{cor:deepsets}), PointNet~\cite{pointnet} (Corollary~\ref{cor:pointnet}), and graph neural networks~\cite{GNNs} (Corollary~\ref{cor:gnns}), with appropriate choices of parameters and nonlinearities.
\end{enumerate}
Finally, we show that the topologies defined by random binning and by species sampling via the corresponding sampling metrics are closely related to each other (Proposition~\ref{prop:relation_to_species}).
See Section~\ref{sec:quotients} for more details.

\subsection{Related Work}
There are several machine learning models that are defined for inputs of different sizes. Examples include neural networks processing sets and point clouds of different sizes~\cite{deepsets,pointnet,bueno2021representation}, graphs of different sizes~\cite{GNNs,maron2018invariant,transferab1}, and transformers processing sequences of tokens of any length~\cite{LIN2022111,transformers_time_series,furuya2025transformers}.  Importantly, the any-dimensional generalization and sketching rates of each of the above models are comparatively less studied, with the following notable exceptions.


The ability of a graph neural network to generalize to graphs of different sizes has been called transferability in the literature, and has been extensively studied by considering appropriate topologies and limits on the space of graphs of all sizes, see~\cite{transferab1,maskey2023transferability,transferb1,graphop_transfer} for example. 
In particular, many results in this line of work exploit the continuity of appropriately-normalized graph neural networks with respect to dense graph limits, which correspond to convergence in the sampling metric~\eqref{eq:sampling_metric} defined using sampling vertices with replacement as in Example~\ref{ex:sampling_vertices_intro}, see~\cite[\S1]{lovasz2012large}.
We generalize and unify many of the techniques in this literature by considering more general notions of sampling maps, which are in turn useful in a broader array of applications.

There is also a literature studying the ability of transformers to generalize to inputs of different lengths, see~\cite{zhou2024transformers,limit_transformers,PE_transformers,yang2026length} for example. Some of the results in this literature are negative, showing that transformers often do \emph{not} generalize well to longer inputs, while others prove length generalization under specific assumptions on the transformer architecture. Our theory contributes to the latter line of work by giving explicit generalization guarantees for permutation-invariant transformers.
The measure-theoretic description of attention we use for this purpose has been previously used to prove universality results for transformers~\cite{furuya2025transformers,furuya2026function,Yun2020Are}, but to our knowledge our result is the first application of these ideas to length generalization.

The authors of~\cite{diaz2025invariant} study the particular case of generalization of symmetric any-dimensional polynomials fitted via least-squares to training inputs of bounded dimensions. They measure generalization using mean-squared error with respect to fixed training and test sets in possibly different dimensions, and their bounds depend explicitly on these sets. In contrast, we simultaneously control the error on inputs of all possible sizes by the error in a fixed training dimension; the latter in turn can be controlled by classic results in generalization theory in a fixed dimension~\cite{vapnik1999nature}.  More broadly, our framework applies seamlessly to general classes of functions continuous with respect to sampling, which go well beyond polynomials.


Transferability of graph neural networks has been extended in~\cite{levin2025transferring} to more general any-dimensional models, where it was defined to be continuity of the model in a certain space containing inputs of all sizes and their limits. 
The any-dimensional generalization and sketching problems that are the subject of our paper are neither formalized nor explicitly tackled there, and sampling maps do not play a role in the construction of their space. Nevertheless, several results in~\cite{levin2025transferring} do pertain to these problems and involve sampling.
%
%
These include rates for any-dimensional generalization for average error with respect to specific data distributions~\cite[Prop.~4.2]{levin2025transferring}, and expected convergence rates for function values on low-dimensional samples of high-dimensional inputs~\cite[Prop.~D.5]{levin2025transferring}. Both assume access to sampling maps satisfying certain desiderata, verified case-by-case. 
%
%
In contrast, sampling maps are fundamental to our framework---we systematically generalize several notions of sampling maps, and we prove the requisite properties pertaining to them in a unified fashion and in broader application domains. In particular, we formalize the notion of any-dimensional data distributions, and we obtain improved generalization rates with respect to them compared to~\cite{levin2025transferring} by exploiting sampling-specific structure in the functions; we also prove any-dimensional generalization with respect to worst-case error.
Furthermore, we obtain convergence rates for function values on low-dimensional samples that hold not only in expectation but also with high probability, which is essential for deriving a practically relevant methodology for sketching.  


Finally, the sketching rates we obtain for sampling with replacement generalize previous rates obtained for measures~\cite{fournier2023convergence} (viewed as limits of point clouds) and graphons~\cite{convergent_seqs1} (viewed as limits of dense simple undirected graphs). These sketching rates were used to test properties of large graphs in the latter literature~\cite{lovasz2012large}.
In particular, rates of approximation for graphons by finite graphs were shown to follow from compactness of the space of graphons in~\cite{lovasz2007szemeredi}, a connection we generalize in Theorem~\ref{thm:general_equivalence}.
Our random binning and species sampling maps generalize those studied in~\cite{levin2025graphs} for graphs with nonnegative edge weights summing to one. In particular, the authors of~\cite{levin2025graphs} show a similar $n^{-1/2}$ rate as in Theorem~\ref{thm:W1_rates_FS_intro} with respect to a different but related metric.

\subsection{Notation}
We denote by $\NN$ the collection of strictly positive integers and by $\NN_0=\NN\cup\{0\}$. For $n\in\NN$, we denote $[n]=\{1,\ldots,n\}$. We denote by $\mfk S_n=\{\pi\colon[n]\to[n] \textrm{ bijective}\}$ the group of permutations on $n$ letters.
If $\mc I$ is a finite set, we denote by $\RR^{\mc I}$ the vector space consisting of vectors indexed by $\mc I$, by $\Delta^{\mc I}$ the unit simplex in $\RR^{\mc I}$ consisting of nonnegative vectors whose entries sum to 1, and by $[-r,r]^{\mc I}$ for $r>0$ the hypercube consisting of vectors with entries of magnitude at most $r$.
If $(\Omega,d)$ is a pseudometric space and $S\subseteq\Omega$, we denote by $\mathrm{dist}(x,S)=\inf_{y\in S}d(x,y)$ the distance of $x\in\Omega$ to $S$. The space $(\Omega,d)$ is totally bounded if all its covering numbers are finite.
We denote the distribution of a random variable $X$ by $\mathrm{Law}(X)$. If $X$ and $Y$ are two random variables on the same space, we write $X\overset{d}{=}Y$ to denote equality of their distributions $\mathrm{Law}(X)=\mathrm{Law}(Y)$. We denote the collection of (Borel) probability distributions on $\Omega$ by $\mc P(\Omega)$. 
If $\mu\in\mc P(\Omega)$, we write $X\sim\mu$ for a random variable $X$ to denote $\mathrm{Law}(X)=\mu$.
If $f\colon\Omega\to\Omega'$ is a continuous map between two topological spaces $\Omega$ and $\Omega'$ and $\mu\in\mc P(\Omega)$, we denote by $f\mu$ the pushforward of $\mu$ by $f$. If $\mu\in\mc P(\Omega)$ and $f\colon\Omega\to\RR$ is a measurable function, we denote by $\mbb E_{\mu}f=\mbb E_{X\sim\mu}f(X)$ the expectation of $f$ with respect to $\mu$.
The Wasserstein-1 distance between two measures $\mu,\nu\in\mc P(\Omega)$ on a metric space $(\Omega,d)$ with finite first moments is defined by
\begin{equation*}
    W_1(\mu,\nu)=\inf_{\substack{\textrm{random } (X,Y)\\ X\sim\mu, Y\sim\nu}} \mbb Ed(X,Y) = \sup_{\substack{f\colon\Omega\to\RR\\ \textrm{1-Lipschitz}}}|\mbb E_{\mu}f-\mbb E_{\nu}f|,
\end{equation*}
where the infimum is taken over couplings of $\mu$ and $\nu$. To simplify our notation, if $X$ and $Y$ are random variables (not necessarily coupled) on the same space $\Omega$, we denote $W_1(X,Y)=W_1(\mathrm{Law}(X),\mathrm{Law}(Y))$.
A sequence of measures $(\mu_n)\subseteq\mc P(\Omega)$ converges weakly to $\mu\in\mc P(\Omega)$ if $\mbb E_{\mu_n}f\to\mbb E_{\mu}f$ for all bounded continuous functions $f\colon\Omega\to\RR$. If $\Omega$ is a compact metric space, this is further equivalent to convergence in $W_1$ distance. A sequence of random variables $(X_n)$ converges weakly if their distributions $(\mathrm{Law}(X_n))$ converge weakly.
If $(\Omega_n)$ are sets, we denote their disjoint union by $\bigsqcup_n\Omega_n=\{(n,x):x\in\Omega_n, n\in\NN\}$. If $\sim$ is an equivalence relation on a set $\Omega$, we denote by $\Omega/\sim$ the quotient space consisting of equivalence classes. Any pseudometric $d$ on $\Omega$ induces an equivalence relation by setting $x\sim y$ if $d(x,y)=0$. If $d$ is a pseudometric on $\bigsqcup_n\Omega_n$, we denote by $\Omega_{\infty}=\bigsqcup_n\Omega_n/\sim$ the quotient space under the equivalence relation induced by $d$, on which $d$ defines a metric, and by $\overline{\Omega}_{\infty}$ the completion of this metric space.

\section{Comparing Objects of Different Sizes}\label{sec:general}
In this section, we begin our study of any-dimensional learning by formally investigating how inputs of different sizes are compared to each other. We start in Section~\ref{sec:compact} by stating a general equivalence between various notions of generalization and sketching across dimensions, and compactness of the metric space (for a general metric) consisting of inputs of all sizes and their limits.  We then focus in Section~\ref{sec:sampling_metric} on the sampling metric.  We describe a large and natural family of compact metric spaces that can be derived with respect to the sampling metric, and we provide a link between (random) limit objects in these metric spaces and an appropriate notion of an any-dimensional data distribution. Finally, we discuss compatibility conditions between index sets that lead to general sampling maps in Section~\ref{sec:sampling_general}.  Missing proofs in this section are given in Section~\ref{sec:proofs_general}.


\subsection{The Role of Compactness}\label{sec:compact}


Given a pseudometric defined on inputs of all sizes, we prove that compactness of this metric space is equivalent to the any-dimensional generalization and sketching properties from Section~\ref{sec:intro}. While the argument is elementary, we include it here for completeness.  Suppose $d$ is a pseudometric on $\bigsqcup_n\Omega_n$, and define the equivalence relation $x\sim y$ if $d(x,y)=0$. We allow for pseudometrics, as opposed to only metrics, because inputs of different sizes can correspond to the same object in the context of a given application domain, see Sections~\ref{sec:intro} and~\ref{sec:sampling_general}.
Note that $d$ is a metric on $\Omega_{\infty}=\bigsqcup_n\Omega_n/\sim$, and we define $\overline{\Omega}_{\infty}$ to be the completion of this metric space.
Denote by $\mc F_1$ the collection of 1-Lipschitz functions on $\bigsqcup_n\Omega_n$, or equivalently, on $\overline{\Omega}_{\infty}$.


\begin{theorem}\label{thm:general_equivalence}
    Assume $(\Omega_n,d)$ is totally bounded for each $n$. Then the following are equivalent.
    \begin{enumerate}[labelwidth=!, labelindent=0pt]
        \item $(\overline{\Omega}_{\infty},d)$ is compact.
        \item We have $\lim_{n\to\infty}\sup_{x\in\Omega_{\infty}}\mathrm{dist}(x,\Omega_{\leq n})=0$.
        \item There exist random maps $\msf S_k\colon\bigsqcup_n\Omega_n\to\Omega_{\leq k}$ for each $k$ such that
        \begin{equation*}
            \lim_{k\to\infty}\sup_{f\in\mc F_1}\mathrm{e}_{\infty}(f, f\circ\msf S_k) = 0,\quad \textrm{almost surely.}
        \end{equation*}
        
        \item There exists a rate $(R_n\geq 0)_{n\in\NN}$ with $R_n\to 0$ such that for any $f,g\in\mc F_1$, we have
        \begin{equation*}
            \mathrm{e}_{\infty}(f,g)\leq \mathrm{e}_n(f,g) + R_n.
        \end{equation*}
        Moreover, we have $\mathrm{e}_n(f,g)<\infty$ for all $n$, so $\mathrm{e}_{\infty}(f,g)<\infty$ as well.
    \end{enumerate}
\end{theorem}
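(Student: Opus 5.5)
We prove the cycle (1)$\Rightarrow$(2)$\Rightarrow$(3)$\Rightarrow$(2), together with (2)$\Leftrightarrow$(4), and (2)$\Rightarrow$(1). Set $\delta_n=\sup_{x\in\Omega_\infty}\mathrm{dist}(x,\Omega_{\leq n})$. This is nonincreasing in $n$, since $\Omega_{\leq n}\subseteq\Omega_{\leq n+1}$.

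\emph{(1)$\Rightarrow$(2).} Compactness of $\overline{\Omega}_\infty$ gives a finite $\epsilon$-net of $\Omega_\infty$. Since $\Omega_\infty$ is dense, we may take the net points in $\Omega_\infty$, hence each in some $\Omega_{m}$. With $N$ the largest such $m$, every $x\in\Omega_\infty$ is within $\epsilon$ of $\Omega_{\leq N}$, so $\delta_n\le\epsilon$ for $n\ge N$.

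\emph{(2)$\Rightarrow$(1).} A complete space is compact iff it is totally bounded, and total boundedness passes to the completion. So it suffices that $\Omega_\infty$ be totally bounded. Given $\epsilon$, choose $n$ with $\delta_n<\epsilon/2$. Cover $\Omega_{\leq n}=\bigsqcup_{i\le n}\Omega_i$ by finitely many $\epsilon/2$-balls, using the assumed total boundedness of each $(\Omega_i,d)$. The same centers then give an $\epsilon$-net of $\Omega_\infty$.

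\emph{(2)$\Rightarrow$(3).} Take $\msf S_k$ deterministic: $\msf S_k(x)\in\Omega_{\leq k}$ with $d(x,\msf S_k(x))\le\delta_k+1/k$. For $f\in\mc F_1$ this gives $|f(x)-f(\msf S_k x)|\le\delta_k+1/k\to0$ uniformly in $x$ and $f$.

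\emph{(3)$\Rightarrow$(2).} Test with $f=d(\cdot,y)$ and $f=\mathrm{dist}(\cdot,\Omega_{\le k})$, which are 1-Lipschitz. The main subtlety is that the supremum over $x$ sits inside the almost-sure statement and the maps are random. The second choice is cleanest: for fixed $k$, $f(\msf S_k x)=0$, so $\mathrm{e}_\infty(f,f\circ\msf S_k)\ge\sup_x\mathrm{dist}(x,\Omega_{\le k})=\delta_k$, deterministically. On the almost sure event this quantity tends to $0$, hence $\delta_k\to0$.

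\emph{(2)$\Rightarrow$(4).} Set $R_n=2\delta_n$. For $x\in\Omega_\infty$ and $y\in\Omega_{\le n}$ with $d(x,y)\le\delta_n+\eta$, we get $|f(x)-g(x)|\le|f(y)-g(y)|+2d(x,y)\le \mathrm{e}_n(f,g)+2\delta_n+2\eta$; then let $\eta\to0$. For finiteness of $\mathrm{e}_n$: $\Omega_{\le n}$ is totally bounded, hence bounded, and $f-g$ is 2-Lipschitz, so $f-g$ is bounded on $\Omega_{\le n}$.

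\emph{(4)$\Rightarrow$(2).} Take $f=\mathrm{dist}(\cdot,\Omega_{\le n})$ and $g=0$. Then $\mathrm{e}_n(f,g)=0$ and $\mathrm{e}_\infty(f,g)=\delta_n$, so $\delta_n\le R_n\to0$.

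These steps are routine. The only care needed is in (3)$\Rightarrow$(2): handling the random maps and the almost-sure quantifier, which the choice $f=\mathrm{dist}(\cdot,\Omega_{\le k})$ makes deterministic.
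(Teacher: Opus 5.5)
Your proof is correct. For (1)$\Leftrightarrow$(2), (2)$\Leftrightarrow$(3) and (4)$\Rightarrow$(2) it is the paper's argument. The only variation there is in (3)$\Rightarrow$(2), where the paper tests with $d(x,\cdot)$ for each fixed $x$ rather than with $\mathrm{dist}(\cdot,\Omega_{\leq k})$; the two choices are equivalent.

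The one genuine difference is how you reach (4).
\begin{itemize}
\item You prove (2)$\Rightarrow$(4) directly, comparing $x$ with a near-best approximant in $\Omega_{\leq n}$. This gives $R_n=2\delta_n$.
\item The paper proves (3)$\Rightarrow$(4) by averaging over the random sketches. It sets $R_n=\mbb E\sup_{f\in\mc F_1}\mathrm{e}_{\infty}(f,f\circ\msf S_n)$, uses the boundedness from (1) and dominated convergence to get $R_n\to0$, and ends with slack $2R_n$.
\end{itemize}

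Your route is more elementary: it needs no expectations, no measurability of a supremum over $\mc F_1$, and no convergence theorem. It also gives the optimal slack. Taking $f=-g=\mathrm{dist}(\cdot,\Omega_{\leq n})$ gives $\mathrm{e}_n(f,g)=0$ and $\mathrm{e}_\infty(f,g)=2\delta_n$, so any admissible rate in (4) must satisfy $R_n\geq 2\delta_n$.

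The paper's route instead sets up the template used in the rest of the paper. There the generalization slack is bounded by the expected error of a concrete random sketch such as $\msf R_n$ or $\msf E_n$, as in Theorems~\ref{thm:sampling_rates_funcs} and~\ref{thm:binning_rates_funcs}, because $\delta_n$ itself is rarely computable.

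One small point to add: for $R_n=2\delta_n$ to be a real-valued sequence, you should say why every $\delta_n$ is finite. Statement (2) gives $\delta_N<\infty$ for some $N$. Since $\Omega_{\leq N}$ is bounded, $\Omega_\infty$ is bounded, and hence every $\delta_n$ is finite.
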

The proof of this theorem proceeds via a straightforward direct argument, which we defer to Section~\ref{sec:proofs_general}.
The above theorem shows that a number of desirable properties are equivalent to compactness of the limit space $\overline{\Omega}_{\infty}$ with the pseudometric $d$.  
Part 2 is an any-dimensional approximation property that says that any object of any size can be approximated by objects of size at most $n$ with uniform rates, which pertains to the any-dimensional sketching question. The derivation of these rates is the key step in our analyses below.
Parts 3 and 4 give a positive answer to the any-dimensional sketching and generalization questions we posed in Section~\ref{sec:intro}, respectively, with uniform rates over the class of 1-Lipschitz functions.
We remark that part 3 can be equivalently stated in terms of deterministic maps $\msf S_k$, but the above formulation more closely aligns with our any-dimensional sketching question and subsequent developments using sampling maps.


As discussed in Section~\ref{sec:intro}, we would also like to quantify generalization error with respect to any-dimensional data distributions.
Specifically, we consider average error with respect to sequences of distributions supported on inputs of growing size that converge weakly to a distribution on limit objects, as formalized by the following proposition. Below, if $\mu\in\mc P(\overline{\Omega}_{\infty})$ and $f,g\colon\overline{\Omega}_{\infty}\to\RR$ we define $\mathrm{e}_{\mu}(f,g)=\mbb E_{\mu}|f-g|$.
\begin{proposition}\label{prop:avg_error}
    Suppose $(\overline{\Omega}_{\infty},d)$ is compact. For any sequence of distributions $(\mu_n\in\mc P(\Omega_{\leq n}))$ and $\mu_{\infty}\in\mc P(\overline{\Omega}_{\infty})$, we have $\mu_n\to\mu_{\infty}$ weakly if and only if there exists a rate $(R_n\geq0)$ with $R_n\to0$ such that for any $f,g\in\mc F_1$, we have $\mathrm{e}_{\mu_{\infty}}(f,g)\leq\mathrm{e}_{\mu_n}(f,g)+R_n$.
\end{proposition}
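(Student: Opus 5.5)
The plan is to use that on the compact metric space $(\overline{\Omega}_{\infty},d)$, weak convergence is equivalent to convergence in $W_1$, together with Kantorovich--Rubinstein duality. Each $\mu_n\in\mc P(\Omega_{\leq n})$ is viewed as a measure on $\overline{\Omega}_{\infty}$ by pushing forward along the quotient map $\Omega_{\leq n}\to\Omega_{\infty}\subseteq\overline{\Omega}_{\infty}$. Every $f\in\mc F_1$ is $1$-Lipschitz, hence continuous and bounded on the compact space $\overline{\Omega}_{\infty}$, so all expectations below are finite. The key observation is that for $f,g\in\mc F_1$ the function $h=|f-g|$ is $2$-Lipschitz. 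Indeed,
\[
\bigl||f-g|(x)-|f-g|(y)\bigr|\leq|f(x)-f(y)|+|g(x)-g(y)|\leq 2d(x,y).
\]

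\emph{Forward direction.} Assume $\mu_n\to\mu_{\infty}$ weakly. By compactness, $W_1(\mu_n,\mu_{\infty})\to0$. I would set $R_n=2W_1(\mu_n,\mu_{\infty})$. Duality applied to $h/2$ gives
\[
\mathrm{e}_{\mu_{\infty}}(f,g)-\mathrm{e}_{\mu_n}(f,g)=\mbb E_{\mu_\infty}h-\mbb E_{\mu_n}h\leq 2W_1(\mu_n,\mu_\infty)=R_n,
\]
which is exactly the claimed inequality, uniformly over $f,g\in\mc F_1$.

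\emph{Converse direction.} Assume the inequality holds with some rate $R_n\to0$. I need to show $\mbb E_{\mu_n}\phi\to\mbb E_{\mu_\infty}\phi$ for every $1$-Lipschitz $\phi$; by compactness this suffices, since Lipschitz functions are dense in $C(\overline{\Omega}_\infty)$ (Stone--Weierstrass), or equivalently since it gives $W_1$ convergence once it holds uniformly over $1$-Lipschitz $\phi$. The hypothesis only bounds the gap in one direction, so the main step is to choose $f,g$ cleverly so that this one-sided control yields two-sided control.

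\emph{Upper bound.} Take $g=0\in\mc F_1$ and $f=\phi-\min\phi+c$, where $c$ is a constant chosen so that $f\geq0$; $f$ is still $1$-Lipschitz. Then $|f-g|=f$, and the hypothesis gives $\mbb E_{\mu_\infty}\phi\leq\mbb E_{\mu_n}\phi+R_n$.

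\emph{Lower bound.} Take instead $f=M-\phi$ with $M=\max\phi$. This $f$ is $1$-Lipschitz and nonnegative, so the hypothesis gives $M-\mbb E_{\mu_\infty}\phi\leq M-\mbb E_{\mu_n}\phi+R_n$, i.e. $\mbb E_{\mu_n}\phi\leq\mbb E_{\mu_\infty}\phi+R_n$.

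\emph{Conclusion.} Combining the two bounds, $|\mbb E_{\mu_n}\phi-\mbb E_{\mu_\infty}\phi|\leq R_n$ for every $1$-Lipschitz $\phi$. This is $W_1(\mu_n,\mu_\infty)\leq R_n\to0$, which gives weak convergence.

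The only technical care needed is the identification of $\mc P(\Omega_{\leq n})$ with measures on $\overline{\Omega}_\infty$ via pushforward, and the fact that every function in $\mc F_1$ extends to $\overline{\Omega}_\infty$. Both are immediate from the definitions.
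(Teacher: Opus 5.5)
Your proof is correct and follows essentially the same argument as the paper. The forward direction takes $R_n=2W_1(\mu_n,\mu_\infty)$ using that $\tfrac12|f-g|\in\mc F_1$. The converse takes $g=0$ with nonnegative constant shifts of $\phi$ and $-\phi$; this is exactly how the paper reduces $W_1(\mu_n,\mu_\infty)$ to $\sup\{\mathrm{e}_{\mu_\infty}(f,0)-\mathrm{e}_{\mu_n}(f,0): f\in\mc F_1,\ f\geq 0\}\leq R_n$, via the symmetry $f\mapsto -f$ of $\mc F_1$.
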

Once again, the proof is direct and is given in Section~\ref{sec:proofs_general}.
The above results are completely general, and show that any compact pseudometric between objects of different sizes enables generalization across dimensions for functions that are continuous with respect to that metric. We turn next to analyzing the sampling metric~\eqref{eq:sampling_metric} in more detail, and show that it admits rich families of compact sets and a natural correspondence between limit objects and any-dimensional data distributions.

\subsection{The Sampling Metric}\label{sec:sampling_metric}
Fix vector spaces $(\vct V_n)$ and sampling maps $(\msf S_k\colon\bigsqcup_n\vct V_n\to\vct V_k)$, where each $\msf S_k$ is viewed as a random map.\footnote{We can also view $\msf S_k$ as a kernel, i.e., a map $\bigsqcup_n\vct V_n\to\mc P(\vct V_k)$. The two views are equivalent by~\cite[Lemma~3.2(vii)]{kallenberg1997foundations}.} We begin by showing that any sequence of compact sets $(\Omega_n\subseteq\vct V_n)$ closed under these sampling maps yields a compact limiting space.


\begin{proposition}\label{prop:compact_under_sampling}
    Suppose $(\Omega_n\subseteq\vct V_n)$ is a sequence of compact sets in the usual topology such that $\msf S_k\left(\bigsqcup_n\Omega_n\right)\subseteq\Omega_k$ almost surely. Endow each $\vct V_n$ with a norm such that $\sup_n\sup_{x\in\Omega_n}\|x\|<\infty$, and consider the sampling metric~\eqref{eq:sampling_metric} on $\bigsqcup_n\Omega_n$ with $W_1$ distances defined using the above norms. Then $(\overline{\Omega}_{\infty},d_{\mathrm{samp}})$ is compact and each $(\Omega_n,d_{\mathrm{samp}})$ is totally bounded. Moreover, a sequence $(x_i)\subseteq\bigsqcup_n\Omega_n$ converges in $d_{\mathrm{samp}}$ if and only if $(\msf S_k(x_i))_i$ converges weakly for each $k\in\NN$.
\end{proposition}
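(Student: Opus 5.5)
The plan is to embed $\bigsqcup_n\Omega_n$ isometrically into a product of Wasserstein spaces and use compactness there. Let $M=\sup_n\sup_{x\in\Omega_n}\|x\|<\infty$. Each $\Omega_k$ is compact in the norm topology, so by Prokhorov's theorem $\mc P(\Omega_k)$ is compact, and since $\Omega_k$ is bounded, $W_1$ metrizes weak convergence there. Hence $(\mc P(\Omega_k),W_1)$ is a compact metric space of diameter at most $2M$. Define $\Phi\colon\bigsqcup_n\Omega_n\to\prod_{k\ge1}\mc P(\Omega_k)$ by $\Phi(x)=(\mathrm{Law}(\msf S_k(x)))_k$. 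This map is well defined because $\msf S_k(x)\in\Omega_k$ almost surely by the closure assumption. Equip the product with the metric $\rho((\mu_k),(\nu_k))=\sum_k2^{-k}W_1(\mu_k,\nu_k)$. This metric is finite because the diameters are bounded by $2M$. By definition~\eqref{eq:sampling_metric}, $d_{\mathrm{samp}}(x,y)=\rho(\Phi(x),\Phi(y))$, so $\Phi$ is an isometry from the pseudometric space onto its image. It therefore descends to an isometric embedding of $\Omega_\infty$.

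Next I check that $\rho$ metrizes the product topology and that the product is compact. Compactness follows from Tychonoff's theorem, or directly by a diagonal argument, using the compactness of each factor. For the topology, convergence in $\rho$ implies convergence of each coordinate, since $W_1(\mu_k,\nu_k)\le 2^k\rho$. Conversely, coordinatewise convergence implies $\rho$-convergence by dominated convergence on the series, with tail bounded by $2M\sum_{k>K}2^{-k}$. So the product is a compact metric space, and in particular complete and totally bounded. The completion $\overline{\Omega}_\infty$ is isometric to the closure of $\Phi(\bigsqcup_n\Omega_n)$ in this complete space. A closed subset of a compact space is compact, so $\overline{\Omega}_\infty$ is compact. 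Total boundedness of each $(\Omega_n,d_{\mathrm{samp}})$ follows because it is isometric to a subset of a compact metric space.

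For the final claim, I read "$(x_i)$ converges in $d_{\mathrm{samp}}$" as convergence in the completion $\overline{\Omega}_\infty$, or equivalently being $d_{\mathrm{samp}}$-Cauchy. Under the embedding, this is convergence of $\Phi(x_i)$ in $\rho$. By the previous paragraph, that holds if and only if $\mathrm{Law}(\msf S_k(x_i))$ converges in $W_1$ for every $k$. Since all these laws live on the fixed compact $\Omega_k$, $W_1$-convergence is the same as weak convergence. The limit laws stay in $\mc P(\Omega_k)$ because $\Omega_k$ is closed.

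The only delicate points are bookkeeping rather than hard estimates. First, the $W_1$ on $\vct V_k$ is computed with the norm, while the statement's "usual topology" is that of a finite-dimensional space; these agree since all norms on $\vct V_k$ are equivalent. Second, I need the equivalence of $W_1$ and weak convergence on bounded (compact) supports, which is recalled in the notation section. The main thing to get right is the interpretation of convergence when limits lie outside $\Omega_\infty$, which is handled by working in the completion via $\Phi$.
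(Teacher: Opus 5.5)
Your proposal is correct and takes essentially the same route as the paper. You embed via $x\mapsto(\mathrm{Law}(\msf S_k(x)))_k$ isometrically into $\prod_k\mc P(\Omega_k)$ with the metric $\sum_k 2^{-k}W_1$, use Tychonoff for compactness, and identify $\overline{\Omega}_{\infty}$ with the compact closure of the image, reading off total boundedness and the convergence criterion from the product topology; you also spell out details the paper leaves implicit, namely that this metric gives the product topology and that convergence is meant in the completion (equivalently, being $d_{\mathrm{samp}}$-Cauchy).
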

We emphasize that we are assuming above that each $\Omega_n$ is compact in the usual norm topology on $\vct V_n$, and conclude that each $\Omega_n$ is precompact (or equivalently, totally bounded) in the topology induced by the sampling metric $d_{\mathrm{samp}}$.
This proposition follows from Tychonoff's theorem applied to the product $\prod_n\mc P(\Omega_n)$, and we give the full proof in Section~\ref{sec:proofs_general}.
Note that this result assumes nothing about the sampling maps or how they are related to each other.

We proceed to give several examples of sampling maps and sets satisfying the above hypotheses. In some of these examples the limit space $\overline{\Omega}_{\infty}$ has been characterized before and shown to be compact, while in others an explicit description may be involved and challenging to obtain, but compactness still follows from the above result.

\begin{example}[Sampling columns]\label{ex:sampling_entries}
    Suppose $\vct V_n=\RR^{d\times n}$, and consider the maps $\msf S_k$ sampling $k$ columns from $x$ uniformly at random with replacement as in Example~\ref{ex:sampling_entries_intro}.
    Note that for any compact $\Theta\subseteq \RR^d$, the sequence of product sets $\Omega_n=\Theta^n$ is closed under sampling. As noted in Example~\ref{ex:sampling_entries_intro}, a sequence $(x_i)\subseteq \bigsqcup_n\Theta^n$ converges in $d_{\mathrm{samp}}$ if and only if $(\mathrm{Law}(\msf S_1(x_i)))_i\subseteq\mc P(\Theta)$ converge weakly. 
    In this case, sending $x\mapsto \mathrm{Law}(\msf S_1(x))$ identifies $\Omega_{\infty}$ with the collection of all measures in $\mc P(\Theta)$ that have a finite support and rational weights, as these are precisely the measures of the form $\mathrm{Law}(\msf S_1(x))$ for some $x\in\Theta^n$ and some $n\in\NN$. We can then identify $\overline{\Omega}_{\infty}$ with all of $\mc P(\Theta)$ endowed with the weak topology, which is indeed compact. 
\end{example}

\begin{example}[Sampling vertices]\label{ex:sampling_vertices}
    Suppose $\vct V_n=\RR^{n\times n}$, viewed as the space of weighted and directed graphs, and consider the sampling map $\msf S_k$ drawing $k$ vertices with replacement and extracting the corresponding induced subgraph as in Example~\ref{ex:sampling_vertices_intro}. 
    Then the sequence $(\Omega_n)$ of simple undirected graphs on $n$ vertices is closed under sampling, and limits in $d_{\mathrm{samp}}$ correspond to dense graph limits~\cite{LOVASZ2006933,convergent_seqs1}. Such limits can be represented by a graphon, which is a symmetric measurable function $W\colon[0,1]^2\to[0,1]$ modulo an equivalence relation~\cite{LOVASZ2006933}. In this case, we can identify $\Omega_{\infty}$ with the space of so-called step-graphons, which are (equivalence classes of) step functions associated to finite graphs~\cite[\S3.1]{convergent_seqs1}. We can then identify $\overline{\Omega}_{\infty}$ with the space of all graphons, which was shown to be compact in~\cite{lovasz2007szemeredi}.

    For more general sequences of sets $(\Omega_n)$ closed under sampling, including weighted and directed graphs, the characterization of $\overline{\Omega}_{\infty}$ becomes more involved~\cite{probability_graphons}, but its compactness still follows from Proposition~\ref{prop:compact_under_sampling}.
\end{example}

\begin{example}[Random binning and species sampling]\label{ex:binning}
    If $\vct V_n=\RR^n$, then the sequence of simplices $(\Omega_n=\Delta^n)$ is closed under both random binning as in Example~\ref{ex:binning_intro} and species sampling as in Example~\ref{ex:species_intro}. 
    In this case, we can view $\Omega_{\infty}$ as the space of random exchangeable probability measures on $[0,1]$ with finite support by associating to each $x\in\RR^n$ the random measure $\sum_{i=1}^nx_i\delta_{T_i}$ where $T_1,\ldots,T_n$ are iid uniform in $[0,1]$. Then the closure $\overline{\Omega}_{\infty}$ with respect to both random binning and species sampling corresponds to the space of all random exchangeable measures\footnote{Here we endow the space of random probability measures $\mc P(\mc P([0,1]))$ with the weak topology with respect to the weak topology on $\mc P([0,1])$. See~\cite{levin2025graphs} for more details.} on $[0,1]$ by~\cite[Props.~3.3, 4.1]{levin2025graphs}, and is compact by Proposition~\ref{prop:compact_under_sampling}.

    The sequence of $\ell_1$ balls $(\Omega_n=\mc B_{\ell_1}^{(n)}(r))$ of radius $r$ is also closed under both binning and species sampling, and its limit space can similarly be identified with a compact space of (signed) random measures.
\end{example}

Combining Proposition~\ref{prop:compact_under_sampling} and Theorem~\ref{thm:general_equivalence}, we conclude that if $(\Omega_n)$ is closed under sampling then there are uniform rates at which we can approximate arbitrary-sized objects by fixed-sized ones (as in Theorem~\ref{thm:general_equivalence}(2)), and uniform any-dimensional sketching and generalization rates for Lipschitz functions measured by worst-case error (as in Theorem~\ref{thm:general_equivalence}(3-4)). We derive explicit such rates for several notions of sampling in Sections~\ref{sec:sampling_wrep} and~\ref{sec:quotients} below after introducing the relevant sampling maps.

We further seek uniform rates for generalization measured by average error with respect to any-dimensional data distributions. According to Proposition~\ref{prop:avg_error}, such uniform rates are available for sequences of measures $(\mu_n\in\mc P(\Omega_n))$ with a weak limit $\mu_{\infty}\in\mc P(\overline{\Omega}_{\infty})$. However, there are many sequences of measures converging to the same limit $\mu_{\infty}$, and we seek a `canonical' choice of such a sequence to model data distributions of growing dimensionality. 
Under the sampling metric, there is a natural family of such sequences of distributions. Specifically, each $\mu_{\infty}\in\mc P(\overline{\Omega}_{\infty})$ defines a sequence of distributions of its samples $(\msf S_k(X))$ where $X\sim\mu_{\infty}$ is independent of the sampling maps $\msf S_k$. This sequence of distributions converges weakly back to $\mu_{\infty}$ whenever the sampling maps $(\msf S_k)$ yield increasingly-good approximations of their inputs, in the sense that $\msf S_k(x)\to x$ weakly as $k\to\infty$ for any $x\in\overline{\Omega}_{\infty}$. This is the case for all the sampling maps considered in the paper. 

We now state the above result formally. For the following result, we endow $\mc P(\overline{\Omega}_{\infty})$ with the weak topology, and endow $\prod_n\mc P(\Omega_n)$ with the product of weak topologies. We also observe that the sampling maps $(\msf S_k\colon\bigsqcup_n\Omega_n\to\Omega_k)$ extend to $\overline{\Omega}_{\infty}$. Indeed, if $x\in\overline{\Omega}_{\infty}$ then we can write $x=\lim_ix_i$ for $(x_i)\subseteq\bigsqcup_n\Omega_n$ and define $\msf S_k(x)$ to be the weak limit of $(\msf S_k(x_i))_i$ for each $k\in\NN$. It is easy to check that this is well-defined by definition of convergence in $d_{\mathrm{samp}}$.
\begin{proposition}\label{prop:fd_measures_as_limits}
    In the setting of Proposition~\ref{prop:compact_under_sampling}, assume that
    \begin{equation}\label{eq:convergence_of_samples}
        \lim_{n\to\infty}\mbb Ed_{\mathrm{samp}}(x,\msf S_n(x))=0,\quad \textrm{for each } x\in\overline{\Omega}_{\infty}.
    \end{equation}
    Then for any $\mu\in\mc P(\overline{\Omega}_{\infty})$ the sequence of measures $\mu_n=\mathrm{Law}(\msf S_n(X))$ where $X\sim \mu$ is independent of $\msf S_n$ converges weakly to $\mu$.
    Furthermore, the map $\mc P(\overline{\Omega}_{\infty})\to\prod_n\mc P(\Omega_n)$ sending 
    \begin{equation}\label{eq:fd_sampling_map}
        \mu\mapsto (\mu_n=\mathrm{Law}(\msf S_n(X)))_{n\in\NN},
    \end{equation} 
    is a linear isomorphism from $\mc P(\overline{\Omega}_{\infty})$ onto its image.
\end{proposition}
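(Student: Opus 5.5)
The plan is to prove the weak convergence with the coupling $(X,\msf S_n(X))$ and Lipschitz test functions, and then to obtain injectivity of~\eqref{eq:fd_sampling_map} from that convergence.

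\emph{Weak convergence.} Let $X\sim\mu$ be independent of $\msf S_n$. The pair $(X,\msf S_n(X))$ is a coupling of $\mu$ and $\mu_n$ on the compact metric space $(\overline{\Omega}_{\infty},d_{\mathrm{samp}})$; here $\Omega_n$ is mapped into $\overline{\Omega}_{\infty}$ through the quotient. Therefore
\begin{equation*}
    W_1(\mu_n,\mu)\leq \mbb E\, d_{\mathrm{samp}}(X,\msf S_n(X)) = \int \mbb E\, d_{\mathrm{samp}}(x,\msf S_n(x))\,d\mu(x).
\end{equation*}
By~\eqref{eq:convergence_of_samples}, the integrand tends to zero pointwise. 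It is also bounded by $\mathrm{diam}(\overline{\Omega}_{\infty})<\infty$, which is finite by compactness (Proposition~\ref{prop:compact_under_sampling}). Dominated convergence then gives $W_1(\mu_n,\mu)\to0$. On a compact space this is equivalent to weak convergence.

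A small measurability point must be handled first: $x\mapsto \mbb E\, d_{\mathrm{samp}}(x,\msf S_n(x))$ needs to be measurable on $\overline{\Omega}_{\infty}$. I would show it is continuous. Viewing $\msf S_n$ as a kernel, $x\mapsto\mathrm{Law}(\msf S_n(x))$ is continuous in the weak topology, since that is exactly how $\msf S_n$ was extended to limits. The function $y\mapsto d_{\mathrm{samp}}(x,y)$ is continuous on $\Omega_n$ with its usual topology, because $d_{\mathrm{samp}}$ is continuous there (the sampling maps act continuously on each $\vct V_n$). Joint continuity in $x$ then follows from the triangle inequality, and measurability follows from continuity.

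\emph{Linearity and isomorphism.} Linearity (affineness on mixtures) holds because $\mu\mapsto\mathrm{Law}(\msf S_n(X))=\int \mathrm{Law}(\msf S_n(x))\,d\mu(x)$ is a mixture of the kernel against $\mu$. Hence it is linear on signed measures and respects convex combinations.

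Injectivity follows from the first part. If $\mu$ and $\nu$ have the same image, then $\mu_n=\nu_n$ for every $n$. Since $\mu_n\to\mu$ and $\nu_n\to\nu$ weakly, uniqueness of weak limits gives $\mu=\nu$.

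For continuity of the map, I use that each coordinate $\mu\mapsto\mu_n$ is weakly continuous. For bounded continuous $h$ on $\Omega_n$, the function $x\mapsto\mbb E\,h(\msf S_n(x))$ is bounded and continuous on $\overline{\Omega}_{\infty}$, again by weak continuity of the kernel. The map is therefore a continuous injection from the compact space $\mc P(\overline{\Omega}_{\infty})$ into the Hausdorff space $\prod_n\mc P(\Omega_n)$. Such a map is a homeomorphism onto its image.

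\emph{Main obstacle.} The hardest step is carefully justifying continuity of the kernel $x\mapsto\mathrm{Law}(\msf S_n(x))$ on the completion $\overline{\Omega}_{\infty}$. The extension must be well defined, and convergence in $d_{\mathrm{samp}}$ must imply weak convergence of the $n$-th samples. The latter comes from Proposition~\ref{prop:compact_under_sampling}, since $d_{\mathrm{samp}}(x_i,x)\to0$ forces $W_1(\msf S_n(x_i),\msf S_n(x))\to0$: each term is bounded by $2^n d_{\mathrm{samp}}(x_i,x)$. This bound would also make the kernel Lipschitz, which simplifies all the continuity claims above.
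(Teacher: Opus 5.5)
Your proposal is correct and is essentially the paper's proof: the coupling $(X,\msf S_n(X))$ with bounded convergence gives $W_1(\mu_n,\mu)\to0$, linearity comes from the mixture formula, and injectivity from uniqueness of weak limits. For continuity you test against bounded continuous $h$ where the paper uses Skorokhod, and your compact-to-Hausdorff step makes explicit the continuity of the inverse that the paper leaves implicit.

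The one slip is in your measurability remark. Proposition~\ref{prop:compact_under_sampling} assumes nothing about continuity of the sampling maps in the usual topology of $\vct V_n$, so $y\mapsto d_{\mathrm{samp}}(x,y)$ need not be continuous on $\Omega_n$. You only need measurability, and that holds because each $\msf S_k$ is a measurable kernel: the quotient map $\Omega_n\to\overline{\Omega}_{\infty}$ is then Borel, and $x\mapsto\mbb E\,d_{\mathrm{samp}}(x,\msf S_n(x))$ is measurable by the standard kernel-integration lemma.
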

We give the proof of this result in Section~\ref{sec:proofs_general}.  We characterize the image of the map \eqref{eq:fd_sampling_map} for our generalizations of sampling with replacement and random binning in Theorems~\ref{thm:proj_consistent_laws} and~\ref{thm:eqp_consistent_laws} below.  The above sequences $(\mu_n)$ are precisely the ones we use in the sequel to quantify average generalization error in the sense of Proposition~\ref{prop:avg_error}.
Note that~\eqref{eq:convergence_of_samples} is the only assumption we have made on the sampling maps $(\msf S_k)$.

\begin{example}\label{ex:deFin1}
    In the setting of Example~\ref{ex:sampling_entries} with $\Omega_n=\Theta^n$ for compact $\Theta\subseteq\RR^d$, we have $\overline{\Omega}_{\infty}=\mc P(\Theta)$ so limit objects are probability distributions on $\Theta$; hence $\mc P(\overline{\Omega}_{\infty})$ corresponds to random probability measures on $\Theta$. 
    The map~\eqref{eq:fd_sampling_map} sends a distribution over measures $\msf M\in\mc P(\mc P(\Theta))$ to the mixture of iid distributions $\mathrm{Law}(\msf S_n(X))=\int \mu^{\otimes n}\, d\msf M(\mu)$ where $X\sim\msf M$. We show in Theorem~\ref{thm:W1_bound_sampling} below that~\eqref{eq:convergence_of_samples} holds for sampling with replacement, and hence Proposition~\ref{prop:fd_measures_as_limits} yields a linear isomorphism between random measures and mixtures of iid distributions, recovering a part of de Finetti's theorem. De Finetti's theorem further states that the space of such mixtures is isomorphic to the space of infinite exchangeable arrays, a result we generalize in Theorem~\ref{thm:proj_consistent_laws} to our broader notion of sampling with replacement.
%
%
\end{example}

\begin{remark}[Extremality]
    We remark that the map~\eqref{eq:fd_sampling_map} identifies limit objects in $\overline{\Omega}_{\infty}$ with extremal sequences of measures obtained from sampling. Indeed, elements of $\overline{\Omega}_{\infty}$ are precisely the extreme points of $\mc P(\overline{\Omega}_{\infty})$, which are mapped isomorphically to extreme points of the image of the map~\eqref{eq:fd_sampling_map}. As an illustration, in the setting of Example~\ref{ex:deFin1} we recover the fact that sequences of iid distributions obtained from deterministic measures on $\Theta$ are precisely the extreme points of all sequences of mixtures of iid distributions. We recover further results from the literature pertaining to such extremality in Sections~\ref{sec:sampling_wrep} and~\ref{sec:quotients}.
\end{remark}

Having studied sampling maps in general, we now describe three specific families of sampling maps generalizing the examples in Section~\ref{sec:intro}.

\subsection{General Sampling Maps}\label{sec:sampling_general}
In this section, we formalize our generalizations of sampling with replacement, random binning, and species sampling from Examples~\ref{ex:sampling_entries_intro}-\ref{ex:species_intro} in Section~\ref{sec:intro}, and we explain when each sampling map is appropriate by characterizing their symmetries and relations between dimensions. We begin in Section~\ref{sec:sampling_and_binning} by generalizing sampling with replacement and random binning, which are dual to each other in a precise sense and can be analyzed together. Then in Section~\ref{sec:species} we generalize species sampling.

\subsubsection{Sampling with Replacement and Random Binning}\label{sec:sampling_and_binning}


As explained in Section~\ref{sec:intro}, the key to our general perspective on sampling with replacement and random binning is an action of maps between finite sets on index sets of vectors, which we now make precise.
\begin{definition}[Compatible index sets]\label{def:FS_index}
    A (FinSet-)\emph{compatible sequence of index sets} is a sequence of finite sets $(\mc I_n)$ together with maps $\theta(f_{n,k})\colon\mc I_k\to\mc I_n$ associated to each $f_{n,k}\colon[k]\to[n]$ which satisfies (i) $\theta(\mathrm{id}_{[n]})=\mathrm{id}_{\mc I_n}$; and (ii) $\theta(f_1\circ f_2)=\theta(f_1)\circ\theta(f_2)$ whenever the composition is well-defined.  The collection of maps $\theta = \{\theta(f_{n,k}) : f_{n,k}\colon[k]\to[n]\}$ is called an \emph{action}.\footnote{In the language of category theory, this is a functor from the category $\FS$ of finite sets to itself.}
\end{definition}
As a consequence of Definition~\ref{def:FS_index}, if $n\leq N$ and $\phi\colon[n]\to[N]$ is an injection, then $\theta(\phi)\colon\mc I_n\to\mc I_N$ is an injection as well. Indeed, there is a surjection $\psi\colon[N]\to[n]$ satisfying $\psi\circ\phi=\mathrm{id}_{[n]}$, in which case $\theta(\psi)\circ\theta(\phi)=\mathrm{id}_{\mc I_n}$ so $\theta(\phi)$ is injective. Likewise, if $\psi\colon[N]\to[n]$ is surjective then $\theta(\psi)\colon\mc I_N\to\mc I_n$ is surjective.

Examples of such compatible sequences include $\mc I_n=[d]\times [n]$ with action $\theta(f)(i,j)=(i,f(j))$, tuples $\mc I_n=[n]^d$ with action $\theta(f)(i_1,\ldots,i_d)=(f(i_1),\ldots,f(i_d))$, and multisets $\mc I_n=[n]^d/\mfk S_d$, viewed as the set of orbits under permutations of the $d$ indices in each tuple of $[n]^d$, with the same action. 
More generally, standard operations on sets can be applied to create new compatible sequences from previous ones.
For example, if $\{(\mc I_n),\theta_I\}$ and $\{(\mc J_n),\theta_J\}$ are two compatible sequences, then so are $\{(\mc I_n\sqcup\mc J_n),\theta_I\sqcup\theta_J\}$ and $\{(\mc I_n\times\mc J_n),\theta_I\times\theta_J\}$. 

All of the compatible sequences of index sets in this paper can be derived by applying such standard operations to the basic sequence $(\mc I_n=[n])$.  For compatible sequences obtained in this manner, we quantify the complexity of the sequence using the following notion of a `degree'.  This degree appears explicitly in the sequel in our rates for any-dimensional sketching and generalization.
\begin{definition}[Degree of compatible index sets]\label{def:FS_index_degree}
Consider a compatible sequence $(\mc I_n)$ of the form $\mc I_n=\bigsqcup_{m=1}^M[n]^{d_m}/H_m$, where $H_m\subseteq\mfk S_{d_m}$ is a subgroup of permutations acting by permuting the coordinates in each multi-index in $[n]^{d_m}$.  The \emph{degree} of such a compatible sequence is given by $D=\max_md_m$.\footnote{Our notion of degree is closely related to the degree of a polynomial functor~\cite{Niu_Spivak_2025}.}
\end{definition}
As illustrations, the degree of $\mc I_n=[d]\times [n]=\bigsqcup_{i=1}^d[n]$ is one, while the degrees of $\mc I_n=[n]^2$, $\mc I_n=[n]^2\sqcup[n]$, and $\mc I_n=[n]^2/\mfk S_2$ are all two.

As discussed in Section~\ref{sec:intro}, we associate to each compatible sequence $(\mathcal{I}_n)$ a sequence of vector spaces $(\vct V_n=\RR^{\mc I_n})$, and let maps between finite sets act on these vector spaces via the maps $\rho$ and $\beta$ given by~\eqref{eq:sampling_action} and~\eqref{eq:binning_action}, respectively.  
The maps $\rho$ and $\beta$ in turn yield generalizations of sampling with replacement $(\msf R_k)$ and random binning $(\msf B_k)$ in~\eqref{eq:sampling_general} and~\eqref{eq:binning_general}, respectively.
\begin{example}[Vector spaces from index sets]\label{ex:vecs_from_inds}
    The following are examples of vector spaces obtained from compatible sequences.
    \begin{enumerate}[font=\emph, align=left, wide, labelwidth=!, labelindent=0pt]
        \item[(Sequences)] When $\mc I_n=[d]\times[n]$, we have $\vct V_n=\RR^{d\times n}$ whose elements we view as sequences of $n$ vectors in $\RR^d$. In this case, if $f\colon [m]\to[n]$ then $\rho(f)\colon\vct V_n\to\vct V_m$ extracts the $m$ columns $f(1),\ldots,f(m)$, while $\beta(f)\colon\vct V_m\to\vct V_n$ bins $m$ columns into $n$ bins $f^{-1}(1),\ldots,f^{-1}(n)$. The map $\msf R_k$ samples $k$ columns with replacement as in Example~\ref{ex:sampling_entries_intro}, while $\msf B_k$ randomly bins the columns into $k$ bins and sums all columns in each bin as in Example~\ref{ex:binning_intro}.

        \item[(Graphs)] When $\mc I_n=[n]^2$, we have $\vct V_n=\RR^{n\times n}$ whose elements we view as adjacency matrices of (weighted, directed) graphs on $n$ vertices, and when $\mc I_n=[n]^2/\mfk S_2$, we have $\vct V_n=\{X\in\RR^{n\times n}:X^\top=X\}$ whose elements we view as undirected graphs on $n$ vertices. In either case, if $f\colon[m]\to[n]$ then $\rho(f)$ extracts the induced subgraph on the $m$ vertices $f(1),\ldots,f(m)$ while $\beta(f)$ forms the quotient graph defined by binning $m$ vertices into $n$ bins $f^{-1}(1),\ldots,f^{-1}(n)$. When $\mc I_n=[n]^2$, the adjacency matrix of the quotient graph is given by 
        \begin{equation*}
            (\beta(f)X)_{i,j}=\sum_{\substack{k\in f^{-1}(i)\\ \ell\in f^{-1}(j)}}X_{k,\ell}.
        \end{equation*}
        The map $\msf R_k$ extracts the induced subgraph on $k$ vertices sampled with replacement as in Example~\ref{ex:sampling_vertices_intro}, while $\msf B_k$ forms the quotient graph defined by randomly binning vertices into $k$ bins as in~\cite{levin2025graphs}.

        \item[(Graph signals)] When $\mc I_n=[n]^2\sqcup([n]\times [d])$, we have $\vct V_n=\RR^{n\times n}\oplus \RR^{n\times d}$ whose elements we view as pairs $(G,X)$ of a graph $G$ on $n$ vertices and a graph signal $X$ assigning $d$-dimensional features to each vertex~\cite{graph_signal_processing}. In this case, if $f\colon[m]\to[n]$ then $\rho(f)(G,X)=(\rho(f)G,\rho(f)X)$ extracts the induced subgraph on the $m$ vertices $f(1),\ldots,f(m)$ along with their corresponding features, while $\beta(f)(G,X)=(\beta(f)G,\beta(f)X)$ forms the quotient graph defined by binning $m$ vertices into $n$ bins $f^{-1}(1),\ldots,f^{-1}(n)$ and sums the features of vertices in the same bin. The map $\msf R_k$ samples $k$ vertices with replacement along with their corresponding features, while $\msf B_k$ randomly bins the vertices into $k$ bins, forms the quotient graph, and sums all features of vertices in the same bin.
    \end{enumerate}
    When $\mc I_n=[n]^d$ or $[n]^d/\mfk S_d$, the above examples generalize to directed and undirected hypergraphs.
\end{example}
Note that we have two sampling maps $(\msf R_k)$ and $(\msf B_k)$ defined on the same sequences of vector spaces. We explain in Examples~\ref{ex:duplication} and~\ref{ex:zero_padding} when each of these sampling maps is appropriate.

We remark that the actions $\rho$ and $\beta$ from~\eqref{eq:sampling_action} and~\eqref{eq:binning_action}, respectively, respect compositions of maps between finite sets. Specifically, we have $\rho(f_1\circ f_2)=\rho(f_2)\circ \rho(f_1)$ while $\beta(f_1\circ f_2)=\beta(f_1)\circ\beta(f_2)$ whenever the composition $f_1\circ f_2$ is well-defined, so $\rho$ reverses the order of compositions while $\beta$ preserves it.\footnote{Such composition-reversing and composition-preserving linear actions of maps between finite sets were called $\coFS$- and $\FS$-representations, respectively, in~\cite{levin2025deFin}.} We will exploit this compositionality in what follows.  

\subsubsection*{When is each sampling map appropriate?}
As discussed in Section~\ref{sec:intro}, the sampling maps $(\msf R_k)$ and $(\msf B_k)$ induce equivalence relations between distinct objects, meaning that $d_{\mathrm{samp}}(x,y)=0$ for $x\neq y$ when $d_{\mathrm{samp}}$ is defined using either choice of sampling maps.  Importantly, we will see that distinct objects of the same size and also of different sizes can be equivalent under $d_{\mathrm{samp}}$. We characterize the equivalences induced by sampling with replacement and random binning, and discuss some of the application domains in which these equivalences are natural.

For objects of the same size, equivalences arise from permutation symmetry.
To see this symmetry, observe that if $\pi\colon[n]\to[n]$ is a permutation (i.e., a bijection), then $\pi$ defines a bijection from $\mc I_n$ to itself via $\theta$, and two isomorphisms of $\vct V_n$ to itself via $\rho$ and $\beta$. These two actions of permutations on $\vct V_n$ define representations of the group $\mfk S_n$ on $\vct V_n$; moreover, the actions are dual to each other and the associated representations are isomorphic.  When the context is clear, we denote the action of a permutation $\pi\in\mfk S_n$ on a vector $x\in\vct V_n$ by $\pi x$, which equals either $\rho(\pi)x$ or $\beta(\pi)x$ depending on whether $(\vct V_n)$ is endowed with sampling with replacement or with random binning maps, respectively.
\begin{proposition}[Permutation symmetry]\label{prop:symmetry_coFS}
    Suppose $(\mc I_n)$ is a compatible sequence, let $(\vct V_n=\RR^{\mc I_n})$ be the associated vector spaces endowed with either sampling with replacement or random binning, denoted by $(\msf S_k)$. Fix $x \in \vct V_n$.
    \begin{enumerate}[font=\emph, align=left,labelwidth=!, labelindent=0pt]
        \item[(Exchangeability)] The distribution $\mathrm{Law}(\msf S_k(x))$ is exchangeable, meaning $\pi_k \msf S_k(x)\overset{d}{=}\msf S_k(x)$ for each $\pi_k\in\mfk S_k$.
        \item[(Permutation invariance)] For any $\pi_n\in\mfk S_n$, we have $\msf S_k(\pi_nx) \overset{d}{=}\msf S_k(x)$ for all $k$, hence $d_{\mathrm{samp}}(x,\pi_nx)=0$.
    \end{enumerate}
\end{proposition}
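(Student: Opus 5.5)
The plan is to reduce both claims to the invariance of the uniform distribution on maps between finite sets under composition with bijections, and then use the compositionality of $\rho$ and $\beta$ noted just before the statement. Throughout, $F_{n,k}\colon[k]\to[n]$ denotes a uniformly random map, with $F_{n,k}(i)$ iid uniform on $[n]$. The key elementary fact is that for any bijections $\pi_k\in\mfk S_k$ and $\pi_n\in\mfk S_n$, the random maps $F_{n,k}\circ\pi_k$ and $\pi_n\circ F_{n,k}$ are again uniformly distributed on the set of maps $[k]\to[n]$. This holds because pre- or post-composition with a bijection is a bijection of the finite set $[n]^{[k]}$, so it preserves the uniform measure. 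The same fact applies to $F_{k,n}\colon[n]\to[k]$.

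For sampling with replacement $\msf S_k=\msf R_k$, with the action $\pi x=\rho(\pi)x$, I would use $\rho(f_1\circ f_2)=\rho(f_2)\circ\rho(f_1)$. Exchangeability follows from
$$\pi_k\msf R_k(x)=\rho(\pi_k)\rho(F_{n,k})x=\rho(F_{n,k}\circ\pi_k)x\overset{d}{=}\rho(F_{n,k})x=\msf R_k(x).$$
Permutation invariance follows from
$$\msf R_k(\pi_n x)=\rho(F_{n,k})\rho(\pi_n)x=\rho(\pi_n\circ F_{n,k})x\overset{d}{=}\msf R_k(x).$$
Here the distributional equalities use that $x$ is fixed and that $f\mapsto\rho(f)x$ is a deterministic function of the random map, so equality in law of the maps transfers to equality in law of the outputs.

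For random binning, with $\pi x=\beta(\pi)x$, the argument is the same but uses $\beta(f_1\circ f_2)=\beta(f_1)\circ\beta(f_2)$. Exchangeability follows from
$$\pi_k\msf B_k(x)=\beta(\pi_k\circ F_{k,n})x\overset{d}{=}\msf B_k(x),$$
and permutation invariance from
$$\msf B_k(\pi_nx)=\beta(F_{k,n}\circ\pi_n)x\overset{d}{=}\msf B_k(x).$$

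Finally, equality of laws for every $k$ makes each term $W_1(\msf S_k(x),\msf S_k(\pi_nx))$ in~\eqref{eq:sampling_metric} vanish, so $d_{\mathrm{samp}}(x,\pi_nx)=0$.

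I do not expect a real obstacle. The only points needing care are getting the order of composition right, since $\rho$ reverses compositions while $\beta$ preserves them, and checking the compositionality identities themselves from Definition~\ref{def:FS_index}. For $\rho$, the identity follows from $[\rho(f_2)\rho(f_1)x]_i=x_{\theta(f_1)\theta(f_2)(i)}=x_{\theta(f_1\circ f_2)(i)}$. For $\beta$, it follows by noting that $\beta(f)=\rho(f)^\star$ is the adjoint, which turns the reversed composition rule into a preserved one.
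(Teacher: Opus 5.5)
Your proposal is correct and follows essentially the same route as the paper's proof. Both reduce the claims to the fact that a uniformly random map is invariant in law under pre- and post-composition with permutations, and then apply $\rho$ (which reverses compositions) and $\beta$ (which preserves them); your justification via bijections of $[n]^{[k]}$ and your explicit check of the compositionality identities simply spell out the same argument in more detail.
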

\begin{proof}
    For any $k,\ell\in\NN$, let $F_{\ell,k}\colon[k]\to[\ell]$ be a uniformly random map, and note that $F_{\ell,k}\circ\pi_k\overset{d}{=}\pi_\ell\circ F_{\ell,k}\overset{d}{=}F_{\ell,k}$ since all three maps send each $i\in[k]$ independently to a uniformly random element of $[\ell]$. Applying $\rho$ and $\beta$, we conclude that $\pi_k\circ\msf S_k\overset{d}{=}\msf S_k\circ\pi_n\overset{d}{=}\msf S_k$ for any $\pi_k\in\mfk S_k$ and $\pi_n\in\mfk S_n$, as claimed.
\end{proof}


In addition to permutation symmetry in each fixed dimension, there are also equivalences between objects of different sizes.
We characterize these equivalences using two distinguished maps between finite sets. The first set are the inclusions $\iota_{N,n}\colon[n]\hookrightarrow [N]$ defined as 
\begin{equation*}
    \iota_{N,n}(i) = i \quad \textrm{for } i \in [n],\quad \textrm{whenever } n\leq N.
\end{equation*}
These inclusions in turn define inclusions of $\mc I_n$ into $\mc I_N$ via the action $\theta$. 
Then the map $\rho(\iota_{N,n})\colon\vct V_N\to\vct V_n$ acts as the orthogonal projection with respect to the usual inner product, while $\beta(\iota_{N,n})\colon\vct V_n\to\vct V_N$ is an embedding by zero-padding.  For example, when $\vct V_n = \RR^n$ the projection $\rho(\iota_{N,n})$ extracts the first $n$ entries from a length-$N$ vector, while $\beta(\iota_{N,n})$ zero-pads a vector of length $n$ by $N-n$ zeros. Similarly, when $\vct V_n = \RR^{n \times n}$ the projection $\rho(\iota_{N,n})$ extracts the induced subgraph on the first $n$ vertices from a graph on $N$ vertices, while the embedding $\beta(\iota_{N,n})$ corresponds to appending $N-n$ isolated vertices to a graph on $n$ vertices. See Example~\ref{ex:vecs_from_inds} for these and other examples.

The second distinguished set of maps are the consecutive partitions $\kappa_{n,N}\colon[N]\to[n]$ defined by
\begin{equation}\label{eq:standard_equipartition}
    \kappa_{n,N}(j + (i-1)(N/n)) = i \quad \textrm{for } i\in[n], j\in[N/n],\quad \textrm{whenever } n|N.
\end{equation}
The corresponding linear maps $\rho(\kappa_{n,N})\colon\vct V_n\to\vct V_N$ are embeddings that duplicate the entries of their inputs in a suitable sense based on the action $\theta$.  For example, we have $\rho(\kappa_{n,N})x=x\otimes\mathbbm{1}_{N/n}$ when $\vct V_n=\RR^n$, which acts by duplicating each entry of a vector $x$ of length $n$ the same number $N/n$ of times. Similarly, we have $\rho(\kappa_{n,N})X=X\otimes\mathbbm{1}_{N/n}\mathbbm{1}_{N/n}^\top$ when $\vct V_n=\RR^{n\times n}$, which acts by duplicating each entry of a matrix $X$ of size $n\times n$ into a block of $(N/n)\times(N/n)$ entries.
Meanwhile, the maps $\beta(\kappa_{n,N})\colon\vct V_N\to\vct V_n$ are projections that sum consecutive blocks of coordinates. For example, the vector $\beta(\kappa_{n,N})x\in\RR^n$ is obtained from $x\in\RR^N$ by summing each group of consecutive $N/n$ coordinates. Similarly, the matrix $\beta(\kappa_{n,N})X\in\RR^{n\times n}$ is obtained from $X\in\RR^{N\times N}$ by summing the entries of each consecutive $N/n\times N/n$ block in $X$. See Example~\ref{ex:vecs_from_inds}.

We are now ready to state the relationships across dimensions satisfied by our sampling maps.
\begin{proposition}[Relations between Dimensions]\label{prop:relations_coFS}
    Suppose $(\mc I_n)$ is a compatible sequence, and let $(\vct V_n=\RR^{\mc I_n})$ endowed with sampling with replacement $(\msf R_k)$ and random binning $(\msf B_k)$.
    \begin{enumerate}[font=\emph, align=left]
        \item[(Inclusions)] For any $n\leq N$, we have $\rho(\iota_{N,n})\circ\msf R_N\overset{d}{=}\msf R_n$ and $\msf B_k\circ\beta(\iota_{N,n})\overset{d}{=}\msf B_k$ for all $k\in\NN$. 
        \item[(Consecutive partitions)] For any $n|N$, we have $\msf R_k\circ\rho(\kappa_{n,N}) \overset{d}{=}\msf R_k$ for all $k\in\NN$ and $\beta(\kappa_{n,N})\circ\msf B_N\overset{d}{=}\msf B_n$.
    \end{enumerate}
\end{proposition}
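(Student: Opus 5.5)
The plan is to mimic the proof of Proposition~\ref{prop:symmetry_coFS}. Each of the four identities will be reduced to an equality in distribution between random maps of finite sets, and then transported to linear maps. The transport uses the functoriality $\rho(f_1\circ f_2)=\rho(f_2)\circ\rho(f_1)$ and $\beta(f_1\circ f_2)=\beta(f_1)\circ\beta(f_2)$, together with the fact that $\rho$ and $\beta$ are deterministic functions of the underlying map. Equality in law of random maps $[a]\to[b]$ therefore gives equality in law of the induced linear maps, and hence of their values at any fixed $x$.

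\textbf{Inclusions.} Write $F_{N,N}\colon[N]\to[N]$ for the uniform random map. Then
\[
\rho(\iota_{N,n})\circ\msf R_N(x)=\rho(\iota_{N,n})\rho(F_{N,N})x=\rho(F_{N,N}\circ\iota_{N,n})x \quad \textrm{for } x\in\vct V_N.
\]
The composition $F_{N,N}\circ\iota_{N,n}\colon[n]\to[N]$ sends each $i\in[n]$ independently and uniformly into $[N]$, so it equals $F_{N,n}$ in distribution. Hence the right-hand side has the law of $\msf R_n(x)$.

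For binning, take $x\in\vct V_n$. Then
\[
\msf B_k(\beta(\iota_{N,n})x)=\beta(F_{k,N})\beta(\iota_{N,n})x=\beta(F_{k,N}\circ\iota_{N,n})x .
\]
Again $F_{k,N}\circ\iota_{N,n}\colon[n]\to[k]$ is a uniform random map, equal in law to $F_{k,n}$. This gives $\msf B_k(x)$ in distribution.

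\textbf{Consecutive partitions.} For $x\in\vct V_n$ with $n\mid N$,
\[
\msf R_k(\rho(\kappa_{n,N})x)=\rho(F_{N,k})\rho(\kappa_{n,N})x=\rho(\kappa_{n,N}\circ F_{N,k})x .
\]
Here the composition $\kappa_{n,N}\circ F_{N,k}\colon[k]\to[n]$ has independent coordinates $\kappa_{n,N}(F_{N,k}(i))$. Each is uniform on $[n]$, because every fiber of $\kappa_{n,N}$ has exactly $N/n$ elements. So the composition equals $F_{n,k}$ in law.

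Similarly, for $x\in\vct V_N$ we have $\beta(\kappa_{n,N})\msf B_N(x)=\beta(\kappa_{n,N}\circ F_{N,N})x$. The composition $\kappa_{n,N}\circ F_{N,N}\colon[N]\to[n]$ is a uniform random map, so this equals $\msf B_n(x)$ in law.

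\textbf{Main obstacle.} The only real check is the order of composition. Since $\rho$ reverses compositions, the inclusion or partition must sit on the correct side of the random map, and the domains and codomains must match. For example, in $\rho(\iota_{N,n})\circ\rho(F_{N,N})$ the underlying map is $F_{N,N}\circ\iota_{N,n}$ and not $\iota_{N,n}\circ F_{N,N}$. The other point requiring care is the uniformity claim for $\kappa_{n,N}\circ F$, which is exactly where equal block sizes, and hence the hypothesis $n\mid N$, are needed. Independence across coordinates is immediate, since each coordinate is a function of a separate independent coordinate of $F$.
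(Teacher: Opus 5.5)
Your argument is the paper's: each claim comes from $F_{k,N}\circ\iota_{N,n}\overset{d}{=}F_{k,n}$ or $\kappa_{n,N}\circ F_{N,k}\overset{d}{=}F_{n,k}$, followed by applying $\rho$ and $\beta$. Your composition orders and your use of $n\mid N$ are correct.

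One slip needs fixing. For $\rho(\iota_{N,n})\circ\msf R_N$ and $\beta(\kappa_{n,N})\circ\msf B_N$ you only treat inputs $x\in\vct V_N$, via $F_{N,N}$. But the subscript $N$ is the output size, and these identities are claimed for inputs $x\in\vct V_M$ of every size $M$. The paper also uses them in that generality, e.g.\ in the proofs of Theorems~\ref{thm:proj_consistent_laws} and~\ref{thm:eqp_consistent_laws}.

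The fix is immediate. Replace $F_{N,N}$ by $F_{M,N}\colon[N]\to[M]$ in the sampling case and by $F_{N,M}\colon[M]\to[N]$ in the binning case, and your argument goes through verbatim. The paper avoids the issue by keeping the auxiliary size $k$ arbitrary, so a single identity of random maps yields both the $\rho$ and the $\beta$ statement for all input sizes at once.
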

\begin{proof}
    For the first claim, fix $n\leq N$, let $k\in\NN$ be arbitrary and note that $F_{k,N}\circ\iota_{N,n}\overset{d}{=}F_{k,n}$ since both maps send each $i\in[n]$ independently to a uniformly random element of $[k]$. Applying $\rho$ and $\beta$ to this identity yields the first claim. For the second claim, note that $\kappa_{n,N}\circ F_{N,k}\overset{d}{=}F_{n,k}$ since each $i\in[k]$ is sent by $F_{N,k}$ independently to a uniformly random element of $[N]$, so its image is equally likely to lie in each fiber of $\kappa_{n,N}$. Applying $\rho$ and $\beta$ to this identity yields the second claim.
\end{proof}

In particular, note that for any $x\in\vct V_n$, we have $d_{\mathrm{samp}}(x,\rho(\kappa_{n,N})x)=0$ if $d_{\mathrm{samp}}$ is defined using $(\msf R_k)$, while $d_{\mathrm{samp}}(x,\beta(\iota_{N,n})x)=0$ for any $x\in\vct V_n$ if $d_{\mathrm{samp}}$ is defined using $(\msf B_k)$.  Note here that $\rho(\kappa_{n,N})x$ and $\beta(\iota_{N,n})x$ belong to $\vct V_N$, so the sampling distance between objects of different sizes can equal zero.

Observe that the maps $\iota_{N,n}$ are particular types of injections, while the maps $\kappa_{n,N}$ are particular types of equipartitions.\footnote{An equipartition is a map with equal-sized fibers.}  
By combining Propositions~\ref{prop:symmetry_coFS} and~\ref{prop:relations_coFS}, we are able to obtain the following consequences for all injections and equipartitions:
\begin{equation}\label{eq:injection_equivalence}
    \rho(\phi_{N,n})\circ\msf R_N\overset{d}{=}\msf R_n\quad  \textrm{and} \quad \msf B_k\circ\beta(\phi_{N,n})\overset{d}{=}\msf B_k\quad  \textrm{for any injection $\phi_{N,n}\colon[n]\to[N]$},
\end{equation}
and
\begin{equation}\label{eq:eqp_equivalence}
    \msf R_k\circ\rho(\psi_{n,N})\overset{d}{=}\msf R_k\quad \textrm{and}\quad \beta(\psi_{n,N})\circ\msf B_N\overset{d}{=}\msf B_n\quad \textrm{for any equipartition } \psi_{n,N}\colon[N]\to[n],  
\end{equation}
since we can write $\phi_{N,n}=\pi_N\circ \iota_{N,n}$ and $\psi_{n,N}=\kappa_{n,N}\circ\pi_N$ for some permutation $\pi_N\in\mfk S_N$.

As a converse to the preceding equivalences, we now prove that if $d_{\mathrm{samp}}(x,y)=0$ then $x,y$ must differ by permutations and duplication for sampling with replacement, or permutations and zero-padding for random binning.
\begin{proposition}\label{prop:equivalence_converse}
    Suppose $(\mc I_n)$ is a compatible sequence, and let $(\vct V_n=\RR^{\mc I_n})$ endowed with sampling with replacement $(\msf R_k)$ and random binning $(\msf B_k)$. Let $x\in\vct V_n$ and $y\in\vct V_N$ for $n\leq N$.
    \begin{enumerate}[labelwidth=!, labelindent=0pt]
        \item If $\msf R_k(x)\overset{d}{=}\msf R_k(y)$ for all $k\in\NN$, then $\rho(\kappa_{N,L})y=\rho(\psi_{n,L})x$ for $L=\mathrm{lcm}\{n,N\}$ and some equipartition $\psi_{n,L}\colon[L]\to[n]$.
        \item If $\msf B_k(x)\overset{d}{=}\msf B_k(y)$ for all $k\in\NN$, then $y=\beta(\phi_{N,n})x$ for some injection $\phi_{N,n}\colon[n]\to[N]$.
    \end{enumerate}
\end{proposition}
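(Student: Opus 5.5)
The plan for both parts is to pass to a large sample size $k$ and compare atoms of the finitely supported laws $\mathrm{Law}(\msf R_k(x))$ and $\mathrm{Law}(\msf B_k(x))$. These laws are pushforwards of the uniform law on maps $[k]\to[n]$, respectively $[n]\to[k]$, so equality of laws gives equality of supports and of atom masses. Throughout we use that $\rho$ reverses compositions and $\beta$ preserves them, together with the existence of sections of surjections and retractions of injections.

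\textbf{Part 2 (binning).} Fix $k\geq N$ large.
\begin{enumerate}
\item The uniformly random maps $F_{k,n}\colon[n]\to[k]$ and $F_{k,N}\colon[N]\to[k]$ are both injective with probability at least $1-N^2/k>1/2$.
\item The two laws are equal. Hence the event $\{\msf B_k(x)\in\beta(\mathrm{Inj}([n],[k]))x\}$ and the event $\{\msf B_k(y)\in\beta(\mathrm{Inj}([N],[k]))y\}$ each carry mass more than $1/2$ under this common law, so the two sets intersect.
\item We therefore get injections $\phi\colon[n]\to[k]$ and $\phi'\colon[N]\to[k]$ with $\beta(\phi)x=\beta(\phi')y$.
\item Choose $\psi\colon[k]\to[N]$ with $\psi\circ\phi'=\mathrm{id}_{[N]}$. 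On the at most $n-|\phi([n])\cap\phi'([N])|$ points of $\phi([n])\setminus\phi'([N])$, let $\psi$ take distinct values in $[N]\setminus\phi'^{-1}(\phi([n]))$. This set has enough room because $n\leq N$, and the choice makes $\psi\circ\phi$ injective.
\item Then $y=\beta(\psi\circ\phi')y=\beta(\psi)\beta(\phi)x=\beta(\psi\circ\phi)x$, which is the claim with $\phi_{N,n}=\psi\circ\phi$.
\end{enumerate}

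\textbf{Part 1 (sampling), first reduction.} Let $z=\rho(\kappa_{N,L})y\in\vct V_L$. By Proposition~\ref{prop:relations_coFS}, $\msf R_k(z)\overset{d}{=}\msf R_k(y)\overset{d}{=}\msf R_k(x)$ for all $k$. It therefore suffices to find an equipartition $\psi\colon[L]\to[n]$ with $z=\rho(\psi)x$. The dual of the argument above gives half of this. For large $k$, the maps $F_{n,k}$ and $F_{L,k}$ are surjective with high probability. Hence $\rho(f)x=\rho(g)z$ for some surjections $f\colon[k]\to[n]$ and $g\colon[k]\to[L]$. Taking a section $s$ of $g$ gives $z=\rho(s)\rho(g)z=\rho(f\circ s)x$. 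However, $f\circ s$ need not be an equipartition, and upgrading it to one is the main obstacle.

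\textbf{Part 1, the equipartition step.} To handle it I would use the atom masses, not just the supports, as follows.
\begin{enumerate}
\item Take $k=mL$ and let $w=\rho(\kappa_{L,k})z$. By Proposition~\ref{prop:symmetry_coFS} and~\eqref{eq:eqp_equivalence}, $w$ is an atom of the common law whose $\mfk S_k$-orbit $O$ has mass
\begin{equation*}
P(\msf R_k(z)\in O)\ \geq\ P(F_{L,k}\text{ is an equipartition})\ =\ \frac{k!}{(m!)^L L^k}.
\end{equation*}
\item The same quantity equals $n^{-k}\,|\{f\colon[k]\to[n] : \rho(f)x\in O\}|$. Group these $f$ by their fiber-size profile.
\item Let $m\to\infty$ and compare exponential growth rates via Stirling. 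Only profiles whose proportions match a "type" of $x$ compatible with the uniform type of $z$ can contribute at the required rate. This should force the existence of a solution $f$ whose fibers have sizes exactly proportional to those of $\kappa_{L,k}$, up to how $z$ duplicates blocks.
\item From such an $f$, compose with a section of $\kappa_{L,k}$ chosen blockwise. Using a Hall-type matching between the $L$ blocks of $[k]$ and the fibers of $f$, extract an equipartition $\psi\colon[L]\to[n]$ with $\rho(\psi)x=z$.
\end{enumerate}
If the asymptotic counting proves delicate, my fallback is an exact identity at finite $k$. The masses $P(\msf R_k(x)=w)$, viewed as polynomials in the empirical frequencies of indices of $x$, would then be matched against those of $z$ for all $k$; this identifies the induced measure on "index classes" of $x$ with the uniform one from $z$. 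Divisibility by $n$ then yields the equipartition.
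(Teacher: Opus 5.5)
\textbf{Part 2.} Your Part 2 is correct and is essentially the paper's argument. The paper phrases your ``two events of mass $>1/2$ must intersect'' step as a coupling of $F_{k,n}$ and $G_{k,N}$ with $\beta(F_{k,n})x=\beta(G_{k,N})y$. It then uses the same kind of retraction $[k]\to[N]$ that is injective on the image of $\phi$.

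\textbf{Part 1: the gap.} Your reduction to finding an equipartition $\psi\colon[L]\to[n]$ with $z=\rho(\psi)x$ is fine, and so is the surjection/section trick. The gap is the equipartition step. It carries the entire content of Part 1, yet it is only asserted.
\begin{itemize}
\item An exactly balanced $f\colon[k]\to[n]$ with $\rho(f)x\in O$ follows from the conclusion (take $f=\psi\circ\kappa_{L,k}$). Via your step 4 it also implies the conclusion. So step 3 is as hard as the proposition itself.
\item The proposed tool cannot deliver it. Comparing exponential growth rates via Stirling only shows that $\{f:\rho(f)x\in O\}$ contains maps whose fiber proportions are within $o(1)$ of uniform. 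Exactly balanced maps form a set of only polynomially small mass, as does your lower bound $k!/((m!)^L L^k)$ on the mass of $O$, so rate comparisons cannot single them out.
\item The appeal to ``types of $x$'' is not made precise, and the fallback via polynomial identities in empirical frequencies is only a sketch.
\end{itemize}

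\textbf{The missing idea.} Exact balance is unnecessary: approximate balance plus integrality already gives Hall's condition.
\begin{itemize}
\item By Hoeffding and a union bound over subsets, for large $k$ a uniform $F_{n,k}$ satisfies $\bigl||F_{n,k}^{-1}(T)|-|T|k/n\bigr|\le\epsilon k$ for all $T\subseteq[n]$ with probability $>1/2$. The same holds for $F_{L,k}$ and all $S\subseteq[L]$.
\item Intersecting these two events under the common law, exactly as in your Part 2, gives balanced $f,g$ with $\rho(f)x=\rho(g)z$. No orbits or atom masses are needed. Your orbit route could be rescued the same way, since unbalanced maps carry exponentially small mass while $O$ carries polynomially small mass, but it is a detour.
\item For $S\subseteq[L]$ let $N(S)=\{i\in[n]:f^{-1}(i)\cap g^{-1}(S)\neq\emptyset\}$. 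From $g^{-1}(S)\subseteq f^{-1}(N(S))$ you get $|N(S)|\,L/n\ge|S|-2\epsilon L$.
\item Both sides are integers, so $\epsilon<1/(2L)$ forces $|N(S)|\,L/n\ge|S|$. This is Hall's condition for matching $[L]$ into $[n]\times[L/n]$, where $j$ is adjacent to $(i,c)$ iff $f^{-1}(i)\cap g^{-1}(j)\neq\emptyset$.
\item A perfect matching gives $s\colon[L]\to[k]$ with $g\circ s=\mathrm{id}_{[L]}$ and $\psi=f\circ s$ an equipartition. Hence $z=\rho(s)\rho(g)z=\rho(s)\rho(f)x=\rho(\psi)x$.
\end{itemize}
The paper does the same, except it also duplicates $x$: it runs the coupling for $\rho(\kappa_{n,L})x$ and $\rho(\kappa_{N,L})y$ in $\vct V_L$. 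Hall's theorem then only has to produce a permutation $\pi\in\mfk S_L$, and the paper takes $\psi=\kappa_{n,L}\circ\pi$.
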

The proof uses concentration results for uniformly random maps between finite sets. In particular, the proof of part 1 entails showing that a bipartite graph constructed from fibers of these random maps contains a perfect matching. We defer the full proof to Section~\ref{sec:proofs_general}. 
We now give some examples of application domains where each of the above equivalences naturally arises.
\begin{example}[Duplication]\label{ex:duplication}
    If $\vct V_n=\RR^{d\times n}$ and $x=(x_1,\ldots,x_n)\in\vct V_n$ is viewed as a point cloud in $\RR^d$, then reordering the columns or duplicating them the same number of times yields an equivalent point cloud (e.g., the corresponding discrete distribution $\frac{1}{n}\sum_{i=1}^n\delta_{x_i}$ is unchanged). In the context of mean-field games, we can view the columns of $x$ as strategies played by $n$ symmetric players, and in the mean-field limit one can show that duplicating the columns of $x$ yields a strategy with the same payoff~\cite[\S2]{cardaliaguet2010notes}. Sampling columns with replacement would therefore be appropriate for either of these types of data.

    If $\vct V_n=\RR^{n\times n}$ and we view $x\in\vct V_n$ as adjacency matrices of graphs, then simultaneously permuting the rows and columns of $x$ corresponds to relabelling the vertices, which does not change the topology of the graph. Furthermore, duplicating each vertex of $x$ corresponds to the so-called blowup of the graph, and these blowups have the same graph homomorphism densities~\cite{convergent_seqs1}. Sampling vertices with replacement would therefore be appropriate when studying such structural properties of graphs.
\end{example}
\begin{example}[Zero-padding]\label{ex:zero_padding}
    If $\vct V_n=\RR^n$ and we view $x\in\vct V_n$ as a (signed) distribution on $n$ items, then reordering the entries of $x$ corresponds to relabelling the items, while zero-padding $x$ amounts to viewing it as a distribution on a larger collection of items assigning zero weight to all but the original $n$ ones. Such relabelling and zero-padding preserves many structural properties of $x$, such as various diversity indices measuring its dispersion~\cite{Zhang02072016}, and hence random binning is appropriate when studying such properties.
    
    If $\vct V_n=\RR^{n\times n}$ and we view $x\in\vct V_n$ as an adjacency matrix of a graph, then permuting and zero-padding $x$ amounts to relabelling and adding isolated vertices to the graph, respectively. These operations again preserve many properties of the graph in question, including homomorphism numbers and various measures of centrality and clustering~\cite{levin2025graphs}, so random binning of vertices is appropriate when studying such properties.
\end{example}

If $f\colon\bigsqcup_n\vct V_n\to\RR$ is continuous with respect to the sampling metric defined by $(\msf R_k)$, then 
\begin{equation}\label{eq:eqp_invariant}
    f(\rho(\psi_{n,N})x)=f(x) \quad \textrm{for all } x\in\vct V_n \textrm{ and equipartitions } \psi_{n,N}\colon[N]\to[n],
\end{equation}
or equivalently, each restriction $f|_{\vct V_n}$ is permutation-invariant and $f$ is unchanged by duplication of entries defined by $\rho(\kappa_{n,N})$. 
Likewise, if $f$ is continuous with respect to the sampling metric defined by $(\msf B_k)$, then
\begin{equation}\label{eq:zero_pad_invariant}
    f(\beta(\phi_{N,n})x)=f(x) \quad \textrm{for all } x\in\vct V_n \textrm{ and injections } \phi_{N,n}\colon[n]\to[N],
\end{equation}
or equivalently, each restriction $f|_{\vct V_n}$ is permutation-invariant and $f$ is unchanged by zero-padding defined by $\beta(\iota_{N,n})$. Thus, functions continuous in the sampling metric defined using either choice of sampling maps respect the same equivalence relations.

The converses of~\eqref{eq:eqp_invariant} and~\eqref{eq:zero_pad_invariant} are false in general, that is, there are functions invariant under equipartitions or injections that are not continuous in the corresponding sampling metric. For example, the norm $f(x)=\|x\|_1$ satisfies~\eqref{eq:zero_pad_invariant} but we show in Corollary~\ref{cor:deepsets} that it is not continuous with respect to random binning. 
However, the converses are true on bounded domains if $f|_{\vct V_n}$ is a polynomial of some degree $d$ for all $n$. In this case, if $f$ is unchanged by equipartitions in the sense of~\eqref{eq:eqp_invariant} then there exists $k\in\NN$ and a fixed-dimensional polynomial $g_k\in\RR[\vct V_k]$ such that $f(x)=\mbb Eg_k(\msf R_k(x))$ for all $x$ by~\cite[Thm.~5.2]{levin2025deFin}. Similarly, if $f$ is unchanged by zero-padding in the sense of~\eqref{eq:zero_pad_invariant} then there exists $k\in\NN$ and $g_k\in\RR[\vct V_k]$ such that $f(x)=\mbb Eg_k(\msf B_k(x))$ for all $x$ by the same theorem. In other words, these polynomials are computing a moment of a fixed-dimensional sample of their input under either sampling with replacement or random binning. We exploit this sampling representation for polynomials in Sections~\ref{sec:sampling_wrep} and \ref{sec:quotients}.

\subsubsection{Species Sampling}\label{sec:species}
We now generalize the species sampling map from Example~\ref{ex:species_intro}.
Fix a compatible sequence $(\mc I_n)$ and consider the corresponding sequence $(\vct V_n=\RR^{\mc I_n})$ of vector spaces.
For simplicity, suppose first that $x\in\Delta^{\mc I_N}$ is a simplex element.  
We view such $x$ as a probability distribution over $\mc I_N$, and consider forming an empirical approximation to it from $n$ iid samples. Specifically, sampling $\alpha_1,\ldots,\alpha_n\in\mc I_N$ iid according to $x$, we set 
\begin{equation}\label{eq:empirical_sampling}
    \mscr E_n(x)=\frac{1}{n}\sum_{i=1}^ne_{\alpha_i}.
\end{equation}
In general, we have $\mscr E_n(x)\in \vct V_N$ and hence $\mscr E_n$ does not reduce the dimensionality of its input. However, it does produce a sparse vector, as $\mscr E_n(x)$ has at most $n$ nonzero entries regardless of the dimensionality of $x$. We can exploit this sparsity by relabeling the elements of the index set to produce a low-dimensional element defining our generalization $\msf E_n(x)$ of species sampling. We next describe this construction formally.

Suppose first that $\mc I_n=[n]^D$ for all $n$ and $x\in\Delta^{\mc I_N}$ as above.
\begin{enumerate}[labelwidth=!, labelindent=0pt]

    \item Sample $\alpha_1,\ldots,\alpha_n$ from the distribution $x$ on $\mc I_N=[N]^D$. Denote $\alpha_i=(j_1^{(i)},\ldots,j_D^{(i)})$ for each $i$.

    \item Consider the subset $\bigcup_{i=1}^n\{j_1^{(i)},\ldots, j_D^{(i)}\}$ of $[N]$, and denote its cardinality by $k\leq nD$. Uniformly randomly enumerate the elements of this subset to obtain $t_1,\ldots,t_k$.

    \item Set
    \begin{equation}\label{eq:species_sampling_general}
        \msf E_n(x)_{\ell_1,\ldots,\ell_D}= \mscr E_n(x)_{t_{\ell_1},\ldots,t_{\ell_D}},
    \end{equation}
    if $\ell_1,\ldots,\ell_D\in[k]$, and $\msf E_n(x)_{\ell_1,\ldots,\ell_D}=0$ otherwise. 
\end{enumerate}
In words, we view the elements of $[N]$ as labeling distinct species, and elements of $\mc I_N = [N]^D$ as relations between these species. The empirical sampling map $\mscr E_n$ samples these relations while `remembering' the species labels, while the species sampling map $\msf E_n$ `forgets' the labels of the species in steps 2 and 3 of the above construction. 
For example, if $\mc I_N=[N]$ then $\msf E_n$ recovers the species sampling map of Example~\ref{ex:species_intro}. Furthermore, if $\mc I_N=[N]^2$ and we view elements of $\vct V_N=\RR^{N\times N}$ as adjacency matrices of (weighted, directed) graphs on $N$ vertices, then $\msf E_n$ can be viewed as sampling $n$ edges proportionally to their edge weights, and has been studied in connection with limits of large graphs in~\cite{levin2025graphs}.

The above construction of $\msf E_n(x)$ generalizes to any compatible sequence $(\mc I_n)$ of finite degree and any $x\in\vct V_N$, not necessarily a simplex element, as follows.
\begin{itemize}
    \item If $\mc I_n=[n]^D/H$ for some subgroup $H\subseteq\mfk S_D$, then each $\alpha_i$ sampled in step 1 above is the orbit of a tuple, denoted $\alpha_i=H(j_1^{(i)},\ldots,j_D^{(i)})$. Because $H$ acts by permuting indices in a tuple, step 2 is well-defined. Moreover, if $(i_1,\ldots,i_D)=h(j_1,\ldots,j_D)$ for some $h\in H$ then $(t_{i_1},\ldots,t_{i_D})=h(t_{j_1},\ldots,t_{j_D})$. Thus, setting $\msf E_n(x)_{H(i_1,\ldots,i_D)}= \mscr E_n(x)_{H(t_{i_1},\ldots,t_{i_D})}$ is well-defined and generalizes~\eqref{eq:species_sampling_general} to this case.

    \item If $\mc I_n=\bigsqcup_{m=1}^M[n]^{d_m}/H_m$ is a general compatible sequence of degree $D=\max_md_m$, then each $\alpha_i$ sampled in step 1 above belongs to $[N]^{d_{m_i}}/H_{m_i}$ for some $m_i$. 
    Then in step 2 we randomly enumerate the set of all indices contained in all of the $\alpha_i$, and in step 3 we generalize~\eqref{eq:species_sampling_general} as above.  More precisely, denoting $\alpha_i=H_{m_i} (j_1^{(i)},\ldots,j_{d_{m_i}}^{(i)})$, in step 2 we randomly enumerate the elements of the subset $\bigcup_{i=1}^n\{j_1^{(i)},\ldots,j_{d_{m_i}}^{(i)}\}$ of $[N]$, which again contains $k\leq nD$ distinct elements $t_1,\ldots,t_k$. In step 3, we then set $\msf E_n(x)_{H_m(i_1,\ldots,i_{d_m})}=\mscr E_n(x)_{H_m(t_{i_1},\ldots,t_{i_{d_m}})}$, generalizing~\eqref{eq:species_sampling_general} to a general compatible sequence of index sets.

    \item We extend both $\mscr E_n$ and $\msf E_n$ beyond simplex vectors by homogeneity. Specifically, if $(\mc I_n)$ is a compatible sequence of finite degree and $x\in\vct V_N$, define $\mscr E_n(x)=\|x\|_1\mathrm{sign}(x)\odot \mscr E_n(|x|/\|x\|_1)$ where $\odot$ denotes entrywise product (and $\mscr E_n(0)=0$ by convention), and define $\msf E_n(x)$ as above.
\end{itemize}
We have thus generalized species sampling to general vectors indexed by elements of a compatible sequence of finite degree to obtain random maps $(\msf E_k\colon \bigsqcup_n\vct V_n\to \vct V_{kD})_k$.
\paragraph{When is species sampling appropriate?}
Species sampling induces the same equivalences between objects of different sizes as random binning, namely, equivalence under permutations and zero-padding. 
\begin{proposition}\label{prop:species_samp_zero_pad}
    Suppose $(\mc I_n)$ is a compatible sequence of finite degree and let $(\vct V_n=\RR^{\mc I_n})$ endowed with the species sampling maps $(\msf E_k)$. We have $\msf E_k\circ \beta(\phi_{N,n})\overset{d}{=}\msf E_k$ for any injection $\phi_{N,n}\colon[n]\to[N]$.
\end{proposition}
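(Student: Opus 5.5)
The plan is to show that species sampling only ever sees the nonzero entries of its input together with the incidence pattern of their indices, and that $\beta(\phi_{N,n})$ relabels indices injectively without altering this pattern. Fix $x\in\vct V_n$ and set $y=\beta(\phi_{N,n})x\in\vct V_N$. I will first identify $y$ explicitly. Since $\phi=\phi_{N,n}$ is injective, $\theta(\phi)\colon\mc I_n\to\mc I_N$ is injective, as noted after Definition~\ref{def:FS_index}. Each fiber $\theta(\phi)^{-1}(\alpha)$ is therefore empty or a singleton, so \eqref{eq:binning_action} gives $y_{\theta(\phi)(\alpha)}=x_\alpha$, and all other entries of $y$ vanish. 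It follows that $\|y\|_1=\|x\|_1$ and that $|y|/\|y\|_1$ is the pushforward of the distribution $|x|/\|x\|_1$ under $\theta(\phi)$. The signs are carried along in the same way. The case $x=0$ is trivial, so I may restrict to the simplex and then restore homogeneity.

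Next I would couple the two constructions. I draw $\alpha_1,\ldots,\alpha_k\in\mc I_n$ iid from $|x|/\|x\|_1$ and set $\alpha_i'=\theta(\phi)(\alpha_i)$. The $\alpha_i'$ are then iid from $|y|/\|y\|_1$. For a summand $[n]^{d_m}/H_m$, $\theta(\phi)$ acts by $H_m(j_1,\ldots,j_{d_m})\mapsto H_m(\phi(j_1),\ldots,\phi(j_{d_m}))$, and it preserves the summand index $m$.

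\begin{itemize}
\item The set of species indices appearing in the $\alpha_i'$ is $\phi(T)$, where $T\subseteq[n]$ is the corresponding set for the $\alpha_i$. Because $\phi$ is injective, it has the same cardinality $\ell$.
\item If $t_1,\ldots,t_\ell$ is a uniformly random enumeration of $T$, then $\phi(t_1),\ldots,\phi(t_\ell)$ is a uniformly random enumeration of $\phi(T)$. I couple the two enumerations this way.
\item For any $(i_1,\ldots,i_{d_m})$ with entries in $[\ell]$, I would check that $\mscr E_k(y)_{H_m(\phi(t_{i_1}),\ldots,\phi(t_{i_{d_m}}))}=\mscr E_k(x)_{H_m(t_{i_1},\ldots,t_{i_{d_m}})}$. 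The empirical measure of the $\alpha_i'$ is the pushforward of the empirical measure of the $\alpha_i$ under the injective map $\theta(\phi)$, so empirical weights and sign factors are preserved.
\end{itemize}

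Under this coupling the two outputs agree: \eqref{eq:species_sampling_general} and its generalizations give $\msf E_k(y)=\msf E_k(x)$ almost surely, hence equality in distribution. Both outputs live in $\vct V_{kD}$ and are zero outside indices in $[\ell]$.

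The only delicate point is bookkeeping in the general finite-degree case. I need that $\theta(\phi)$ on $[n]^{d_m}/H_m$ is the coordinatewise action on orbits, that it maps distinct orbits to distinct orbits, and that the step-2 index set commutes with $\phi$. All of these follow from injectivity of $\phi$ together with the fact that $H_m$ only permutes coordinate positions. I expect this, rather than any probabilistic estimate, to be the main place requiring care, and I would write it out once for a single summand and note that disjoint unions are handled componentwise.
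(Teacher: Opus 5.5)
Your proposal is correct and follows the paper's argument: because $\phi_{N,n}$ is injective, $\beta(\phi_{N,n})\mscr E_k(x)\overset{d}{=}\mscr E_k(\beta(\phi_{N,n})x)$, and the relabeling step in the construction of $\msf E_k$ forgets the species labels, so $\msf E_k(\beta(\phi_{N,n})x)\overset{d}{=}\msf E_k(x)$. Your explicit coupling, with $\alpha_i'=\theta(\phi)(\alpha_i)$ and enumeration $\phi(t_1),\ldots,\phi(t_\ell)$, just writes out the details that the paper's two-line proof leaves to the reader.
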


\begin{proof}
    Observe that $\beta(\phi_{N,n})\mscr E_k(x)\overset{d}{=}\mscr E_k(\beta(\phi_{N,n})x)$ as can be seen directly from~\eqref{eq:empirical_sampling} using the fact that $\phi_{N,n}$ is injective. The construction of $\msf E_k$ then shows that $\msf E_k(\beta(\phi_{N,n})x)\overset{d}{=}\msf E_k(x)$, as claimed. 
\end{proof}
In particular, if $d_{\mathrm{samp}}$ is defined using species sampling, then $d_{\mathrm{samp}}(x,\beta(\phi_{N,n})x)=0$ for all $x\in\vct V_n$ and all injections $\phi_{N,n}\colon[n]\to[N]$. 
Likewise, if $f\colon\bigsqcup_n\vct V_n\to\RR$ is continuous with respect to the sampling metric defined by $(\msf E_k)$, we have $f(\beta(\phi_{N,n})x)=f(x)$.
Conversely, it follows from Proposition~\ref{prop:relation_to_species} below that if $d_{\mathrm{samp}}(x,y)=0$ for $x\in\vct V_n$ and $y\in\vct V_N$ for $n\leq N$ then $y=\beta(\phi_{N,n})x$ for some injection $\phi_{N,n}$.

Thus, species sampling may be appropriate in the same domains as random binning, such as those in Example~\ref{ex:zero_padding}.
In fact, we show in Proposition~\ref{prop:relation_to_species} of Section~\ref{sec:quotients} that random binning and species sampling define closely related topologies via the corresponding sampling metrics.


\subsection{Missing Proofs from Section~\ref{sec:general}}\label{sec:proofs_general}
We begin by proving Theorem~\ref{thm:general_equivalence} via a direct argument.

\begin{proof}[Proof (Theorem~\ref{thm:general_equivalence}).]
    We start by arguing that properties 1 and 2 are equivalent. Since $\overline{\Omega}_{\infty}$ is a complete metric space by construction, it is compact if and only if it is totally bounded. 
    If property 2 holds, then for any $\epsilon>0$ there is $N(\epsilon)\in\NN$ such that $\sup_{x\in\overline{\Omega}_{\infty}}\mathrm{dist}(x,\Omega_{\leq N(\epsilon)})\leq \epsilon/2$. Since $\Omega_{\leq N(\epsilon)}$ is totally bounded, it has a finite $\epsilon/2$-net that is an $\epsilon$-net for $\overline{\Omega}_{\infty}$, proving that the latter is indeed totally bounded.
    Conversely, if $\overline{\Omega}_{\infty}$ is totally bounded and $\{y_1,\ldots,y_k\}\subseteq \overline{\Omega}_{\infty}$ is a finite $\epsilon/2$-net, let $x_i\in\bigsqcup_n\Omega_n$ satisfy $d(x_i,y_i)\leq\epsilon/2$ and note that $\{x_1,\ldots,x_k\}\subseteq\Omega_{\leq N(\epsilon)}$ for some $N(\epsilon)\in\NN$, hence $\sup_{x\in\overline{\Omega}_{\infty}}\mathrm{dist}(x,\Omega_{\leq N(\epsilon)})\leq \epsilon$. Letting $\epsilon\to0$ proves statement 2.
    Thus, statements 1 and 2 are equivalent.


    We proceed to argue that statements 2 and 3 are equivalent. If statement 2 holds, for any $(\epsilon_k>0)$ with $\epsilon_k\to 0$, define the (deterministic) map $\msf S_k\colon\bigsqcup_n\Omega_n\to\Omega_{\leq k}$ by finding $\msf S_k(x)\in\Omega_{\leq k}$ satisfying $d(x,\msf S_k(x))\leq \mathrm{dist}(x,\Omega_{\leq k})+\epsilon_k$. Then for any $f\in\mc F_1$ we have $\mathrm{e}_{\infty}(f,f\circ\msf S_k)\leq \sup_{x\in\Omega_{\infty}}\mathrm{dist}(x,\Omega_{\leq k})+\epsilon_k\to0$, so statement 3 holds. Conversely, suppose statement 3 holds and fix any $x\in\bigsqcup_n\Omega_n$. Then $f_x=d(x,\cdot)$ is 1-Lipschitz and 
    \begin{equation*}
        \mathrm{dist}(x,\Omega_{\leq k}) \leq d(x,\msf S_k(x)) = |f_x(\msf S_k(x))-f_x(x)| \leq \mathrm{e}_{\infty}(f_x,f_x\circ\msf S_k) \leq \sup_{f\in\mc F_1}\mathrm{e}_{\infty}(f,f\circ\msf S_k).
    \end{equation*}
    Since this holds for any $x\in\bigsqcup_n\Omega_n$, we conclude that statement 2 holds.
    
    Finally, we argue that statement 4 is equivalent to the previous equivalent three statements. If statement 3 holds, denote $R_n=\mbb E\sup_{f\in\mc F_1}\mathrm{e}_{\infty}(f,f\circ\msf S_n)$, and note that $R_n\to0$ because $(\sup_{f\in\mc F_1}\mathrm{e}_{\infty}(f,f\circ\msf S_k))_k$ is a sequence of bounded (by statement 1) random variables converging to zero almost surely, so they also converge to zero in expectation. Then for any $f,g\in\mc F_1$ and any $x\in\Omega_{\infty}$ we have
    \begin{equation*}
        |f(x)-g(x)|\leq |f(x)-\mbb Ef(\msf S_n(x))| + |\mbb Ef(\msf S_n(x))-\mbb Eg(\msf S_n(x))| + |\mbb Eg(\msf S_n(x))-g(x)|\leq \mathrm{e}_n(f,g) + 2R_n,
    \end{equation*}
    hence statement 4 holds. In particular, since $\Omega_n$ is bounded and $f,g\in\mc F_1$ we have $\mathrm{e}_n(f,g)<\infty$, so $\mathrm{e}_{\infty}(f,g)<\infty$ for all $f,g\in\mc F_1$. Conversely, suppose statement 4 holds. For any $n\in\NN$, let $f=\mathrm{dist}(\cdot,\Omega_{\leq n})$ and $g=0$, and note that 
    \begin{equation*}
        \sup_{x\in\Omega_{\infty}}\mathrm{dist}(x,\Omega_{\leq n}) = \mathrm{e}_{\infty}(f,g) \leq \mathrm{e}_n(f,g) + R_n = R_n,
    \end{equation*}
    which converges to zero as $n\to\infty$, hence statement 2 holds.
\end{proof}

Next, we prove Proposition~\ref{prop:avg_error}, likewise via a direct argument.
\begin{proof}[Proof (Proposition~\ref{prop:avg_error}).]
    Since $\overline{\Omega}_{\infty}$ is compact, we have $\mu_n\to\mu_{\infty}$ weakly if and only if $\lim_nW_1(\mu_n,\mu_{\infty})=0$, or equivalently, if and only if $\lim_n\sup_{f\in\mc F_1}|\mbb E_{\mu_{\infty}}f-\mbb E_{\mu_n}f|=0$. Thus, if $\mu_n\to\mu_{\infty}$ weakly then $\mathrm{e}_{\mu_{\infty}}(f,g)\leq \mathrm{e}_{\mu_n}(f,g)+2W_1(\mu_n,\mu_{\infty})$ since if $f,g\in\mc F_1$ then $\frac{1}{2}|f-g|\in\mc F_1$. Conversely, if such rate $R_n$ exists, note that
    \begin{equation*}\begin{aligned}
        W_1(\mu_n,\mu_{\infty}) &=\sup_{f\in\mc F_1}|\mbb E_{\mu_{\infty}}f-\mbb E_{\mu_n}f| = \max\Big\{\sup_{f\in\mc F_1}(\mbb E_{\mu_{\infty}}f-\mbb E_{\mu_n}f),\ \sup_{f\in\mc F_1}(\mbb E_{\mu_{\infty}}(-f)-\mbb E_{\mu_n}(-f))\Big\}\\
        &= \sup_{f\in\mc F_1}(\mbb E_{\mu_{\infty}}f-\mbb E_{\mu_n}f) = \sup_{f\in\mc F_1}(\mbb E_{\mu_{\infty}}(f-\inf_{x\in\Omega_{\infty}}f(x))-\mbb E_{\mu_n}(f-\inf_{x\in\Omega_{\infty}}f(x)))\\
        &= \sup_{\substack{f\in\mc F_1\\ f\geq 0}}(\mbb E_{\mu_{\infty}}f-\mbb E_{\mu_n}f) = \sup_{\substack{f\in\mc F_1\\ f\geq 0}}(\mathrm{e}_{\mu_{\infty}}(f,0)-\mathrm{e}_{\mu_n}(f,0)) \leq R_n\to0,
    \end{aligned}\end{equation*}
    and hence $\mu_n\to\mu_{\infty}$ weakly. Above, in going from the first to the second line we used the fact that $-f\in\mc F_1$ if and only if $f\in\mc F_1$, and the last inequality follows since $\mathrm{e}_{\mu}(f,0)=\mbb E_{\mu}f$ when $f\geq0$.
\end{proof}

We now prove Proposition~\ref{prop:compact_under_sampling} using Tychonoff's theorem.
\begin{proof}[Proof (Proposition~\ref{prop:compact_under_sampling}).]
    Endow each $\mc P(\Omega_n)$ with the weak topology, metrized by the $W_1$-metric with respect to the given norms on $\vct V_n$, and endow $\prod_n\mc P(\Omega_n)$ with the product topology, metrized by $d((\mu_k),(\nu_k))=\sum_{k\geq 1}2^{-k}W_1(\mu_k,\nu_k)$. Note that $\prod_n\mc P(\Omega_n)$ is compact by Tychonoff's theorem, and that the map sending $x\mapsto (\mathrm{Law}(\msf S_k(x)))_k$ is an isometric embedding of $\Omega_{\infty}$ into $\prod_n\mc P(\Omega_n)$. It therefore extends to an isometric embedding of $\overline{\Omega}_{\infty}$ as a closed, and therefore compact, subset of $\prod_n\mc P(\Omega_n)$. In particular, each $\Omega_n$ is totally bounded in $d_{\mathrm{samp}}$ since $\overline{\Omega}_{\infty}$ is totally bounded. The above argument further shows that convergence of $(x_i)$ in $d_{\mathrm{samp}}$ is equivalent to convergence of the sequence of sequences $\Big((\mathrm{Law}(\msf S_k(x_i)))_k\Big)_i$ in the product topology on $\prod_n\mc P(\Omega_n)$, which precisely means that $(\msf S_k(x_i))_i$ converges weakly for each $k$.
\end{proof}

We turn to proving Proposition~\ref{prop:fd_measures_as_limits}, relating limits in sampling metric to sequences of distributions.
\begin{proof}[Proof (Proposition~\ref{prop:fd_measures_as_limits}).]
    For the first claim, let $X\sim \mu$ and let $\msf S_n$ be independent of $X$ for each $n$. Then
    \begin{equation*}
        \limsup_{n\to\infty}W_1(X, \msf S_n(X))\leq \limsup_{n\to\infty}\mbb E_X\mbb E_{\msf S_n}d_{\mathrm{samp}}(X,\msf S_n(X)) \leq \mbb E\limsup_{n\to\infty}\mbb E_{\msf S_n}d_{\mathrm{samp}}(X,\msf S_n(X))=0,
    \end{equation*}
    where we used the fact that $d_{\mathrm{samp}}(X,\msf S_n(X))\leq \mathrm{diam}(\overline{\Omega}_{\infty})<\infty$ almost surely. This proves that $\msf S_n(X)\to X$ weakly in $\overline{\Omega}_{\infty}$, which is the first claim.
    
    For the second claim, it is clear that~\eqref{eq:fd_sampling_map} is linear, i.e., maps mixtures to mixtures, as $\mathrm{Law}(\msf S_n(X))=\int \mathrm{Law}(\msf S_n(x))\, d\nu(x)$ if $X\sim\nu$ is independent of $\msf S_n$.
    To see that it is continuous, note that if $\mu_i\to\mu_{\infty}$ weakly in $\mc P(\overline{\Omega}_{\infty})$, then there is a coupling of $X_i\sim \mu_i$ such that $X_i\to X_{\infty}$ in $d_{\mathrm{samp}}$ almost surely by Skorokhod's representation. We then get $\mathrm{Law}(\msf S_n(X_i))\to\mathrm{Law}(\msf S_n(X_{\infty}))$ for each $n$ where $\msf S_n$ is independent of the $X_i$.
    To see that~\eqref{eq:fd_sampling_map} is injective, suppose $\msf S_n(X)\overset{d}{=}\msf S_n(Y)$ for random $X,Y\in\overline{\Omega}_{\infty}$ and for all $n$ and take the weak limits of both sides as $n\to\infty$ to obtain $X\overset{d}{=}Y$ by the first claim.
    Thus, the map~\eqref{eq:fd_sampling_map} is indeed a linear isomorphism onto its image.
    %
\end{proof}

Finally, we prove Proposition~\ref{prop:equivalence_converse}.
\begin{proof}[Proof (Proposition~\ref{prop:equivalence_converse}).]
    For the first claim, it suffices to prove that if $n=N$ and $\msf R_k(x)\overset{d}{=}\msf R_k(y)$ for all $k$ then there exists $\pi\in\mfk S_n$ such that $y=\rho(\pi)x$. Applying this claim to $\rho(\kappa_{N,L})y, \rho(\kappa_{n,L})x\in\vct V_L$ yields the claim for general $x$ and $y$.
    
    Suppose $\msf R_k(x)\overset{d}{=}\msf R_k(y)$ for all $k$ and $n=N$. Then for each $k\in\NN$ there is a coupling $(F_{n,k},G_{n,k})$ of two uniformly random maps from $[k]$ to $[n]$ satisfying $\rho(F_{n,k})x=\rho(G_{n,k})y$. We argue that for sufficiently-large $k\geq n$, with positive probability we can find an injection $\phi_{k,n}\colon[n]\to[k]$ such that $G_{n,k}\circ\phi_{k,n}=\mathrm{id}_{[n]}$ and $F_{n,k}\circ\phi_{k,n}\in\mfk S_n$ is a permutation, in which case applying $\rho(\phi_{k,n})$ to both sides above yields $\rho(F_{n,k}\circ\phi_{k,n})x=y$ as desired. To this end, we define a bipartite graph with bipartition $[n]\sqcup[n]$ where $i,j\in[n]$ are connected if $F_{n,k}^{-1}(i)\cap G_{n,k}^{-1}(j)\neq\emptyset$. We prove that this graph has a perfect matching $\pi\in\mfk S_n$ with positive probability, in which case picking $\phi_{k,n}(i)\in G_{n,k}^{-1}(i)\cap F_{n,k}^{-1}(\pi(i))$ yields the desired injection. We prove the existence of this perfect matching by showing that Hall's condition holds with positive probability.

    Pick $\epsilon < \frac{1}{2n}$. For any subset $S\subseteq[n]$, we have with probability at least $1-2\exp(-2\epsilon^2k)$ that 
    \begin{equation*}
        \left||G_{n,k}^{-1}(S)| - \frac{|S|k}{n}\right| \leq \epsilon k,
    \end{equation*}
    since $|G_{n,k}^{-1}(S)|=\sum_{i=1}^k\mathbbm{1}[G_{n,k}(i)\in S]\sim\mathrm{Binom}(k,|S|/n)$. Therefore, with probability at least $1-2^{n+2}\exp(-2\epsilon^2k)$ the above holds both for $F_{n,k}$ and for $G_{n,k}$, and simultaneously for all subsets $S\subseteq[n]$. Choosing $k$ sufficiently large ensures that this probability is positive.
    For each $S\subseteq[n]$ define 
    \begin{equation*}
        N(S)=\{i\in[n]: G_{n,k}^{-1}(i)\cap F_{n,k}^{-1}(S)\neq\emptyset\},
    \end{equation*}
    and suppose that $|N(S)|<|S|$ for some such $S$. 
    Observe that $F_{n,k}^{-1}(S)\subseteq G_{n,k}^{-1}(N(S))$ by definition of $N(S)$ and hence 
    \begin{equation*}
        \frac{(|N(S)|+1)k}{n}-\epsilon k\leq\frac{|S|k}{n}-\epsilon k \leq |F_{n,k}^{-1}(S)|\leq |G_{n,k}^{-1}(N(S))|\leq \frac{|N(S)|k}{n}+\epsilon k.
    \end{equation*}
    We conclude that $\epsilon\geq \frac{1}{2n}$, a contradiction. Thus, we have $|S|\leq |N(S)|$ for all $S\subseteq[n]$, so Hall's condition is satisfied and the above bipartite graph has a perfect matching, as desired. This proves the first claim.

    For the second claim, suppose $\msf B_k(x)\overset{d}{=}\msf B_k(y)$ for all $k$, so there is a coupling $(F_{k,n},G_{k,N})$ of uniformly random maps $[n]\to[k]$ and $[N]\to[k]$, respectively, satisfying $\beta(F_{k,n})x=\beta(G_{k,N})y$. Note that for $k\geq n$, the random map $F_{k,n}$ is injective with probability at least $1-\frac{n(n-1)}{2k}$ and similarly for $G_{k,N}$. Therefore, for $k\geq N$ sufficiently large both $F_{k,n}$ and $G_{k,N}$ are injective with positive probability, in which case there are permutations $\pi,\tau\in\mfk S_k$ satisfying $F_{k,n}=\pi\circ\iota_{k,n}$ and $G_{k,N}=\tau\circ\iota_{k,N}$. In this case we get \begin{equation}\label{eq:injection_equivalence_step}
        \beta(\phi_{k,n})x=\beta(\iota_{k,N})y,\quad \textrm{for some injection } \phi_{k,n}\colon[n]\to[k].
    \end{equation}
    Define the map $f\colon[k]\to[N]$ sending $f(i)=i$ for $i\in[N]$, sending $\phi_{k,n}([n])\setminus[N]$ injectively to $[N]\setminus \phi_{k,n}([n])$, and sending $f(i)=1$ for $i\notin [N]\cup \phi_{k,n}([n])$. Note that $f\circ\iota_{k,N}=\mathrm{id}_{[N]}$ and $f\circ\phi_{k,n}$ is injective, so applying $\beta(f)$ to both sides in~\eqref{eq:injection_equivalence_step} yields the second claim.
\end{proof}

\section{Rates for Sampling with Replacement}\label{sec:sampling_wrep}

In this section, we prove rates for any-dimensional sketching and generalization using the sampling metric defined with respect to sampling with replacement~\eqref{eq:sampling_general}.  
The general outline for this section is as follows.  We first derive rates at which arbitrary objects can be approximated by fixed-dimensional ones in Theorem~\ref{thm:W1_bound_sampling}, quantifying part 2 of Theorem~\ref{thm:general_equivalence}.  
Based on Proposition~\ref{prop:fd_measures_as_limits}, we then characterize in Theorem~\ref{thm:proj_consistent_laws} the structure of any-dimensional data distributions obtained by sampling with replacement from limit objects.  
Next we leverage the equivalences in Theorem~\ref{thm:general_equivalence} to derive explicit sketching and generalization rates in Theorem~\ref{thm:sampling_rates_funcs} for functions continuous with respect to sampling with replacement.  
We conclude by deriving corollaries of our general results for functions arising in several applications, including graph densities and signals as well as many permutation-invariant neural network architectures such as transformers.

\subsection{Main Results}
We fix a compatible sequence of index sets $(\mc I_n)$, and consider the associated vector spaces $(\vct V_n=\RR^{\mc I_n})$ and sampling with replacement maps $(\msf R_k\colon\bigsqcup_n\vct V_n\to\vct V_k)$ defined by~\eqref{eq:sampling_general}. 
We also fix a sequence of compact sets $(\Omega_n)$ closed under sampling, such as the sequence of hypercubes $\Omega_n=[-r,r]^{\mc I_n}$. Throughout this section, we consider the sampling metric~\eqref{eq:sampling_metric} defined using $(\msf R_k)$, with $W_1$-distances taken with respect to the $\ell_{\infty}$-norms.

Our first result shows that $\mbb Ed_{\mathrm{samp}}(x,\msf R_n(x))\to0$ as $n\to\infty$ and gives the associated rate at which this expectation goes to zero.
To bound $\mbb Ed_{\mathrm{samp}}(x,\msf R_n(x))$, we must bound for each $k$ the expected distance $\mbb E_{\msf R_n}W_1(\msf R_k(x),\msf R_k\circ\msf R_n(x))$, where we (a) fix a realization of $\msf R_n$; (b) compute the $W_1$ distance between the distributions of $\msf R_k(x)$ and $\msf R_k\circ\msf R_n(x)$, yielding a random number depending on $\msf R_n$; and (c) take the expectation with respect to $\msf R_n$.
\begin{theorem}\label{thm:W1_bound_sampling}
    Suppose a compatible sequence $(\mc I_n)$ has degree $D$, and suppose $(\Omega_n\subseteq[-r,r]^{\mc I_n})$ is closed under sampling. 
    Then for any $x\in\bigsqcup_n\Omega_n$ and any $k$ such that $|\mc I_k|>2$, we have
    \begin{equation*}
        \mbb E_{\msf R_n}W_1(\msf R_k(x), \msf R_k\circ\msf R_n(x))\leq \frac{k(k-1)}{n}r + 228r\left(\frac{k^2}{n}\right)^{1/|\mc I_k|}
    \end{equation*}
    and $W_1(\msf R_k(x), \msf R_k\circ\msf R_n(x))$ is $\frac{4k^2r^2}{n}$-subgaussian with respect to the randomness in $\msf R_n$.
    We also have
    \begin{equation*}
        \mbb Ed_{\mathrm{samp}}(x,\msf R_n(x))\leq Cr\exp_2\Big[-(M^{-1}\log_2 n)^{\frac{1}{1+D}}\Big],
    \end{equation*}
    where $C>0$ is a constant depending on $(\mc I_n)$ and $M$ is as in Definition~\ref{def:FS_index_degree}. Furthermore, the distance $d_{\mathrm{samp}}(x,\msf R_n(x))$ is $\frac{16r^2}{n}$-subgaussian. 
\end{theorem}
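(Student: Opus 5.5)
We couple $\msf R_k\circ\msf R_n$ with an intermediate object and then use concentration. Fix $x\in\Omega_N$ and write $\msf R_n(x)=\rho(F_{N,n})x$. By compositionality, $\msf R_k\circ\msf R_n(x)=\rho(F_{N,n}\circ G_{n,k})x$, where $G_{n,k}\colon[k]\to[n]$ is uniform and independent of $F_{N,n}$.

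\textbf{Step 1: reduction to an empirical measure.} Conditionally on $F=F_{N,n}$, the law of $\msf R_k\circ\msf R_n(x)$ is the pushforward under $g\mapsto\rho(g)x$ of the law of the random map $F\circ G\colon[k]\to[N]$. When $G$ is injective, $(F(G(1)),\dots,F(G(k)))$ is a uniformly random $k$-tuple of distinct entries of $(F(1),\dots,F(n))$, which is a sample without replacement from the i.i.d.\ points $F(1),\dots,F(n)$. The probability that $G$ is not injective is at most $\binom{k}{2}/n$. On that event we couple arbitrarily and pay at most $2r$ in $\ell_\infty$, which gives the term $\frac{k(k-1)}{n}r$.

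\textbf{Step 2: the i.i.d.\ comparison.} On the injective event, compare the law of $\rho(H)x$, where $H$ is $k$ distinct draws from the multiset $\{F(1),\dots,F(n)\}$, with the law of $\rho(F_{N,k})x$. The law of $\rho(F_{N,k})x$ is a distribution $\mu$ on the finite set $\Theta_x=\{\rho(f)x\}\subseteq[-r,r]^{\mc I_k}$. The law of $\rho(H)x$ is essentially a U-statistic version of the empirical measure of $\mu$ built from $n$ samples.

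\textbf{Step 3: Wasserstein rate.} Apply a Fournier--Guillin type bound for empirical measures in dimension $|\mc I_k|>2$. Partitioning the $n$ points into $\lfloor n/k\rfloor$ disjoint blocks gives about $n/k$ independent draws from $\mu$. Rather than invoking this directly, I would rerun the dyadic-partition proof for the U-statistic: bound cell-mass deviations by their variance, which is $O(k^2/n)$ by a Hoeffding decomposition. This produces $C r\,(k^2/n)^{1/|\mc I_k|}$ with an explicit constant, which I expect to be the claimed $228$.

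\textbf{Step 4: subgaussianity.} View $W_1(\msf R_k(x),\msf R_k\circ\msf R_n(x))$ as a function of the independent inputs $F(1),\dots,F(n)$. Changing one $F(i)$ changes the law of $\rho(F\circ G)x$ only on the event $i\in G([k])$, which has probability at most $k/n$. The $W_1$ distance therefore moves by at most $2r\cdot k/n$. McDiarmid's inequality then gives variance proxy $\sum_i (2rk/n)^2/4\cdot 4=4k^2r^2/n$.

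For $d_{\mathrm{samp}}(x,\msf R_n(x))=\sum_k 2^{-k}W_1(\cdot)$, the same argument gives differences bounded by $\sum_k 2^{-k}\,2rk/n\le 4r/n$. This yields the proxy $16r^2/n$ after matching the normalization.

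\textbf{Step 5: summing over $k$.} Use degree $D$ to bound $|\mc I_k|\le Mk^D$. Split the sum at a threshold $K$, and use the trivial bound $2r$ on the tail, which contributes $2r\,2^{-K}$. For $k\le K$, the terms are bounded by $r\bigl(K^2/n + (K^2/n)^{1/(MK^D)}\bigr)$. Choose $K$ with $K^{1+D}\approx M^{-1}\log_2 n$, so that $(K^2/n)^{1/(MK^D)}\approx 2^{-\log_2 n/(MK^D)}\approx 2^{-K}$. This gives $Cr\exp_2[-(M^{-1}\log_2 n)^{1/(1+D)}]$.

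\textbf{Main obstacle.} The hardest step is Step 3: getting a clean empirical-measure $W_1$ bound for samples without replacement from the $n$ atoms, with a dimension-free constant. I would first try the blocking reduction to i.i.d.\ Fournier--Guillin to secure the correct $(k^2/n)^{1/|\mc I_k|}$ scaling, and only then optimize constants.
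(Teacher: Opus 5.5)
Your outline is correct. Steps 1, 4 and 5 are what the paper does:
\begin{itemize}
\item the bias term $\frac{k(k-1)}{n}r$ comes from coupling a uniform map $[k]\to[n]$ with a uniform injection (the paper cites Stam; your birthday bound on $G$ is the same estimate);
\item subgaussianity is the bounded-difference inequality with increments $2kr/n$ (resp.\ $\sum_k 2^{-k}\,2kr/n=4r/n$ for $d_{\mathrm{samp}}$);
\item the final rate comes from truncating the series at $K=\lceil (M^{-1}\log_2 n)^{1/(1+D)}\rceil$ using $|\mc I_k|\le Mk^D$.
\end{itemize}

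The genuine difference is Step 3. The paper never isolates the without-replacement measure for the fluctuation term. It treats $Z_f=\mbb E_{\msf R_k}f(\msf R_k\circ\msf R_n(x))-\mbb Ef(\msf R_k(x))$ as a V-statistic indexed by test functions, and notes that $\{Z_f-\mbb EZ_f\}_f$ is a $\frac{4k^2}{n}$-subgaussian process in the sup-norm. It then bounds the supremum over $1$-Lipschitz $f$ on $[-r,r]^{\mc I_k}$ by Dudley's entropy integral with the Kolmogorov--Tikhomirov estimate $\log N\lesssim(5r/\epsilon)^{|\mc I_k|}$. The constant $228$ is simply what that optimization yields.

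You instead bound $W_1(U_F,\mu)$ for the U-statistic measure $U_F$ by a dyadic-partition argument. Your route is more elementary (no chaining, no metric entropy of Lipschitz classes) and gives a constant well below $228$. The paper's route produces a generic lemma valid for any test class, and its single-$f$ version is reused for the fixed-dimensional-moment rates in Theorem~\ref{thm:sampling_rates_funcs}.

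Three points in Step 3 need care.
\begin{itemize}
\item The cell-variance bound must carry the cell mass, e.g.\ $\mathrm{Var}\,U_F(Q)\le\frac{k^2}{n}\mu(Q)$. This follows from Efron--Stein, because resampling one $F(i)$ only affects a fraction $k/n$ of the injections. A bound of order $k^2/n$ per cell without the factor $\mu(Q)$ only gives the exponent $1/(2|\mc I_k|)$ after summing over the $2^{j|\mc I_k|}$ cells at level $j$.
\item Run the dyadic bound directly in $\ell_\infty$ on subcubes of $[-r,r]^{\mc I_k}$. Since $|\mc I_k|\ge 3$, the sum $\sum_{j\le J}2^{j(|\mc I_k|/2-1)}$ is at most $(1-2^{-1/2})^{-1}$ times its last term, so the constant is dimension-free. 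Off-the-shelf Fournier--Guillin bounds are stated for the Euclidean norm, where the diameter of $[-r,r]^{\mc I_k}$ is $2r\sqrt{|\mc I_k|}$. That dimension dependence would spoil Step 5, where $|\mc I_k|$ is as large as $MK^D$.
\item The blocking reduction controls $U_F$ only through Hoeffding's representation of $U_F$ as an average over permutations of empirical measures of $\lfloor n/k\rfloor$ disjoint blocks, combined with convexity of $W_1$.
\end{itemize}

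Also dispose of two cases you did not mention. The case $k>n$ is trivial, since then $\frac{k(k-1)}{n}r\ge 2r$. The small $k$ with $|\mc I_k|\le 2$ in Step 5 need separate bounds; the paper handles them with $O(n^{-1/2})$ and $O(n^{-1/2}\log n)$ estimates.
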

Above, we write $\exp_2(a)=2^a$ and $\log_2(b)=\frac{\log b}{\log 2}$.
The proof is based on the observation that a uniformly random map $F_{N,k}\colon[k]\to[N]$ can be identified with $k$ iid uniformly distributed random indices $(F_{N,k}(1),\ldots,F_{N,k}(k))$ in $[N]$, so $\mathrm{Law}(F_{N,k})=\mu^{\otimes k}$ where $\mu$ is the uniform distribution on $[N]$. Meanwhile, if we fix a realization $\widehat F_{N,n}$ of $F_{N,n}$, we have $\mathrm{Law}(\widehat F_{N,n}\circ F_{n,k})=\mu_n^{\otimes k}$ where $\mu_n=\frac{1}{n}\sum_{i=1}^n\delta_{\widehat F_{N,n}(i)}$ is an empirical measure obtained by sampling $n$ iid points from $\mu$. Comparing $\mu_n$ to $\mu$ yields the proof, see Section~\ref{sec:proofs_sampling}.

We give a few examples of the rates implied by Theorem~\ref{thm:W1_bound_sampling}.
\begin{example}[Sampling columns with replacement]
    Suppose $\vct V_n=\RR^{d\times n}$ and $\Omega_n=[-r,r]^{d\times n}$. As discussed in Example~\ref{ex:sampling_entries}, we have an isomorphism $\overline{\Omega}_{\infty}\cong \mc P([-r,r]^d)$ induced by $x\mapsto \mathrm{Law}(\msf R_1(x))$. In particular, in this special case we have $\mathrm{Law}(\msf R_k(x))=\mathrm{Law}(\msf R_1(x))^{\otimes k}$, so $d_{\mathrm{samp}}(x,y)$ and $W_1(\msf R_1(x),\msf R_1(y))$ metrize the same topology. The rate $\mbb E_{\msf R_n}W_1(\msf R_1(x),\msf R_1\circ\msf R_n(x))=O(n^{-1/d})$ for $d>2$ from Theorem~\ref{thm:W1_bound_sampling} is optimal~\cite{fournier2023convergence}. 
\end{example}

\begin{example}[Sampling vertices with replacement]
    Suppose $\vct V_n=\RR^{n\times n}$ and let $\Omega_n\subseteq[-r,r]^{n\times n}$ be closed under sampling. Convergence in $d_{\mathrm{samp}}$ recovers the limits of weighted graphs studied in~\cite{probability_graphons}. We obtain a rate of convergence of $O(\exp_2[-(\log_2 n)^{1/3}])$ in $d_{\mathrm{samp}}$, compared with a rate of $O(\exp_2[-\frac{1}{2}\log\log n])$ implied by the results of~\cite{probability_graphons} (who focus on convergence in a different metric, but the two can be related as in~\cite[Exer.~10.33]{lovasz2012large}). 
    We get the similar rate $O(\exp_2[-(\frac{1}{d+1}\log_2 n)^{1/3}])$ for pairs of graphs and $d$-dimensional graph signals on them, for which $\vct V_n=\RR^{n\times n}\oplus\RR^{n\times d}$.
    Likewise, we obtain a rate of $O\left(\exp_2\left[-(M^{-1}\log n)^{\frac{1}{1+D}}\right]\right)$ for approximation of general rank-$D$ hypergraphs (with $M=1$) and hypergraph $d$-dimensional signals ($M=d+1$).
\end{example}

Theorem~\ref{thm:W1_bound_sampling} has two important consequences. First, it implies that $\mbb Ed_{\mathrm{samp}}(x,\msf R_n(x))\to0$ for any $x\in\overline{\Omega}_{\infty}$, so Proposition~\ref{prop:fd_measures_as_limits} applies and gives us a characterization of any-dimensional data distributions obtained by sampling from random limit objects.
\begin{theorem}[Projection-consistent distributions]\label{thm:proj_consistent_laws}
    Suppose $(\mc I_n)$ has finite degree and let $(\Omega_n\subseteq[-r,r]^{\mc I_n})$ be closed under sampling. Then the following are equivalent for a sequence of probability distributions $(\mu_n\in\mc P(\Omega_n))$: 
    \begin{enumerate}[align=left, font=\emph, labelwidth=!, labelindent=0pt]
        \item[(Projection consistency)] We have $\rho(\phi_{N,n})\mu_N=\mu_n$ whenever $ \phi_{N,n}\colon[n]\to[N]$ is injective.
        \item[(Sampling representation)] There exists a distribution $\mu_{\infty}\in\mc P(\overline{\Omega}_{\infty})$ such that $\mu_n=\mathrm{Law}(\msf R_n(X))$ for all $n$, where $X\sim\mu_{\infty}$ is independent of $\msf R_n$.
    \end{enumerate}
    Moreover, the measure $\mu_{\infty}$ above is unique, and sequences of the form $(\mu_n=\mathrm{Law}(\msf R_n(x)))$ for deterministic $x\in\overline{\Omega}_{\infty}$ are extremal in the set of all projection-consistent sequences.
\end{theorem}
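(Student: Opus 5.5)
The direction (Sampling representation) $\Rightarrow$ (Projection consistency) is immediate. By~\eqref{eq:injection_equivalence}, $\rho(\phi_{N,n})\circ\msf R_N\overset{d}{=}\msf R_n$ for every injection $\phi_{N,n}$. This holds first for finite $x\in\bigsqcup_m\Omega_m$, extends to $x\in\overline{\Omega}_{\infty}$ by the definition of $\msf R_k$ on limit objects as weak limits, and finally passes to random $X\sim\mu_{\infty}$ independent of the sampling by mixing. Uniqueness of $\mu_{\infty}$ follows from the injectivity part of Proposition~\ref{prop:fd_measures_as_limits}. That proposition applies because Theorem~\ref{thm:W1_bound_sampling} gives $\mbb Ed_{\mathrm{samp}}(x,\msf R_n(x))\to0$ for $x\in\bigsqcup_n\Omega_n$. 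To get the same for $x\in\overline{\Omega}_{\infty}$, approximate $x$ by finite objects $x_i\to x$ and use continuity of $x\mapsto\mathrm{Law}(\msf R_n(x))$ together with the uniformity in $x$ of the bound.

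For (Projection consistency) $\Rightarrow$ (Sampling representation), let $Y_N\sim\mu_N$ on $\Omega_N$ and view $Y_N$ as a random element of $\overline{\Omega}_{\infty}$. By compactness (Proposition~\ref{prop:compact_under_sampling}), pass to a subsequence along which $\mathrm{Law}(Y_N)\to\mu_{\infty}\in\mc P(\overline{\Omega}_{\infty})$ weakly. Fix $n$; the goal is $\mathrm{Law}(\msf R_n(Y_N))\to\mu_n$. Note that $\msf R_n(Y_N)=\rho(F_{N,n})Y_N$. Condition on the event that $F_{N,n}$ is injective, which has probability at least $1-n^2/(2N)\to1$. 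On this event $F_{N,n}$ is some injection $\phi_{N,n}$, and it is independent of $Y_N$. Projection consistency then gives $\rho(\phi_{N,n})Y_N\sim\mu_n$. Hence the total variation distance between $\mathrm{Law}(\msf R_n(Y_N))$ and $\mu_n$ is at most $n^2/(2N)\to0$.

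Next, the map $\mc P(\overline{\Omega}_{\infty})\to\mc P(\Omega_n)$, $\nu\mapsto\mathrm{Law}(\msf R_n(X))$ with $X\sim\nu$, is continuous, as shown in the proof of Proposition~\ref{prop:fd_measures_as_limits}. Therefore $\mathrm{Law}(\msf R_n(Y_N))\to\mathrm{Law}(\msf R_n(X))$ along the subsequence, where $X\sim\mu_{\infty}$. Combining the two limits yields $\mu_n=\mathrm{Law}(\msf R_n(X))$ for every $n$. The main subtlety is this identification of limits: $Y_N$ lives on $\Omega_N\subseteq\bigsqcup\Omega_m$ rather than on a fixed space, so continuity must be invoked through the extension of $\msf R_n$ to $\overline{\Omega}_{\infty}$. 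That extension is well defined exactly because convergence in $d_{\mathrm{samp}}$ means weak convergence of all samples.

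For extremality, the map~\eqref{eq:fd_sampling_map} is a linear (mixture-preserving) bijection from $\mc P(\overline{\Omega}_{\infty})$ onto the set of projection-consistent sequences; bijectivity is the equivalence just proved combined with uniqueness. Such a bijection maps extreme points to extreme points. The extreme points of $\mc P(\overline{\Omega}_{\infty})$ are the Dirac masses $\delta_x$. Their images are exactly the sequences $(\mathrm{Law}(\msf R_n(x)))_n$ for deterministic $x$, which are therefore extremal.
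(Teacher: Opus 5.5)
Your proof is correct. For the easy parts it matches the paper:
\begin{itemize}
\item sampling representation implies projection consistency via \eqref{eq:injection_equivalence}, extended to limit objects and then mixed;
\item uniqueness and extremality follow from the injective affine map \eqref{eq:fd_sampling_map} of Proposition~\ref{prop:fd_measures_as_limits}, with Theorem~\ref{thm:W1_bound_sampling} extended to $\overline{\Omega}_{\infty}$ by approximation.
\end{itemize}

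The genuine difference is in the existence direction. The paper invokes the finite de Finetti theorem \cite[Thm.~4.15]{levin2025deFin} to produce some sequence $\nu_i\in\mc P(\Omega_{\infty})$ with $\mathrm{Law}(\msf R_n(X_i))\to\mu_n$. It then asserts that $(\nu_i)$ itself converges, which implicitly uses the injectivity in Proposition~\ref{prop:fd_measures_as_limits} to identify all subsequential limits.

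You instead take $\nu_N=\mu_N$ and prove the needed estimate inline. Conditioning on injectivity of $F_{N,n}$ and applying projection consistency gives a total variation bound of $n(n-1)/(2N)$ between $\mathrm{Law}(\msf R_n(Y_N))$ and $\mu_n$. This is exactly the Diaconis--Freedman comparison of sampling with and without replacement in this setting. You then only need a subsequential limit, which compactness of $\mc P(\overline{\Omega}_{\infty})$ provides.

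Your route buys a self-contained argument. Existence uses only compactness (Proposition~\ref{prop:compact_under_sampling}) and continuity of the sampling kernel, so Theorem~\ref{thm:W1_bound_sampling} is needed only for uniqueness and extremality.

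The paper's route buys uniformity with Theorem~\ref{thm:eqp_consistent_laws}. Your conditioning trick relies on a uniformly random map $[n]\to[N]$ being injective with high probability. A uniformly random map $[N]\to[n]$ is almost never an equipartition, so the binning analog genuinely requires the dual de Finetti theorem \cite[Thm.~4.20]{levin2025deFin}.
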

The proof combines Proposition~\ref{prop:fd_measures_as_limits} and Theorem~\ref{thm:W1_bound_sampling}, along with a ``finite'' de Finetti theorem comparing sampling with and without replacement from~\cite{levin2025deFin} generalizing~\cite{diaconis_freedman}, see Section~\ref{sec:proofs_sampling}.

Theorem~\ref{thm:proj_consistent_laws} may be viewed as generalizing one formulation of de Finetti's theorem and several related results concerning infinite exchangeable arrays, as we proceed to illustrate.
\begin{example}[Infinite exchangeable arrays]\label{ex:deFin}
    Suppose $\vct V_n=\RR^{d\times n}$ and $\Omega_n=\Theta^n$ for compact $\Theta\subseteq\RR^d$. In this case, projection-consistent distributions are precisely infinite exchangeable arrays. Indeed, if $(X_1,X_2,\ldots)$ is an infinite exchangeable array with $X_i\in\Theta$ almost surely then $\mu_n=\mathrm{Law}(X_1,\ldots,X_n)$ is projection-consistent, and conversely if a sequence of measures $(\mu_n\in\mc P(\Omega_n))$ is projection-consistent then the sequence $(X_1,\ldots,X_n)\sim\mu_n$ extends to an infinite exchangeable array by Kolmogorov's extension theorem. Since $\overline{\Omega}_{\infty}\cong\mc P(\Theta)$ in this case, Theorem~\ref{thm:proj_consistent_laws} recovers the correspondence between infinite exchangeable arrays and mixtures of iid arrays, along with the extremality of iid arrays, that follows from de Finetti's theorem and its extensions by Hewitt--Savage~\cite{hewitt_savage} and Dynkin~\cite{dynkin1953classes}.
\end{example}

\begin{example}[Infinite exchangeable random graph models]\label{ex:AldousHoover}
    Suppose $\vct V_n=\RR^{n\times n}$ and $\Omega_n\subseteq\{0,1\}^{n\times n}$. Reasoning as in Example~\ref{ex:deFin}, we note that projection-consistent sequences $(\mu_n\in\mc P(\Omega_n))$ correspond to infinite two-dimensional arrays of binary random variables $(X_{i,j})_{i,j\in\NN}$ whose distribution is unchanged by simultaneous permutations of rows and columns. Such arrays can be viewed, in turn, as unweighted random graph models on countably-many exchangeable vertices, see~\cite{diaconis2007graph}. In this setting, Theorem~\ref{thm:proj_consistent_laws} implies that the countable random graph models obtained by sampling vertices with replacement from deterministic graphs and their limits are extremal, recovering~\cite[Cor.~5.4]{diaconis2007graph}. 
    Our Theorem~\ref{thm:proj_consistent_laws} similarly applies to general weighted countable graph models, as well as hypergraph models. 
\end{example}

The second consequence of Theorem~\ref{thm:W1_bound_sampling} is sketching and generalization bounds for functions continuous in sampling metric. While the rates we obtain for general Lipschitz-continuous functions are quite slow, we obtain substantially faster rates by exploiting particular representations of the functions at hand in terms of low-dimensional samples of their inputs.

\begin{theorem}[Sketching and generalization rates]\label{thm:sampling_rates_funcs}
    In the setting of Theorem~\ref{thm:W1_bound_sampling}, consider functions $f,\widehat f\colon\bigsqcup_n\Omega_n\to\RR$.
    \begin{enumerate}[font=\emph, align=left, labelwidth=!, labelindent=0pt]
        \item[(General)] If $f$ is $L$-Lipschitz in $d_{\mathrm{samp}}$, then for any $x\in\bigsqcup_n\Omega_n$ with probability at least $1-2e^{-2\epsilon^2}$ we have
        \begin{equation*}
            |f(x)-f(\msf R_n(x))|\leq LCr\exp_2\Big[-(M^{-1}\log_2 n)^{\frac{1}{1+D}}\Big] + \frac{4rL\epsilon}{\sqrt{n}}.
        \end{equation*}
        If $\widehat f$ is also $L$-Lipschitz in $d_{\mathrm{samp}}$, then 
        \begin{equation}\label{eq:generalization_sampling_lip} 
        \mathrm{e}_{\infty}(f,\widehat f)\leq\mathrm{e}_n(f,\widehat f)+2LCr\exp_2\left(-(M^{-1}\log_2 n)^{\frac{1}{1+D}}\right).
        \end{equation}

        \item[(Fixed-dimensional laws)] Suppose $f(x)=g(\mathrm{Law}(\msf R_k(x)))$ where $g\colon\mc P(\Omega_k)\to\RR$ is $L$-Lipschitz in Wasserstein distance and $|\mc I_k|>2$. Then for any $x\in\bigsqcup_n\Omega_n$ with probability at least $1-2e^{-2\epsilon^2}$ we have
        \begin{equation*}
            |f(x)-f(\msf R_n(x))| \leq Lr\left[ \frac{k(k-1)}{n} + \frac{2k\epsilon}{\sqrt{n}} + 228\left(\frac{k^2}{n}\right)^{1/|\mc I_k|}\right].
        \end{equation*}
        If $\widehat f$ has the same form as $f$ above, then
        \begin{equation}\label{eq:generalization_sampling_Law}
            \mathrm{e}_{\infty}(f,\widehat f)\leq \mathrm{e}_n(f,\widehat f) + 2Lr\left[\frac{k(k-1)}{n} + 228\left(\frac{k^2}{n}\right)^{1/|\mc I_k|}\right].
        \end{equation}

        \item[(Fixed-dimensional moments)] Suppose $f=\sigma(\mbb Eg_k\circ\msf R_k)$ where each coordinate function of $g_k\colon\Omega_k\to\RR^\ell$ is $B$-bounded and $\sigma\colon\RR^\ell\to\RR$ is $L_{\sigma}$-Lipschitz with respect to the $\ell_1$ norm on $\RR^\ell$. Then for any $x\in\bigsqcup_n\Omega_n$ with probability at least $1-2\ell e^{-2\epsilon^2}$ we have
        \begin{equation*}
            |f(x)-f(\msf R_n(x))|\leq L_{\sigma}Bk\ell\left(\frac{k-1}{n}+\frac{2\epsilon}{\sqrt{n}}\right).
        \end{equation*}
        If $\widehat f$ has the same form (possibly with a different $L_{\sigma}$-Lipschitz $\widehat \sigma$ and $B$-bounded $\widehat g_k$), then
        \begin{equation}\label{eq:generalization_sampling_means}
            \mathrm{e}_{\infty}(f,\widehat f)\leq \mathrm{e}_n(f,\widehat f) + 2L_{\sigma}Bk\ell\left(\frac{k-1}{n} + \frac{2}{\sqrt{n}}\right).
        \end{equation}
        When $\ell=1$ and $\sigma=\mathrm{id}$, the $2/\sqrt{n}$ term above can be omitted.
    \end{enumerate}
    The generalization bounds~\eqref{eq:generalization_sampling_lip},~\eqref{eq:generalization_sampling_Law}, and~\eqref{eq:generalization_sampling_means} also hold for average error with respect to projection-consistent distributions.  
\end{theorem}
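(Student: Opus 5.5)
The three parts share one template: bound $|f(x)-f(\msf R_n(x))|$ by its mean plus a subgaussian deviation, then feed the resulting uniform bound on $\mbb E|f(x)-f(\msf R_n(x))|$ into the argument of Theorem~\ref{thm:general_equivalence}(3$\Rightarrow$4). Concretely, for any $x\in\bigsqcup_N\Omega_N$ we write
\begin{equation*}
|f(x)-\widehat f(x)|\leq \mbb E|f(x)-f(\msf R_n(x))| + \mbb E|f(\msf R_n(x))-\widehat f(\msf R_n(x))| + \mbb E|\widehat f(\msf R_n(x))-\widehat f(x)|.
\end{equation*}
The middle term is at most $\mathrm{e}_n(f,\widehat f)$ because $\msf R_n(x)\in\Omega_n$ almost surely, since the sets are closed under sampling. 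So each generalization bound is $\mathrm{e}_n(f,\widehat f)$ plus twice the expected sketching error. For average error with respect to a projection-consistent sequence, Theorem~\ref{thm:proj_consistent_laws} writes $\mu_n=\mathrm{Law}(\msf R_n(X))$ with $X\sim\mu_\infty$. We then take the same decomposition and integrate over $X$, using that each term is bounded uniformly in $x$ and extends to $\overline{\Omega}_\infty$ by continuity.

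\emph{General part.} By the Lipschitz property, $|f(x)-f(\msf R_n(x))|\leq L\,d_{\mathrm{samp}}(x,\msf R_n(x))$. Theorem~\ref{thm:W1_bound_sampling} bounds the expectation of this distance by $Cr\exp_2[\cdot]$ and says it is $16r^2/n$-subgaussian. A subgaussian tail bound then gives deviation $4r\epsilon/\sqrt n$, with the Hoeffding-style normalization producing probability $1-2e^{-2\epsilon^2}$. The generalization bound uses only the expectation.

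\emph{Fixed-dimensional laws.} Here $|f(x)-f(\msf R_n(x))|\leq L\,W_1(\msf R_k(x),\msf R_k\circ\msf R_n(x))$. We apply the per-$k$ expectation bound and the $4k^2r^2/n$-subgaussianity from Theorem~\ref{thm:W1_bound_sampling}, which gives deviation $2kr\epsilon/\sqrt n$.

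\emph{Fixed-dimensional moments.} Write $h_j(x)=\mbb E g_{k,j}(\msf R_k(x))$. The Lipschitz property of $\sigma$ gives $|f(x)-f(\msf R_n(x))|\leq L_\sigma\sum_j|h_j(x)-h_j(\msf R_n(x))|$, so it suffices to control one bounded moment. Fix a realization $\widehat F$ of $F_{N,n}$. Then $h_j(\msf R_n(x))=\mbb E\,g_{k,j}(\rho(\widehat F\circ F_{n,k})x)$, which is a V-statistic of order $k$ in the $n$ iid uniform indices $\widehat F(1),\dots,\widehat F(n)$. Its mean differs from the U-statistic $h_j(x)$ only through non-injective $k$-tuples, whose fraction is at most $k(k-1)/(2n)$. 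With $|g|\leq B$, this bias is at most $Bk(k-1)/n$.

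For the fluctuation, changing a single index $\widehat F(i)$ affects only the tuples that use $i$, a fraction at most $k/n$. This gives bounded differences $2Bk/n$, and McDiarmid yields deviation $2Bk\epsilon/\sqrt n$ with probability $1-2e^{-2\epsilon^2}$. A union bound over the $\ell$ coordinates gives the stated bound. For the generalization bound we integrate the tail, giving $\mbb E|\cdot|\leq$ bias $+\,O(Bk/\sqrt n)$, which is absorbed into the $2/\sqrt n$ term. When $\ell=1$ and $\sigma=\mathrm{id}$ there is no absolute value inside $\sigma$, so we can take expectations directly. The fluctuation then averages to zero, leaving only the bias, which is why the $2/\sqrt n$ term can be omitted.

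The main obstacle is this moments case. The bias and bounded-difference bookkeeping for the V-statistic must be done carefully so that the constants $k(k-1)/n$ and $2k/\sqrt n$ come out exactly. One also has to check that composing $\rho$ along $\widehat F\circ F_{n,k}$ really reduces $\mbb E g(\msf R_k(\msf R_n x))$ to an empirical V-statistic.
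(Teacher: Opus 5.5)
Your proposal is correct and is essentially the paper's proof. It uses the same three-term decomposition through $\msf R_n(x)\in\Omega_n$; Theorem~\ref{thm:W1_bound_sampling} (expectation bound plus bounded-difference subgaussianity) for the Lipschitz and fixed-law cases; Theorem~\ref{thm:proj_consistent_laws} with conditioning on $X$ for average error; and, for the moment case, exactly the V-statistic bias/bounded-difference argument that the paper packages as Proposition~\ref{prop:concentration_coFS}, with your count of non-injective tuples replacing its coupling of a random map with a random injection.

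The one place to be careful is the constant in the moment generalization bound. Integrating the McDiarmid tail $2e^{-nt^2/(2B^2k^2)}$ naively gives $\sqrt{2\pi}\,Bk/\sqrt{n}$, which exceeds $2Bk/\sqrt{n}$. Instead, for $Z=h_j(\msf R_n(x))$, use $\mbb E|Z-\mbb EZ|\leq\sqrt{\mathrm{Var}(Z)}\leq Bk/\sqrt{n}$ (Efron--Stein with ranges $2Bk/n$), or truncate the tail bound at $1$ before integrating.
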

The proof uses Theorem~\ref{thm:W1_bound_sampling}, and we again defer it to Section~\ref{sec:proofs_sampling}. 
%
We now illustrate these improved rates on several classes of functions, beginning with polynomials.
\begin{corollary}[Polynomials]\label{cor:moment_polys}
    In the setting of Theorem~\ref{thm:W1_bound_sampling}, let $p\colon\bigsqcup_n\Omega_n\to\RR$ be a polynomial of degree $d$ unchanged by equipartitions in the sense of~\eqref{eq:eqp_invariant}. Then there exists $B>0$ such that for any $x\in\bigsqcup_n\Omega_n$ with probability at least $1-2e^{-2\epsilon^2}$, we have
    \begin{equation}\label{eq:sampling_poly_sketch}
        |p(x)-p(\msf R_n(x))|\leq BDd\left[\frac{Dd-1}{n} + \frac{2\epsilon}{\sqrt{n}}\right].
    \end{equation}
    If $\widehat p$ is another such polynomial, then there exists $\widehat B>0$ such that
    \begin{equation*}
        \mathrm{e}_{\infty}(p,\widehat p)\leq \mathrm{e}_n(p,\widehat p) + \frac{(B+\widehat B)Dd(Dd-1)}{n}.
    \end{equation*}
\end{corollary}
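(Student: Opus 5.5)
The plan is to reduce Corollary~\ref{cor:moment_polys} to the ``fixed-dimensional moments'' case of Theorem~\ref{thm:sampling_rates_funcs}, with $\ell=1$ and $\sigma=\mathrm{id}$. That case requires a sampling representation of $p$, which we get from the de Finetti-type result quoted at the end of Section~\ref{sec:sampling_and_binning}. Since $p$ restricts to a polynomial of degree $d$ on each $\vct V_n$ and is unchanged by equipartitions in the sense of~\eqref{eq:eqp_invariant}, \cite[Thm.~5.2]{levin2025deFin} gives some $k$ and a fixed-dimensional polynomial $g_k\in\RR[\vct V_k]$ with $p(x)=\mbb E g_k(\msf R_k(x))$ for all $x$.

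The first step is to pin down $k$: we need $k\le Dd$. The heuristic is as follows. A degree-$d$ monomial on $\vct V_n$ involves at most $d$ indices from $\mc I_n=\bigsqcup_m[n]^{d_m}/H_m$. Each such index involves at most $D$ elements of $[n]$, so the monomial depends on at most $Dd$ elements of $[n]$. Its average over sampling with replacement therefore only needs a sample of that many points. I would check that the cited theorem indeed yields $k=Dd$ (or at most $Dd$); this is the step I expect to be the main obstacle. If the reference only states existence of some $k$, the fallback is to rederive the bound by writing $p$ as a combination of equipartition-invariant orbit sums of monomials. Each such orbit sum equals $\mbb E$ of the monomial composed with $\msf R_{Dd}$, up to lower-order terms, and these lower-order terms are handled by the same identity inductively. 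Duplicating points in the sample via $\rho$ of a map $[Dd]\to[k']$ lets all terms be realized with a single $k=Dd$.

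The second step is boundedness. Set $B=\sup_{y\in\Omega_{Dd}}|g_{Dd}(y)|$, which is finite because $\Omega_{Dd}$ is compact and $g_{Dd}$ is a polynomial, so continuous. Closedness of $(\Omega_n)$ under sampling guarantees that $g_{Dd}$ is only ever evaluated on $\Omega_{Dd}$, so the coordinate function is $B$-bounded on the relevant set.

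The final step is to apply the fixed-dimensional moments part of Theorem~\ref{thm:sampling_rates_funcs} with $k=Dd$, $\ell=1$, $\sigma=\mathrm{id}$ and $L_\sigma=1$. This gives~\eqref{eq:sampling_poly_sketch} with probability at least $1-2e^{-2\epsilon^2}$. For the generalization bound, take $\widehat p=\mbb E\widehat g_{Dd}\circ\msf R_{Dd}$ with bound $\widehat B$. Rather than using a common bound $\max(B,\widehat B)$ and the factor $2$, I would rerun the short argument behind~\eqref{eq:generalization_sampling_means}. For $x$ of arbitrary size, write $|p(x)-\widehat p(x)|\le|p(x)-\mbb E p(\msf R_n x)|+\mathrm{e}_n+|\mbb E\widehat p(\msf R_n x)-\widehat p(x)|$. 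The expected bias terms are $BDd(Dd-1)/n$ and $\widehat B Dd(Dd-1)/n$ respectively, using the remark that the $2/\sqrt n$ term can be omitted when $\ell=1$ and $\sigma=\mathrm{id}$. Summing yields the claimed $(B+\widehat B)Dd(Dd-1)/n$.
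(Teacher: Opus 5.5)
Your proposal is correct and matches the paper's proof. The paper takes the representation $p(x)=\mbb E g_{Dd}(\msf R_{Dd}x)$ with $k=Dd$ directly from \cite[Sec.~5]{levin2025deFin}, so the step you flag as the main obstacle is settled by that citation; it then sets $B=\sup_{\Omega_{Dd}}|g_{Dd}|$, invokes the fixed-dimensional-moments case of Theorem~\ref{thm:sampling_rates_funcs}, and your rerun of the bias argument with separate constants $B$ and $\widehat B$ is exactly what yields the stated $(B+\widehat B)$ factor rather than $2\max\{B,\widehat B\}$.
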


\begin{proof}
    It was shown in~\cite[Sec.~5]{levin2025deFin} that there is a polynomial $g_{Dd}\in\RR[\vct V_{Dd}]$ satisfying $p(x)=\mbb Eg_{Dd}(\msf R_{Dd}x)$ for all $x$. Setting $B=\sup_{x\in\Omega_{Dd}}|g_{Dd}(x)|$ and similarly for $\widehat B$, we obtain both conclusions from Theorem~\ref{thm:sampling_rates_funcs}.
\end{proof}
Notably, Corollary~\ref{cor:moment_polys} applies to any polynomial function and any finite-degree compatible sequence of index sets. Here are a few examples encompassed by the above result.
\begin{enumerate}[align=left, font=\emph,labelwidth=!, labelindent=0pt,wide]
    \item[(Moment polynomials)] When $\vct V_n=\RR^{d\times n}$, polynomials unchanged by duplication are precisely moment polynomials 
    $$p(x)=q\Big(\mbb E(\msf R_1(x))^{\alpha_1},\ldots,\mbb E(\msf R_1(x))^{\alpha_\ell}\Big),$$ 
    where $\alpha_i\in \NN_0^d$ and $q$ is a fixed polynomial~\cite[\S6.1]{levin2025deFin}. Here $\mbb E(\msf R_1(x))^{\alpha}=\frac{1}{n}\sum_{j=1}^n\prod_{\ell=1}^d(x_j)_\ell^{\alpha_{\ell}}$ for $\alpha\in\NN_0^d$.
    
    \item[(Graph densities)] When $\vct V_n=\RR^{n\times n}$, polynomials unchanged by duplication are precisely linear combinations of graph homomorphism densities~\cite[\S6.3]{levin2025deFin}. Specifically, if $H\in\NN_0^{k\times k}$ is a multigraph, we define the homomorphism density of $H$ in $G\in\vct V_n$ by
    \begin{equation*}
        t(H;G) = \frac{1}{n^k}\sum_{f\colon[k]\to[n]}\prod_{i,j=1}^kG_{f(i),f(j)}^{H_{i,j}}.
    \end{equation*}
    It is called a homomorphism density because when $H$ and $G$ are undirected simple graphs, the value $t(H;G)$ is the fraction of maps between vertex sets $V(H)\to V(G)$ that are graph homomorphisms.
    Similarly, when $\vct V_n=(\RR^n)^{\otimes d}$ is viewed as the space of adjacency tensors of hypergraphs, polynomials unchanged by duplication correspond to linear combinations of hypergraph densities.

    \item[(Graph signals)] When $\vct V_n=\RR^{n\times n}\oplus \RR^{n\times d}$ consists of pairs of graphs and graph signals on them, then polynomials unchanged by duplication include polynomial graphon neural networks~\cite{transferab1}, which are compositions of polynomials of the form
    \begin{equation*}
        (G,X)\mapsto \left(G,\sigma\left(\sum_{\ell=0}^L\frac{1}{n^{\ell}}G^{\ell}X\Theta_{\ell}\right)\right),\quad \textrm{for polynomial } \sigma \textrm{ and } \Theta_0,\ldots,\Theta_L\in\RR^{d\times d},
    \end{equation*}
    followed by a last pooling layer such as $(G,X)\mapsto \frac{1}{n}X^\top \mathbbm{1}$ that is polynomial and unchanged under duplication.
\end{enumerate}
For all of the above families of polynomials, we obtain any-dimensional sketching rates of $O(n^{-1/2})$ and any-dimensional generalization rates of $O(n^{-1})$. 

We remark that the rates for these polynomials are a substantial improvement over those previously available. For example, the result~\cite[Prop.~4.2]{levin2025transferring} implies $O(n^{-1/d})$ generalization rates for moment polynomials and $O(1/\sqrt{\log n})$ rates for graph densities and signals with respect to projection-consistent any-dimensional distributions. The improvement comes precisely from exploiting the fact that polynomials compute moments of low-dimensional samples, instead of merely their Lipschitz continuity.

Next, we turn to nonpolynomial function classes.
\begin{corollary}[Normalized DeepSets]\label{cor:normalized_deepsets}
    Suppose $\mc I_n=[d]\times [n]$ so $\vct V_n=\RR^{d\times n}$, and let $(\Omega_n\subseteq [-r,r]^{d\times n})$ be a sequence of compact sets closed under sampling. Suppose each coordinate function of $h\colon\RR^d\to\RR^\ell$ is $B$-bounded on $\Omega_1$ and $\sigma\colon\RR^\ell\to\RR$ is $L_{\sigma}$-Lipschitz in $\ell_1$-norm. Consider $f\colon\bigsqcup_n\Omega_n\to\RR$ defined by \begin{equation}\label{eq:normalized_deepsets}
        f(x)=\sigma\left(\frac{1}{n}\sum_{i=1}^nh(x_i)\right)=\sigma(\mbb Eh\circ\msf R_1(x)),\quad \textrm{for } x=(x_1,\ldots,x_n)\in\vct V_n.
    \end{equation}
    Then for any $x\in\bigsqcup_n\Omega_n$ with probability at least $1-2\ell e^{-2\epsilon^2}$ we have
    \begin{equation*}
        |f(x)-f(\msf R_n(x))|\leq \frac{2L_{\sigma}B\ell\epsilon}{\sqrt{n}},
    \end{equation*}
    and for any other $\widehat f$ of the above form, we have
    \begin{equation*}
        \mathrm{e}_{\infty}(f,\widehat f)\leq \mathrm{e}_n(f,\widehat f) + \frac{4L_{\sigma}B\ell}{\sqrt{n}}.
    \end{equation*}
\end{corollary}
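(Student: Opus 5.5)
This is a direct specialization of the (Fixed-dimensional moments) part of Theorem~\ref{thm:sampling_rates_funcs} with $k=1$. First write $f$ in the required form. For $x\in\vct V_n$, the sample $\msf R_1(x)$ is a single uniformly random column $x_J$ with $J\sim\mathrm{Unif}([n])$. Hence $\mbb E h(\msf R_1(x))=\frac{1}{n}\sum_i h(x_i)$, which gives the second equality in~\eqref{eq:normalized_deepsets}. So $f=\sigma(\mbb E g_1\circ\msf R_1)$ with $g_1=h|_{\Omega_1}$. Since $\Omega_1\subseteq[-r,r]^{d}$ is the relevant domain (samples land in $\Omega_1$ by closure under sampling), each coordinate of $g_1$ is $B$-bounded on $\Omega_1$, and $\sigma$ is $L_\sigma$-Lipschitz in $\ell_1$. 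These are exactly the hypotheses of the third part of Theorem~\ref{thm:sampling_rates_funcs}.

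Next, plug $k=1$ into the bounds there. The term $\frac{k-1}{n}$ vanishes. The sketching bound becomes $L_\sigma B\ell\cdot 2\epsilon/\sqrt n$ with probability at least $1-2\ell e^{-2\epsilon^2}$. The generalization bound~\eqref{eq:generalization_sampling_means} becomes $\mathrm{e}_n(f,\widehat f)+2L_\sigma B\ell\cdot 2/\sqrt n=\mathrm{e}_n(f,\widehat f)+4L_\sigma B\ell/\sqrt n$. These match the claimed bounds.

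The only caveat is the hypothesis $|\mc I_k|>2$ in Theorem~\ref{thm:W1_bound_sampling}. It matters only for the (Fixed-dimensional laws) part, not the moments part, so it is not needed here. If I wanted to be self-contained instead of citing the theorem, I would argue directly. Conditionally on a realization of $\msf R_n$, $\mbb E h(\msf R_1(\msf R_n x))=\frac1n\sum_{i=1}^n h(x_{J_i})$ with $J_i$ iid uniform, an average of $n$ iid $B$-bounded terms with mean $\mbb E h(\msf R_1 x)$. Hoeffding's inequality per coordinate plus a union bound over the $\ell$ coordinates gives the sketching bound. For generalization, compare $f(x)$ and $\widehat f(x)$ through $\mbb E f(\msf R_n x)$ and bound $\mbb E\|\cdot\|_1$ deviations by $\ell B\cdot 2/\sqrt n$ (variance bound), as in the proof of Theorem~\ref{thm:general_equivalence}(4).
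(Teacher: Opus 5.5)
Your proposal is correct and matches the paper's proof. The paper simply invokes the (Fixed-dimensional moments) part of Theorem~\ref{thm:sampling_rates_funcs} with $k=1$ and $g_1=h|_{\Omega_1}$, so the $\frac{k-1}{n}$ terms vanish and the stated constants drop out. Your remark that the $|\mc I_k|>2$ condition is irrelevant here is right, and your self-contained Hoeffding-plus-union-bound alternative is also sound.
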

\begin{proof}
    This directly follows from Theorem~\ref{thm:sampling_rates_funcs}.
\end{proof}
When $\sigma$ and $h$ are neural networks, the architecture defined by~\eqref{eq:normalized_deepsets} is called normalized DeepSets~\cite{bueno2021representation,levin2025transferring}. In particular, when $\Omega_n=\Theta^n$ for compact $\Theta\subseteq\RR^d$, we can extend $f$ in~\eqref{eq:normalized_deepsets} to measures $\mu\in\mc P(\Theta)$ by $f(\mu)=\sigma(\mbb E_{\mu}h)$. This extension was shown in~\cite{bueno2021representation} to be Lipschitz-continuous in Wasserstein metric on $\mc P(\Theta)$. In turn, the result~\cite[Prop.~4.2]{levin2025transferring} implies $O(n^{-1/d})$ generalization rates for such Lipschitz functions on measures for average error with respect to infinite exchangeable arrays (Example~\ref{ex:deFin}). This rate can similarly be derived from Theorem~\ref{thm:sampling_rates_funcs}, and Corollary~\ref{cor:normalized_deepsets} yields improved $O(n^{-1/2})$ rates for functions of the form~\eqref{eq:normalized_deepsets}.

Finally, we give sketching and generalization rates for transformers, making Corollary~\ref{cor:transformers_intro} precise. 
\begin{corollary}[Permutation-invariant transformers]\label{cor:transformers}
    Suppose $\mc I_n=[d]\times[n]$ with $d>2$, so $\vct V_n=\RR^{d\times n}$, and $\Omega_n=\Theta^n$ for compact $\Theta\subseteq[-r,r]^d$. For matrices $Q,K\in\RR^{h\times d}$ and $V\in\RR^{d\times d}$, define the self-attention mapping $A\colon\vct V_n\to\vct V_n$ by
    \begin{equation}\label{eq:attention}
        A(x_1,\ldots,x_n)_i = x_i + \frac{1}{Z_i}\sum_{j=1}^n\exp(\langle Qx_i, Kx_j\rangle)Vx_j,\quad \textrm{where } Z_i=\sum_{j=1}^n\exp(\langle Qx_i, Kx_j\rangle).
    \end{equation}
    Let $\phi\colon\RR^d\to\RR^d$ be $L_{\phi}$-Lipschitz, and let it act column-wise on $\vct V_n$. Finally, let $P\colon\vct V_n\to\vct V_1$ be the mean pooling map $P(x_1,\ldots,x_n)=\frac{1}{n}\sum_{i=1}^nx_i$. Define a depth-$\ell$ transformer to be the mapping $T\colon\bigsqcup_n\vct V_n\to\vct V_1$ defined by the composition
    \begin{equation*}
        T = P\circ(\phi\circ A)^{\circ \ell}.
    \end{equation*}
    Then for any $x\in\bigsqcup_n\Omega_n$ and $\epsilon>0$ with probability at least $1-2(\ell+1) e^{-2\epsilon^2}$ we have
    \begin{equation}\label{eq:transformer_sampling}
        \|T(x)-T(\msf R_n(x))\|_2\leq \frac{C(1+2\epsilon)}{\sqrt{n}},
    \end{equation}
    where $C>0$ depends on the architecture and on $r$.
    For any other such map $\widehat T$, we then have
    \begin{equation*}
        \mathrm{e}_{\infty}(T,\widehat T)\leq \mathrm{e}_n(T,\widehat T) + \frac{C'}{\sqrt{n}},
    \end{equation*}
    where $\mathrm{e}_n(T,\widehat T)=\sup_{x\in\Omega_{\leq n}}\|T(x)-\widehat T(x)\|_2$ and similarly for $\mathrm{e}_{\infty}$. Here $C'>0$ depends on both architectures and on $r$. 
    Also, for any $G\colon\bigsqcup_n\Omega_n\to\vct V_1$ of the form $G(x)=\bar G(\mathrm{Law}(\msf R_1(x)))$ for Lipschitz $\bar G\colon\mc P(\Theta)\to\vct V_1$, we have
    \begin{equation*}
        \mathrm{e}_{\infty}(T,G)\leq \mathrm{e}_n(T,G) + \frac{C''}{n^{1/d}},
    \end{equation*}
    where $C''>0$ depends on the architecture of $T$, on the Lipschitz constant of $\bar G$, and on $\Theta$.
\end{corollary}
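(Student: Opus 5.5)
The plan is to work on measures and to treat the transformer as a map of empirical measures. For $x\in\Theta^n$, write $\mu_x=\mathrm{Law}(\msf R_1(x))=\frac1n\sum_i\delta_{x_i}\in\mc P(\Theta)$. Following the measure-theoretic in-context mapping of~\cite{furuya2025transformers}, a layer $\phi\circ A$ acts on a token $x_i$ as $\Gamma(\mu_x,x_i)$, where
\begin{equation*}
\Gamma(\mu,z)=\phi\Big(z+\frac{\int e^{\langle Qz,Ky\rangle}Vy\,d\mu(y)}{\int e^{\langle Qz,Ky\rangle}d\mu(y)}\Big).
\end{equation*}
The output measure is the pushforward $\Gamma(\mu,\cdot)_\#\mu$. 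After $\ell$ layers, $T(x)=\int z\,d\mu^{(\ell)}(z)$ with $\mu^{(0)}=\mu_x$ and $\mu^{(j+1)}=\Gamma_{j}(\mu^{(j)},\cdot)_\#\mu^{(j)}$. Every intermediate measure is supported in a compact set $\Theta_j$ determined by $r$, $V$ and $\phi$, since attention outputs are convex combinations of the $Vy$.

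On these compact sets the exponential weights are bounded above and below. The integrands $z\mapsto e^{\langle Qz,Ky\rangle}$ and $e^{\langle Qz,Ky\rangle}Vy$ are bounded and Lipschitz in both variables. So the map $\mu\mapsto \Gamma(\mu,\cdot)_\#\mu$ is Lipschitz in $W_1$, and $T$ extends to a $W_1$-Lipschitz function $\bar T$ on $\mc P(\Theta)$. For the sketching bound, however, I will not use $W_1$ on $\mc P(\Theta)$, because that would only give $n^{-1/d}$. Instead I exploit the moment structure.

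Let $x\in\Theta^N$ and let $y=\msf R_n(x)$, so $\mu_y$ is the empirical measure of $n$ iid draws from $\mu_x$. I will propagate errors layer by layer, with error measured by $\delta_j=\max_{i\le n}\|y^{(j)}_i-x^{(j)}_{J_i}\|$, where $J_1,\ldots,J_n$ are the sampled indices. At layer $j$, the attention output at sampled token $y_i$ differs from that at $x_{J_i}$ in two ways.
\begin{enumerate}
\item The tokens fed in are perturbed by at most $\delta_j$. This costs $C\delta_j$ by the Lipschitz property.
\item The averages are over $\mu_{y^{(j)}}$ rather than $\mu_{x^{(j)}}$. These are ratios of empirical averages $\frac1n\sum_m g(z,x^{(j)}_{J_m})$ versus the corresponding means under $\mu_{x^{(j)}}$, for bounded functions $g$.
\end{enumerate}
The second error must be controlled uniformly in the query point $z$, which is the main obstacle. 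A pointwise Hoeffding bound at each of the $N$ queries does not suffice. I would first try to write the sampled-side query points as the finitely many $x^{(j)}_{J_i}$; but these depend on the sample, so a union bound fails. The fallback is a uniform deviation bound over $z\in\Theta_j$. The class $\{y\mapsto e^{\langle Qz,Ky\rangle}\}$ is parametrized by $Qz\in\RR^h$ with bounded Lipschitz dependence, so McDiarmid's inequality plus a Rademacher or chaining bound for this parametric Lipschitz class gives an $O((1+\epsilon)/\sqrt n)$ bound with probability $1-2e^{-2\epsilon^2}$. Since the dimension $h$ and the number of functions enter only the constant, this yields $\delta_{j+1}\le C(\delta_j+(1+\epsilon)/\sqrt n)$. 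One such event is needed per layer, giving $\ell$ events. A final Hoeffding bound for the mean pooling $P$ gives the $(\ell+1)$-th event. Unrolling the recursion gives~\eqref{eq:transformer_sampling}, with the probability $1-2(\ell+1)e^{-2\epsilon^2}$.

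For generalization between $T$ and $\widehat T$, I will follow the proof of Theorem~\ref{thm:general_equivalence} (4) and Theorem~\ref{thm:sampling_rates_funcs}. For $x$ of any size, insert $T(\msf R_n(x))$ and $\widehat T(\msf R_n(x))$ via the triangle inequality. The middle term is at most $\mathrm{e}_n(T,\widehat T)$. For the two outer terms, integrate the sketching tail bound, which gives $\mbb E\|T(x)-T(\msf R_n x)\|\le C''/\sqrt n$, and pick a realization achieving the expectation bound for both maps simultaneously.

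For $G$, the same decomposition applies, but $G$ is only $W_1$-Lipschitz on $\mc P(\Theta)$. Here I use $\mbb E W_1(\mu_x,\mu_{\msf R_n x})=\mbb E W_1(\msf R_1(x),\msf R_1\circ\msf R_n(x))=O(n^{-1/d})$ for $d>2$, which is Theorem~\ref{thm:W1_bound_sampling} with $k=1$, $|\mc I_1|=d$. Combined with the $1/\sqrt n\le n^{-1/d}$ bound for $T$, this yields $C''n^{-1/d}$.
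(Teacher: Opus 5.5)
Your argument is correct, and its skeleton matches the paper's. The common parts are:
\begin{itemize}
\item the in-context form of attention;
\item the per-layer discrepancy $\delta_j$ (the paper's $\Delta_j$), with the recursion obtained by first replacing the full layer-$j$ token measure by its $n$-sample empirical measure at a fixed query, then perturbing both arguments by $\delta_j$;
\item one high-probability event per layer plus one for the mean pooling;
\item the triangle-inequality generalization step.
\end{itemize}

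The genuine difference is how you make the deviation bound uniform over query points. You use symmetrization and chaining for the finite-dimensional Lipschitz family $\{y\mapsto e^{\langle Qz,Ky\rangle},\ e^{\langle Qz,Ky\rangle}Vy : z\in\Theta_j\}$, then McDiarmid for the supremum. The paper instead uses the explicit feature map of the exponential kernel, $e^{\langle Qz,Ky\rangle}=\langle G(Qz),G(Ky)\rangle_{\mathcal H}$, where $G(z)=(z^{\otimes m})_{m\geq 0}$ and $\mathcal H=\bigoplus_m(\RR^h)^{\otimes m}$ has the inner product weighted by $1/m!$. A single bounded-differences bound on $\|\mbb E_{\mu}G(KY)-\mbb E_{\mu_n}G(KY)\|_{\mathcal H}$ (and on its analogue with $G(KY)\otimes VY$) then gives uniformity in $z$ for free via Cauchy--Schwarz. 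The paper's route avoids covering numbers and gives constants independent of $h$, though they are exponential in $\|Q\|^2$ and $\|K\|^2$ times the squared radius of the token sets. Your route applies unchanged to any bounded, positive attention score that is Lipschitz in a finite-dimensional parameter, at the price of an entropy-integral constant depending on $h$.

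For the comparison with $G$, you use the $1/\sqrt n$ sketching bound on the $T$ side and the $n^{-1/d}$ empirical-measure rate only on the $G$ side. The paper instead writes $T(x)=\bar T(\mathrm{Law}(\msf R_1(x)))$ with $\bar T$ $W_1$-Lipschitz and applies the $n^{-1/d}$ rate to both. Since $d>2$, the two give the same bound.
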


The proof is based on the measure-theoretic in-context mapping of~\cite{furuya2025transformers}, and is deferred to Section~\ref{sec:proofs_sampling}. 
The $O(1/\sqrt{n})$ sketching rate for $\ell=1$ was previously proved in~\cite[Prop.~D.13]{bohbot2025token}.
We remark that evaluating $T(x)$ exactly for $x\in\vct V_N$ takes $O(N^2)$ operations due to~\eqref{eq:attention}, but evaluating it to a desired accuracy $\delta>0$ can be done with high probability in $O(\delta^{-4})$ time (independent of the length $N$ of the input) after sampling $n=O(\delta^{-2})$ columns from $x$ uniformly at random and using~\eqref{eq:transformer_sampling}. 

\subsection{Missing Proofs from Section~\ref{sec:sampling_wrep}}\label{sec:proofs_sampling}
We begin by proving Theorem~\ref{thm:W1_bound_sampling}. 
The following is the key proposition we shall need.
\begin{proposition}\label{prop:concentration_coFS}
    Fix a bounded $f\colon\Omega_k\to\RR$ and $x\in\bigsqcup_n\Omega_n$, and define the random variable $Z_f=\mbb E_{\msf R_k}f(\msf R_k\circ\msf R_n(x)) - \mbb Ef(\msf R_k(x))$ where expectations are only with respect to $\msf R_k$, which is independent of $\msf R_n$. 
    \begin{enumerate}[align=left, font=\emph, labelwidth=!, labelindent=0pt]
        \item[(Bias)] We have $|\mbb E_{\msf R_n}Z_f|\leq \frac{k(k-1)}{n}\|f\|_{\infty}$. 

        \item[(Concentration)] $Z_f-\mbb E_{\msf R_n}Z_f$ is $\frac{4k^2}{n}\|f\|_{\infty}^2$-subgaussian.
    \end{enumerate}
\end{proposition}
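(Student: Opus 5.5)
Write $x\in\Omega_N$ and realize $\msf R_n(x)=\rho(F_{N,n})x$ and $\msf R_k=\rho(F_{n,k})$ with $F_{n,k}$ independent of $F_{N,n}$. Since $\rho$ reverses compositions, $\msf R_k\circ\msf R_n(x)=\rho(F_{N,n}\circ F_{n,k})x$. The plan is to express everything through the random indices $U_i=F_{N,n}(i)$, $i\in[n]$, which are iid uniform on $[N]$.

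Conditionally on $U=(U_1,\ldots,U_n)$, the map $F_{N,n}\circ F_{n,k}$ is $k$ iid draws from the empirical measure $\mu_n=\frac1n\sum_i\delta_{U_i}$. Hence
\begin{equation*}
\mbb E_{\msf R_k}f(\msf R_k\circ\msf R_n(x))=\frac{1}{n^k}\sum_{g\colon[k]\to[n]}f\big(\rho(U\circ g)x\big)=:h(U_1,\ldots,U_n),
\end{equation*}
where $U\circ g\colon[k]\to[N]$ sends $j\mapsto U_{g(j)}$. On the other hand, $\mbb Ef(\msf R_k(x))=\mbb E f(\rho(F_{N,k})x)$ with $F_{N,k}$ uniformly random.

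For the bias, take the expectation over $U$ of each summand. If $g$ is injective, then $U\circ g$ consists of $k$ distinct iid uniform indices, so it has the law of $F_{N,k}$ and the summand's expectation equals $\mbb Ef(\msf R_k(x))$ exactly. For non-injective $g$, bound the difference of the two expectations crudely by $2\|f\|_\infty$. The fraction of non-injective $g$ is $1-\prod_{j<k}(1-j/n)\le\binom k2/n$, which gives
\begin{equation*}
|\mbb E_{\msf R_n}Z_f|\le 2\|f\|_\infty\binom{k}{2}/n=\frac{k(k-1)}{n}\|f\|_\infty.
\end{equation*}

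For concentration, apply McDiarmid's bounded-differences inequality to $h$ as a function of the independent $U_1,\ldots,U_n$. Changing $U_i$ alters only the summands with $i\in g([k])$. The fraction of such $g$ is $1-(1-1/n)^k\le k/n$, and each affected summand moves by at most $2\|f\|_\infty$, so the bounded difference is $c_i=2k\|f\|_\infty/n$. McDiarmid then gives subgaussian variance proxy $\frac14\sum_i c_i^2=\frac{k^2\|f\|_\infty^2}{n}$. This is within the claimed $\frac{4k^2}{n}\|f\|_\infty^2$, so the claim holds even with some slack in the convention for the proxy.

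The main point to check is the identification of $\mathrm{Law}(\rho(U\circ g)x)$ with $\mathrm{Law}(\msf R_k(x))$ for injective $g$. This holds because $U\circ g$ is a uniformly random map $[k]\to[N]$. The rest is bookkeeping.
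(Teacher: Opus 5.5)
Your proof is correct and follows the paper's argument. For the bias, the paper compares the uniform map $F_{n,k}$ with a uniform injection via Stam's coupling with $\mathbb{P}[F_{n,k}\neq\Phi_{n,k}]\le k(k-1)/(2n)$, which is exactly your direct count of non-injective $g$. For concentration, the paper applies the bounded-differences inequality to the same V-statistic representation with per-coordinate difference $2k\|f\|_\infty/n$. Your sharper proxy $k^2\|f\|_\infty^2/n$ comes from using the standard McDiarmid convention, whereas the paper states the looser constant.
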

\begin{proof}
    If $k>n$ then $\frac{k(k-1)}{n}\geq 2$ and $|\mbb E_{\msf R_n}Z_f|\leq 2\|f\|_{\infty}\leq\frac{k(k-1)}{n}\|f\|_{\infty}$. Suppose therefore that $k\leq n$. By~\cite{sampling_tv_dist}, there is a coupling of a uniformly random map $F_{n,k}$ and of a uniformly random injection $\Phi_{n,k}$ such that $\mbb P[F_{n,k}\neq \Phi_{n,k}]\leq \frac{k(k-1)}{2n}$. Furthermore, since $F_{N,n}\circ\phi_{n,k}\overset{d}{=}F_{N,k}$ for any injection $\phi_{n,k}\colon[k]\to[n]$, we have $\rho(\Phi_{n,k})\circ \msf R_n\overset{d}{=}\msf R_k$ if $\msf R_n$ is independent of $\Phi_{n,k}$. In this case, we have
    \begin{equation*}
        |\mbb EZ_f| = |\mbb Ef(\rho(F_{n,k})\circ\msf R_n(x)) - \mbb Ef(\rho(\Phi_{n,k})\circ\msf R_n(x))| \leq 2\|f\|_{\infty}\mbb P\Big[F_{n,k}\neq \Phi_{n,k}\Big]\leq \|f\|_{\infty}\frac{k(k-1)}{n},
    \end{equation*}
    as claimed. 

    To see the subgaussianity of $Z_f$, represent a map $[n]\to[N]$ as a sequence in $[N]^n$, and observe that $\mathrm{Law}(F_{N,n})=\mathrm{Unif}[N]^{\otimes n}$. Writing $\msf R_n=\rho(\widehat{F}_{N,n})$, let $\mu_n=\frac{1}{n}\sum_{i=1}^n\delta_{\widehat{F}_{N,n}(i)}$ be the empirical measure obtained by sampling $n$ iid points from $\mu=\mathrm{Unif}([N])$. Now observe that $\mathrm{Law}(\widehat{F}_{N,n}\circ F_{n,k})=\mu_n^{\otimes k}$ conditioned on $\widehat{F}_{N,n}$ while $\mathrm{Law}(F_{N,k})=\mu^{\otimes k}$. Furthermore, if we define $\widetilde f\colon [N]^k\to \RR$ by $\widetilde f(F)=f(\rho(F)x)$, then 
    \begin{equation}\label{eq:V_stat_rep}
    \mbb E_{\msf R_k}f(\msf R_k\circ\msf R_n(x))=\mbb E_{\mu_n^{\otimes k}}\widetilde f=\frac{1}{n^k}\sum_{i_1,\ldots,i_k=1}^n\widetilde f\Big(\left[\widehat F_{N,n}(i_1),\ldots,\widehat F_{N,n}(i_k)\right]\Big),
    \end{equation}
    where $\|\widetilde f\|_{\infty}\leq\|f\|_{\infty}$. Changing the value $\widehat F_{N,n}(\ell)$ for any fixed $\ell\in[n]$ changes the value of $\mbb E_{\mu_n^{\otimes k}}\widetilde f$ by at most $\frac{2k}{n}\|f\|_{\infty}$, hence $Z_f-\mbb EZ_f$ is $\frac{4k^2}{n}\|f\|_{\infty}^2$-subgaussian by the bounded-difference inequality~\cite{wainwright2019high}.
\end{proof}

We turn to proving Theorem~\ref{thm:W1_bound_sampling} by combining Proposition~\ref{prop:concentration_coFS} with the following standard covering number bound. 
\begin{lemma}[Covering number bound]\label{lem:covering_nums}
    In the setting of Proposition~\ref{prop:concentration_coFS}, suppose $\mc F_k$ is a family of functions on $\Omega_k$ with $\sup_{f\in\mc F_k}\sup_{x\in\Omega_k}|f(x)|\leq r$. Then
    \begin{equation*}
        \mbb E\sup_{f\in\mc F_k}Z_f \leq r\frac{k(k-1)}{n} + \inf_{\delta>0}\left\{4\delta + \frac{16\sqrt{2}k}{\sqrt{n}}\int_{\delta}^{r}\sqrt{\log N(\mc F_k,\|\cdot\|_{\infty},\epsilon)}\, d\epsilon\right\},
    \end{equation*}
    where $N(\mc F_k,\|\cdot\|_{\infty},\epsilon)$ is the $\epsilon$-covering number of the function class $\mc F_k$ in $\|\cdot\|_{\infty}$.
    Moreover, the random variable $\sup_{f\in\mc F_k}Z_f$ is $\frac{4k^2r^2}{n}$-subgaussian.
\end{lemma}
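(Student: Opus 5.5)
Split $\sup_{f\in\mc F_k}Z_f$ into its mean under $\msf R_n$ plus a centered empirical process. Apply the bias bound of Proposition~\ref{prop:concentration_coFS} to that mean, and Dudley chaining to the centered part, feeding in the subgaussian increments from the concentration part of the same proposition.

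\textbf{Bias.} Since $\mbb E_{\msf R_n}Z_f$ is deterministic, we have
\begin{equation*}
    \sup_f Z_f\leq \sup_f \mbb E_{\msf R_n}Z_f+\sup_f\big(Z_f-\mbb E_{\msf R_n}Z_f\big).
\end{equation*}
The first term is at most $r\frac{k(k-1)}{n}$ by the bias bound and $\|f\|_\infty\leq r$.

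\textbf{Centered process.} Set $\bar Z_f=Z_f-\mbb E_{\msf R_n}Z_f$. The key observation is that $f\mapsto \bar Z_f$ is linear in $f$, so $\bar Z_f-\bar Z_g=\bar Z_{f-g}$. The concentration part of Proposition~\ref{prop:concentration_coFS}, applied to the bounded function $f-g$, then shows that $\bar Z_f-\bar Z_g$ is $\frac{4k^2}{n}\|f-g\|_\infty^2$-subgaussian. Hence $(\bar Z_f)_{f\in\mc F_k}$ is a subgaussian process with respect to the metric $\frac{2k}{\sqrt n}\|f-g\|_\infty$. The truncated Dudley entropy integral, in the form of Wainwright~\cite{wainwright2019high}, gives
\begin{equation*}
    \mbb E\sup_f\bar Z_f\leq 2\,\mbb E\sup_{\|f-g\|_\infty\le\delta}(\bar Z_f-\bar Z_g)+32\int_{\delta/4}^{D}\sqrt{\log N(\epsilon)}\,d\epsilon .
\end{equation*}
After rescaling the metric, this produces the factor $\frac{k}{\sqrt n}$ and the upper limit $r$ (the $\|\cdot\|_\infty$-diameter of $\mc F_k$ is at most $2r$; the constants are absorbed into $16\sqrt2$). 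For the local term we do not use chaining. Every $Z_h$ is a difference of two averages of $h$, so $|\bar Z_h|\le 4\|h\|_\infty$ deterministically. Therefore the local supremum is at most $4\delta$ up to the constant bookkeeping, and taking the infimum over $\delta$ gives the stated bound. The main obstacle I expect is matching the exact constants ($4\delta$ and $16\sqrt2$) to a particular version of Dudley's bound. I would first try the version with the discretization term handled by the crude deterministic bound $|\bar Z_h|\leq 4\|h\|_\infty$, and adjust the lower limit of the integral accordingly.

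\textbf{Subgaussianity of the supremum.} I would reuse the bounded-difference argument from the proof of Proposition~\ref{prop:concentration_coFS}. By~\eqref{eq:V_stat_rep}, for each $f$ the quantity $Z_f$ is a function of the i.i.d.\ indices $\widehat F_{N,n}(1),\ldots,\widehat F_{N,n}(n)$. Changing one index changes $\mbb E_{\mu_n^{\otimes k}}\widetilde f$ by at most $\frac{2k}{n}\|f\|_\infty\leq\frac{2kr}{n}$, uniformly in $f$. A supremum of functions with uniformly bounded differences has the same bounded differences. McDiarmid's inequality then shows that $\sup_f Z_f$ is $\frac{4k^2r^2}{n}$-subgaussian, that is, subgaussian about its mean with variance proxy $\sum_{i=1}^n(2kr/n)^2/4\cdot 4$.
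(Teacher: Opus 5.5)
Your proposal is the paper's proof. It uses the bias bound from Proposition~\ref{prop:concentration_coFS}, then Dudley's entropy integral for the centered process via linearity $Z_f-Z_g=Z_{f-g}$ and the concentration part of that proposition, then the bounded-difference inequality on~\eqref{eq:V_stat_rep} (per-index change at most $2kr/n$, uniformly in $f$) for the subgaussianity of the supremum.

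The one loose end is the constants. Wainwright's Theorem~5.22 as you quote it gives a local term $8\delta$ and an entropy integral with prefactor $64k/\sqrt n$ and lower limit $\delta/4$. That cannot be absorbed into $4\delta$ and $16\sqrt2\, k/\sqrt n$; the paper simply cites a version of Dudley's bound with these constants. Your fallback does work if you chain directly over dyadic scales from $r$ down to $2\delta$, with each link determined by its finer endpoint, and use the sharper deterministic estimate $|Z_h-\mbb E_{\msf R_n}Z_h|\le 2\|h\|_\infty$. This holds because the difference equals $\mbb E_{\msf R_k}h(\msf R_k\circ\msf R_n(x))-\mbb E\,h(\msf R_k\circ\msf R_n(x))$, a difference of two averages of $h$. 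That route yields $4\delta+\frac{12\sqrt2\, k}{\sqrt n}\int_\delta^r\sqrt{\log N(\mc F_k,\|\cdot\|_\infty,\epsilon)}\,d\epsilon$, which implies the stated bound.
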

\begin{proof}
    Since $Z_f-Z_g=Z_{f-g}$, we conclude that $\{Z_f-\mbb EZ_f\}_{f\in\mc F_k}$ is a centered $\frac{4k^2}{n}$-subgaussian process with respect to the uniform metric. Since the diameter of $\mc F_k$ under the uniform metric is at most $2r$ by assumption, Dudley's entropy integral bound~\cite{dudley_bound} gives
    \begin{equation*}
        \mbb E\sup_{f\in\mc F_k}(Z_f-\mbb EZ_f)\leq \inf_{\delta>0}\left\{4\delta + \frac{16\sqrt{2}k}{\sqrt{n}}\int_{\delta}^{r}\sqrt{\log N(\mc F_k,\|\cdot\|_{\infty},\epsilon)}\, d\epsilon\right\}.
    \end{equation*}
    Combining the above bound with our bias bound from Proposition~\ref{prop:concentration_coFS} yields the first claim.

    Finally, observe that $Z_f$ is a function of $n$ iid random indices $\widehat F_{N,n}(1),\ldots,\widehat{F}_{N,n}(n)$, and that changing the value of one of these indices affects at most $n^k-(n-1)^k\leq kn^{k-1}$ terms in~\eqref{eq:V_stat_rep} by at most $2\|f\|_{\infty}/n^k\leq 2r/n^k$. Therefore, each $Z_f$ is a $\frac{2kr}{n}$-Lipschitz function with respect to the Hamming distance on $[N]^n$, and hence the same is true of $\sup_{f\in\mc F_k}Z_f$. The bounded difference inequality~\cite[Cor.~2.21]{wainwright2019high} yields the second claim.
\end{proof}

Combining Lemma~\ref{lem:covering_nums} with covering number bounds for Lipschitz functions yields Theorem~\ref{thm:W1_bound_sampling}.
\begin{proof}[Proof (Theorem~\ref{thm:W1_bound_sampling})]
    Let $\mc F_k$ be the collection of all functions $f\colon[-r,r]^{\mc I_k}\to\RR$ with $f(0)=0$ that are 1-Lipschitz with respect to the $\ell_{\infty}$-norm, and note that $\sup_{f\in\mc F_k}\|f\|_{\infty}\leq r$. Combining the definition of the $W_1$ distance with Lemma~\ref{lem:covering_nums} gives
    \begin{equation*}
        \mbb E_{\msf R_n}W_1(\msf R_k(x),\msf R_k\circ\msf R_n(x)) \leq \mbb E\sup_{f\in\mc F_k}Z_f\leq r\frac{k(k-1)}{n} + \inf_{\delta>0}\left\{4\delta + \frac{16\sqrt{2}k}{\sqrt{n}}\int_{\delta}^{r}\sqrt{\log N(\mc F_k,\|\cdot\|_{\infty},\epsilon)}\, d\epsilon\right\}.
    \end{equation*}
    By~\cite[\S9]{kolmogorov1959varepsilon} (or by~\cite[Thm.~17]{luxburg2004distance}), we have 
    \begin{equation*}
        \log N(\mc F_k,\|\cdot\|_{\infty},\epsilon) \leq (\log 2)N([-r,r]^{\mc I_k},\|\cdot\|_{\infty},\epsilon/2)+\log\left(2\left\lceil\frac{2r}{\epsilon}\right\rceil +1\right).
    \end{equation*}
    We further have $N([-r,r]^{\mc I_k},\|\cdot\|_{\infty},\epsilon/2)\leq \left\lceil 4r/\epsilon\right\rceil^{|\mc I_k|}$ and hence $\log N(\mc F_k,\|\cdot\|_{\infty},\epsilon) \leq (1+\log 2)\left(\frac{5r}{\epsilon}\right)^{|\mc I_k|}$ for all $\epsilon\in(0,r)$. Since $|\mc I_k|>2$, we have
    \begin{equation*}
        4\delta + \frac{16\sqrt{2}k}{\sqrt{n}}\int_{\delta}^{r}\sqrt{\log N(\mc F_k,\|\cdot\|_{\infty},\epsilon)}\, d\epsilon\leq 4\delta+16\sqrt{2(1+\log 2)}\frac{k}{\sqrt{n}}(5r)^{|\mc I_k|/2}\frac{\delta^{-|\mc I_k|/2+1}}{|\mc I_k|/2-1}.
    \end{equation*}
    Choosing $\delta=5r\left(4\frac{k}{\sqrt{n}}\sqrt{2(1+\log 2)}\right)^{2/|\mc I_k|}$, we obtain the claimed estimate $228r\left(\frac{k^2}{n}\right)^{1/|\mc I_k|}$. If this chosen $\delta$ exceeds $r$, then $\left(\frac{k^2}{n}\right)^{1/|\mc I_k|}\geq 1/4$, and choosing $\delta=r$ instead gives the bound $4r\leq 16r\left(\frac{k^2}{n}\right)^{1/|\mc I_k|}\leq 228r\left(\frac{k^2}{n}\right)^{1/|\mc I_k|}$. 
%
    Lemma~\ref{lem:covering_nums} also shows that $\sup_{f\in\mc F_k}Z_f$ is $\frac{4k^2r^2}{n}$-subgaussian.

    Turning to the claimed bound on $d_{\mathrm{samp}}$, for any $K\leq n$ we have
    \begin{equation*}
        \mbb Ed_{\mathrm{samp}}(x,\msf R_n(x))\leq \sum_{k=1}^K2^{-k}\mbb E_{\msf R_n}W_1(\msf R_k(x),\msf R_k\circ\msf R_n(x)) + 2^{-K+1}r\leq Cr\left(\frac{K^2}{n}\right)^{\frac{1}{MK^D}} + 2^{-K+1}r
    \end{equation*}
    obtained by estimating $\mbb E_{\msf R_n}W_1(\msf R_k(x),\msf R_k\circ\msf R_n(x))\leq 2r$ for $k\geq K$ and applying the upper bounds from the first part of this corollary (if $|\mc I_k|\leq 2$ for some small $k$, the covering number argument yields bounds of $O(n^{-1/2})$ or $O(n^{-1/2}\log n)$, both of which are dominated by the slower $|\mc I_k|>2$ terms). Choosing $K=\left\lceil\left(\frac{\log_2 n}{M}\right)^{\frac{1}{D+1}}\right\rceil$ gives the desired bound. Finally, the same proof as in Proposition~\ref{prop:concentration_coFS} shows that $d_{\mathrm{samp}}(x,\msf R_n(x))$ is a function of $n$ independent random variables and is $\sum_{k\geq 1}2^{-k}\frac{2kr}{n}=\frac{4r}{n}$-Lipschitz in Hamming distance in each of them, hence the bounded-difference inequality yields its claimed $\frac{16r^2}{n}$-subgaussianity.
\end{proof}

We turn to proving Theorem~\ref{thm:proj_consistent_laws} by combining Proposition~\ref{prop:fd_measures_as_limits} and a ``finite'' de Finetti theorem from~\cite{levin2025deFin}. 
\begin{proof}[Proof (Theorem~\ref{thm:proj_consistent_laws}).]
    Propositions~\ref{prop:symmetry_coFS} and~\ref{prop:relations_coFS} show that if $\mu_n=\mathrm{Law}(\msf R_n(X))$ for random $X$ independent of $\msf R_n$, then $(\mu_n)$ is projection-consistent. Conversely, if $(\mu_n)$ is projection-consistent then by~\cite[Thm.~4.15]{levin2025deFin} there is a sequence $\nu_i\in\mc P(\Omega_{\infty})$ such that $\mathrm{Law}(\msf R_n(X_i))\xrightarrow{i\to\infty}\mu_n$ weakly for each $n$ where $X_i\sim\nu_i$ is independent of $\msf R_n$. This implies that the sequence $(\nu_i)$ converges weakly with respect to $d_{\mathrm{samp}}$, and $\mu_{\infty}=\lim_i\nu_i\in\mc P(\overline{\Omega}_{\infty})$ then satisfies the desired sampling representation. The last claim follows by Proposition~\ref{prop:fd_measures_as_limits}, which applies by Theorem~\ref{thm:W1_bound_sampling}. Specifically, Theorem~\ref{thm:W1_bound_sampling} shows that $\mbb Ed_{\mathrm{samp}}(x,\msf R_n(x))\to0$ as $n\to\infty$ at a universal rate for all $x\in\Omega_{\infty}$. If $x\in\overline{\Omega}_{\infty}$ then we can write $x=\lim_ix_i$ for $x_i\in\Omega_{\infty}$, so $\msf R_n(x)$ is the weak limit of $\msf R_n(x_i)$. A simple application of the triangle inequality shows that $\mbb Ed_{\mathrm{samp}}(x,\msf R_n(x))\to0$ at the same universal rate, so the condition~\eqref{eq:convergence_of_samples} in Proposition~\ref{prop:fd_measures_as_limits} is satisfied.
\end{proof}

Next, we prove the sketching and generalization bounds in Theorem~\ref{thm:sampling_rates_funcs}.
\begin{proof}[Proof (Theorem~\ref{thm:sampling_rates_funcs}).]
    If $f$ and $\widehat f$ are $L$-Lipschitz in $d_{\mathrm{samp}}$ and $x\in\bigsqcup_n\Omega_n$, then $|f(x)-f(\msf R_n(x))|\leq Ld_{\mathrm{samp}}(x,\msf R_n x)$ and
    \begin{equation}\label{eq:main_generalization_bound}
        |f(x)-\widehat f(x)|\leq |f(x)-\mbb Ef(\msf R_n(x))| + |\mbb E[f(\msf R_n(x))-\widehat f(\msf R_n(x))]| + |\mbb E\widehat f(\msf R_n(x))-\widehat f(x)|.
    \end{equation}
    Now observe that $|f(x)-\mbb Ef(\msf R_n(x))|\leq L\mbb Ed_{\mathrm{samp}}(x,\msf R_n(x))$ and similarly for $\widehat f$, and that $|\mbb E[f(\msf R_n(x))-\widehat f(\msf R_n(x))]|\leq \mathrm{e}_n(f,\widehat f)$. The bounds for general Lipschitz functions now follow from Theorem~\ref{thm:W1_bound_sampling}. 

    If $f(x)=F(\mathrm{Law}(\msf R_k(x)))$ for $L$-Lipschitz $F$, we have $|f(x)-f(\msf R_n(x))|\leq LW_1(\msf R_k(x),\msf R_k\circ\msf R_n(x))$ where we fix a realization of $\msf R_n$, and $|f(x)-\mbb Ef(\msf R_n(x))|\leq L\mbb EW_1(\msf R_k(x),\msf R_k\circ\msf R_n(x))$. Since $\widehat f$ satisfies the same bounds, we obtain the claimed rates from Theorem~\ref{thm:W1_bound_sampling} using~\eqref{eq:main_generalization_bound} again. 

    Finally, suppose $f(x)=\sigma(\mbb Eg_k(\msf R_k(x)))$ and note that 
    \begin{equation*}
        |f(x)-f(\msf R_n(x))|\leq L_{\sigma}\|\mbb E_{\msf R_k}g_k(\msf R_k(x))-\mbb E_{\msf R_k}g_k(\msf R_k\circ\msf R_n(x))\|_1=L_{\sigma}\sum_{j=1}^{\ell}\Big|\mbb E[g_k]_{j}(\msf R_k(x))-\mbb E_{\msf R_k}[g_k]_{j}(\msf R_k\circ\msf R_n(x))\Big|,
    \end{equation*} 
    where we fix a realization of $\msf R_n$ and denote by $[g_k]_{j}$ the $j$th coordinate function of $g_k$. Similarly, we have $|f(x)-\mbb Ef(\msf R_n(x))|\leq L_{\sigma}\mbb E_{\msf R_n}\sum_{j=1}^{\ell}|\mbb E_{\msf R_k}[g_k]_{j}(\msf R_k(x))-\mbb E_{\msf R_k}[g_k]_{j}(\msf R_k\circ\msf R_n(x))|$. Since $\widehat f$ satisfies the same bounds, we obtain the claimed rates from Proposition~\ref{prop:concentration_coFS} using~\eqref{eq:main_generalization_bound} again. If $\sigma=\mathrm{id}$, we can improve the generalization bound because $|f(x)-\mbb Ef(\msf R_n(x))|\leq \|\mbb Eg_k(\msf R_k(x))-\mbb Eg_k(\msf R_k\circ\msf R_n(x))\|_1$ where both expectations are inside the norm, and hence we only incur an error due to the bias in Proposition~\ref{prop:concentration_coFS}.
    
    All the above bounds hold for average error with respect to projection-consistent distributions $(\mu_n)$. This can be seen by writing $\mu_n=\mathrm{Law}(\msf R_n(X))$ for a random $X\in\overline{\Omega}_{\infty}$ using Theorem~\ref{thm:proj_consistent_laws} and applying the above bounds conditionally on $X$.
\end{proof}

Finally, we prove Corollary~\ref{cor:transformers} using the measure-theoretic expression for attention in~\cite{furuya2025transformers}.
\begin{proof}[Proof (Corollary~\ref{cor:transformers}).]
    Let $\mc P_c(\RR^d)$ be the space of compactly supported probability measures on $\RR^d$, and consider the in-context attention mapping $\bar A\colon\RR^d\times\mc P_c(\RR^d)\to\RR^d$ introduced in~\cite{furuya2025transformers}, given by
    \begin{equation}\label{eq:in_context_attn}
        \bar A(z,\mu) = z + \frac{\mbb E_{Y\sim\mu}\exp(\langle Qz, KY\rangle)VY}{\mbb E_{Y\sim\mu}\exp(\langle Qz,KY\rangle)},
    \end{equation}
    which satisfies
    \begin{equation*}
        A(x)_i = \bar A(x_i,\mathrm{Law}(\msf R_1(x))),\quad \textrm{ for each } x\in(\RR^d)^n \textrm{ and } i\in[n].
    \end{equation*}
    Define a radius bound on all the layers of the transformer inductively by setting $R_0=\sup_{z\in\Theta}\|z\|_2\leq r\sqrt{d}$ and $R_{j+1}=\|\phi(0)\|_2 + L_{\phi}(1+\|V\|_{\mathrm{op}})R_j$. Set $t=\max_{j\leq \ell}R_j$ and denote $B_t=\{z\in\RR^d:\|z\|_2\leq t\}$. We can find Lipschitz constants $L_1,L_2>0$ such that $\bar A(x,\cdot)\colon\mc P(B_t)\to \RR^d$ is $L_1$-Lipschitz while $\bar A(\cdot,\mu)\colon B_t\to\RR^d$ is $L_2$-Lipschitz for any $x\in B_t$ and $\mu\in\mc P(B_t)$, see~\cite[Lemma~2]{furuya2026approximation} for example. 
    Furthermore, if $\mathrm{supp}(\mu)\subseteq B_t$ and $x\in B_t$, let $\mu_n$ be a random empirical measure obtained from $n$ iid samples from $\mu$ and note that with probability at least $1-2e^{-2\epsilon^2}$ we have
    \begin{equation}\label{eq:barA_sampling}
        \sup_{z\in B_t}\|\bar A(z,\mu) - \bar A(z,\mu_n)\|_2 \leq \frac{C(1+2\epsilon)}{\sqrt{n}},
    \end{equation}
    for a constant $C>0$. Indeed, consider the Hilbert space $\mc H = \bigoplus_{m\geq 0} (\RR^h)^{\otimes m}$ with inner product $\langle (u_m),(v_m)\rangle_{\mc H}=\sum_m\frac{1}{m!}\langle u_m,v_m\rangle$ and consider the map $G\colon \RR^h\to\mc H$ given by $G(z)=(z^{\otimes m})_m$. Note that $\|\mbb E_{Y\sim \mu}G(KY) - \mbb E_{Y\sim\mu_n}G(KY)\|_{\mc H}\leq \frac{M(1+2\epsilon)}{\sqrt{n}}$ with probability at least $1-e^{-2\epsilon^2}$, where $M=\sup_{y\in B_t}\|G(Ky)\|_{\mc H}$. This follows by the bounded-difference inequality. Therefore, for any $z\in B_t$ we have
    \begin{equation*}
        |\mbb E_{Y\sim\mu}\exp(\langle Qz,KY\rangle) - \mbb E_{Y\sim\mu_n}\exp(\langle Qz,KY\rangle)| = |\langle G(Qz), \mbb E_{Y\sim \mu}G(KY) - \mbb E_{Y\sim\mu_n}G(KY)\rangle| \leq M'\frac{M(1+2\epsilon)}{\sqrt{n}},
    \end{equation*}
    where $M'=\sup_{z\in B_t}\|G(Qz)\|_{\mc H}$.
    Similarly, noting that $\exp(\langle Qz,Ky\rangle)Vy = (G(Qz)^*\otimes I)(G(Ky)\otimes Vy)$, we have
    \begin{equation*}
        \|\mbb E_{Y\sim\mu}\exp(\langle Qz,KY\rangle)VY - \mbb E_{Y\sim\mu_n}\exp(\langle Qz,KY\rangle)VY\|_2 \leq M'\frac{M''(1+2\epsilon)}{\sqrt{n}},
    \end{equation*}
    where $M''=\sup_{y\in B_t}\|G(Ky)\|_{\mc H}\cdot \|Vy\|$.
    Since the denominator in~\eqref{eq:in_context_attn} is uniformly bounded on $B_t\times\mc P(B_t)$ above and below, we obtain~\eqref{eq:barA_sampling}.

    Next, we fix a realization of $F_n\colon[n]\to[N]$, set $\msf R_n=\rho(F_n)$, and fix $x\in \Theta^N$. Define $$\Delta_\ell = \sup_{i\in[n]}\|[\msf R_nT_1^{\circ \ell}(x)]_i-[T_1^{\circ\ell}(\msf R_nx)]_i\|_2,$$ where $T_1=\phi\circ A$. In words, $\Delta_{\ell}$ measures the extend to which $\ell$ layers of $T_1$ commute with sampling. We claim that with probability at least $1-2\ell e^{-2\epsilon^2}$ we have
    \begin{equation}\label{eq:Delta_ell_bound}
        \Delta_{\ell}\leq \frac{C'(1+2\epsilon)}{\sqrt{n}},
    \end{equation}
    for some constant $C'>0$. To this end, note that $\msf R_n\circ \phi = \phi\circ \msf R_n$ as $\phi$ is applied column-wise. Furthermore, define the three measures
    \begin{equation*}
        \mu = \mathrm{Law}(\msf R_1T_1^{\circ \ell}(x)),\quad \mu_n = \mathrm{Law}(\msf R_1\msf R_nT_1^{\circ \ell}(x)),\quad \widehat \mu_n = \mathrm{Law}(\msf R_1T_1^{\circ \ell}(\msf R_nx)),
    \end{equation*}
    where we recall that the realization of $\msf R_n$ is fixed and that the randomness above only comes from $\msf R_1$. Note that $\mu_n$ is the empirical measure obtained from $n$ iid samples from $\mu$, while $W_1(\mu_n,\widehat\mu_n)\leq \Delta_{\ell}$.
    Therefore,
    \begin{equation*}\begin{aligned}
        \Delta_{\ell+1} &\leq L_{\phi}\sup_{i\in[n]}\|A(T_1^{\circ \ell}(x))_{F_n(i)} - A(T_1^{\circ \ell}(\msf R_nx))_i\|_2 = L_{\phi}\sup_{i\in[n]}\|\bar A(T_1^{\circ \ell}(x)_{F_n(i)}, \mu) - \bar A(T_1^{\circ \ell}(\msf R_n x)_{i}, \widehat\mu_n)\|_2\\
        &\leq L_{\phi}\left[\sup_{i\in[n]}\|\bar A(T_1^{\circ \ell}(x)_{F_n(i)}, \mu) - \bar A(T_1^{\circ \ell}(x)_{F_n(i)}, \mu_n)\|_2 + \sup_{i\in[n]}\|\bar A(T_1^{\circ \ell}(x)_{F_n(i)}, \mu_n) - \bar A(T_1^{\circ \ell}(\msf R_n x)_{i}, \widehat\mu_n)\|_2\right]\\
        &\leq L_{\phi}\left[\frac{C(1+2\epsilon)}{\sqrt{n}} + L_2\sup_i\|T_1^{\circ \ell}(x)_{F_n(i)} - T_1^{\circ\ell}(\msf R_nx)_i\|_2 + L_1W_1(\mu_n,\widehat\mu_n)\right]\\
        &\leq L_{\phi}(L_1+L_2)\Delta_{\ell} + \frac{L_{\phi}C(1+2\epsilon)}{\sqrt{n}},
    \end{aligned}\end{equation*}
    where the third inequality holds with probability at least $1-2e^{-2\epsilon^2}$.
    Inducting on $\ell$ and applying a union bound, we obtain the claim.

    Finally, note that
    \begin{equation*}\begin{aligned}
        \|T(x)-T(\msf R_nx)\|_2 &= \|\mbb E\msf R_1T_1^{\circ \ell}(x) - \mbb E\msf R_1T_1^{\circ\ell}(\msf R_nx)\|_2 \\&\leq \|\mbb E\msf R_1T_1^{\circ\ell}(x) - \mbb E\msf R_1\msf R_nT_1^{\circ\ell}(x)\|_2 + \|\mbb E\msf R_1\msf R_nT_1^{\circ\ell}(x) - \mbb E\msf R_1T_1^{\circ\ell}(\msf R_nx)\|_2\\ &\leq \frac{C''(1+2\epsilon)}{\sqrt{n}} + \Delta_{\ell},
    \end{aligned}\end{equation*}
    with probability at least $1-2e^{-2\epsilon^2}$, which, together with~\eqref{eq:Delta_ell_bound} concludes the claimed sketching bound.
    %
    %
    The first generalization bound now follows as in the proof of Theorem~\ref{thm:sampling_rates_funcs}. For the second generalization bound, defining $\mc A\colon\mc P_c(\RR^d)\to\mc P_c(\RR^d)$ by $\mc A(\mu)=\mathrm{Law}(\bar A(X,\mu))$ where $X\sim\mu$, and letting $\phi$ act on measures via pushforward, observe that $T(x)=\bar T(\mathrm{Law}(\msf R_1(x)))$ where $\bar T(\mu)=\mbb E_{X\sim (\phi\circ \mc A)^{\circ \ell}(\mu)}X$. Recalling that $\bar A(x,\cdot)\colon\mc P(B_t)\to \RR^d$ is $L_1$-Lipschitz while $\bar A(\cdot,\mu)\colon B_t\to\RR^d$ is $L_2$-Lipschitz for any $x\in B_t$ and $\mu\in\mc P(B_t)$, it is easy to see that $\bar T$ is $[L_{\phi}(L_1+L_2)]^{\ell}$-Lipschitz with respect to $W_1$. The last generalization bound now follows from Theorem~\ref{thm:W1_bound_sampling}.
\end{proof}

\section{Rates for Random Binning and Species Sampling}\label{sec:quotients}
In this section, we prove rates for sketching and generalization using the sampling metric defined with respect to random binning from Section~\ref{sec:sampling_and_binning}, and relate the topologies defined by random binning and species sampling via their corresponding sampling metrics.

\subsection{Main Results}
We fix a compatible sequence of index sets $(\mc I_n)$, and consider the associated vector spaces $(\vct V_n=\RR^{\mc I_n})$ and random binning maps $(\msf B_k\colon\bigsqcup_n\vct V_n\to\vct V_k)$ defined by~\eqref{eq:binning_general}. We also fix a sequence of compact sets $(\Omega_n)$ closed under random binning, such as the sequence of $\ell_1$ balls $\mc B_{\ell_1}^{(n)}(r)=\{x\in\vct V_n:\|x\|_1\leq r\}$.  Proceeding as in Section~\ref{sec:sampling_wrep}, it may be tempting to try to prove a result akin to Theorem~\ref{thm:W1_bound_sampling} in which we might show that $d_{\mathrm{samp}}(x,\msf B_n(x))\to0$ with high probability, and quantify the rate at which it converges to zero.  Unfortunately, while we will show below that $\mbb Ed_{\mathrm{samp}}(x,\msf B_n(x))\to0$, the following example illustrates that individual realizations $d_{\mathrm{samp}}(x,\msf B_n(x))$ do not concentrate as well around their means as in the case of sampling with replacement.

\begin{example}[Anti-concentration for random binning]\label{ex:anti_concentration}
    Suppose $\vct V_n=\RR^n$ and consider $x=(1/2,1/2)\in\vct V_2$. Observe that $\msf B_n(x)$ equals one of the standard basis vectors with probability $1/n$, since this is the probability that the two entries of $x$ to map to the same bin. Therefore, with probability $1/n$, we have that $d_{\mathrm{samp}}(x,\msf B_n(x)) = d_{\mathrm{samp}}(x,e_1) = \sum_{k\geq 2}2^{-k}(1-\frac{1}{k})\frac{1}{\sqrt{2}}=\frac{1-\log 2}{\sqrt{2}}$ where we define $d_{\mathrm{samp}}$ with respect to the $\ell_2$ norms on the $\vct V_n$.  
    Thus, for any rate $(\epsilon_n\geq0)$ with $\epsilon_n\downarrow 0$ we have $\sum_{n\geq 1}\mbb P[d_{\mathrm{samp}}(x,\msf B_n(x))\geq \epsilon_n]=\infty$.
\end{example}
In contrast, for sampling with replacement Theorem~\ref{thm:W1_bound_sampling} gives a rate $\epsilon_n$ such that $\mbb P[d_{\mathrm{samp}}(x,\msf R_n(x))\geq \epsilon_n]\leq Cn^{-2}$ for a constant $C>0$. 

To remedy this situation, we show in Theorem~\ref{thm:W1_rates_FS} that $d_{\mathrm{samp}}(x,\msf E_n(x))$ exhibits the requisite concentration, where $\msf E_n$ is the species sampling map from Section~\ref{sec:species}.  
In other words, species sampling furnishes the requisite sketching map for the sampling metric defined via random binning. 
Subsequently, we follow a similar agenda as in Section~\ref{sec:sampling_wrep}.  
We characterize the any-dimensional data distributions that are obtained via random binning in Theorem~\ref{thm:eqp_consistent_laws}.  
We then give explicit any-dimensional sketching and generalization rates in Theorem~\ref{thm:binning_rates_funcs}, followed by corollaries that provide concrete illustrations of our results in the context of applications in graph signal processing and neural networks. We conclude with Proposition~\ref{prop:relation_to_species} in which we relate convergence with respect to random binning and species sampling.

We begin by quantifying the convergence $d_{\mathrm{samp}}(x,\msf E_n(x))\to 0$ in expectation and with high probability.
\begin{theorem}\label{thm:W1_rates_FS}
    Suppose $(\mc I_n)$ is a compatible sequence of index sets of degree $D$, and consider the sampling metric defined using random binning $(\msf B_k)$ and the $W_1$-metric with respect to the $\ell_2$ norm. Fix $(\Omega_n\subseteq \RR^{\mc I_n})$ closed under both binning and species sampling such that $\sup_{n\in\NN}\sup_{x\in\Omega_n}\|x\|_1\leq r$.
    \begin{enumerate}[labelwidth=!, labelindent=0pt]
        \item For any $x\in\bigsqcup_n\Omega_n$ and $k\in\NN$, we have 
        \begin{equation*}
            \mbb E_{\msf E_n}W_1(\msf B_k(x),\msf B_k\circ\msf E_n(x))\leq \frac{r}{\sqrt{n}},
        \end{equation*}
        and similarly $\mbb Ed_{\mathrm{samp}}(x,\msf E_n(x))\leq r/\sqrt{n}$. Moreover, both $W_1(\msf B_k(x),\msf B_k\circ\msf E_n(x))$ and $d_{\mathrm{samp}}(x,\msf E_n(x))$ are $\frac{4r^2}{n}$-subgaussian with respect to the randomness in $\msf E_n$.

        \item For any $x\in\bigsqcup_n\Omega_n$ and any $k\in\NN$, we have 
        \begin{equation}\label{eq:expected_binning_rate}
            \mbb E_{\msf B_n}W_1(\msf B_k(x),\msf B_k\circ\msf B_n(x))\leq r\sqrt{\frac{2D(3D-1)}{n}},
        \end{equation}
        and similarly $\mbb Ed_{\mathrm{samp}}(x,\msf B_n(x))\leq r\sqrt{\frac{2D(3D-1)}{n}}$. We also have $\|\mbb E\msf B_k(x) - \mbb E\msf B_k(\msf B_n(x))\|_1\leq r\frac{D(D-1)}{n}$.
    \end{enumerate}
\end{theorem}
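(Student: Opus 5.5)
Both parts rest on one coupling principle: build $\msf B_k(x)$ and its comparison object on a common probability space, and bound the $W_1$ distance by the expected $\ell_2$ distance of the coupled pair. Because $\beta(f)$ is linear and only merges mass, $\|\beta(f)y\|_2\le\|y\|_1$ for every $y$. All $W_1$ bounds will therefore reduce to $\ell_1$ or $\ell_2$ controls on differences that are independent of $k$. The $d_{\mathrm{samp}}$ bounds then follow by summing with weights $2^{-k}$.

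\emph{Part 1 (species sampling).} The first step is to show $\msf B_k\circ\msf E_n(x)\overset{d}{=}\msf B_k(\mscr E_n(x))$. Since $\msf E_n(x)$ is $\mscr E_n(x)$ with its finitely many support indices relabelled injectively and then zero-padded, this follows from Proposition~\ref{prop:relations_coFS} (inclusions) together with Proposition~\ref{prop:symmetry_coFS}, exactly as in the proof of Proposition~\ref{prop:species_samp_zero_pad}. Next I couple $\msf B_k(x)=\beta(F)x$ with $\beta(F)\mscr E_n(x)$ using the same uniform $F$. This gives
\[
W_1\le \mbb E_F\|\beta(F)(x-\mscr E_n(x))\|_2 .
\]
For the resulting random vector I use Cauchy--Schwarz to write $\mbb E\|\cdot\|_2\le(\mbb E\|\cdot\|_2^2)^{1/2}$. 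For fixed $F$, the vector $\beta(F)\mscr E_n(x)$ is an average of $n$ iid vectors $\|x\|_1\mathrm{sign}(x_{\alpha_i})\beta(F)e_{\alpha_i}$, each of norm $\|x\|_1\le r$, with mean $\beta(F)x$. The variance is therefore at most $r^2/n$, which gives $\mbb E_{\msf E_n}W_1\le r/\sqrt n$, uniformly in $k$ and hence also for $d_{\mathrm{samp}}$.

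For subgaussianity, $W_1(\msf B_k(x),\msf B_k\circ\msf E_n(x))$ is a function of the iid draws $\alpha_1,\dots,\alpha_n$. Changing one $\alpha_i$ moves $\mscr E_n(x)$ by at most $2r/n$ in $\ell_1$, and hence moves $W_1$ by at most $2r/n$. The bounded difference inequality then yields the variance proxy $n(2r/n)^2=4r^2/n$, and the same argument applies to the weighted sum $d_{\mathrm{samp}}$.

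\emph{Part 2 (binning).} Write $\msf B_n(x)=\beta(G)x$ with $G\colon[N]\to[n]$ uniform, and note $\msf B_k\circ\msf B_n(x)=\beta(F\circ G)x$ for an independent uniform $F\colon[n]\to[k]$. The plan is to couple $F\circ G$ with a uniform $H\colon[N]\to[k]$ and bound $\mbb E\|\beta(H)x-\beta(F\circ G)x\|_2$.

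\emph{Key step.} $F\circ G$ differs from a uniform map only through collisions of $G$. Given $G$, choose $H$ equal to $F\circ G$, except that the fibers of $G$ get resampled independently. Concretely, I generate $H$ by assigning each index $j\in[N]$ its own uniform label, whereas $F\circ G$ forces indices sharing a $G$-fiber to share a label.

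I would expand $\mbb E\|\beta(H)x-\beta(F\circ G)x\|_2^2$ as a sum over pairs of multi-indices $\alpha,\alpha'\in\mc I_N$ of $x_\alpha x_{\alpha'}$ times the probability that the two images coincide under one map but not the other. A multi-index of degree $\le D$ is affected only if two of its at most $D$ coordinates collide under $G$, or coordinates of $\alpha$ and $\alpha'$ collide; there are at most $\binom{2D}{2}$ coordinate pairs, each colliding with probability $1/n$. This should give a bound of order $r^2\cdot 2D(3D-1)/n$, and taking square roots yields~\eqref{eq:expected_binning_rate}.

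For the mean bound, I compare $\mbb E\beta(H)x$ and $\mbb E\beta(F\circ G)x$ entrywise. Only single multi-indices matter here, so only within-index collisions contribute; this costs probability $\binom D2/n$, and multiplying by $2r$ gives $rD(D-1)/n$.

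I expect the main obstacle to be the pair-collision bookkeeping in the second-moment expansion: it must hold for general $\mc I_n=\bigsqcup[n]^{d_m}/H_m$, and the constant $2D(3D-1)$ has to come out of it.
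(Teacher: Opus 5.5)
Your Part 1 (passing from $\msf E_n$ to $\mscr E_n$, coupling with the same $F$, the $r^2/n$ variance bound, bounded differences) and your mean bound $\|\mbb E\msf B_k(x)-\mbb E\msf B_k(\msf B_n(x))\|_1\le rD(D-1)/n$ are correct and follow the paper. The gap is in the main Part 2 estimate \eqref{eq:expected_binning_rate}.

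To couple a uniform $H\colon[N]\to[k]$ with $F\circ G$, you must keep one index $j_\ell$ in each fiber $G^{-1}(\ell)$ with $H(j_\ell)=F(\ell)$, and give the other indices fresh labels. (Read literally, ``each $j$ gets its own uniform label'' is the independent coupling, which already fails for $x=e_1$.) You never say which index is kept. Your bookkeeping also assumes $\theta(H)(\alpha)\ne\theta(F\circ G)(\alpha)$ only when coordinates of $\alpha,\alpha'$ collide among themselves. That is false. A coordinate $i$ gets different labels under the two maps whenever it is not the kept index of its fiber, i.e.\ whenever $G(i)=G(j)$ for some other $j\in[N]$ kept instead. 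When $N\gg n$ this has probability close to $1$.

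Concretely, take $\mc I_n=[n]$ and $x=e_N$; the true $W_1$ is $0$. If the kept index is the smallest in its fiber (or uniform in it), $N$ is kept with probability $(1-1/n)^{N-1}$ (resp.\ about $n/N$). Hence $\mbb E\|\beta(H)x-\beta(F\circ G)x\|_2\to\sqrt2(1-1/k)$ as $N\to\infty$ for every fixed $n$. No choice of representative that ignores $x$ gives an $N$-free rate.

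The missing idea is to adapt the coupling to $x$ through importance weights $w_i=\sum_{\alpha:\,i\in\mathrm{supp}(\alpha)}|x_\alpha|$, which satisfy $\sum_iw_i\le D\|x\|_1$. In each fiber, keep an index of maximal weight. Then $i$ fails to be kept only if it collides with one of the at most $D\|x\|_1/w_i$ other indices of weight $\ge w_i$, which has probability at most $D\|x\|_1/(nw_i)$.

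With $\delta_\alpha=e_{\theta(H)(\alpha)}-e_{\theta(F\circ G)(\alpha)}$, one shows $\mbb E[\langle\delta_\alpha,\delta_{\alpha'}\rangle\mid G]=0$ whenever two conditions hold:
\begin{itemize}
\item every \emph{shared} index $i\in\mathrm{supp}(\alpha)\cap\mathrm{supp}(\alpha')$ is kept;
\item $G$ is injective on $\mathrm{supp}(\alpha)\cup\mathrm{supp}(\alpha')$.
\end{itemize}
Non-shared coordinates that are not kept get fresh uniform labels on both sides and do not bias the inner product.

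Failure of injectivity is your $\binom{2D}{2}/n$ event and contributes $2D(2D-1)\|x\|_1^2/n$. Failure at a shared index contributes at most $\frac{2D\|x\|_1}{n}\sum_i\frac{1}{w_i}\bigl(\sum_{\alpha:\,i\in\mathrm{supp}(\alpha)}|x_\alpha|\bigr)^2=\frac{2D\|x\|_1}{n}\sum_iw_i\le\frac{2D^2\|x\|_1^2}{n}$, where the $1/w_i$ cancels exactly. This is the $2D^2$ in $2D(3D-1)=2D^2+2D(2D-1)$, and your collision count has no source for it. The real obstacle is not the bookkeeping but the choice of coupling.
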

We emphasize again that although Theorem~\ref{thm:W1_rates_FS}(2) shows that $\mbb Ed_{\mathrm{samp}}(x,\msf B_n(x))\to0$, these distances do not concentrate well around their means as shown in Example~\ref{ex:anti_concentration}. The last $O(1/n)$ bound in Theorem~\ref{thm:W1_rates_FS}(2) gives us an improved generalization rate for polynomials unchanged by zero-padding.

The proof of Theorem~\ref{thm:W1_rates_FS}(1) is based on an extension of the following standard bound for the empirical sampling map~\eqref{eq:empirical_sampling},
\begin{equation}\label{eq:Binom_bound}
    \mbb E\|x-\mscr E_n(x)\|_2 \leq \|x\|_1\sqrt{\sum_{\alpha\in\mc I_N}\mbb E\Big(p_{\alpha} -\tfrac{1}{n}\mathrm{Binom}(n,p_{\alpha})\Big)^2} \leq \frac{\|x\|_1}{\sqrt{n}},\quad \textrm{for any } x\in\RR^{\mc I_N},\ N\in\NN,
\end{equation}
where $p_{\alpha}=|x_{\alpha}|/\|x\|_1$ are the probabilities proportional to the magnitudes of entries of $x$, together with the fact that $d_{\mathrm{samp}}(\mscr E_n(x),\msf E_n(x))=0$ almost surely.
The proof of Theorem~\ref{thm:W1_rates_FS}(2) uses an explicit coupling between $\msf B_k(x)$ and $\msf B_k\circ\msf B_n(x)$.
We give the full details in Section~\ref{sec:proofs_quotients}.
Examples of sets satisfying the hypotheses of Theorem~\ref{thm:W1_rates_FS} include the sequence of simplices $\Omega_n=\Delta^{\mc I_n}$ and $\ell_1$-balls. 

We proceed to describe some of the implications of Theorem~\ref{thm:W1_rates_FS}.  First, the fact that $\mbb Ed_{\mathrm{samp}}(x,\msf B_n(x))\to0$ at a universal rate as $n\to\infty$ allows us to apply Proposition~\ref{prop:fd_measures_as_limits} and leads to a characterization of sequences of any-dimensional distributions that are compatible with random binning.
\begin{theorem}[Equipartition-consistent distributions]\label{thm:eqp_consistent_laws}
    Suppose $(\mc I_n)$ has finite degree, let $\vct V_n=\RR^{\mc I_n}$, and suppose $(\Omega_n)$ is a sequence of compact sets closed under binning with $\sup_n\sup_{x\in\Omega_n}\|x\|_1<\infty$. Then the following are equivalent for a sequence of probability distributions $(\mu_n\in\mc P(\Omega_n))$: 
    \begin{enumerate}[align=left, font=\emph, labelwidth=!, labelindent=0pt]
        \item[(Equipartition consistency)] We have $\beta(\psi_{n,N})\mu_N=\mu_n$ whenever $ \psi_{n,N}\colon[N]\to[n]$ is an equipartition.
        
        \item[(Sampling representation)] There exists a distribution $\mu_{\infty}\in\mc P(\overline{\Omega}_{\infty})$ such that $\mu_n=\mathrm{Law}(\msf B_n(X))$ for all $n$, where $X\sim\mu_{\infty}$ is independent of $\msf B_n$.
    \end{enumerate}
    Moreover, the measure $\mu_{\infty}$ above is unique, and sequences of the form $(\mu_n=\mathrm{Law}(\msf B_n(x)))$ for deterministic $x\in\overline{\Omega}_{\infty}$ are extremal in the set of all such sequences.
\end{theorem}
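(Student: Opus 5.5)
The plan mirrors the proof of Theorem~\ref{thm:proj_consistent_laws}, with random binning in place of sampling with replacement and equipartitions in place of injections.

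\emph{Sampling representation $\Rightarrow$ equipartition consistency.} Suppose $\mu_n=\mathrm{Law}(\msf B_n(X))$ with $X\sim\mu_\infty$ independent of the binning maps. First treat deterministic $x\in\bigsqcup_n\Omega_n$. By~\eqref{eq:eqp_equivalence}, $\beta(\psi_{n,N})\circ\msf B_N\overset{d}{=}\msf B_n$ for every equipartition $\psi_{n,N}$. Hence $\beta(\psi_{n,N})\mathrm{Law}(\msf B_N(x))=\mathrm{Law}(\msf B_n(x))$. This identity extends to $x\in\overline{\Omega}_\infty$ by taking weak limits, since $\beta(\psi_{n,N})$ is a continuous linear map and $\msf B_k$ on $\overline{\Omega}_\infty$ is defined through weak limits. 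Integrating against $\mu_\infty$ then gives the claim for random $X$.

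\emph{Equipartition consistency $\Rightarrow$ sampling representation.} Let $(\mu_n)$ be equipartition-consistent. I would invoke the ``finite'' de Finetti theorem of~\cite{levin2025deFin} for $\FS$-representations, which is the analogue of~\cite[Thm.~4.15]{levin2025deFin} with equipartitions replacing injections. It gives measures $\nu_i\in\mc P(\Omega_\infty)$ such that $\mathrm{Law}(\msf B_n(X_i))\to\mu_n$ weakly for each $n$, where $X_i\sim\nu_i$. Concretely, one can take $\nu_i$ to be the law of $Y_i\sim\mu_{N_i}$ viewed as an element of $\Omega_\infty$, with $N_i=i!$. The natural comparison is then between $\msf B_n(Y)$ and $\beta(\Psi_{n,N})Y$, where $\Psi_{n,N}$ is a uniformly random equipartition. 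Equipartition consistency together with exchangeability (Proposition~\ref{prop:symmetry_coFS}) gives $\beta(\Psi_{n,N})Y\overset{d}{=}\mu_n$. Fiber sizes of $F_{n,N}$ concentrate around $N/n$ by the same binomial bounds used in the proof of Proposition~\ref{prop:equivalence_converse}, so one expects to couple $F_{n,N}$ with $\Psi_{n,N}$ so that they agree off a vanishing fraction of $[N]$.

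\textbf{Main obstacle.} Mass on that exceptional set need not be small in $\ell_1$, since $Y$ could concentrate on a few indices. I would get around this using exchangeability of $Y$ (a uniform random relabeling leaves $\mu_N$ unchanged). After relabeling, the mismatched indices form a uniformly random set, so the expected $\ell_1$ mass they carry is at most $r$ times the fraction of mismatched multi-indices. That fraction is $O(D\sqrt{n/N})$ for degree $D$. This should show $W_1(\msf B_n(Y),\mu_n)\to0$ as $N\to\infty$, uniformly in $Y$ bounded in $\ell_1$ by $r$.

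\emph{Passing to the limit.} By Proposition~\ref{prop:compact_under_sampling}, $\overline{\Omega}_\infty$ is compact, so a subsequence $\nu_i\to\mu_\infty$ weakly in $\mc P(\overline{\Omega}_\infty)$. The map~\eqref{eq:fd_sampling_map} is continuous, as shown in the proof of Proposition~\ref{prop:fd_measures_as_limits}. Therefore $\mathrm{Law}(\msf B_n(X))=\mu_n$ for $X\sim\mu_\infty$.

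\emph{Uniqueness and extremality.} Both follow from Proposition~\ref{prop:fd_measures_as_limits}, once we check its hypothesis~\eqref{eq:convergence_of_samples} for binning. Theorem~\ref{thm:W1_rates_FS}(2) gives $\mbb Ed_{\mathrm{samp}}(x,\msf B_n(x))\le r\sqrt{2D(3D-1)/n}$ uniformly over $x\in\Omega_\infty$. The triangle-inequality limiting argument from the proof of Theorem~\ref{thm:proj_consistent_laws} extends this bound to $x\in\overline{\Omega}_\infty$. Proposition~\ref{prop:fd_measures_as_limits} then says~\eqref{eq:fd_sampling_map} is an affine isomorphism onto its image, which by the above equivalence is exactly the convex set of equipartition-consistent sequences. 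This gives uniqueness of $\mu_\infty$. It also gives extremality: Dirac masses $\delta_x$ are extreme in $\mc P(\overline{\Omega}_\infty)$, so their images $(\mathrm{Law}(\msf B_n(x)))_n$ are extreme among equipartition-consistent sequences.
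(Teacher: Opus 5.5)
Your proposal is correct and follows the paper's proof. The forward direction comes from \eqref{eq:eqp_equivalence} (Propositions~\ref{prop:symmetry_coFS} and~\ref{prop:relations_coFS}), the converse from the dual finite de Finetti theorem \cite[Thm.~4.20]{levin2025deFin} together with compactness and continuity of~\eqref{eq:fd_sampling_map}, and uniqueness and extremality from Proposition~\ref{prop:fd_measures_as_limits}, with~\eqref{eq:convergence_of_samples} supplied by Theorem~\ref{thm:W1_rates_FS}(2); your additional sketch of the de Finetti step is a valid self-contained substitute for the citation: couple a uniform map $F_{n,N}$ with a uniform equipartition $\Psi_{n,N}$ so that they disagree only on a uniformly random subset of $[N]$ of expected size $O(\sqrt{nN})$, independent of $Y$.
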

Similarly to Theorem~\ref{thm:proj_consistent_laws}, the proof combines Proposition~\ref{prop:fd_measures_as_limits} with a ``dual'' finite de Finetti theorem comparing uniformly random binning and random binning into equally sized bins from~\cite{levin2025deFin}, see Section~\ref{sec:proofs_quotients}.
We give some examples of the any-dimensional data distributions obtained by randomly binning limit objects. Using Theorem~\ref{thm:W1_rates_FS}, we then give generalization rates for average error with respect to these distributions.
\begin{example}
    If $\mc I_n=[n]$ and $\Omega_n=\Delta^n$, then a sequence $(\mu_n\in\mc P(\Delta^n))$ is equipartition-consistent if and only if (i) each $\mu_n$ is exchangeable; (ii) whenever $n|N$ we have
    \begin{equation*}
        \left(\sum_{j=1}^{N/n}(X_N)_{j + (i-1)(N/n)}\right)_{i=1}^n \overset{d}{=} X_n.
    \end{equation*}
    For example, the sequence of Dirichlet distributions $\mu_n=\mathrm{Dir}(\alpha\mathbbm{1}_n/n)$ satisfies these conditions for any $\alpha\in[0,\infty)$, where $\alpha=0$ corresponds to a uniformly random coordinate vector.

    If $\mc I_n=[n]^2$, then equipartition-consistent sequences $(\mu_n\in\mc P(\Delta^{n\times n}))$ were called ``equipartition-consistent random graph models'' in~\cite{levin2025graphs}, where they were shown to correspond to certain limits of growing-sized graphs. In particular, the specialization of Theorem~\ref{thm:eqp_consistent_laws} to this case was proved in~\cite[Thm.~1.9]{levin2025graphs}.

\end{example}

The next consequences of Theorem~\ref{thm:W1_rates_FS} are the following sketching and generalization rates, proved in Section~\ref{sec:proofs_quotients}. In contrast to the analogous Theorem~\ref{thm:sampling_rates_funcs} for sampling with replacement, here we get a rate of $n^{-1/2}$ regardless of the latent low-dimensional structure of our functions.
\begin{theorem}[Sketching and generalization rates]\label{thm:binning_rates_funcs}
    In the setting of Theorem~\ref{thm:W1_rates_FS}, suppose $f\colon\bigsqcup_n\Omega_n\to\RR$ satisfies one of the following conditions.
    \begin{enumerate}[align=left, font=\emph,labelwidth=!, labelindent=0pt]
        \item[(General)] $f$ is $L$-Lipschitz in $d_{\mathrm{samp}}$;
        \item[(Fixed-dimensional laws)] $f$ is $L$-Lipschitz in $(x,y)\mapsto W_1(\msf B_k(x),\msf B_k(y))$ for some $k\in\NN$;
        \item[(2-norm continuity)] $f$ is unchanged by zero-padding so $f\circ\beta(\phi_{N,n})=f$ for any injection $\phi_{N,n}\colon[n]\to[N]$, and $f|_{\Omega_n}$ is $L$-Lipschitz in $\ell_2$ norm for all $n$.
    \end{enumerate}
    Then for any $x\in\bigsqcup_n\Omega_n$ with probability at least $1-e^{-2\epsilon^2}$ we have
    \begin{equation}\label{eq:FS_sketching}
        |f(x)-f(\msf E_n(x))|\leq \frac{Lr(1 + 2\epsilon)}{\sqrt{n}}.
    \end{equation}
    If $\widehat f$ is another such function and $n \geq D$, then
    \begin{equation}\label{eq:FS_generalization}
        \mathrm{e}_{\infty}(f,\widehat f)\leq \mathrm{e}_{n}(f,\widehat f) + 2Lr\sqrt{\frac{1}{\lfloor n/D\rfloor}}.
    \end{equation}
    If $(\mu_n)$ is equipartition-consistent and $\mu_{\infty}$ is the representing measure from Theorem~\ref{thm:eqp_consistent_laws}, we also have
    \begin{equation*}
        \mathrm{e}_{\mu_{\infty}}(f,\widehat f)\leq \mathrm{e}_{\mu_n}(f,\widehat f) + 2Lr\sqrt{\frac{2D(3D-1)}{n}}.
    \end{equation*}
\end{theorem}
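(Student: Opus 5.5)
The plan is to reduce all three cases to a single inequality of the form $|f(x)-f(y)|\le L\,\Lambda(x,y)$, where $\Lambda$ is a quantity controlled by Theorem~\ref{thm:W1_rates_FS}. Then I would repeat the three-term argument~\eqref{eq:main_generalization_bound} from the proof of Theorem~\ref{thm:sampling_rates_funcs}.

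\textbf{Reduction to a common distance.}
\begin{itemize}
\item In the (General) case, take $\Lambda=d_{\mathrm{samp}}$.
\item In the (Fixed-dimensional laws) case, take $\Lambda(x,y)=W_1(\msf B_k(x),\msf B_k(y))$.
\item In the (2-norm continuity) case, use zero-padding invariance together with Proposition~\ref{prop:species_samp_zero_pad} and the construction of $\msf E_n$. The vector $\msf E_n(x)$ is obtained from $\mscr E_n(x)$ by restricting to the support and relabelling the indices $t_1,\dots,t_k$ through an injection. Hence $f(\msf E_n(x))=f(\mscr E_n(x))$, where $\mscr E_n(x)\in\vct V_N$ has the same size as $x$. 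The same invariance shows $\mscr E_n(x)$ lies in the domain, since it differs from $\msf E_n(x)\in\Omega_{nD}$ only by zero-padding. (If it lies outside the domain, I would extend $f$ by zero-padding invariance.) So here $\Lambda(x,\mscr E_n(x))=\|x-\mscr E_n(x)\|_2$.
\end{itemize}

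\textbf{Sketching bound~\eqref{eq:FS_sketching}.}
\begin{itemize}
\item In the first two cases, Theorem~\ref{thm:W1_rates_FS}(1) gives mean at most $r/\sqrt n$ and $\frac{4r^2}{n}$-subgaussianity. A subgaussian tail then gives the bound $r(1+2\epsilon)/\sqrt n$ with probability at least $1-e^{-2\epsilon^2}$.
\item In the third case, \eqref{eq:Binom_bound} gives $\mbb E\|x-\mscr E_n(x)\|_2\le r/\sqrt n$. The bounded-difference inequality applies because changing one sample $\alpha_i$ moves $\mscr E_n(x)$ by at most $\sqrt2\|x\|_1/n\le 2r/n$ in $\ell_2$. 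This yields the same tail.
\end{itemize}

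\textbf{Worst-case generalization~\eqref{eq:FS_generalization}.}
\begin{itemize}
\item Fix $x$ of arbitrary size and set $m=\lfloor n/D\rfloor$. Then $\msf E_m(x)\in\Omega_{mD}\subseteq\Omega_{\le n}$, using closure under species sampling and the fact that $\msf E_m$ maps into $\vct V_{mD}$.
\item Write $|f(x)-\widehat f(x)|\le \mbb E|f(x)-f(\msf E_m x)|+\mbb E|f(\msf E_m x)-\widehat f(\msf E_m x)|+\mbb E|\widehat f(\msf E_mx)-\widehat f(x)|$.
\item The middle term is at most $\mathrm e_n(f,\widehat f)$. Each outer term is at most $L r/\sqrt m$ by the expectation bounds above. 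This gives $2Lr/\sqrt{\lfloor n/D\rfloor}$.
\end{itemize}

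\textbf{Average error.}
\begin{itemize}
\item Use Theorem~\ref{thm:eqp_consistent_laws} to write $\mu_n=\mathrm{Law}(\msf B_n(X))$ with $X\sim\mu_\infty$. Apply the same three-term split with $\msf B_n$ in place of $\msf E_m$, conditionally on $X$, and integrate.
\item The outer terms need $|f(X)-\mbb E f(\msf B_n X)|\le L\,\mbb E\Lambda(X,\msf B_nX)$. For $\Lambda=d_{\mathrm{samp}}$ or the fixed-$k$ $W_1$, this is bounded by~\eqref{eq:expected_binning_rate}, giving $Lr\sqrt{2D(3D-1)/n}$. This requires extending $f$ continuously to $\overline\Omega_\infty$, which is fine since $f$ is Lipschitz in $d_{\mathrm{samp}}$.
\end{itemize}

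\textbf{Main obstacle.} The hardest step is the average-error bound in the 2-norm continuity case. There $\|X-\msf B_n X\|_2$ is not directly meaningful, since the two have different sizes and $X$ may be a limit object. I would first try to show that such $f$ is $L$-Lipschitz in $d_{\mathrm{samp}}$, or at least in $W_1(\msf B_k\cdot,\msf B_k\cdot)$ for large $k$. The argument would go as follows.
\begin{itemize}
\item Zero-pad $x$ and $y$ to a common large size $N$.
\item Choose $k\gg N$, so that $\msf B_k$ is injective with high probability (as in the proof of Proposition~\ref{prop:equivalence_converse}). On that event, zero-padding invariance gives $f(x)=f(\msf B_k x)$ with large probability, and similarly for $y$.
\item Since $f$ is Lipschitz in $\ell_2$ on each $\Omega_k$, this yields $|f(x)-f(y)|\le L\,W_1(\msf B_k x,\msf B_k y)+o(1)$.
\end{itemize}
Passing $k\to\infty$ needs care, because $d_{\mathrm{samp}}$ weights $W_1$ by $2^{-k}$. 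If this fails, I would instead prove the average-error bound only for the first two cases and handle the third via $\msf E_n$ applied to samples from $\mu_\infty$.
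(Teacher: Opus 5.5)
Your sketching bound, the worst-case generalization bound, and the average-error bound in the \emph{(General)} and \emph{(Fixed-dimensional laws)} cases essentially reproduce the paper's proof. The gap is the average-error bound under \emph{(2-norm continuity)}, which you leave unresolved.

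Your primary plan there, showing that $f$ is Lipschitz in $d_{\mathrm{samp}}$, does not go through. The $k$ needed for $\msf B_k$ to be injective grows with the sizes of $x$ and $y$. But $d_{\mathrm{samp}}$ only controls $W_1(\msf B_k\cdot,\msf B_k\cdot)$ up to a factor $2^k$. So no Lipschitz bound with constant $L$ comes out, and the stated rate needs exactly that constant.

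Your fallback does not prove the statement either. Running the three-term split with $\msf E_n$ makes the middle term an average error under $\mathrm{Law}(\msf E_n(X))$. The statement instead needs $\mathrm{e}_{\mu_n}$ with $\mu_n=\mathrm{Law}(\msf B_n(X))$.

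The missing idea is to apply your large-$k$ inequality only to the pair $(x,\msf B_n(x))$, and to bypass $d_{\mathrm{samp}}$ altogether.
\begin{itemize}
\item Fix $x\in\Omega_N$ and a realization of $\msf B_n$. Your argument gives $|f(x)-f(\msf B_n(x))|\le L\,W_1(\msf B_k(x),\msf B_k\circ\msf B_n(x))+o(1)$ as $k\to\infty$. The error is the oscillation of $f$ times the probability that $\msf B_k$ fails to be injective on $[N]$ or on $[n]$. That oscillation is at most $2Lr$, by zero-padding invariance (closure under binning implies closure under zero-padding) and $\|\cdot\|_2\le\|\cdot\|_1\le r$.
\item Take $\mbb E_{\msf B_n}$ and apply \eqref{eq:expected_binning_rate}, whose bound does not depend on $k$. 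Letting $k\to\infty$ then gives $\mbb E|f(x)-f(\msf B_n(x))|\le Lr\sqrt{2D(3D-1)/n}$.
\item This is exactly the paper's route. The third claim of Lemma~\ref{lem:binning_rate} is proved by sending $k\to\infty$ in the same coupling, so that $\beta(F_k)$ becomes an isometry. It yields a random injection $\Phi\colon[n]\to[N]$ with $\mbb E\|x-\beta(\Phi)\msf B_n(x)\|_2\le\|x\|_1\sqrt{2D(3D-1)/n}$. Zero-padding invariance then gives $f(\msf B_n(x))=f(\beta(\Phi)\msf B_n(x))$, which is a same-size $\ell_2$ comparison.
\item For limit objects, the bound above is uniform in $x$. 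Each map $x\mapsto\mbb Ef(\msf B_n(x))$ is $2^nL$-Lipschitz in $d_{\mathrm{samp}}$, so $f$ is a uniform limit of Lipschitz functions. Hence $f$ extends uniformly continuously to $\overline{\Omega}_{\infty}$, and the bound passes to the limit. This is the limiting argument the paper alludes to.
\end{itemize}
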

The proof is a direct application of the above two theorems, and we defer it to Section~\ref{sec:proofs_quotients}. 
The three conditions on the function $f$ in Theorem~\ref{thm:binning_rates_funcs} are different sufficient conditions for continuity of $f$ with respect to random binning. Some of these conditions are more conveniently applicable to different function classes, as we proceed to illustrate.

As in Section~\ref{sec:sampling_wrep}, a prominent example of functions satisfying the above hypotheses is polynomials. 
\begin{corollary}\label{cor:FS_polynomials}
    In the setting of Theorem~\ref{thm:W1_rates_FS}, let $p\colon\bigsqcup_n\Omega_n\to\RR$ be a polynomial unchanged by zero-padding, so $p\circ\beta(\phi_{N,n})=p$ as polynomials for all injections $\phi_{N,n}\colon[n]\to[N]$ and all $n\leq N$. Then $p$ satisfies the second condition in Theorem~\ref{thm:binning_rates_funcs}.  Moreover, the following improved $O(1/n)$ generalization bounds hold. If $p$ and $\hat p$ are two such polynomials of degree $d$, there exists a constant $L'>0$ such that
    \begin{equation}
        \mathrm e_{\infty}(p,\widehat p) \leq \mathrm e_n(p, \widehat p) + L'\max\{r^d,1\}\frac{d(d+1)D(dD-1)}{n}.
    \end{equation}
    The same generalization bound applies for average error with respect to equipartition-consistent distributions.
\end{corollary}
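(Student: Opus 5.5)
The plan is to use the binning version of the sampling representation of polynomials from~\cite[Thm.~5.2]{levin2025deFin}, which was quoted at the end of Section~\ref{sec:sampling_and_binning}. Since $p$ is unchanged by zero-padding and each restriction $p|_{\vct V_n}$ has degree $d$, there is a fixed $k$ and a polynomial $g_k\in\RR[\vct V_k]$ with $p(x)=\mbb E g_k(\msf B_k(x))$ for all $x$. I expect the construction to give $k=dD$: a degree-$d$ monomial involves at most $d$ indices in $\mc I_n$, hence at most $dD$ elements of $[n]$, so binning into $dD$ bins suffices. The same holds for $\widehat p$ with some $\widehat g_k$.

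For the second condition of Theorem~\ref{thm:binning_rates_funcs}, the function $g_k$ restricted to the compact set $\Omega_k$ (which lies in an $\ell_1$ ball of radius $r$) is Lipschitz in $\ell_2$ with some constant $L_g$. Then $|p(x)-p(y)|=|\mbb Eg_k(\msf B_k x)-\mbb Eg_k(\msf B_k y)|\le L_g W_1(\msf B_k(x),\msf B_k(y))$ by the dual form of $W_1$, which proves the first claim.

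For the $O(1/n)$ bound, I will follow the decomposition~\eqref{eq:main_generalization_bound} with $\msf B_n$ in place of $\msf R_n$:
\[
|p(x)-\widehat p(x)|\le |p(x)-\mbb Ep(\msf B_n x)|+\mathrm e_n(p,\widehat p)+|\widehat p(x)-\mbb E\widehat p(\msf B_n x)|.
\]
The middle term uses closure of $(\Omega_n)$ under binning. For the outer terms, write $\mbb E p(\msf B_n x)=\mbb E g_k(\msf B_k\msf B_n x)$. Expand $g_k=\sum_\gamma c_\gamma m_\gamma$ in monomials of degree at most $d$, and view $\mbb E m_\gamma(\msf B_k y)$ as a linear functional of the tensor power $y^{\otimes j}$, $j\le d$. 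The key point is that $\mbb E(\msf B_k y)^{\otimes j}=\mbb E\,\beta(F_{k,n})^{\otimes j}y^{\otimes j}$ is again a binning-type expectation, now for the compatible sequence $\mc I_n^{j}$, which has degree $jD\le dD$. So I will apply the last statement of Theorem~\ref{thm:W1_rates_FS}(2), $\|\mbb E\msf B_k(z)-\mbb E\msf B_k(\msf B_n z)\|_1\le \|z\|_1\frac{D'(D'-1)}{n}$, with $D'=jD$ and $z=x^{\otimes j}$. This is valid because $\msf B_n(x)^{\otimes j}$ is the binning of $x^{\otimes j}$ in the product sequence, and $\|x^{\otimes j}\|_1\le r^j\le\max\{r^d,1\}$.

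Pairing with the bounded coefficient vector of $m_\gamma$ gives $|p(x)-\mbb Ep(\msf B_nx)|\le C\max\{r^d,1\}\sum_{j\le d}\frac{jD(jD-1)}{n}$. Summing over $j$ yields the factor $d(d+1)D(dD-1)$ up to constants, and these constants are absorbed into $L'$ together with the analogous constant for $\widehat p$.

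The main obstacle is justifying the tensorization step: I must check that $\beta(f)^{\otimes j}$ is exactly the binning action for the index sequence $\mc I_n^j$, and that the constant in the $\ell_1$ bias bound depends only on the degree. For the average-error statement, I will write $\mu_n=\mathrm{Law}(\msf B_n X)$ using Theorem~\ref{thm:eqp_consistent_laws} and apply the pointwise bound conditionally on $X$, extended to limits by continuity.
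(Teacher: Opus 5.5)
Your proposal is correct and follows essentially the same argument as the paper. Both use the sampling representation $p=\mbb E g_k\circ\msf B_k$ from \cite{levin2025deFin} with $\deg g_k\le d$, take the Lipschitz constant of $g_k$ on $\Omega_k$ to get the second condition, and obtain the $O(1/n)$ rate by applying the $\ell_1$ bias bound of Theorem~\ref{thm:W1_rates_FS}(2) to tensor powers $x^{\otimes j}$, viewed as binned in a product compatible sequence. The only cosmetic difference is that the paper bundles all powers into one sequence $\mc J_n=\bigsqcup_{i=0}^d\mc I_n^i$ of degree $dD$ and pairs once with the coefficient vector of $g_k$, while you treat each $j$ separately with degree $jD$; both give the same bound.
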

The proof uses a result of~\cite{levin2025deFin} expressing such a polynomial as the moment of a fixed-sized random binning of its input together with Theorem~\ref{thm:W1_rates_FS}, see Section~\ref{sec:proofs_quotients}. We thus obtain $O(1/\sqrt{n})$ sketching rates and $O(1/n)$ generalization rates for polynomials unchanged by zero-padding, similarly to the rates for polynomials unchanged by duplication given in Corollary~\ref{cor:moment_polys}.
The following are a few examples of the polynomials to which Corollary~\ref{cor:FS_polynomials} applies, which constitute ``unnormalized'' analogs of the polynomials unchanged by duplication considered in Corollary~\ref{cor:moment_polys}.
\begin{enumerate}[align=left, font=\emph, labelwidth=!, labelindent=0pt, wide]
    \item[(Multisymmetric Functions)] When $\vct V_n=\RR^{d\times n}$, polynomials unchanged by zero-padding are precisely multisymmetric functions~\cite{multisym_funcs}, consisting of polynomials of the form $$p(x)=q\Big(p_{\alpha_1}(x),\ldots,p_{\alpha_k}(x)\Big),$$ where $q\in\RR[y_1,\ldots,y_k]$ is a fixed polynomial, and for $\alpha\in\NN_0^d\setminus\{0\}$ we have $p_{\alpha}(x)=\sum_{i=1}^n\prod_{j=1}^dx_{j,i}^{\alpha_j}$. When $d=1$ and $\alpha\in\NN$, the polynomials $p_{\alpha}(x)=\sum_{i=1}^nx_i^{\alpha}$ are known as power-sum polynomials, and polynomials unchanged by zero-padding are called symmetric functions. These are classic objects of study in combinatorics and representation theory, see~\cite[Chap.~7]{Stanley_Fomin_1999} and~\cite{macdonald1998symmetric}.

    \item[((Hyper)Graph Numbers)] When $\vct V_n=\RR^{n\times n}$, polynomials unchanged by zero-padding are precisely linear combinations of graph homomorphism numbers~\cite[\S6.4]{levin2025deFin}. Specifically, if $H\in \NN_0^{k\times k}$ is a multigraph with no isolated vertices, we define the homomorphism number of $H$ in $G\in\vct V_n$ by
    \begin{equation*}
        \mathrm{hom}(H;G)=\sum_{f\colon[k]\to[n]}\prod_{i,j=1}^kG_{f(i),f(j)}^{H_{i,j}}.
    \end{equation*}
    When $H,G$ are simple graphs, the value $\mathrm{hom}(H;G)$ is the number of graph homomorphisms from $H$ to $G$.
    Similarly, when $\vct V_n=(\RR^n)^{\otimes d}$ polynomials unchanged by zero-padding are linear combinations of hypergraph homomorphism numbers, analogously defined.

    \item[(Graph Signals)] When $\vct V_n=\RR^{n\times n}\oplus\RR^{n\times d}$, polynomials unchanged by zero-padding include polynomial (unnormalized) graph neural networks, which are compositions of polynomials of the form
    \begin{equation*}
        (G,X)\mapsto \left(G,\sigma\left(\sum_{\ell=0}^LG^{\ell}X\Theta_{\ell}\right)\right),\quad \textrm{for polynomial } \sigma \textrm{ with } \sigma(0)=0 \textrm{ and } \Theta_0,\ldots,\Theta_L\in\RR^{d\times d},
    \end{equation*}
    followed by a zero-padding invariant final layer such as $(G,X)\mapsto X^\top\mathbbm{1}$.
\end{enumerate}
For all of the above families of polynomials, we obtain $O(1/\sqrt{n})$ sketching and $O(1/n)$ generalization rates.
We now turn to proving sketching and generalization rates for some non-polynomial function classes.
\begin{corollary}[DeepSets]\label{cor:deepsets}
    Let $\vct V_n=\RR^{d\times n}$ and consider maps of the form $f(x_1,\ldots,x_n)=\sigma(\sum_i\rho(x_i))$ for $\rho\colon\RR^d\to\RR^{\ell}$ with $\rho(0)=0$ and $\sigma\colon\RR^{\ell}\to\RR$. Then $f$ is unchanged by zero-padding.
    \begin{enumerate}[labelwidth=!, labelindent=0pt]
        \item Suppose $\sigma$ is $L_{\sigma}$-Lipschitz, and that $\rho$ is $L_{\rho}\sqrt{r}$-Lipschitz with respect to the $\ell_2$ norm on $\{x\in\RR^d:\|x\|_1\leq r\}$ for each $r$. Then $f$ satisfies the third condition in Theorem~\ref{thm:binning_rates_funcs} with $L=L_{\rho}L_{\sigma}\sqrt{2r}$.
        \item The 1-norm $f(x)=\sum_i|x_i|$ is discontinuous in sampling metric. 
    \end{enumerate}
\end{corollary}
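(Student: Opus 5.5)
Zero-padding invariance is immediate: with $\mc I_n=[d]\times[n]$, the map $\beta(\phi_{N,n})$ places the columns of $x$ into the positions $\phi_{N,n}(1),\ldots,\phi_{N,n}(n)$ and fills the rest with zero columns. Since $\rho(0)=0$, the added columns contribute nothing to $\sum_i\rho(x_i)$, so $f\circ\beta(\phi_{N,n})=f$.

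For part 1 we must show that $f|_{\Omega_n}$ is $L_\rho L_\sigma\sqrt{2r}$-Lipschitz in $\ell_2$, where $\Omega_n\subseteq\{\|x\|_1\le r\}$. Fix $x,y\in\Omega_n$. By the Lipschitz property of $\sigma$,
$$|f(x)-f(y)|\le L_\sigma\Big\|\sum_i(\rho(x_i)-\rho(y_i))\Big\|\le L_\sigma\sum_i\|\rho(x_i)-\rho(y_i)\|_2 .$$
The naive bound $\|\rho(x_i)-\rho(y_i)\|_2\le L_\rho\sqrt r\|x_i-y_i\|_2$ followed by Cauchy--Schwarz produces a factor $\sqrt n$, which is not allowed. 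We therefore read the hypothesis scale-wise. On the ball of radius $s_i=\|x_i\|_1+\|y_i\|_1$ (or $\max\{\|x_i\|_1,\|y_i\|_1\}$), the map $\rho$ is $L_\rho\sqrt{s_i}$-Lipschitz, so
$$\|\rho(x_i)-\rho(y_i)\|_2\le L_\rho\sqrt{s_i}\,\|x_i-y_i\|_2 .$$
Cauchy--Schwarz then gives
$$\sum_i\sqrt{s_i}\,\|x_i-y_i\|_2\le\Big(\sum_i s_i\Big)^{1/2}\|x-y\|_2\le\sqrt{2r}\,\|x-y\|_2,$$
because $\sum_i s_i\le\|x\|_1+\|y\|_1\le 2r$. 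This yields exactly $L=L_\rho L_\sigma\sqrt{2r}$. The key step is this use of the per-column scale: the $\sqrt r$-dependence in the hypothesis is designed so that the sum over columns is controlled by the total $\ell_1$ mass instead of by $n$. Exploiting it correctly is the main point, and the rest is bookkeeping. The norm on $\RR^\ell$ used for $\sigma$ is taken to be $\ell_2$; other choices change only the constants.

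For part 2 we exhibit a sequence that converges in $d_{\mathrm{samp}}$ while $\|x\|_1$ does not. Take $d=1$ and $x_n=(\tfrac1n,-\tfrac1n,\ldots,\tfrac1n,-\tfrac1n)\in\RR^{2n}$, which has $\|x_n\|_1=2$ and lies in the $\ell_1$-ball of radius $2$. Bin the columns into $k$ bins and denote by $S_j$ the resulting bin sums. Each $S_j$ is $\tfrac1n$ times a difference of two counts, which is centered with variance at most $\tfrac{2n}{k}\cdot\tfrac1{n^2}=\tfrac{2}{kn}$. Hence
$$\mbb E\|\msf B_k(x_n)\|_2\le\sqrt{2/n}\to0,$$
so $W_1(\msf B_k(x_n),\msf B_k(0))\to0$ for every $k$. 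Since also $W_1\le 2$ for all $k$, dominated convergence gives $d_{\mathrm{samp}}(x_n,0)\to0$, whereas $f(x_n)=2\neq0=f(0)$. Therefore the $1$-norm is discontinuous in the sampling metric.
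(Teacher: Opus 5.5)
Your proposal is correct and follows the paper's proof essentially verbatim. In part 1 you use the same per-column scale $\|x_i\|_1+\|y_i\|_1$ followed by Cauchy--Schwarz to get $L_\rho L_\sigma\sqrt{2r}$. In part 2 you use the same cancelling $\pm$ sequence (the paper takes entries $\pm\frac{1}{2n}e_1$, so that $\|x^{(n)}\|_1=1$) and the same binomial variance computation giving $\mbb E\|\msf B_k(x_n)\|_2^2=O(1/n)$ uniformly in $k$; that uniformity already yields $d_{\mathrm{samp}}(x_n,0)\le\sqrt{2/n}$ directly, so the appeal to dominated convergence is unnecessary.
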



The proof is a direct computation, see Section~\ref{sec:proofs_quotients}.  Although the 1-norm is of the form $f(x_1,\ldots,x_n)=\sigma(\sum_i\rho(x_i))$ for $d=1$, $\sigma = \mathrm{id}_{\RR}$ and $\rho = |\cdot|$, both of which are 1-Lipschitz, the stronger condition on $\rho$ in part 1, requiring its Lipschitz constant to decay to zero around the origin, fails to hold for the absolute value function.  Therefore, mere Lipschitz continuity of $\rho$ (as well as $\sigma$) is not by itself sufficient to obtain continuity with respect to random binning.  The hypothesis on $\rho$ in part 1 is satisfied if, for example, $\rho$ is differentiable with a $1/2$-H\"older continuous derivative satisfying $\rho'(0)=0$.


When $\sigma$ and $\rho$ in Corollary~\ref{cor:deepsets} are neural networks, the resulting architecture is called DeepSets~\cite{deepsets}. When $d=1$ and $\Omega_n=\Delta^n$, different choices of $\sigma$ and $\rho$ yield various diversity indices used to quantify the concentration of a discrete probability distribution~\cite{Zhang02072016}. For example, setting $\rho(x)=x^q$ and $\sigma(t)=\frac{1-t}{q-1}$ gives Tsallis’ entropy, which for $q\geq 3/2$ satisfies the hypotheses of Corollary~\ref{cor:deepsets}(1). We note however that some diversity indices are not continuous in sampling metric. This is the case for Shannon and R\'enyi entropies (given by $\sigma=\mathrm{id}$, $\rho(t)=t\log \frac{1}{t}$ for the former and $\sigma(t)=\frac{1}{1-q}\log(t)$, $\rho(t)=t^q$ for the latter) for example, both of which diverge along the (convergent) sequence $x^{(n)}=\mathbbm{1}_n/n$.

\begin{corollary}[PointNet]\label{cor:pointnet}
    Let $\vct V_n=\RR^{d\times n}$ and consider maps of the form $f(x_1,\ldots,x_n)=\sigma(\sup_i\rho(x_i))$ for $\rho\colon\RR^d\to\RR^{\ell}$ and $\sigma\colon\RR^{\ell}\to\RR$, where the supremum is taken coordinate-wise. If $\rho(x)\geq0$ for all $x$, $\rho(0)=0$, and if $\sigma$ and $\rho$ are $L_{\sigma}$ and $L_{\rho}$-Lipschitz continuous in $\ell_2$-norm, then the function $f$ satisfies the third condition in Theorem~\ref{thm:binning_rates_funcs} with $L=L_{\rho}L_{\sigma}$.
\end{corollary}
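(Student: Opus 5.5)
We need to verify the two parts of the third condition of Theorem~\ref{thm:binning_rates_funcs}: that $f$ is unchanged by zero-padding, and that each restriction $f|_{\Omega_n}$ is $L_\rho L_\sigma$-Lipschitz in the $\ell_2$ norm on $\RR^{d\times n}$. Zero-padding is immediate. For an injection $\phi_{N,n}\colon[n]\to[N]$, the columns of $\beta(\phi_{N,n})x$ are the columns of $x$ placed at positions $\phi_{N,n}(1),\ldots,\phi_{N,n}(n)$, with zeros elsewhere. The coordinatewise supremum over columns therefore becomes $\max\{\sup_i\rho(x_i),\rho(0)\}$. Because $\rho\geq0$ and $\rho(0)=0$, appending $\rho(0)$ to the supremum does not change it, so $f(\beta(\phi_{N,n})x)=f(x)$. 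Permutations of the columns do not change a supremum either, which is why the argument holds for an arbitrary injection and not only for $\iota_{N,n}$.

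For the Lipschitz bound, fix $x,y\in\RR^{d\times n}$ and write $u=\sup_i\rho(x_i)$ and $v=\sup_i\rho(y_i)$ in $\RR^\ell$. First, $|f(x)-f(y)|\leq L_\sigma\|u-v\|_2$. Next, for each coordinate $j\in[\ell]$ we use the elementary inequality $|\max_i a_i-\max_i b_i|\leq\max_i|a_i-b_i|$, which gives
\begin{equation*}
|u_j-v_j|\leq\max_i|\rho_j(x_i)-\rho_j(y_i)|.
\end{equation*}
Squaring and summing over $j$, and bounding each maximum by a sum over $i$, yields
\begin{equation*}
\|u-v\|_2^2\leq\sum_j\sum_i|\rho_j(x_i)-\rho_j(y_i)|^2=\sum_i\|\rho(x_i)-\rho(y_i)\|_2^2\leq L_\rho^2\sum_i\|x_i-y_i\|_2^2=L_\rho^2\|x-y\|_2^2.
\end{equation*}
Hence $|f(x)-f(y)|\leq L_\sigma L_\rho\|x-y\|_2$, where $\|x-y\|_2$ is the Frobenius norm, i.e., the $\ell_2$ norm on $\vct V_n=\RR^{\mc I_n}$. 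This bound holds uniformly in $n$, as the third condition of Theorem~\ref{thm:binning_rates_funcs} requires.

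No step here is genuinely hard. The one point that needs care is the passage from the coordinatewise maximum to the full $\ell_2$ norm: the inequality $\max_i|\cdot|^2\leq\sum_i|\cdot|^2$ has to be applied separately for each $j$ before summing over $j$. This is what keeps the constant at $L_\rho L_\sigma$, with no factor depending on $\ell$ or $n$. Once both properties are established, Theorem~\ref{thm:binning_rates_funcs} applies directly with $L=L_\rho L_\sigma$.
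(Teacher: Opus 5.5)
Your proof is correct and follows the same route as the paper, which only says ``Direct computation'': you get zero-padding invariance from $\rho\geq 0=\rho(0)$ and permutation invariance of the supremum. You get the $L_\rho L_\sigma$ Lipschitz bound by applying $|\max_i a_i-\max_i b_i|\leq\max_i|a_i-b_i|$ in each coordinate and bounding $\max_i$ by $\sum_i$ before summing over coordinates, which is exactly the computation the paper leaves implicit.
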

\begin{proof}
    Direct computation.
\end{proof}
Finally, we consider non-polynomial graph neural networks.
\begin{corollary}[Graph Neural Networks]\label{cor:gnns}
    Let $\vct V_n=\RR^{n\times n}\oplus\RR^{n\times d}$ and consider a composition of maps of the form
    \begin{equation*}
        F(A,X)=\left(A, \sigma\left(\sum_{d=0}^DA^dX\Theta_d\right)\right),\quad \textrm{where } \Theta_0,\ldots,\Theta_D\in\RR^{d\times d},
    \end{equation*}
    and $\sigma\colon\RR^d\to\RR^d$ satisfies $\sigma(0)=0$, is $L_{\sigma}$-Lipschitz in $\ell_2$, and is applied row-wise. Also define $P\colon\vct V_n\to\RR^d$ by $P(A,X)=\rho(X)^\top\mathbbm{1}_n$ for $\rho\colon\RR^d\to\RR^d$ that is applied row-wise and satisfies the hypotheses of Corollary~\ref{cor:deepsets}(1). Consider the depth-$\ell$ graph neural network given by the composition
    \begin{equation*}
        f = P\circ F^{\circ \ell}\colon\bigsqcup_n\mc B_{\ell_1}^{(n)}(r)\to\RR^d,
    \end{equation*}
    where $\mc B_{\ell_1}^{(n)}(r)=\{(A,X)\in\vct V_n:\|A\|_1+\|X\|_1\leq r\}$.
    Then each coordinate function of $f$ satisfies the third condition in Theorem~\ref{thm:binning_rates_funcs}.
\end{corollary}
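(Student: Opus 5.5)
We need to verify, for each coordinate of $f=P\circ F^{\circ\ell}$, the two parts of the third condition of Theorem~\ref{thm:binning_rates_funcs}: invariance under zero-padding, and a uniform $\ell_2$-Lipschitz bound on each $\Omega_n=\mc B_{\ell_1}^{(n)}(r)$. The plan is to show each layer $F$ commutes with zero-padding and is Lipschitz in a norm that controls what $P$ needs, and then to finish with the DeepSets estimate of Corollary~\ref{cor:deepsets}(1).

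\emph{Zero-padding invariance.} Take an injection $\phi=\phi_{N,n}$ with $\beta(\phi)(A,X)=(\beta(\phi)A,\beta(\phi)X)$. This inserts $A$ into an $N\times N$ matrix, supported on the rows and columns indexed by $\phi([n])$, and places the rows of $X$ at those positions, with zeros elsewhere.
\begin{itemize}
\item Matrix powers commute with padding: $(\beta(\phi)A)^d\beta(\phi)X=\beta(\phi)(A^dX)$. This holds because $\beta(\phi)A=\Phi A\Phi^\top$ and $\beta(\phi)X=\Phi X$, where $\Phi$ is the $N\times n$ partial permutation matrix with $\Phi^\top\Phi=I_n$.
\item For $d=0$ the term is just $\Phi X\Theta_0$.
\item Since $\sigma(0)=0$ and $\sigma$ acts row-wise, $\sigma(\Phi Y)=\Phi\sigma(Y)$.
\end{itemize}
Together these give $F\circ\beta(\phi)=\beta(\phi)\circ F$. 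Finally $P(\Phi A\Phi^\top,\Phi X)=\rho(\Phi X)^\top\mathbbm 1_N=\rho(X)^\top\mathbbm 1_n$, using $\rho(0)=0$. Hence $f\circ\beta(\phi)=f$.

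\emph{Lipschitz bounds.} Fix $n$. First, $F$ maps bounded sets to bounded sets: $\|A^dX\|\le\|A\|_{\mathrm{op}}^d\|X\|_F$ with $\|A\|_{\mathrm{op}}\le\|A\|_F\le\|A\|_1\le r$. So all intermediate iterates $(A,X_j)$ lie in a bounded set whose radius $r_j$ depends only on $r,\ell,L_\sigma,\Theta$, and not on $n$.
\begin{itemize}
\item \emph{One layer.} Using $A^dX-A'^dX'=A^d(X-X')+(A^d-A'^d)X'$ and a telescoping bound $\|A^d-A'^d\|_{\mathrm{op}}\le d r^{d-1}\|A-A'\|_F$, we get
\[
\|F(A,X)-F(A',X')\|_F\le C_j\|(A,X)-(A',X')\|_2 .
\]
Here $C_j$ depends only on $r_j,D,\|\Theta_d\|,L_\sigma$, and the bound holds in Frobenius ($=\ell_2$) norm.
\item \emph{Readout.} The last step is where the DeepSets hypothesis enters. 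Part~1 of Corollary~\ref{cor:deepsets} requires the $\ell_1$ mass of the input to $P$ to stay bounded, so that $\rho$ is $L_\rho\sqrt{r'}$-Lipschitz on a ball of radius $r'$. We therefore also need $\|X_\ell\|_1\le r'$ uniformly in $n$.
\end{itemize}

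\emph{Main obstacle.} The $\ell_1$ bound in the readout step is the one I expect to be hardest. $\ell_2$-Lipschitzness of $\sigma$ controls $\ell_2$ norms, not $\ell_1$ norms, row by row. I would try the following.
\begin{itemize}
\item Bound $\|\sigma(Y)\|_1\le\sqrt d\sum_i\|\sigma(Y_i)\|_2\le\sqrt d L_\sigma\sum_i\|Y_i\|_2\le\sqrt dL_\sigma\|Y\|_1$.
\item Bound $\|A^dX\Theta\|_1\le\|A\|_{1\to1}^d\|X\|_1\|\Theta\|$, where $\|A\|_{1\to1}\le\|A\|_1\le r$ is the max column sum, which is at most the total entry sum.
\end{itemize}
This gives $\ell_1$ radii $r'_j$ independent of $n$.

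\emph{Conclusion.} Combining these, the inner map $(A,X)\mapsto\rho(X_\ell)^\top\mathbbm1$ is Lipschitz, via
\[
\|\rho(X)^\top\mathbbm 1-\rho(X')^\top\mathbbm 1\|\le L_\rho\sqrt{2r'}\,\|X-X'\|_F,
\]
which is exactly the DeepSets computation. Composing with the layer constants $\prod_jC_j$ yields an $\ell_2$-Lipschitz constant $L$ uniform in $n$ for each coordinate of $f$, as required.
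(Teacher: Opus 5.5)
Your proposal is correct and follows essentially the same route as the paper. Both arguments rest on four ingredients:
\begin{enumerate}
\item zero-padding equivariance of each layer, together with $\rho(0)=0$;
\item a layerwise $\ell_2$-Lipschitz bound on $\ell_1$-balls;
\item propagation of the $\ell_1$ radius through the layers, via $\|\sigma(Y)\|_1\le\sqrt d\,L_\sigma\|Y\|_1$ and $\ell_1$ submultiplicativity;
\item the DeepSets readout estimate from Corollary~\ref{cor:deepsets}(1).
\end{enumerate}
Your explicit verification of the zero-padding step, using the partial permutation matrix $\Phi$ with $\Phi^\top\Phi=I_n$, simply fills in what the paper dismisses as easy to verify.
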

The proof is an elementary but long computation, and we defer it to Section~\ref{sec:proofs_quotients}. 
For all of the function classes in the above corollaries, Theorem~\ref{thm:binning_rates_funcs} yields $n^{-1/2}$ sketching and generalization rates.

So far in this section, we have worked with the sampling metric defined by random binning, and showed that species sampling is an appropriate sketching map with respect to this metric. We end this section by further considering the sampling metric defined by species sampling, and show that its induced topology is closely related to the one induced by random binning. 
\begin{proposition}\label{prop:relation_to_species}
    Let $(\mc I_n)$ be a compatible sequence of finite degree, and let $(x_i)\subseteq\bigsqcup_n\RR^{\mc I_n}$. Then $(\msf E_k(x_i))$ converges for all $k$ if and only if the sequence of random tuples $((\msf B_k(x_i^+),\msf B_k(x_i^-)))_i$ converges for all $k$, where $x_i^+=\max\{x_i,0\}$ and $x_i^-=\max\{-x_i,0\}$ are the entrywise positive and negative parts of $x_i$, and we apply the same random map $\msf B_k$ to both $x_i^+$ and $x_i^-$.
\end{proposition}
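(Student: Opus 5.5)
The strategy is to reduce both sides of the equivalence to convergence of a common family of ``test statistics'': polynomial moments. For sign-split inputs these moments are expectations of symmetric, zero-padding-invariant polynomials. Throughout, assume $\sup_i\|x_i\|_1<\infty$, which is needed for either convergence to make sense. Under this assumption all the random objects $\msf E_k(x_i)$ and $\msf B_k(x_i^\pm)$ lie in compact sets, so weak convergence is equivalent to convergence of all polynomial moments; the compactness comes from Proposition~\ref{prop:compact_under_sampling}. By Theorem~\ref{thm:eqp_consistent_laws} and the polynomial representation from~\cite{levin2025deFin} discussed after~\eqref{eq:zero_pad_invariant}, moments of $(\msf B_k(x^+),\msf B_k(x^-))$ over all $k$ span the same space as the evaluations $p(x^+,x^-)$, where $p$ ranges over polynomials in the pair that are unchanged by (simultaneous) zero-padding and permutation. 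A basis of these is given by generalized homomorphism numbers
\[
\mathrm{hom}(H;x^+,x^-)=\sum_{f}\prod_{e\in H}x^{s(e)}_{\theta(f)(e)},
\]
where $H$ is a finite ``multigraph'' whose edges $e$ are indices of $\mc I_m$, each edge carries a sign label $s(e)\in\{+,-\}$, there are no isolated vertices, and the sum runs over maps $f$ of its vertices. So the binning side is equivalent to convergence of all such signed hom-numbers. Products of hom-numbers are hom-numbers of disjoint unions, which lets us pass freely between moments of $\msf B_k$ and single hom-numbers.

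Next I will express moments of species samples in the same basis. Write $\|x\|_1\,\msf E_k(x)$ in terms of $\mscr E_k(|x|/\|x\|_1)$ together with signs. The law of $\msf E_k(x)$ up to relabelling (which is all we need, since we test against symmetric statistics) is determined by the law of the random \emph{pattern} formed by $\alpha_1,\ldots,\alpha_k$. This pattern records which vertex labels coincide across the sampled indices, what type each $\alpha_j$ has, and which signs occur. The probability of a fixed pattern $P$ equals
\[
\|x\|_1^{-k}\,\mathrm{inj}(P;x^+,x^-),
\]
where $\mathrm{inj}$ is the injective hom-number of $P$ viewed as a signed multigraph with $k$ edges. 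By M\"obius inversion over vertex partitions, $\mathrm{inj}$ and $\mathrm{hom}$ are triangularly related. Moreover $\|x\|_1=\sum_\alpha(x^+_\alpha+x^-_\alpha)$ is itself a single-edge hom-number, so the scale factor is a polynomial in the binning data.

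For the direction ($\Leftarrow$), suppose the binning data converge. Then all signed hom-numbers converge, hence all $\mathrm{inj}(P)$ and $\|x_i\|_1$ converge. Every polynomial moment of $\msf E_k(x_i)$ is a finite polynomial combination of $\|x_i\|_1$ and the quantities $\mathrm{inj}(P;x_i^\pm)\,\|x_i\|_1^{-k}$, and here the case $\|x_i\|_1\to 0$ must be handled separately. In that case $\msf E_k(x_i)\to 0$ trivially. Away from this case, the polynomial moments of $\msf E_k(x_i)$ converge.

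For the direction ($\Rightarrow$), I will show that the species moments determine all signed hom-numbers. The mass $\|x\|_1=\|\msf E_k(x)\|_1$ is recovered from $\msf E_k$, and the signs of entries of $\msf E_k(x)$ reveal the sign labels. The pattern-probabilities are then recovered from the law of $\msf E_k(x)$ via continuous functionals. This requires care, because distinct $\alpha_j$ landing on the same index merge into a single entry, so the pattern is only visible through the vector of multiplicities $\msf E_k(x)\cdot k$. Conditional on that vector, however, the pattern is exchangeable and recoverable. Multiplying by $\|x\|_1^k$ gives the $\mathrm{inj}$'s, and M\"obius inversion gives the $\mathrm{hom}$'s.

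The main obstacle is the continuity needed in the ($\Rightarrow$) direction. Pattern-probabilities are not continuous functions of the law of $\msf E_k$ when a pattern is recovered from ties in counts, so I would use polynomial test functions instead. Concretely, $\mbb E\sum_\alpha \msf E_k(x)_\alpha^{m}$ and its multi-index analogues are polynomials in pattern-probabilities with nonzero leading term. Varying $k$ and inducting on the number of edges then solves for each $\mathrm{inj}(P)$ as a polynomial in limits of moments of $\msf E_{k'}(x)$ for $k'\le k$. If this triangular system proves messy, a fallback is to use only that $\mathrm{hom}$ with $k$ edges equals $\|x\|_1^k$ times the expectation of a bounded continuous function of $\msf E_k(x)$, read off directly from the sample.
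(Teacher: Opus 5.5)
Your argument is correct in outline and takes a genuinely different route from the paper. The paper first reduces to nonnegative vectors by doubling the index sets, $\mc J_n=\mc I_n\sqcup\mc I_n$, using $\msf E_k(x)\overset{d}{=}Q\circ\msf E_k\circ S(x)$. It then proves both implications analytically. The coupling bounds $\mbb E W_1(\msf B_k(x),\msf B_k\circ\msf E_n(x))\le\|x\|_1/\sqrt n$ and, for $x\ge 0$, $\mbb E W_1(\msf E_k(x),\msf E_k\circ\msf B_n(x))\le\|x\|_1kD(kD-1)/n$ let either sampling be approximated by the other at a fixed finite size. A Skorokhod coupling and a Cauchy argument then finish.

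You instead show that both modes of convergence are equivalent to convergence of one family of invariants, the signed injective (equivalently homomorphism) numbers $\mathrm{inj}(P;x^+,x^-)$.
\begin{itemize}
\item On the binning side you use the representation $p=\mbb E g\circ\msf B_k$ of zero-padding-invariant polynomials quoted after~\eqref{eq:zero_pad_invariant}. It is applied to the doubled sequence, which is what your simultaneous binning of $x^\pm$ amounts to.
\item On the species side you use the exact formula $\mbb P[\text{pattern } P]=c_P\|x\|_1^{-k}\mathrm{inj}(P;x^+,x^-)$. Here $c_P$ is an automorphism constant you omitted, which is harmless.
\end{itemize}
Your sign labels play the role of the paper's doubling trick.

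The paper's argument is self-contained given its sketching lemmas, and it is quantitative. Yours needs the external representation theorem for ($\Leftarrow$). It also needs a spanning lemma: zero-padding-invariant polynomials are spanned by orbit sums of monomials, i.e.\ by $\mathrm{inj}$ of patterns without isolated vertices. You assert this; prove it via linear independence of the $\mathrm{inj}$'s of distinct patterns. In exchange, your route describes the common topology explicitly by signed homomorphism numbers, much as homomorphism densities characterize dense graph convergence.

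Three points to tidy.
\begin{itemize}
\item Boundedness of $\|x_i\|_1$ need not be assumed, because each side implies it. The norms $\|\msf E_1(x_i)\|_1=\|x_i\|_1$ and $\|\msf B_1(x_i^+)\|_1+\|\msf B_1(x_i^-)\|_1=\|x_i\|_1$ are deterministic.
\item The law of $\msf E_k(x)$ is determined exactly, not just up to relabelling, by $\|x\|_1$ and the pattern law, since the enumeration is uniform. You need this, because weak convergence is tested against all bounded continuous functions, not only symmetric ones.
\item The obstacle you flag in ($\Rightarrow$) does not exist. Your first proposed fix is also off: polynomial moments of $\msf E_k(x)$ are linear, not polynomial, in the pattern probabilities.
\end{itemize}

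Here is why there is no continuity obstacle. For $x\neq 0$, $\msf E_k(x)/\|x\|_1$ takes values in a fixed finite set $\mc V_k$ of vectors of unit $\ell_1$ norm. Each $v\in\mc V_k$ determines its unordered signed pattern, since multiplicities and signs are read off as $k|v_\gamma|$ and $\mathrm{sign}(v_\gamma)$. Tied counts simply give different points of $\mc V_k$.

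The closed rays $[0,\infty)v$, $v\in\mc V_k$, meet only at the origin. So for any function $h$ of the pattern, the map $y\mapsto\|y\|_1^k h(y/\|y\|_1)$, set to $0$ at $y=0$, is continuous on their union. That union contains the support of every $\msf E_k(x_i)$, so by Tietze the map extends to a bounded continuous function. Take $h$ to be the indicator of an unordered pattern $U$; then $\|x_i\|_1^k\,\mbb P_i[U]\propto\mathrm{inj}(U;x_i^+,x_i^-)$ converges. This covers $\|x_i\|_1\to 0$ and $\|x_i\|_1\to c>0$ at once. It is your fallback made rigorous, and unordered patterns suffice because $\mathrm{inj}$ does not depend on the order of the edges.
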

In words, convergence of the species samples is equivalent to joint convergence of random binning of the positive and negative parts. 
In particular, the two notions of convergence are equivalent for nonnegative vectors.
The special case of this equivalence for $\mc I_n=[n]^2$ and $\Omega_n=\Delta^{n\times n}$ was shown in~\cite[Prop.~4.1]{levin2025graphs}.

We remark that species sampling gives another sequence of any-dimensional data distributions, and our results yield generalization rates with respect to these distributions. For example, if $(\Omega_n=\Delta^{\mc I_n})$ then we get the same completion $\overline{\Omega}_{\infty}$ with respect to both $(\msf B_k)$ and $(\msf E_k)$ by Proposition~\ref{prop:relation_to_species}. 
Combining Theorem~\ref{thm:W1_rates_FS} and Proposition~\ref{prop:fd_measures_as_limits} shows that each probability distribution $\mu\in\mc P(\overline{\Omega}_{\infty})$ on limit objects yields two sequences of distributions on finite objects converging weakly to $\mu$, namely $(\mathrm{Law}(\msf B_k(X)))$ and $(\mathrm{Law}(\msf E_k(X)))$ for $X\sim\mu$ independent of $\msf B_k$ and $\msf E_k$.
Both sequences of measures can be used as any-dimensional distributions for which $n^{-1/2}$ generalization rates are available. 
It would be interesting to characterize sequences of the form $(\mathrm{Law}(\msf E_k(X)))$ analogously to Theorem~\ref{thm:eqp_consistent_laws}. For example, when $\mc I_n=[n]^2$ such sequences of distributions are precisely edge-exchangeable multigraph models normalized to have unit edge weights, as shown in~\cite[Rmk.~4.2]{levin2025graphs}. 
We leave such a characterization more generally for future work.

\subsection{Missing Proofs from Section~\ref{sec:quotients}}\label{sec:proofs_quotients}
We begin by proving Theorem~\ref{thm:W1_rates_FS}. We shall need the following several lemmas to do so. 
The first lemma shows that the species sampling $\msf E_n$ and empirical sampling $\mscr E_n$ maps in~\eqref{eq:species_sampling_general} and~\eqref{eq:empirical_sampling}, respectively, are equivalent in the sampling metric defined by random binning. 
\begin{lemma}[Equivalence of species and empirical sampling]\label{lem:empirical_species_equivalent}
    Suppose $(\mc I_n)$ has finite degree and let $\vct V_n=\RR^{\mc I_n}$. Then $d_{\mathrm{samp}}(\mscr E_n(x),\msf E_n(x))=0$ for any $x\in\vct V_N$ and any $n,N\in\NN$.
\end{lemma}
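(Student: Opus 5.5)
The plan is to realize $\msf E_n(x)$ as $\beta(\phi)$ applied to a smaller vector obtained from $\mscr E_n(x)$, and to realize $\mscr E_n(x)$ as $\beta$ of an injection applied to that same vector. Then the zero-padding invariance of random binning, \eqref{eq:injection_equivalence}, gives equality of the laws of all binned samples. Throughout we work conditionally on the random draws $\alpha_1,\ldots,\alpha_n$ and the random enumeration $t_1,\ldots,t_k$ used in the construction of $\msf E_n$. Note that $d_{\mathrm{samp}}$ here is a distance between the two random vectors realization-wise, so it suffices to prove that for every fixed realization the deterministic vectors $\mscr E_n(x)\in\vct V_N$ and $\msf E_n(x)\in\vct V_{nD}$ satisfy $\msf B_m(\mscr E_n(x))\overset{d}{=}\msf B_m(\msf E_n(x))$ for all $m$.

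Fix a realization. Let $T=\{t_1,\ldots,t_k\}\subseteq[N]$ be the set of species indices appearing in the sampled $\alpha_i$, and define the injection $\tau\colon[k]\to[N]$, $\tau(\ell)=t_\ell$. Let $z\in\vct V_k$ be defined by $z_{H_m(\ell_1,\ldots,\ell_{d_m})}=\mscr E_n(x)_{H_m(t_{\ell_1},\ldots,t_{\ell_{d_m}})}$; this is well defined exactly as in the construction of $\msf E_n$. The first key step is to check that $\mscr E_n(x)=\beta(\tau)z$. Since $\tau$ is injective, $\theta(\tau)$ is injective, so $[\beta(\tau)z]_\gamma$ equals $z_{\theta(\tau)^{-1}(\gamma)}$ if $\gamma$ lies in the image of $\theta(\tau)$ and $0$ otherwise. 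Every nonzero entry of $\mscr E_n(x)$ is indexed by some $\alpha_i$, all of whose coordinates lie in $T$, so $\alpha_i$ is in the image of $\theta(\tau)$. Hence the two vectors agree on the image and both vanish off it. This also handles the homogeneous sign-and-norm extension, since that extension only rescales entries supported on the same $\alpha_i$.

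The second key step is to check that $\msf E_n(x)=\beta(\iota_{nD,k})z$, using $k\leq nD$. By definition, $\msf E_n(x)$ has entries $z_{H_m(\ell_1,\ldots)}$ when all $\ell_j\in[k]$ and zero otherwise. This is precisely zero-padding of $z$ along the inclusion $\theta(\iota_{nD,k})$, because for tuple-type index sets $\theta(\iota)$ is the coordinatewise inclusion $[k]^{d_m}/H_m\hookrightarrow[nD]^{d_m}/H_m$.

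Finally, apply \eqref{eq:injection_equivalence} twice: $\msf B_m(\mscr E_n(x))=\msf B_m(\beta(\tau)z)\overset{d}{=}\msf B_m(z)\overset{d}{=}\msf B_m(\beta(\iota_{nD,k})z)=\msf B_m(\msf E_n(x))$ for every $m$. Every term of $d_{\mathrm{samp}}$ therefore vanishes. The main obstacle I expect is purely bookkeeping. One point is verifying that $\theta$ of an injection acts on the orbit index sets $[n]^{d}/H$ by coordinatewise relabeling, with image exactly the orbits having all coordinates in the range. The other is handling the disjoint union over $m$ and the sign and normalization conventions cleanly. None of this should need any probabilistic estimate.
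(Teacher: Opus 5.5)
Your argument is correct and is essentially the paper's. The paper builds a single injection $\Phi\colon[nD]\to[N]$ with $\Phi(\ell)=t_\ell$ for $\ell\le k$, checks $\mscr E_n(x)=\beta(\Phi)\msf E_n(x)$, and invokes \eqref{eq:injection_equivalence} once; since $\Phi\circ\iota_{nD,k}=\tau$, this is just your two steps composed. The only difference is that the paper must first zero-pad $x$ (via Proposition~\ref{prop:species_samp_zero_pad}) so that $N\ge nD$ and $\Phi$ exists, whereas passing through the intermediate vector $z\in\vct V_k$ lets you avoid that reduction.
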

\begin{proof}
    After zero-padding $x$ if necessary, which does not change the distributions of $\mscr E_n(x)$ and $\msf E_n(x)$ by Proposition~\ref{prop:species_samp_zero_pad}, we may assume $N\geq nD$. We construct a random injection $\Phi_{N,nD}\colon[nD]\to[N]$ satisfying
    \begin{equation}\label{eq:random_inj_empirical_species}
        \mscr E_n(x)=\beta(\Phi_{N,nD})\msf E_n(x),
    \end{equation}
    which proves that $d_{\mathrm{samp}}(\mscr E_n(x),\msf E_n(x))=0$ by~\eqref{eq:injection_equivalence}.
    To construct this injection, recall the randomly-enumerated indices $\{t_1,\ldots,t_k\}$ from the construction of $\mscr E_n$ and $\msf E_n$ in Section~\ref{sec:species}, where $k\leq nD$. Set $\Phi_{N,nD}(i)=t_i$ for $i\leq k$ and set $\Phi_{N,nD}|_{[nD]\setminus[k]}$ to be a uniformly random injection into $[N]\setminus\{t_1,\ldots,t_k\}$. Note that~\eqref{eq:random_inj_empirical_species} is satisfied by construction of $\msf E_n(x)$ in~\eqref{eq:species_sampling_general}.
\end{proof}

As we shall see, empirical sampling satisfies several useful properties that we exploit to prove Theorem~\ref{thm:W1_rates_FS}. 
By exploiting these properties, we now prove the first bound from Theorem~\ref{thm:W1_rates_FS}(1).
\begin{lemma}\label{lem:W1_rate_finite_x}
    Suppose $(\mc I_n)$ has finite degree. For any $x\in\RR^{\mc I_N}$ and any $N,n,k\in\NN$, we have
    \begin{equation*}
        \mbb E_{\msf E_n}W_1(\msf B_k(x),\msf B_k\circ\msf E_n(x))\leq \frac{\|x\|_1}{\sqrt{n}}.
    \end{equation*}
\end{lemma}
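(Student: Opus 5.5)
The plan is to reduce from species sampling to empirical sampling, and then to couple the two binnings through a single random map, so that the Wasserstein distance is bounded by an expected $\ell_2$ distance. The bound \eqref{eq:Binom_bound} then finishes the argument.

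\emph{Step 1: reduce to empirical sampling.} By Lemma~\ref{lem:empirical_species_equivalent}, $d_{\mathrm{samp}}(\mscr E_n(x),\msf E_n(x))=0$ for every realization. In particular, $\msf B_k(\msf E_n(x))\overset{d}{=}\msf B_k(\mscr E_n(x))$ conditionally on the realization of the species/empirical sample. Here we use the coupling constructed in that lemma, so that $\mscr E_n(x)=\beta(\Phi)\msf E_n(x)$ for an injection $\Phi$, together with \eqref{eq:injection_equivalence}. Hence
\begin{equation*}
\mbb E_{\msf E_n}W_1(\msf B_k(x),\msf B_k\circ\msf E_n(x))=\mbb E_{\mscr E_n}W_1(\msf B_k(x),\msf B_k\circ\mscr E_n(x)).
\end{equation*}
The advantage of $\mscr E_n(x)$ is that it lives in the same space $\RR^{\mc I_N}$ as $x$.

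\emph{Step 2: couple the binnings.} For a fixed realization $y=\mscr E_n(x)\in\RR^{\mc I_N}$, couple $\msf B_k(x)$ and $\msf B_k(y)$ by using the same uniformly random map $F_{k,N}$ for both. Then
\begin{equation*}
W_1(\msf B_k(x),\msf B_k(y))\leq \mbb E_F\|\beta(F_{k,N})(x-y)\|_2 .
\end{equation*}
To bound the right-hand side, I would try the expansion
\begin{equation*}
\|\beta(f)z\|_2^2=\sum_{\alpha,\alpha'}z_\alpha z_{\alpha'}\mathbbm 1[\theta(f)(\alpha)=\theta(f)(\alpha')].
\end{equation*}
Since the entries of $x-y$ are not sign-controlled, this does not obviously reduce to $\|x-y\|_2^2$: cross terms appear on collisions. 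So I would instead avoid computing $\|\beta(F)z\|_2$ directly and follow the pattern of \eqref{eq:Binom_bound} with the binned quantities. Conditional on $F$, the vector $\beta(F)\mscr E_n(x)$ is itself an empirical sample of $\beta(F)x$. Indeed, pushing the iid samples $\alpha_i\sim|x|/\|x\|_1$ through $\theta(F)$ gives iid samples on $\mc I_k$, weighted by $\|x\|_1\,\mathrm{sign}(x_{\alpha_i})/n$.

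\emph{Step 3: apply the variance bound.} Write $\beta(F)\mscr E_n(x)=\frac{\|x\|_1}{n}\sum_i\mathrm{sign}(x_{\alpha_i})e_{\theta(F)(\alpha_i)}$. This is an average of $n$ iid vectors, each of norm $\|x\|_1$, with mean $\beta(F)x$. Therefore
\begin{equation*}
\mbb E_{\mscr E_n}\|\beta(F)(\mscr E_n(x)-x)\|_2\leq\Big(\mbb E\|\beta(F)(\mscr E_n(x)-x)\|_2^2\Big)^{1/2}\leq \frac{\|x\|_1}{\sqrt n},
\end{equation*}
since the variance of an average of $n$ iid vectors of norm at most $\|x\|_1$ is at most $\|x\|_1^2/n$. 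Interchanging $\mbb E_F$ and $\mbb E_{\mscr E_n}$ by Fubini then gives the claim.

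The main subtlety is in Step 1. The equality in distribution must hold realization-wise, given the sample, and not merely in law. This is what the injection $\Phi$ in Lemma~\ref{lem:empirical_species_equivalent} provides, combined with $\msf B_k\circ\beta(\Phi)\overset{d}{=}\msf B_k$ for each fixed injection.
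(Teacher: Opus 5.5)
Your proposal is correct and follows essentially the same route as the paper. You replace $\msf E_n$ by $\mscr E_n$ via Lemma~\ref{lem:empirical_species_equivalent}, couple the two binnings through a common map $F_{k,N}$ independent of the sample, and bound $\mbb E\|\beta(F_{k,N})(x-\mscr E_n(x))\|_2^2\leq \|x\|_1^2/n$ by conditioning on $F_{k,N}$, since $\beta(F_{k,N})\mscr E_n(x)$ is then an average of $n$ iid vectors of norm $\|x\|_1$ with mean $\beta(F_{k,N})x$; the only cosmetic difference is that the paper first normalizes to $\|x\|_1=1$ by homogeneity, whereas you carry $\|x\|_1$ through.
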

\begin{proof}
    By homogeneity of $\msf E_n$ and linearity of $\msf B_k$, it suffices to prove the claim when $\|x\|_1=1$, which we assume for the remainder of the proof. We have
    \begin{equation*}
        \mbb EW_1(\msf B_k(x),\msf B_k\circ\msf E_n(x)) = \mbb EW_1(\msf B_k(x),\msf B_k\circ\mscr E_n(x))\leq \mbb E\|\beta(F_{k,N})(x-\mscr E_n(x))\|_2\leq \sqrt{\mbb E\|\beta(F_{k,N})(x-\mscr E_n(x))\|_2^2},
    \end{equation*}
    where the equality follows from Lemma~\ref{lem:empirical_species_equivalent} and the first inequality follows by using the same random map $F_{k,N}$, chosen independently of $\mscr E_n$, to define the coupling. Write $\beta(F_{k,N})\mscr E_n(x)=\frac{1}{n}\sum_{i=1}^ns_ie_{\theta(F_{k,N})(\alpha_i)}$ where $\alpha_1,\ldots,\alpha_n\in\mc I_N$ are sampled iid from $|x|$ and $s_i=\mathrm{sign}(x_{\alpha_i})$. We can then write $\beta(F_{k,N})(x-\mscr E_n(x)) = \frac{1}{n}\sum_{i=1}^n\delta_i$ where 
    \begin{equation*}
        \delta_i = s_ie_{\theta(F_{k,N})(\alpha_i)} - \beta(F_{k,N})x = s_ie_{\theta(F_{k,N})(\alpha_i)} - \mbb Es_ie_{\theta(F_{k,N})(\alpha_i)}.
    \end{equation*}
    where the last equality is a direct computation. Note that conditioned on $F_{k,N}$, the different $\delta_i$ are iid, and each one satisfies $\mbb E\delta_i=0$ and $\mbb E\|\delta_i\|^2 \leq \mbb E\|s_ie_{\theta(F_{k,N})(\alpha_i)}\|_2^2=1$. We conclude that $\mbb E\|\beta(F_{k,N})(x-\mscr E_n(x))\|_2^2\leq \frac{1}{n}$ by first conditioning on $F_{k,N}$, as desired.
\end{proof}
Next, we prove the bound~\eqref{eq:expected_binning_rate} from Theorem~\ref{thm:W1_rates_FS}(2) using an explicit coupling between $\msf B_k(x)$ and $\msf B_k\circ\msf B_n(x)$. Along the way, we prove a bound between $x$ and an appropriate zero-padding of the binned $\msf B_n(x)$, to be used in the proof of Theorem~\ref{thm:binning_rates_funcs}.
\begin{lemma}\label{lem:binning_rate}
    Suppose $(\mc I_n)$ has degree $D$. For any $x\in\RR^{\mc I_N}$ and any $N\in\NN$, we have
    \begin{equation*}
        \mbb E_{\msf B_n}W_1(\msf B_k(x), \msf B_k\circ\msf B_n(x))\leq \|x\|_1\sqrt{\frac{2D(3D-1)}{n}},
    \end{equation*}
    and $\|\mbb E\msf B_k(x) - \mbb E\msf B_k(\msf B_n(x))\|_1\leq \|x\|_1\frac{D(D-1)}{n}$ for independent $\msf B_k$ and $\msf B_n$. Also, there is a random injection $\Phi\colon[n]\to[N]$ coupled to $\msf B_n$ satisfying $\mbb E\|x-\beta(\Phi)\msf B_n(x)\|_2\leq \|x\|_1\sqrt{\frac{2D(3D-1)}{n}}$.
\end{lemma}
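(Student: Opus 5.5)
The plan is to build everything from one coupling. Let $F=F_{n,N}\colon[N]\to[n]$ define $\msf B_n=\beta(F)$, and let $G=F_{k,n}\colon[n]\to[k]$ be independent. Then $\msf B_k\circ\msf B_n(x)=\beta(G\circ F)x$, while $\msf B_k(x)=\beta(H)x$ for a fresh uniform $H\colon[N]\to[k]$. Conditionally on $F$, I will couple $H$ with $G\circ F$ by a ``collision-repair'' construction. I let $H=G\circ F$ on a set of representatives of the fibers of $F$, one element per fiber. Every remaining element $j$ (i.e.\ any $j$ sharing a bin of $F$ with an earlier element) is resampled uniformly and independently. This $H$ is still uniform on $[N]\to[k]$, but it depends on $F$, and the $W_1$ bound needs only a valid coupling for each fixed $F$; I will make sure that conditioning is handled correctly. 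Then
\[
\beta(H)x-\beta(G\circ F)x=\sum_{\alpha\in\mc I_N} x_\alpha\bigl(e_{\theta(H)(\alpha)}-e_{\theta(G\circ F)(\alpha)}\bigr),
\]
and a term is nonzero only if the multi-index $\alpha$ (of at most $D$ coordinates $j_1,\dots,j_{d}$) contains some coordinate that is not a fiber representative. Since $F$ is uniform, for a fixed coordinate $j$ this happens when $F(j)=F(j')$ for some earlier $j'$. That probability is not small in general, so I will instead order things relative to $\alpha$. For each $\alpha$, the relevant event is that two of its own at most $D$ coordinates collide under $F$ while being distinct, or that its coordinates collide with those of other heavy indices.

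To avoid this difficulty I would rather use a cleaner route, namely the random injection in the last claim. I take $\Phi\colon[n]\to[N]$ with $F\circ\Phi=\mathrm{id}$ on the image of $F$, choosing one preimage per nonempty bin (chosen uniformly) and sending empty bins injectively elsewhere. Then $\beta(\Phi)\msf B_n(x)$ is $x$ with each fiber collapsed onto its representative. For each $\alpha$, the entry $x_\alpha$ is moved only if some coordinate of $\alpha$ is not its fiber's representative. I then compute $\mbb E\|x-\beta(\Phi)\msf B_n(x)\|_2^2$ by expanding the square. The diagonal terms are bounded by $\sum_\alpha x_\alpha^2\,\mbb P[\alpha \text{ moved}]$. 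The cross terms $x_\alpha x_{\alpha'}$ survive only when $\alpha,\alpha'$ land on the same image or move, which requires a collision among their at most $2D$ coordinates, with probability at most $\binom{2D}{2}/n$-type bounds. Bounding $\sum|x_\alpha||x_{\alpha'}|\le\|x\|_1^2$ and counting the collision pairs carefully should yield the constant $2D(3D-1)/n$, roughly $D(D-1)/2+\ldots$ pairs. Then Jensen gives the stated $\sqrt{\cdot}$ bound. This is the main obstacle: getting the combinatorial counting of collisions right without a dependence on $N$. The key point is that $\|x\|_1^2$ absorbs the sum over pairs, and every nonzero contribution needs a collision event of probability $O(D^2/n)$.

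From the injection bound the $W_1$ bound follows. By \eqref{eq:injection_equivalence}, $\msf B_k\circ\beta(\Phi)\overset{d}{=}\msf B_k$, so coupling $\msf B_k$ on $\vct V_N$ via the same random map gives
\[
W_1(\msf B_k(x),\msf B_k\circ\msf B_n(x))\le \mbb E_{\msf B_k}\|\msf B_k(x-\beta(\Phi)\msf B_n(x))\|_2\le \|x-\beta(\Phi)\msf B_n(x)\|_2 .
\]
Here I use that binning is a contraction in $\ell_2$ up to collisions; otherwise I bound it directly through the same second-moment argument as in Lemma~\ref{lem:W1_rate_finite_x}. Taking the expectation over $\msf B_n$ gives the first claim. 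For the mean bound, linearity gives $\mbb E\msf B_k(x)-\mbb E\msf B_k\msf B_n(x)=\sum_\alpha x_\alpha(\mbb E e_{\theta(H)\alpha}-\mbb Ee_{\theta(G F)\alpha})$. The laws of $\theta(H)(\alpha)$ and $\theta(GF)(\alpha)$ differ only on the event that two distinct coordinates of $\alpha$ collide under $F$, which has probability at most $\binom{D}{2}/n$. The total variation is then at most that, giving $2\|x\|_1\binom D2/n=\|x\|_1D(D-1)/n$.
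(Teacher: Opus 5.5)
Your mean bound is correct and is the paper's argument. Your first coupling (setting $H=G\circ F$ on fiber representatives and drawing fresh values elsewhere) is also essentially the paper's coupling. The gap is the choice of representative, which you flagged in your first paragraph and then left unresolved in the ``cleaner route''.

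\textbf{The gap.} With a \emph{uniformly} chosen representative in each fiber of $F$, the diagonal terms $\sum_\alpha x_\alpha^2\,\mathbb P[\alpha\text{ moved}]$ are not $O(\|x\|_1^2/n)$. The reason is that ``$\alpha$ moved'' only requires some coordinate of $\alpha$ to share its fiber with \emph{any} of the other $N-1$ indices and lose the draw. That is not a collision among the at most $2D$ coordinates of $\alpha,\alpha'$. Concretely, take $D=1$ and $x=e_1\in\RR^N$ with $N\gg n$. The fiber of $F(1)$ has about $N/n$ elements, so $\Phi(F(1))\neq 1$ with probability at least $1-n/N$. On that event $\|x-\beta(\Phi)\msf B_n(x)\|_2=\sqrt2$, so the third claim is false for your $\Phi$. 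Your treatment of the terms with $\alpha\neq\alpha'$ is fine: those genuinely require a collision among the coordinates of $\alpha$ and $\alpha'$.

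\textbf{The missing idea.} In each fiber, pick the index of largest weight $w_i=\sum_{\alpha:\,i\in\mathrm{supp}(\alpha)}|x_\alpha|$. Since $\sum_i w_i\le D\|x\|_1$, at most $D\|x\|_1/w_i$ other indices have weight $\ge w_i$. Hence $\mathbb P[\Phi(F(i))\neq i]\le D\|x\|_1/(w_i n)$. The identity $\sum_i w_i^{-1}\bigl(\sum_{\alpha\ni i}|x_\alpha|\bigr)^2=\sum_i w_i\le D\|x\|_1$ then bounds the shared-support contribution by $2D^2\|x\|_1^2/n$. Adding $2D(2D-1)\|x\|_1^2/n$ from collisions gives the constant $2D(3D-1)/n$. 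With this choice, your injection-first computation does prove the third claim directly.

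\textbf{A second flaw.} Your passage from the injection bound to the $W_1$ bound uses $\mathbb E_{\msf B_k}\|\msf B_k(y)\|_2\le\|y\|_2$. Binning is not an $\ell_2$ contraction (a single bin sends $(1,1)$ to $2$), so that step is unjustified. The paper instead runs the second-moment computation directly on $\beta(F_k)x-\beta(F_k\circ\rho\circ F_n)x$. There, the fresh uniform values of $F_k$ make the cross terms vanish in conditional expectation under events (A) and (B). The injection bound is then obtained as the $k\to\infty$ limit, in which $F_k$ is injective.
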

\begin{proof}

    Since $(\mc I_n)$ has degree $D$, write $\mc I_n=\bigsqcup_{m=1}^M[n]^{d_m}/H_m$ with $D=\max_md_m$, and write $x=\sum_{\alpha\in\mc I_N}x_{\alpha}e_{\alpha}$ where $(e_{\alpha})$ is the standard basis for $\RR^{\mc I_N}$. After zero-padding $x$, we may assume that $N\geq n$. Let $F_k\colon[N]\to[k]$ and $F_n\colon[N]\to[n]$ be uniformly random maps, so that
    \begin{equation*}
        \msf B_n(x)=\sum_{\alpha\in\mc I_N}x_{\alpha}e_{\theta(F_n)(\alpha)},\quad \msf B_k(x)=\sum_{\alpha\in\mc I_N}x_{\alpha}e_{\theta(F_k)(\alpha)}.
    \end{equation*}
    We proceed to construct a coupling of $\msf B_k\circ\msf B_n(x)$ and of $\msf B_k(x)$ for a fixed realization of $F_n$. We do so by constructing a map $G_{k,n}\colon[n]\to[k]$ depending on $F_k$ and $F_n$ such that the distribution of $G_{k,n}$ given $F_n$ is uniform on the set of all maps from $[n]$ to $[k]$, and such that $G_{k,n}\circ F_n$ is equal to $F_k$ on the most ``important'' indices, which we proceed to formalize. Once we do so, we get $\beta(F_k)x\overset{d}{=}\msf B_k(x)$ and $\beta(G_{k,n}\circ F_n)x\overset{d}{=}\msf B_k\circ\msf B_n(x)$ for any realization of $F_n$, yielding the desired coupling.
    
    If $\alpha=H(i_1,\ldots,i_d)$ for $H\subseteq\mfk S_d$ we write $\mathrm{supp}(\alpha)=\{i_1,\ldots,i_d\}\subseteq[N]$ for the set of distinct indices appearing in $\alpha$. Note that this is well-defined as $H$ acts by permuting coordinates in a tuple, and that each $\alpha\in \mc I_N$ has this form for some $d$ and $H$. We now define the weight associated to $i\in[N]$ by
    \begin{equation}\label{eq:importance_weights}
        w_i = \sum_{\substack{\alpha\in\mc I_N\\ i\in\mathrm{supp}(\alpha)}}|x_{\alpha}|,
    \end{equation}
    so that $\sum_{i=1}^Nw_i\leq D\|x\|_1$ as each $\alpha\in\mc I_N$ contains at most $D$ indices in its support. Using these weights, for each $\ell\in[n]$ we choose a representative $\rho(\ell)\in F_n^{-1}(\ell)$ with the largest weight, breaking ties by choosing the smallest such index. Formally, we set
    \begin{equation*}
        \rho(\ell) = \min\left\{i\in F_n^{-1}(\ell): w_i=\max_{i'\in F_n^{-1}(\ell)}w_{i'}\right\},
    \end{equation*}
    if $F_n^{-1}(\ell)\neq\emptyset$ and set $\rho|_{[n]\setminus F_n([N])}$ to be an arbitrary injective map $[n]\setminus F_n([N])\to [N]\setminus \rho(F_n([N]))$.
    We thus get a random (depending only on $F_n$) injective map $\rho\colon[n]\to[N]$.
    We then define $G_{k,n}\colon[n]\to[k]$ by setting
    \begin{equation}
        G_{k,n}(\ell) = F_k(\rho(\ell)),
    \end{equation}
    so we map $\ell\in[n]$ to the image under $F_k$ of a largest-weight element in the fiber $F_n^{-1}(\ell)$. Note that $G_{k,n}$ is indeed a uniformly random map from $[n]$ to $[k]$ conditioned on $F_n$, since $\rho$ is always injective and since $F_k$ maps distinct inputs to independent and uniform elements in $[k]$. Thus, we have
    \begin{equation}\label{eq:coupling_squared}
        \mbb E_{\msf B_n}W_1(\msf B_k(x),\msf B_k\circ\msf B_n(x))\leq \mbb E\|\beta(F_k)x - \beta(G_{k,n}\circ F_n)x\|_2\leq \sqrt{\mbb E\|\beta(F_k)x-\beta(G_{k,n}\circ F_n)x\|_2^2}.
    \end{equation}

    We turn to analyzing the squared 2-norm of the difference
    \begin{equation*}
        \beta(F_k)x - \beta(G_{k,n}\circ F_n)x = \sum_{\alpha\in\mc I_N}x_{\alpha}\delta_{\alpha},\quad \textrm{where } \delta_{\alpha}=e_{\theta(F_k)(\alpha)} - e_{\theta(G_{k,n}\circ F_n)(\alpha)}.
    \end{equation*}
    Explicitly, if $\alpha=H(i_1,\ldots,i_d)$ then
    \begin{equation*}
        \delta_{\alpha} = e_{H(F_k(i_1),\ldots,F_k(i_d))} - e_{H(F_k\circ\rho\circ F_n(i_1),\ldots, F_k\circ\rho\circ F_n(i_d))}.
    \end{equation*}
    Consider $\langle \delta_{\alpha},\delta_{\beta}\rangle$ for $\alpha,\beta\in\mc I_N$. Cauchy--Schwarz gives $|\langle \delta_{\alpha},\delta_{\beta}\rangle|\leq 2$. We further claim that 
    \begin{equation*}
        \mbb E[\langle \delta_{\alpha},\delta_{\beta}\rangle|F_n]=0\quad \textrm{under the following two conditions on $\alpha,\beta$,}
    \end{equation*} 
    namely,
    \begin{equation}\label{eq:event_A}\tag{A}
        \rho\circ F_n(i)=i \textrm{ for all } i\in\mathrm{supp}(\alpha)\cap\mathrm{supp}(\beta),
    \end{equation}
    and
    \begin{equation}\label{eq:event_B}\tag{B}
        F_n(i)\neq F_n(i') \textrm{ for all distinct } i,i'\in\mathrm{supp}(\alpha)\cup\mathrm{supp}(\beta).
    \end{equation}
    Indeed, if~\eqref{eq:event_A} and~\eqref{eq:event_B} hold and we further condition on the variables $$V=\{F_k(i),F_k(\rho\circ F_n(i)): i\in\mathrm{supp}(\beta)\},$$
    we have
    \begin{equation*}
        \mbb E\Big[\langle \delta_{\alpha},\delta_{\beta}\rangle \Big| F_n, V\Big] = \left\langle \mbb E\big[\delta_{\alpha}|F_n,V\big],\delta_{\beta}\right\rangle,
    \end{equation*}
    because $V$ includes all random indices $F_k(i)$ appearing in $\delta_{\beta}$.
    Furthermore, if $\alpha = H(i_1,\ldots,i_d)$ then 
    \begin{equation*}
        (F_k(i_1),\ldots,F_k(i_d)) \overset{d}{=} (F_k\circ\rho\circ F_n(i_1),\ldots, F_k\circ\rho\circ F_n(i_d))\quad \textrm{conditioned on } V.
    \end{equation*}
    That is because either $\rho\circ F_n(i_j)=i_j$, or $\rho\circ F_n(i_j)\neq i_j$ in which case both $F_k(i_j)$ and $F_k\circ\rho\circ F_n(i_j)$ are not in $V$ (since $i_j\notin\mathrm{supp}(\beta)$ by~\eqref{eq:event_A}), so both are uniformly distributed over $[k]$ and independent for different $i_j$ (because $F_n(i_j)$ are all distinct by~\eqref{eq:event_B}).
    Thus, we conclude that $\mbb E[\delta_{\alpha}|F_n,V]=0$ and hence $\mbb E[\langle \delta_{\alpha},\delta_{\beta}\rangle|F_n]=0$ if the events~\eqref{eq:event_A} and~\eqref{eq:event_B} hold. 

    The above argument shows that
    \begin{equation}\label{eq:intermediate_bd}\begin{aligned}
        |\mbb E\langle \delta_{\alpha},\delta_{\beta}\rangle|&\leq 2\mbb P[\textrm{not~\eqref{eq:event_A} or not~\eqref{eq:event_B}}] \leq 2\Big(\mbb P[\rho\circ F_n(i)\neq i \textrm{ for some } i\in\mathrm{supp}(\alpha)\cap\mathrm{supp}(\beta)]\\ &\quad+ \mbb P[F_n(i) = F_n(i') \textrm{ for some } i\neq i'\in\mathrm{supp}(\alpha)\cup\mathrm{supp}(\beta)]\Big).
    \end{aligned}\end{equation}
    We proceed to bound each probability separately.
    First, we have
    \begin{equation*}\begin{aligned}
        &\mbb P[\rho\circ F_n(i)\neq i \textrm{ for some } i\in\mathrm{supp}(\alpha)\cap\mathrm{supp}(\beta)] \leq \sum_{i\in\mathrm{supp}(\alpha)\cap\mathrm{supp}(\beta)}\mbb P[\rho\circ F_n(i)\neq i]\\
        &\leq \sum_{i\in\mathrm{supp}(\alpha)\cap\mathrm{supp}(\beta)}\mbb P[\textrm{there is } j\in [N]\setminus\{i\} \textrm{ s.t.\ } w_j\geq w_i \textrm{ and } F_n(j)=F_n(i)].
    \end{aligned}\end{equation*}
    Let $S_i=\{j\in[N]\setminus\{i\}: w_j\geq w_i\}$ and note that $|S_i|\leq \frac{D\|x\|_1}{w_i}$. Indeed, note that
    \begin{equation*}
        w_i|S_i|\leq \sum_{j\in S_i}w_j \leq \sum_{j\in[N]}w_j\leq D\|x\|_1.
    \end{equation*}
    Therefore,
    \begin{equation*}
        \mbb P[\textrm{there is } j\in [N]\setminus\{i\} \textrm{ s.t.\ } w_j\geq w_i \textrm{ and } F_n(j)=F_n(i)] \leq \sum_{j\in S_i}\mbb P[F_n(j)=F_n(i)] \leq \frac{|S_i|}{n} \leq \frac{D\|x\|_1}{w_in},
    \end{equation*}
    since $F_n$ is uniformly random. Thus, we have the following bound on the first probability in~\eqref{eq:intermediate_bd}
    \begin{equation*}
        \mbb P[\rho\circ F_n(i)\neq i \textrm{ for some } i\in\mathrm{supp}(\alpha)\cap\mathrm{supp}(\beta)] \leq \frac{D\|x\|_1}{n}\sum_{i\in\mathrm{supp}(\alpha)\cap\mathrm{supp}(\beta)}\frac{1}{w_i}.
    \end{equation*}
    For the second probability in~\eqref{eq:intermediate_bd}, note that $|\mathrm{supp}(\alpha)\cup\mathrm{supp}(\beta)|\leq 2D$, so
    \begin{equation*}
        \mbb P[F_n(i) = F_n(i') \textrm{ for some } i\neq i'\in\mathrm{supp}(\alpha)\cup\mathrm{supp}(\beta)]\leq \frac{\binom{2D}{2}}{n} = \frac{D(2D-1)}{n}.
    \end{equation*}
    
    Using the above bounds, we have
    \begin{equation}\label{eq:almost_final_bd}\begin{aligned}
        &\mbb E\|\beta(F_k)x-\beta(G_{k,n}\circ F_n)x\|_2^2 \leq \sum_{\alpha,\beta\in\mc I_N}|x_{\alpha}x_{\beta}|\cdot|\mbb E\langle\delta_{\alpha},\delta_{\beta}\rangle|\\&\leq \frac{2D\|x\|_1}{n}\left(\sum_{\alpha,\beta\in\mc I_N}|x_{\alpha}x_{\beta}|\sum_{i\in\mathrm{supp}(\alpha)\cap\mathrm{supp}(\beta)}\frac{1}{w_i}\right) + \frac{2D(2D-1)}{n}\sum_{\alpha,\beta\in\mc I_N}|x_{\alpha}x_{\beta}|.
    \end{aligned}\end{equation}
    The second term in~\eqref{eq:almost_final_bd} is simply $\frac{2D(2D-1)}{n}\|x\|_1^2$. For the first term, we interchange the sums to obtain
    \begin{equation*}\begin{aligned}
        &\sum_{\alpha,\beta\in\mc I_N}|x_{\alpha}x_{\beta}|\sum_{i\in\mathrm{supp}(\alpha)\cap\mathrm{supp}(\beta)}\frac{1}{w_i} = \sum_{i=1}^N\frac{1}{w_i}\sum_{\substack{\alpha,\beta\in\mc I_N\\ i\in\mathrm{supp}(\alpha)\cap\mathrm{supp}(\beta)}}|x_{\alpha}x_{\beta}| = \sum_{i=1}^N\frac{1}{w_i}\left(\sum_{\substack{\alpha\in\mc I_N\\ i\in\mathrm{supp}(\alpha)}}|x_{\alpha}|\right)^2\\ &= \sum_{i=1}^Nw_i\leq D\|x\|_1,
    \end{aligned}\end{equation*}
    by definition of $w_i$ in~\eqref{eq:importance_weights}. Putting everything together, we get
    \begin{equation*}
        \mbb E\|\beta(F_k)x-\beta(G_{k,n}\circ F_n)x\|_2^2\leq \frac{2D^2\|x\|_1^2}{n}+\frac{2D(2D-1)\|x\|_1^2}{n} = \frac{2D(3D-1)\|x\|_1^2}{n},
    \end{equation*}
    giving the first claimed bound by~\eqref{eq:coupling_squared}.

    For the second claimed bound, write $$\|\mbb E\msf B_k(x) - \mbb E\msf B_k(\msf B_n(x))\|_1\leq \sum_{\alpha\in\mc I_N}|x_{\alpha}|\|\mbb Ee_{\theta(F_k)(\alpha)} - \mbb Ee_{\theta(F_{k,n}\circ F_n)(\alpha)}\|_1,$$ where $F_{k,n}\colon[n]\to[k]$ is independent of $F_n$. Note that $F_n$ is injective on $\mathrm{supp}(\alpha)$ with probability at least $1-\frac{D(D-1)}{2n}$, an event we shall call $\mathrm{C}$, and conditioning on this event we have $\theta(F_k)(\alpha)\overset{d}{=}\theta(F_{k,n}\circ F_n)(\alpha)$ so $\mbb Ee_{\theta(F_k)(\alpha)} = \mbb E[e_{\theta(F_{k,n}\circ F_n)(\alpha)} | \mathrm{C}]$. Thus, we have $$\|\mbb Ee_{\theta(F_k)(\alpha)} - \mbb Ee_{\theta(F_{k,n}\circ F_n)(\alpha)}\|_1 = \left\| (\mbb P[\mathrm{C}]-1)\mbb Ee_{\theta(F_k)(\alpha)} + \mbb P[\textrm{not } \mathrm{C}]\mbb E[e_{\theta(F_{k,n}\circ F_n)(\alpha)} | \textrm{not } \mathrm{C}]\right\|_1\leq \frac{D(D-1)}{n}.$$
    Combining the two displayed bounds above, we obtain the second claim.

    For the third claimed bound, we consider a large $k$ in the above coupling. Specifically, for $k\geq N$ the map $F_k\colon[N]\to[k]$ is injective with probability at least $1-\frac{N(N-1)}{2k}$, in which case 
    \begin{equation*}
        \|\beta(F_k)x-\beta(G_{k,n}\circ F_n)x\|_2 = \|\beta(F_k)(x-\beta(\rho\circ F_n)x)\|_2 = \|x-\beta(\rho\circ F_n)x\|_2.
    \end{equation*}
    Thus, we have
    \begin{equation*}\begin{aligned}
        &\left(1-\frac{N(N-1)}{2k}\right)\mbb E\|x-\beta(\rho\circ F_n)x\|_2 \leq \mbb E\Big[\|\beta(F_k)x-\beta(G_{k,n}\circ F_n)x\|_2\Big| F_k \textrm{ injective}\Big]\mbb P[F_k \textrm{ injective}]\\&\leq \mbb E\|\beta(F_k)x-\beta(G_{k,n}\circ F_n)x\|_2 \leq \|x\|_1\sqrt{\frac{2D(3D-1)}{n}}.
    \end{aligned}\end{equation*}
    Taking $k\to\infty$ and recalling that $\rho$ is injective, we obtain the third claimed bound.
\end{proof}
We are ready to prove Theorem~\ref{thm:W1_rates_FS}.
\begin{proof}[Proof (Theorem~\ref{thm:W1_rates_FS}).]
    For the first part, the expectation bounds follow from Lemma~\ref{lem:W1_rate_finite_x} and the fact that $\sup_n\sup_{x\in\Omega_n}\|x\|_1\leq r$. The claimed subgaussianity follows from the bounded-difference inequality. Indeed, by Lemma~\ref{lem:empirical_species_equivalent} we can replace $\msf E_n(x)$ by $\mscr E_n(x)$ without changing the distributions in question, and observe that $\mscr E_n(x)$ is a function of $n$ iid indices sampled from $x$ by construction in Section~\ref{sec:species}, and changing any one of them affects two entries in $\mscr E_n(x)$ by at most $r/n$. 

    For the second part, the bound~\eqref{eq:expected_binning_rate} follows from Lemma~\ref{lem:binning_rate}, and the bound on $\mbb Ed_{\mathrm{samp}}(x,\msf B_n(x))$ then follows by the definition~\eqref{eq:sampling_metric} of the sampling metric.
\end{proof}

We turn to proving Theorem~\ref{thm:eqp_consistent_laws} by combining Proposition~\ref{prop:fd_measures_as_limits} and a dual de Finetti theorem from~\cite{levin2025deFin}. 
\begin{proof}[Proof (Theorem~\ref{thm:eqp_consistent_laws}).]
    Propositions~\ref{prop:symmetry_coFS} and~\ref{prop:relations_coFS} show that if $\mu_n=\mathrm{Law}(\msf B_n(X))$ for random $X\in\overline{\Omega}_{\infty}$ independent of $\msf B_n$, then $(\mu_n)$ is equipartition-consistent. Conversely, if $(\mu_n)$ is equipartition-consistent then by~\cite[Thm.~4.20]{levin2025deFin} there is a sequence $(\nu_i\in\mc P(\Omega_{\infty}))$ such that $\mathrm{Law}(\msf B_n(X_i))\xrightarrow{i\to\infty} \mu_n$ weakly for each $n$, where $X_i\sim\nu_i$ is independent of $\msf B_n$. This implies that $(\nu_i)$ converges weakly with respect to $d_{\mathrm{samp}}$ to some $\mu_{\infty}\in\mc P(\overline{\Omega}_{\infty})$ satisfying the claimed sampling representation. The uniqueness of such $\mu_{\infty}$ and the extremality of sequences of the form $(\mathrm{Law}(\msf B_n(x)))$ for $x\in\overline{\Omega}_{\infty}$ both follow from Proposition~\ref{prop:fd_measures_as_limits}, which applies by Theorem~\ref{thm:W1_rates_FS}. 
    Specifically, Theorem~\ref{thm:W1_rates_FS} shows that $\mbb Ed_{\mathrm{samp}}(x,\msf B_n(x))\to0$ as $n\to\infty$ at a universal rate for all $x\in\Omega_{\infty}$. If $x\in\overline{\Omega}_{\infty}$ then we can write $x=\lim_ix_i$ for $x_i\in\Omega_{\infty}$, so $\msf B_n(x)$ is the weak limit of $\msf B_n(x_i)$. The triangle inequality shows that $\mbb Ed_{\mathrm{samp}}(x,\msf B_n(x))\to0$ at the same universal rate, so the condition~\eqref{eq:convergence_of_samples} in Proposition~\ref{prop:fd_measures_as_limits} is satisfied.
\end{proof}

Next, we use Theorem~\ref{thm:W1_rates_FS} to prove the sketching and generalization rates in Theorem~\ref{thm:binning_rates_funcs}. The following proof is similar to the proof of Theorem~\ref{thm:sampling_rates_funcs}.
\begin{proof}[Proof (Theorem~\ref{thm:binning_rates_funcs}).]
    First suppose $f$ is $L$-Lipschitz with respect to $d_{\mathrm{samp}}$. Then 
    \begin{equation}\label{eq:comp1}
        |f(x)-f(\msf E_n(x))|\leq Ld_{\mathrm{samp}}(x,\msf E_n(x))\leq \frac{Lr(1+2\epsilon)}{\sqrt{n}},    
    \end{equation}
    with probability at least $1-e^{-2\epsilon^2}$ by Theorem~\ref{thm:W1_rates_FS}(1). The same theorem gives 
    \begin{equation}\label{eq:comp2}
        |f(x)-\mbb Ef(\msf E_n(x))|\leq L\mbb Ed_{\mathrm{samp}}(x,\msf E_n(x))\leq \frac{Lr}{\sqrt{n}}.
    \end{equation}
    Second, if $f$ is $L$-Lipschitz with respect to $(x,y)\mapsto W_1(\msf B_k(x),\msf B_k(y))$, then Theorem~\ref{thm:W1_rates_FS}(1) gives the bounds~\eqref{eq:comp1} and~\eqref{eq:comp2} using the same arguments.
    
    Now suppose $f$ is unchanged by zero-padding and that each restriction $f|_{\vct V_n}$ is $L$-Lipschitz in 2-norm. Then by Lemma~\ref{lem:empirical_species_equivalent} we have
    \begin{equation*}
        |f(x)-f(\msf E_n(x))| = |f(x)-f(\mscr E_n(x))|\leq L\|x-\mscr E_n(x)\|_2\leq \frac{Lr(1+2\epsilon)}{\sqrt{n}},
    \end{equation*}
    with probability at least $1-e^{-2\epsilon^2}$, where the last inequality follows from~\eqref{eq:Binom_bound} and the fact that $\|x-\mscr E_n(x)\|_2$ is $\frac{4r^2}{n}$-subgaussian. Likewise, it follows from~\eqref{eq:Binom_bound} that $|f(x)-\mbb Ef(\msf E_n(x))|\leq \frac{Lr}{\sqrt{n}}$. 

    We have thus proved the claimed sketching rates under all the three conditions in Theorem~\ref{thm:binning_rates_funcs}. To obtain the generalization rates, let $f,\widehat f$ satisfy either of the three conditions in the theorem (possibly different ones) and fix any $x\in\bigsqcup_n\Omega_n$. If $m=\lfloor n/D\rfloor$, then $\msf E_m(x)\in\Omega_{mD}\subseteq\Omega_{\leq n}$, and hence
    \begin{equation*}
        |f(x)-\widehat f(x)|\leq |f(x)-\mbb Ef(\msf E_m(x))| + |\mbb Ef(\msf E_m(x))-\mbb E\widehat f(\msf E_m(x))| + |\mbb E\widehat f(\msf E_m(x))-\widehat f(x)|\leq \mathrm{e}_{n} + 2Lr\sqrt{\frac{1}{\lfloor n/D\rfloor}}.
    \end{equation*}
    Finally, suppose $(\mu_n)$ is equipartition-consistent. By Theorem~\ref{thm:eqp_consistent_laws}, there is a measure $\mu\in\mc P(\overline{\Omega}_{\infty})$ such that $\mu_n=\mathrm{Law}(\msf B_n(X))$ for $X\sim\mu$ independent of $\msf B_n$. Arguing as above, we have 
    \begin{equation*}\begin{aligned}
        \mathrm{e}_{\mu_{\infty}} &= \mbb E|f(X)-\widehat f(X)|\leq \mbb E_X|f(X)-\mbb E_{\msf B_n}f(\msf B_n(X))| + \mbb E|f(\msf B_n(X))-\widehat f(\msf B_n(X))|\\ &\quad + \mbb E_X|\mbb E_{\msf B_n}\widehat f(\msf B_n(X))-\widehat f(X)| \leq \mathrm{e}_{\mu_n} + 2Lr\sqrt{\frac{2D(3D-1)}{n}},
    \end{aligned}\end{equation*}
    using Lemma~\ref{lem:binning_rate} (together with a limiting argument to handle limit objects, like in the proof of Theorem~\ref{thm:eqp_consistent_laws} above). This is the last claimed bound.
\end{proof}
We proceed to prove Corollary~\ref{cor:FS_polynomials} concerning polynomials unchanged by zero-padding.
\begin{proof}[Proof (Corollary~\ref{cor:FS_polynomials}).]
    It was shown in~\cite[Sec.~5]{levin2025deFin} that there exists $g_k\in\RR[\vct V_k]$ of degree $d$ satisfying $p(x)=\mbb Eg_k(\msf B_k(x))$ for all $x$ with $k=dD$. If $L$ is the Lipschitz constant of $g_k$ over $\Omega_k$, then $p$ is $L$-Lipschitz in $(x,y)\mapsto W_1(\msf B_k(x),\msf B_k(y))$, as claimed. For the strengthened generalization bound, fix $x\in \Omega_N\subseteq \RR^{\mc I_N}$ for some $N\in\NN$, and form the vector $\mathrm{Sym}^{\leq d}x$ containing all monomials of degree at most $d$ in the entries of $x$. If we define $\mc J_n=\bigsqcup_{i=0}^d\mc I_n^i$ to be the collection of tuples of elements of $\mc I_n$ of size at most $d$, note that $(\mc J_n)$ is a compatible sequence of degree $dD$ and $(x^{\otimes i})_{i=0}^d\in \RR^{\mc J_N}$. Writing $g_k(x)=\langle c_{g_k}, (x^{\otimes i})_{i=0}^d\rangle$ and noting that $\msf B_n(x^{\otimes i})\overset{d}{=}(\msf B_n(x))^{\otimes i}$, we have
    \begin{equation}\begin{aligned}
        |p(x)-\mbb Ep(\msf B_n(x))| &= |\mbb Eg_k(\msf B_k(x))-\mbb Eg_k(\msf B_k(\msf B_n(x)))| \leq \|c_{g_k}\|_{\infty}\|\mbb E \msf B_k((x^{\otimes i})_{i=0}^d) - \mbb E\msf B_k(\msf B_n((x^{\otimes i})_{i=0}^d))\|_1\\&\leq \|c_{g_k}\|_{\infty}\frac{dD(dD-1)}{n}\|(x^{\otimes i})_{i=0}^d\|_1\leq \|c_{g_k}\|_{\infty}\frac{dD(dD-1)}{n}(d+1)\max\{r^d,1\},
    \end{aligned}\end{equation}
    where the second inequality follows by the last bound in Theorem~\ref{thm:W1_rates_FS}(2) and the last inequality follows from the bound $\|(x^{\otimes i})_{i=0}^d\|_1 = \sum_{i=0}^d\|x\|_1^i \leq (d+1)\max\{1,r^d\}$. Similarly writing $\widehat p(x)=\mbb E\widehat g_k(\msf B_k(x))$ and letting $L'=\|c_{g_k}\|_{\infty} + \|c_{\widehat g_k}\|_{\infty}$, the result follows as in the proof of Theorem~\ref{thm:binning_rates_funcs}.
\end{proof}
We now apply Theorem~\ref{thm:binning_rates_funcs} to analyze the DeepSets-like functions from Corollary~\ref{cor:deepsets}.
\begin{proof}[Proof (Corollary~\ref{cor:deepsets}).]
    The first claim follows from the observation that if $x,y\in\vct V_n$ then
    \begin{equation*}
        |f(x)-f(y)|\leq L_{\sigma}\sum_i\|\rho(x_i)-\rho(y_i)\|\leq L_{\sigma}L_{\rho}\sum_i\|x_i-y_i\|_2\sqrt{\|x_i\|_1+\|y_i\|_1}\leq L_{\sigma}L_{\rho}\sqrt{2r}\|x-y\|_2.
    \end{equation*}
    For the second claim, consider the sequence $x^{(n)}=(\frac{1}{2n}e_1,\ldots,\frac{1}{2n}e_1,-\frac{1}{2n}e_1,\ldots,-\frac{1}{2n}e_1)\in\vct V_{2n}$, where both entries are repeated $n$ times. Note that $x^{(n)}\to0$ in sampling metric. Indeed, we have $\msf B_k(x^{(n)})\overset{d}{=}\frac{1}{2n}\msf B_k(e_1\mathbbm{1}_n^\top) - \frac{1}{2n}\msf B_k'(e_1\mathbbm{1}_n^\top)$ where $\msf B_k,\msf B_k'$ are independent. Since $\msf B_k(\frac{1}{n}e_1\mathbbm{1}_n^\top)=(\frac{N_1}{n}e_1,\ldots,\frac{N_k}{n}e_1)$ where $(N_1,\ldots,N_k)\sim\mathrm{Multinom}(n,k,\mathbbm{1}_k/k)$, we get $\mbb E\|\msf B_k(x^{(n)})\|_2^2 = \frac{1}{2}\sum_{i=1}^k\mathrm{Var}(N_i/n) = \frac{1-1/k}{2n}$, proving that $\msf B_k(x^{(n)})\to0$ weakly for each $k$.
    However, we have $f(x^{(n)})\equiv 1$ for all $n$, proving that $f$ is discontinuous in sampling metric.
\end{proof}

We turn to proving Corollary~\ref{cor:gnns} analyzing certain graph neural networks.
\begin{proof}[Proof (Corollary~\ref{cor:gnns}).]
    It is easy to verify that $f$ is unchanged by zero-padding. If $\|A\|_1+\|X\|_1\leq r$ and similarly for $(B,Y)$, we have
    \begin{equation*}\begin{aligned}
        &\|F(A,X)-F(B,Y)\|_2\leq \|A-B\|_2 + L_{\sigma}\left\|\sum_{d=0}^DA^dX\Theta_d - \sum_{d=0}^DB^dY\Theta_d\right\|_2\\&\leq \|A-B\|_2 + L_{\sigma}\sum_{d=0}^D\Big(\|A^dX\Theta_d-A^dY\Theta_d\|_2 + \|A^dY\Theta_d-B^dY\Theta_d\|_2\Big)\\
        &\leq \|A-B\|_2 + L_{\sigma}\sum_{d=0}^D\|A\|_2^d\|\Theta_d\|_{\mathrm{op}}\|X-Y\|_2 + \sum_{d=0}^D\|A^d-B^d\|_{\mathrm{op}}\|Y\|_2\|\Theta_d\|_{\mathrm{op}}\\
        &\leq \left(L_{\sigma}\sum_{d=0}^Dr^d\|\Theta_d\|_{\mathrm{op}}\right)\|X-Y\|_2 + \left(1+L_{\sigma}\sum_{d=0}^Ddr^d\|\Theta_d\|_{\mathrm{op}}\right)\|A-B\|_2,\\
        &\leq \underbrace{\left(1+L_{\sigma}\sum_{d=0}^Dr^d\|\Theta_d\|_{\mathrm{op}} + L_{\sigma}\sum_{d=0}^Ddr^d\|\Theta_d\|_{\mathrm{op}}\right)}_{=L(r)}\|(A,X)-(B,Y)\|_2.
    \end{aligned}\end{equation*}
    Finally, note that if $\|A\|_1+\|X\|_1\leq r$, then 
    \begin{equation*}\begin{aligned}
        \|F(A,X)\|_1\leq \|A\|_1+L_{\sigma}\sqrt{d}\left(\|\Theta_0\|_1\|X\|_1+\sum_{d=1}^D\|\Theta_d\|_1\|A^d\|_1\|X\|_1\right)&\leq r+L_{\sigma}\sqrt{d}\sum_{d=0}^Dr^{d+1}\|\Theta_d\|_1.
    \end{aligned}\end{equation*}
    Denoting the last bound above by $R(r)$, after $j$ applications of $F(A,X)$ we have $\|F^{\circ j}(A,X)\|_1\leq R^{\circ j}(r)$, and hence is Lipschitz with constant $\prod_{j=1}^{\ell}L(R^{\circ (j-1)}(r))$.
    Since $f=P\circ F^{\circ\ell}$ and $P$ is $L_{\rho}\sqrt{2R^{\circ \ell}(r)}$-Lipschitz in 2-norm on the 1-norm ball of radius $R^{\circ \ell}(r)$ by Corollary~\ref{cor:deepsets}(1), we obtain the claim.
\end{proof}

Finally, we prove Proposition~\ref{prop:relation_to_species} showing that convergence of species samples is equivalent to joint convergence of the random binnings of positive and negative parts. To this end, we shall need the following two lemmas. The first lemma is an analog of the first bound in Theorem~\ref{thm:W1_rates_FS}(1) with the roles of random binning and species sampling reversed.
\begin{lemma}\label{lem:species_instead_of_binning}
    Let $(\mc I_n)$ be a compatible sequence of degree $D$ and let $x\in\RR^{\mc I_N}$ with $x\geq0$. Then
    \begin{equation*}
        \mbb E_{\msf B_n}W_1(\msf E_k(x),\msf E_k\circ\msf B_n(x))\leq \|x\|_1\frac{kD(kD-1)}{n}.
    \end{equation*}
\end{lemma}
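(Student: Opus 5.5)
The plan is to build, for each fixed realization of $\msf B_n=\beta(F_n)$ with $F_n\colon[N]\to[n]$ uniformly random, an explicit coupling of $\msf E_k(x)$ and $\msf E_k(\msf B_n(x))$. We then bound the $W_1$ distance by the expected $\ell_2$ discrepancy under this coupling, and only afterwards average over $F_n$. By homogeneity of $\msf E_k$ and linearity of $\beta(F_n)$ we may assume $\|x\|_1=1$. Since $x\geq0$, we have $\msf B_n(x)\geq0$ and $\|\msf B_n(x)\|_1=1$, so no sign bookkeeping is needed and both vectors are probability distributions on $\mc I_N$ and $\mc I_n$ respectively.

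The key observation is a pushforward identity. Sampling $\alpha'\in\mc I_n$ from the distribution $\beta(F_n)x$ is the same as sampling $\alpha\in\mc I_N$ from $x$ and setting $\alpha'=\theta(F_n)(\alpha)$, because $(\beta(F_n)x)_{\alpha'}=\sum_{\alpha\in\theta(F_n)^{-1}(\alpha')}x_\alpha$. We therefore couple the two species samples as follows.
\begin{itemize}
\item Draw $\alpha_1,\ldots,\alpha_k$ iid from $x$, independently of $F_n$, and use them to build $\msf E_k(x)$.
\item Use $\theta(F_n)(\alpha_1),\ldots,\theta(F_n)(\alpha_k)$ to build $\msf E_k(\msf B_n(x))$.
\end{itemize}
Let $S=\bigcup_i\mathrm{supp}(\alpha_i)\subseteq[N]$, which has $|S|\leq kD$.

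Next consider the event that $F_n$ is injective on $S$. On this event, $F_n$ maps the species $t_1,\ldots,t_{|S|}$ bijectively onto the species appearing in the $\theta(F_n)(\alpha_i)$. Two indices $\theta(F_n)(\alpha_i)$ and $\theta(F_n)(\alpha_j)$ coincide if and only if $\alpha_i=\alpha_j$: this uses injectivity on $S$ together with the fact that $H_m$ permutes coordinates, so orbits are respected. Hence the multiset pattern of the sample, read through a uniformly random enumeration of the species, is identical in the two constructions. We may couple the random enumerations by composing one with $F_n|_S$, which gives $\msf E_k(x)=\msf E_k(\msf B_n(x))$ exactly on this event. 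This step is the one requiring care: we must check that relabeling through $F_n|_S$ preserves the uniform law of the enumeration and that the well-definedness argument for orbits in Section~\ref{sec:species} transfers. I expect it to go through, since $\msf E_k$ depends only on the equality pattern among the sampled relations.

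Off the event, both $\msf E_k(x)$ and $\msf E_k(\msf B_n(x))$ have $\ell_1$ (hence $\ell_2$) norm at most $1$, so their distance is at most $2$. Therefore, for fixed $F_n$,
\[
W_1(\msf E_k(x),\msf E_k\circ\msf B_n(x))\leq 2\,\mbb P_{\alpha}\big[F_n \textrm{ not injective on } S\big].
\]
Taking $\mbb E_{F_n}$ and using independence of $F_n$ from the $\alpha_i$, we condition on $S$ and apply a union bound over pairs in $S$. This gives a non-injectivity probability of at most $\binom{kD}{2}/n$, hence a bound of $2\binom{kD}{2}/n=kD(kD-1)/n$. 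Rescaling by $\|x\|_1$ yields the claim.
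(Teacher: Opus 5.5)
Your proposal is correct and follows the paper's proof essentially step for step: reduce to the simplex by homogeneity, push the iid samples $\alpha_i$ forward through $\theta(F_n)$, couple the enumerations via $F_n|_S$ so that the two species samples coincide when $F_n$ is injective on $S$, and otherwise bound the $\ell_2$ discrepancy by $2$ and union-bound over the at most $\binom{kD}{2}$ pairs in $S$. The step you flag does go through, though the justification should be that injectivity of $F_n$ on $S$ preserves all coordinate-level coincidences among the supports of the $\alpha_i$ (which is what the relabeling in~\eqref{eq:species_sampling_general} depends on), not merely which $\alpha_i$ coincide, and composing with $F_n|_S$ gives exactly this.
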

\begin{proof}
    By homogeneity of $\msf E_k$ and $\msf E_k\circ\msf B_n$, it suffices to prove the claim for $x\in\Delta^{\mc I_N}$. After zero-padding $x$ if needed, we may assume that $N\geq kD$. 
    
    Following the construction in Section~\ref{sec:species}, sample $\alpha_1,\ldots,\alpha_k\in\mc I_N$ from $x$, define $\mscr E_k(x)=\frac{1}{k}\sum_{i=1}^ke_{\alpha_i}$, and let $t_1,\ldots,t_{\ell}$ be a random enumeration of $S=\bigcup_{i=1}^k\mathrm{supp}(\alpha_i)\subseteq[N]$, where $\ell\leq kD$ and if $\alpha=H(j_1,\ldots,j_d)$ for some $H\subseteq\mfk S_d$ we denote $\mathrm{supp}(\alpha)=\{j_1,\ldots,j_d\}$. 
    Define
    \begin{equation*}
        (X_k)_{H(i_1,\ldots,i_d)} = \mscr E_k(x)_{H(t_{i_1},\ldots,t_{i_d})} = \frac{1}{k}\sum_{i=1}^k\mathbbm{1}[\alpha_i=H(t_{i_1},\ldots,t_{i_d})],    
    \end{equation*}
    for each $H(i_1,\ldots,i_d)\in \mc I_{kD}$, which satisfies $X_k\overset{d}{=}\msf E_k(x)$.

    We now construct a random $Y_k$ coupled to $X_k$ with $Y_k\overset{d}{=}\msf E_k\circ\msf B_n(x)$. Fix a realization $F_{n,N}\colon[N]\to[n]$ of a uniformly random map, so $\msf B_n(x)\overset{d}{=}\beta(F_{n,N})x$.
    Observe that $\theta(F_{n,N})(\alpha_1),\ldots,\theta(F_{n,N})(\alpha_k)$ are iid samples from $\beta(F_{n,N})x$, so $\beta(F_{n,N})\mscr E_k(x)\overset{d}{=}\mscr E_k(\beta(F_{n,N})x)$. Furthermore, we have $\bigcup_{i=1}^k\mathrm{supp}(\theta(F_{n,N})(\alpha_i))=F_{n,N}(S)$. Therefore, if $F_{n,N}$ is injective on the random and independent set $S$, then a uniformly random enumeration of $\bigcup_{i=1}^k\mathrm{supp}(\theta(F_{n,N})(\alpha_i))=F_{n,N}(S)$ is given by $F_{n,N}(t_1),\ldots,F_{n,N}(t_{\ell})$ and we set
    \begin{equation}\label{eq:Yk_F_injective}
        (Y_k)_{H(i_1,\ldots,i_d)} = [\beta(F_{n,N})\mscr E_k(x)]_{H(F_{n,N}(t_{i_1}),\ldots,F_{n,N}(t_{i_d}))} = \frac{1}{k}\sum_{i=1}^k\mathbbm{1}[\theta(F_{n,N})(\alpha_i)=H(F_{n,N}(t_{i_1}),\ldots,F_{n,N}(t_{i_d}))],
    \end{equation}
    for each $H(i_1,\ldots,i_d)\in \mc I_{kD}$. If $F_{n,N}$ is not injective on $S$, then define $Y_k$ similarly with an arbitrary random enumeration of $F_{n,N}(S)$ that is independent of $F_{n,N}$. We therefore have $Y_k\overset{d}{=}\msf E_k(\beta(F_{n,N})x)$.

    If $F_{n,N}$ is injective on $S$, we have $X_k=Y_k$ by~\eqref{eq:Yk_F_injective} because $\alpha_i = H(t_{i_1},\ldots,t_{i_d})$ if and only if 
    $$\theta(F_{n,N})(\alpha_i)=\theta(F_{n,N})H(t_{i_1},\ldots,t_{i_d}) = H(F_{n,N}(t_{i_1}),\ldots,F_{n,N}(t_{i_d})).$$ 
    If $F_{n,N}$ is not injective on $S$, we have $\|X_k-Y_k\|_2\leq \|X_k\|_1+\|Y_k\|_1 = 2$. 
    We conclude that
    \begin{equation*}
        \mbb E_{\msf B_n}W_1(\msf E_k(x),\msf E_k\circ\msf B_n(x))\leq 2\mbb P[F_{n,N} \textrm{ not injective on } S] \leq 2\frac{\binom{kD}{2}}{n} = \frac{kD(kD-1)}{n},
    \end{equation*}
    as claimed.
\end{proof}
We proceed to relate convergence with respect to species sampling and random binning.
\begin{lemma}\label{lem:relation_to_species_part}
    Let $(\mc I_n)$ be a compatible sequence of finite degree, set $\vct V_n=\RR^{\mc I_n}$, and let $(x_i)\subseteq \bigsqcup_n\vct V_n$. 
    \begin{enumerate}[labelwidth=!, labelindent=0pt]
        \item If $(\msf E_k(x_i))$ converges weakly for each $k$, then $(\msf B_k(x_i))$ converges weakly for each $k$. 
        \item If $(\msf B_k(x_i))$ converges weakly for each $k$ and $x_i\geq0$ for all $i$, then $(\msf E_k(x_i))$ converges weakly for each $k$.
    \end{enumerate}
\end{lemma}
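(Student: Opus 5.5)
Both parts follow one template. We approximate the map whose convergence we want (random binning $\msf B_k$ in part 1, species sampling $\msf E_k$ in part 2) by its composition with the other map at a fixed size $n$. For fixed $n$, that composition depends continuously on the law of a finite-dimensional object, so convergence passes through it. We then show the resulting laws are Cauchy in $W_1$ and let $n\to\infty$. Throughout, all norms are $\ell_2$ as in Theorem~\ref{thm:W1_rates_FS}. On an $\ell_1$-ball of radius $r$, weak convergence and $W_1$-convergence coincide, since everything lives in a compact subset of a fixed finite-dimensional space for each $k$.

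\emph{Part 1.} First we get a uniform bound $\sup_i\|x_i\|_1\leq r$. Entries sharing an index in $\mscr E_k(x)$ share a sign, so $\|\msf E_k(x)\|_1=\|x\|_1$ almost surely. Weak convergence of $\msf E_1(x_i)$ then forces $(\|x_i\|_1)$ to converge, hence to be bounded.

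Next, for fixed $n$ we show that $\mathrm{Law}(\msf B_k(\msf E_n(x_i)))$ converges in $i$. Each $\msf E_n(x_i)$ lies in the fixed space $\vct V_{nD}$ (after zero-padding, which changes nothing by Proposition~\ref{prop:species_samp_zero_pad}). On $\vct V_{nD}$, $\msf B_k=\beta(F_{k,nD})$ is a finite mixture of fixed linear maps, so $\mathrm{Law}(\msf B_k(\msf E_n(x_i)))$ is a continuous image of $\mathrm{Law}(\msf E_n(x_i))$, which converges by hypothesis.

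By convexity of $W_1$ under mixtures, the law $\mathrm{Law}(\msf B_k(\msf E_n(x)))$ (averaged over $\msf E_n$) satisfies
\[
W_1\bigl(\msf B_k(x),\msf B_k(\msf E_n(x))\bigr)\leq \mbb E_{\msf E_n}W_1\bigl(\msf B_k(x),\msf B_k\circ\msf E_n(x)\bigr)\leq \frac{r}{\sqrt n}
\]
by Lemma~\ref{lem:W1_rate_finite_x}. Hence
\[
\limsup_{i,j}W_1\bigl(\msf B_k(x_i),\msf B_k(x_j)\bigr)\leq \frac{2r}{\sqrt n}
\]
for every $n$. So $(\mathrm{Law}(\msf B_k(x_i)))_i$ is $W_1$-Cauchy on a compact set, and therefore converges.

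\emph{Part 2.} With $x_i\geq0$, the single entry-sum structure gives $\|\msf B_1(x_i)\|_1=\|x_i\|_1$ almost surely, so weak convergence of $\msf B_1(x_i)$ again yields $\sup_i\|x_i\|_1\leq r$. We now reverse the roles of the two maps, using Lemma~\ref{lem:species_instead_of_binning}. After averaging over $\msf B_n$ it gives
\[
W_1\bigl(\msf E_k(x_i),\msf E_k(\msf B_n(x_i))\bigr)\leq r\,\frac{kD(kD-1)}{n}
\]
uniformly in $i$.

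It remains to check that $\mathrm{Law}(\msf E_k(y))$ depends continuously on $y$ ranging over the fixed space $\vct V_n$, with nonnegative entries. I expect this to be the main obstacle. I would argue it directly from the construction in Section~\ref{sec:species}. For $y\neq0$, the law of the sampled multiset $(\alpha_1,\ldots,\alpha_k)$ has probabilities that are polynomial in $p=y/\|y\|_1$, hence continuous. The random enumeration is then a fixed finite randomization that does not depend on $y$. Finally, the output is scaled by $\|y\|_1$. Near $y=0$, the output has $\ell_1$-norm $\|y\|_1\to0$, so it converges weakly to $\delta_0=\mathrm{Law}(\msf E_k(0))$ regardless of $p$.

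Given this continuity, $\mathrm{Law}(\msf E_k(\msf B_n(x_i)))$ converges in $i$ for each fixed $n$, being a continuous image of the convergent $\mathrm{Law}(\msf B_n(x_i))$. The same Cauchy argument as in part 1, letting $n\to\infty$, then yields weak convergence of $(\msf E_k(x_i))_i$.
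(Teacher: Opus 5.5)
Your proposal is correct and follows the paper's proof. In both parts you pass through the composition at a fixed size $n$, control the error uniformly in $i$ by Lemma~\ref{lem:W1_rate_finite_x} (part 1) or Lemma~\ref{lem:species_instead_of_binning} (part 2), and conclude that the laws are $W_1$-Cauchy by letting $n\to\infty$. The only difference is in transporting convergence through the fixed-$n$ composition. The paper uses a Skorokhod coupling with the explicit bounds $\|\msf B_k(X-Y)\|_2\leq\|X-Y\|_1$ and $W_1(\msf E_k(x),\msf E_k(y))\leq k\|x-y\|_1$ on the simplex. You instead use weak continuity of pushforward under a finite mixture of linear maps, together with a direct continuity argument for $y\mapsto\mathrm{Law}(\msf E_k(y))$ on the nonnegative orthant, which works equally well.
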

\begin{proof}
    For the first claim, suppose $(\msf E_n(x_i))_i$ converges weakly for each $n$. In particular, the sequence of norms $\|x_i\|_1=\|\msf E_1(x_i)\|$ converges, so we can find $r>0$ such that $\sup_i\|x_i\|_1\leq r$. Choose $\epsilon>0$ and $n\geq 2(r/\epsilon)^2$. By Skorokhod's representation theorem, we can find a coupling $(X_{i,n})_i$ of $(\msf E_n(x_i))_i$ converging almost surely. For any $k,i,j\in\NN$, we then have
    \begin{equation*}\begin{aligned}
        &W_1(\msf B_k(x_i),\msf B_k(x_j))\leq \mbb E_{\msf E_n}W_1(\msf B_k(x_i), \msf B_k\circ\msf E_n(x_i)) + \mbb E_{X_{i,n},X_{j,n}}W_1(\msf B_k(X_{i,n}),\msf B_k(X_{j,n}))\\ &\quad + \mbb E_{\msf E_n}W_1(\msf B_k\circ\msf E_n(x_j),\msf B_k(x_j)) \leq 2\epsilon + \mbb E\|\msf B_k(X_{i,n}-X_{j,n})\|_2,
    \end{aligned}\end{equation*}
    by Theorem~\ref{thm:W1_rates_FS}(1) and our choice of $n$, where in the last line $\msf B_k$ is independent of $X_{i,n},X_{j,n}$. Noting that $\|\msf B_k(X_{i,n}-X_{j,n})\|_2\leq \|X_{i,n}-X_{j,n}\|_1\to0$ almost surely, we have $\limsup_{i,j\to\infty}W_1(\msf B_k(x_i),\msf B_k(x_j))\leq 2\epsilon$ for any $\epsilon>0$, and hence that $(\msf B_k(x_i))$ converges for each $k$. This proves the first claim.
    
    Now interchange the roles of species sampling and random binning, using Lemma~\ref{lem:species_instead_of_binning} instead of Theorem~\ref{thm:W1_rates_FS}(1). We conclude that if $(\msf B_n(x_i))_i$ converges for each $n$, then $$W_1(\msf E_k(x_i),\msf E_k(x_j))\leq 2\epsilon + \mbb E_{X_{i,n},X_{j,n}}W_1(\msf E_k(X_{i,n}),\msf E_k(X_{j,n})),$$ 
    where $(X_{i,n})_i$ is an almost-surely convergent coupling of $(\msf B_n(x_i))$. Since $X_{i,n}\geq0$, either $X_{i,n}\to0$ in which case $W_1(\msf E_k(X_{i,n}),\msf E_k(X_{j,n}))\to0$, or $\lim_i\|X_{i,n}\|_1>0$, in which case
    \begin{equation*}\begin{aligned}
        W_1(\msf E_k(X_{i,n}),\msf E_k(X_{j,n})) \leq &\|X_{i,n}\|_1W_1(\msf E_k(X_{i,n}/\|X_{i,n}\|_1),\msf E_k(X_{j,n}/\|X_{j,n}\|_1)) + \Big| \|X_{i,n}\|_1 - \|X_{j,n}\|_1\Big|.
    \end{aligned}\end{equation*}
    Both terms converge to zero almost surely, the second because $(\|X_{i,n}\|_1=\|\msf B_1(x_i)\|_1)_i$ converges, and the first because $W_1(\msf E_k(x),\msf E_k(y))\leq k\|x-y\|_1$ for $x,y\in\Delta^{\mc I_n}$. This can be seen by noting that $\frac{1}{2}\|x-y\|_1$ is the total variation distance between the distributions defined by $x$ and $y$ on $\mc I_n$, so there is a coupling between $k$ iid samples from these distributions that are equal with probability at least $1-\frac{k}{2}\|x-y\|_1$. Applying this fact to the construction of $\msf E_k(x)$ and $\msf E_k(y)$ from Section~\ref{sec:species} yields the claimed bound.
\end{proof}
We are ready to prove Proposition~\ref{prop:relation_to_species}.
\begin{proof}[Proof (Proposition~\ref{prop:relation_to_species}).]
    Define a new compatible sequence $(\mc J_n=\mc I_n\sqcup\mc I_n)$, consisting of two disjoint copies of the original one. Note that its degree is also $D$. 
    We have maps $S\colon\RR^{\mc I_n}\to \RR^{\mc J_n}_+$ into nonnegative vectors sending $x\mapsto (x^+,x^-)$, and $Q\colon \RR^{\mc J_n}_+\to \RR^{\mc I_n}$ sending $(y,z)\mapsto y-z$. Observe that $\msf E_k(x)\overset{d}{=}Q\circ\msf E_k\circ S(x)$ for any $x\in\RR^{\mc I_N}$ and any $N$ by construction in Section~\ref{sec:species}. Indeed, the empirical sample $\mscr E_k(x)=\|x\|_1\mathrm{sign}(x)\odot\mscr E_k(|x|/\|x\|_1)$ can equivalently be formed by sampling $k$ iid elements $(\alpha_1,s_1),\ldots,(\alpha_k,s_k)$ from the distribution $(x^+,x^-)/\|x\|_1$ on $\mc J_N$, where $s_i=1$ if $\alpha_i$ is sampled from $x^+$ and $s_i=-1$ if it is sampled from $x^-$. We then have $Q\circ\mscr E_k\circ S(x)\overset{d}{=}\frac{\|x\|_1}{k}\sum_{i=1}^ks_ie_{\alpha_i} \overset{d}{=} \mscr E_k(x)$. Relabelling the sampled $\alpha_i$ randomly as in~\eqref{eq:species_sampling_general} shows that the same identity holds for $\msf E_k$ instead of $\mscr E_k$.
    
    We conclude that $(\msf E_k(x_i))_i$ converge weakly if and only if $(Q\circ\msf E_k\circ S(x_i))_i$ converges weakly. Since $\msf E_k\circ S(x_i) = (Y_i,Z_i)$ and the supports of $Y_i$ and $Z_i$ are disjoint almost surely, we have $\msf E_k\circ S(x_i)=S\circ Q\circ \msf E_k\circ S(x_i)$ for all $i$, so $(Q\circ\msf E_k\circ S(x_i))_i$ converges weakly if and only if $(\msf E_k\circ S(x_i))_i$ converges weakly. In turn, since $S(x_i)\geq 0$, we conclude by Lemma~\ref{lem:relation_to_species_part} that $(\msf E_k\circ S(x_i))_i$ converges weakly for all $k$ if and only if $(\msf B_k\circ S(x_i))_i$ converges weakly for all $k$. Finally, observe that $\msf B_k(x_i^+,x_i^-)\overset{d}{=}(\msf B_k(x_i^+),\msf B_k(x_i^-))$ where we apply the same random map $\msf B_k$ to both $x_i^+$ and $x_i^-$ by construction of random binning in~\eqref{eq:binning_general} (see also Section~\ref{sec:sampling_and_binning}). 
\end{proof}

\section{Conclusions}\label{sec:conclusion}
We have considered the closely related problems of generalization and sketching of any-dimensional functions.
To tackle these problems, we compare objects of different sizes by comparing distributions of their random samples. By using the right sampling maps, depending on the application domain and the relations between inputs of different sizes there, we obtain rich families of compact sets on which we get uniform rates for generalization and sketching, and a correspondence between limit objects and any-dimensional data distributions. 
Focusing on specific generalizations of sampling with replacement, random binning, and species sampling, we then obtain precise quantitative rates for approximation, sketching, and generalization. Some of the function classes to which our framework applies include polynomials, permutation-invariant transformers, and several neural network architectures defined for sets, point clouds, and graphs of all sizes.
We end with a few directions for future work.
\begin{enumerate}[align=left, font=\textbf]

    \item[(Distributions of species samples)] Can we characterize the collection of sequences $(\mathrm{Law}(\msf E_k(X)))_k$ for random $X$ in terms of the relations of these distributions across dimensions, analogously to Theorems~\ref{thm:proj_consistent_laws} and~\ref{thm:eqp_consistent_laws}? These sequences of distributions include random partitions of integers~\cite{pitman1995exchangeable,kingman1,kingman2,rodriguez2013nonparametric} and edge-exchangeable random graph models~\cite{janson2018edge,levin2025graphs}.

    \item[(Other sampling maps)] While we focused on three particular notions of sampling in this paper, there are other notions that might be appropriate for different applications. For example, in the context of natural language sentences, which are not permutation-invariant, is there a different notion of sampling that can summarize long sentences by short ones?
    
    \item[(Set-based summaries)] In this paper, we consider summarizing objects using random sampling, and comparing these random summaries in Wasserstein distance. Another type of summary studied in the context of certain graph limits involves summarizing an object by forming sets consisting of all possible projections of it, suitably defined, and comparing these sets in Hausdorff distance, see~\cite[Chap.~12]{lovasz2012large} and~\cite{graphop_transfer} for example. Can we develop a general framework and rates for such summaries?  
    
\end{enumerate}

\section*{Acknowledgements}
The authors were supported in part by AFOSR grant FA9550-23-1-0070 and by NSF grant DMS-2502377. Some ideas in the proofs of Proposition~\ref{prop:equivalence_converse}(1), Corollary~\ref{cor:transformers}, Theorem~\ref{thm:W1_rates_FS}, and Proposition~\ref{prop:relation_to_species} were suggested by ChatGPT, which was also used to proofread the manuscript.
This work was conducted while EL was at the department of Computing and Mathematical Sciences at Caltech.
\bibliographystyle{unsrt}
\bibliography{free_cvx_refs}

@article{convergent_seqs1,
title = {Convergent sequences of dense graphs {I}: Subgraph frequencies, metric properties and testing},
journal = {Advances in Mathematics},
volume = {219},
number = {6},
pages = {1801-1851},
year = {2008},
issn = {0001-8708},
doi = {https://doi.org/10.1016/j.aim.2008.07.008},
url = {https://www.sciencedirect.com/science/article/pii/S0001870808002053},
author = {C. Borgs and J.T. Chayes and L. Lovász and V.T. Sós and K. Vesztergombi}
}

@book{lovasz2012large,
  title={Large networks and graph limits},
  author={Lov{\'a}sz, L{\'a}szl{\'o}},
  volume={60},
  year={2012},
  publisher={American Mathematical Soc.}
}

@inproceedings{
maron2018invariant,
title={Invariant and Equivariant Graph Networks},
author={Haggai Maron and Heli Ben-Hamu and Nadav Shamir and Yaron Lipman},
booktitle={International Conference on Learning Representations},
year={2019},
url={https://openreview.net/forum?id=Syx72jC9tm},
}

@article{levin2025deFin,
  title={Any-Dimensional Polynomial Optimization via de {F}inetti Theorems},
  author={Levin, Eitan and Chandrasekaran, Venkat},
  journal={arXiv preprint arXiv:2507.15632},
  year={2025}
}

@article{sampling_tv_dist,
author = {Stam, A. J.},
title = {Distance between sampling with and without replacement},
journal = {Statistica Neerlandica},
volume = {32},
number = {2},
pages = {81-91},
doi = {https://doi.org/10.1111/j.1467-9574.1978.tb01387.x},
url = {https://onlinelibrary.wiley.com/doi/abs/10.1111/j.1467-9574.1978.tb01387.x},
eprint = {https://onlinelibrary.wiley.com/doi/pdf/10.1111/j.1467-9574.1978.tb01387.x},
year = {1978}
}

@techreport{cardaliaguet2010notes,
  title={Notes on mean field games},
  author={Cardaliaguet, Pierre},
  year={2010}
}

@book{Stanley_Fomin_1999, place={Cambridge}, series={Cambridge Studies in Advanced Mathematics}, title={Enumerative Combinatorics vol. 2}, publisher={Cambridge University Press}, author={Stanley, Richard P.}, year={1999}, collection={Cambridge Studies in Advanced Mathematics}}

@article{diaconis_freedman,
author = {P. Diaconis and D. Freedman},
title = {{Finite Exchangeable Sequences}},
volume = {8},
journal = {The Annals of Probability},
number = {4},
publisher = {Institute of Mathematical Statistics},
pages = {745 -- 764},
year = {1980},
doi = {10.1214/aop/1176994663},
URL = {https://doi.org/10.1214/aop/1176994663}
}

@book{macdonald1998symmetric,
  title={Symmetric functions and {H}all polynomials},
  author={Macdonald, Ian Grant},
  year={1998},
  publisher={Oxford university press}
}

@article{hewitt_savage,
 ISSN = {00029947, 10886850},
 URL = {http://www.jstor.org/stable/1992999},
 author = {Edwin Hewitt and Leonard J. Savage},
 journal = {Transactions of the American Mathematical Society},
 number = {2},
 pages = {470--501},
 publisher = {American Mathematical Society},
 title = {Symmetric Measures on {C}artesian Products},
 urldate = {2025-05-29},
 volume = {80},
 year = {1955}
}

@article{dynkin1953classes,
  title={Classes of equivalent random quantities},
  author={Dynkin, Evgenii Borisovich},
  journal={Uspekhi Matematicheskikh Nauk},
  volume={8},
  number={2},
  pages={125--130},
  year={1953},
  publisher={Russian Academy of Sciences, Steklov Mathematical Institute of Russian~…}
}

@article{diaconis2007graph,
  title={Graph limits and exchangeable random graphs},
  author={Diaconis, Persi and Janson, Svante},
  journal={arXiv preprint arXiv:0712.2749},
  year={2007}
}

@article{levin2025transferring,
  title={On Transferring Transferability: Towards a Theory for Size Generalization},
  author={Levin, Eitan and Ma, Yuxin and D{\'\i}az, Mateo and Villar, Soledad},
  journal={arXiv preprint arXiv:2505.23599},
  year={2025}
}

@article{levin2025graphs,
  title={Limits of Weighted Graphs via Random Quotients},
  author={Levin, Eitan and Chandrasekaran, Venkat},
  journal={arXiv preprint arXiv:2512.23149},
  year={2025}
}

@article{lovasz2007szemeredi,
  title={Szemer{\'e}di’s lemma for the analyst},
  author={Lov{\'a}sz, L{\'a}szl{\'o} and Szegedy, Bal{\'a}zs},
  journal={GAFA Geometric And Functional Analysis},
  volume={17},
  number={1},
  pages={252--270},
  year={2007},
  publisher={Springer}
}

@article{fournier2023convergence,
  title={Convergence of the empirical measure in expected {Wasserstein} distance: non-asymptotic explicit bounds in {$\mathbb{R}^d$}},
  author={Fournier, Nicolas},
  journal={ESAIM: Probability and Statistics},
  volume={27},
  pages={749--775},
  year={2023},
  publisher={EDP Sciences}
}

@book{wainwright2019high,
  title={High-dimensional statistics: A non-asymptotic viewpoint},
  author={Wainwright, Martin J},
  volume={48},
  year={2019},
  publisher={Cambridge university press}
}

@book{kallenberg1997foundations,
  title={Foundations of modern probability},
  author={Kallenberg, Olav},
  year={1997},
  publisher={Springer}
}

@article{janson2018edge,
  title={On edge exchangeable random graphs},
  author={Janson, Svante},
  journal={Journal of Statistical Physics},
  volume={173},
  number={3},
  pages={448--484},
  year={2018},
  publisher={Springer}
}

@article{LOVASZ2006933,
title = {Limits of dense graph sequences},
journal = {Journal of Combinatorial Theory, Series B},
volume = {96},
number = {6},
pages = {933-957},
year = {2006},
issn = {0095-8956},
doi = {https://doi.org/10.1016/j.jctb.2006.05.002},
url = {https://www.sciencedirect.com/science/article/pii/S0095895606000517},
author = {László Lovász and Balázs Szegedy},
}

@article{luxburg2004distance,
  title={Distance-based classification with {L}ipschitz functions},
  author={Luxburg, Ulrike von and Bousquet, Olivier},
  journal={Journal of Machine Learning Research},
  volume={5},
  number={Jun},
  pages={669--695},
  year={2004}
}

@article{shi2009hash,
  title={Hash kernels for structured data},
  author={Shi, Qinfeng and Petterson, James and Dror, Gideon and Langford, John and Smola, Alex and Vishwanathan, S. V. N.},
  journal={Journal of Machine Learning Research},
  volume={10},
  number={11},
  year={2009}
}

@article{CORMODE200558,
title = {An improved data stream summary: the count-min sketch and its applications},
journal = {Journal of Algorithms},
volume = {55},
number = {1},
pages = {58-75},
year = {2005},
issn = {0196-6774},
doi = {https://doi.org/10.1016/j.jalgor.2003.12.001},
url = {https://www.sciencedirect.com/science/article/pii/S0196677403001913},
author = {Graham Cormode and S. Muthukrishnan}
}

@article{yassaee2014achievability,
  title={Achievability proof via output statistics of random binning},
  author={Yassaee, Mohammad Hossein and Aref, Mohammad Reza and Gohari, Amin},
  journal={IEEE Transactions on Information Theory},
  volume={60},
  number={11},
  pages={6760--6786},
  year={2014},
  publisher={IEEE}
}

@article{kingman1,
    author = {Kingman, J. F. C.},
    title = {The Representation of Partition Structures},
    journal = {Journal of the London Mathematical Society},
    volume = {s2-18},
    number = {2},
    pages = {374-380},
    year = {1978},
    month = {10},
    issn = {0024-6107},
    doi = {10.1112/jlms/s2-18.2.374},
    url = {https://doi.org/10.1112/jlms/s2-18.2.374},
    eprint = {https://academic.oup.com/jlms/article-pdf/s2-18/2/374/2788610/s2-18-2-374.pdf},
}

@article{kingman2,
    author = {Kingman, John Frank Charles},
    title = {Random partitions in population genetics},
    journal = {Proceedings of the Royal Society of London. A. Mathematical and Physical Sciences},
    volume = {361},
    number = {1704},
    pages = {1-20},
    year = {1978},
    month = {05},
    issn = {0080-4630},
    doi = {10.1098/rspa.1978.0089},
    url = {https://doi.org/10.1098/rspa.1978.0089},
    eprint = {https://royalsocietypublishing.org/rspa/article-pdf/361/1704/1/62915/rspa.1978.0089.pdf},
}

@article{pitman1995exchangeable,
  title={Exchangeable and partially exchangeable random partitions},
  author={Pitman, Jim},
  journal={Probability Theory and Related Fields},
  volume={102},
  number={2},
  pages={145--158},
  year={1995},
  publisher={Springer}
}

@article{probability_graphons,
  TITLE = {{Probability-graphons: Limits of large dense weighted graphs}},
  AUTHOR = {Abraham, Romain and Delmas, Jean-Fran{\c c}ois and Weibel, Julien},
  URL = {https://hal.science/hal-04361443},
  JOURNAL = {{Innovations in Graph Theory}},
  PUBLISHER = {{Dutch Science Council (NWO)}},
  VOLUME = {2},
  PAGES = {25--117},
  YEAR = {2025},
  MONTH = Mar,
  DOI = {10.5802/igt.7},
  HAL_ID = {hal-04361443},
  HAL_VERSION = {v2},
}

@article{furuya2026approximation,
  title={Approximation Theory for {L}ipschitz Continuous Transformers},
  author={Furuya, Takashi and Murari, Davide and Sch{\"o}nlieb, Carola-Bibiane},
  journal={arXiv preprint arXiv:2602.15503},
  year={2026}
}

@misc{dudley_bound,
  title        = {Theoretical Statistics, Lecture 14},
  author       = {Peter Bartlett},
  howpublished = {\url{https://www.stat.berkeley.edu/~bartlett/courses/2013spring-stat210b/notes/14notes.pdf}},
  year         = {2013}
}

@article{kolmogorov1959varepsilon,
  title={$\varepsilon$-entropy and $\varepsilon$-capacity of sets in function spaces},
  author={Kolmogorov, Andrei Nikolaevich and Tikhomirov, Vladimir Mikhailovich},
  journal={Uspekhi Matematicheskikh Nauk},
  volume={14},
  number={2},
  pages={3--86},
  year={1959},
  publisher={Russian Academy of Sciences, Steklov Mathematical Institute of Russian~…}
}

@inproceedings{deepsets,
 author = {Zaheer, Manzil and Kottur, Satwik and Ravanbakhsh, Siamak and Poczos, Barnabas and Salakhutdinov, Russ R and Smola, Alexander J},
 booktitle = {Advances in Neural Information Processing Systems},
 editor = {I. Guyon and U. Von Luxburg and S. Bengio and H. Wallach and R. Fergus and S. Vishwanathan and R. Garnett},
 pages = {},
 publisher = {Curran Associates, Inc.},
 title = {Deep Sets},
 url = {https://proceedings.neurips.cc/paper_files/paper/2017/file/f22e4747da1aa27e363d86d40ff442fe-Paper.pdf},
 volume = {30},
 year = {2017}
}

@book{Niu_Spivak_2025, 
place={Cambridge}, 
series={London Mathematical Society Lecture Note Series}, 
title={Polynomial Functors: A Mathematical Theory of Interaction}, 
publisher={Cambridge University Press}, 
author={Niu, Nelson and Spivak, David I.}, 
year={2025}, 
collection={London Mathematical Society Lecture Note Series}
}

@article{bueno2021representation,
  title={On the representation power of set pooling networks},
  author={Bueno, Christian and Hylton, Alan},
  journal={Advances in Neural Information Processing Systems},
  volume={34},
  pages={17170--17182},
  year={2021}
}

@inproceedings{
furuya2025transformers,
title={Transformers are Universal In-context Learners},
author={Takashi Furuya and Maarten V. de Hoop and Gabriel Peyr{\'e}},
booktitle={The Thirteenth International Conference on Learning Representations},
year={2025},
url={https://openreview.net/forum?id=6S4WQD1LZR}
}

@article{multisym_funcs,
     author = {Vaccarino, Francesco},
     title = {The ring of multisymmetric functions},
     journal = {Annales de l'Institut Fourier},
     pages = {717--731},
     year = {2005},
     publisher = {Association des Annales de l'Institut Fourier},
     volume = {55},
     number = {3},
     doi = {10.5802/aif.2111},
     mrnumber = {2149400},
     zbl = {1062.05143},
     language = {en},
     url = {https://www.numdam.org/articles/10.5802/aif.2111/}
}

@article{Zhang02072016,
author = {Zhiyi Zhang and Michael Grabchak},
title = {Entropic representation and estimation of diversity indices},
journal = {Journal of Nonparametric Statistics},
volume = {28},
number = {3},
pages = {563--575},
year = {2016},
publisher = {Taylor \& Francis},
doi = {10.1080/10485252.2016.1190357},
URL = {https://doi.org/10.1080/10485252.2016.1190357},
eprint = {https://doi.org/10.1080/10485252.2016.1190357}
}

@inproceedings{rodriguez2013nonparametric,
  title={Nonparametric bayesian inference},
  author={Rodriguez, Abel and M{\"u}ller, Peter},
  booktitle={NSF-CBMS Regional Conference Series in Probability and Statistics},
  volume={9},
  pages={i--110},
  year={2013},
  organization={JSTOR}
}

@book{vapnik1999nature,
  title={The Nature of Statistical Learning Theory},
  author={Vapnik, Vladimir},
  year={1999},
  publisher={Springer},
  doi={10.1007/978-1-4757-3264-1}
}

@InProceedings{pointnet,
author = {Qi, Charles R. and Su, Hao and Mo, Kaichun and Guibas, Leonidas J.},
title = {{PointNet}: Deep Learning on Point Sets for {3D} Classification and Segmentation},
booktitle = {Proceedings of the IEEE Conference on Computer Vision and Pattern Recognition (CVPR)},
month = {July},
year = {2017}
}

@ARTICLE{GNNs,
  author={Scarselli, Franco and Gori, Marco and Tsoi, Ah Chung and Hagenbuchner, Markus and Monfardini, Gabriele},
  journal={IEEE Transactions on Neural Networks}, 
  title={The Graph Neural Network Model}, 
  year={2009},
  volume={20},
  number={1},
  pages={61-80},
  doi={10.1109/TNN.2008.2005605}}

@inproceedings{transferab1,
 author = {Ruiz, Luana and Chamon, Luiz and Ribeiro, Alejandro},
 booktitle = {Advances in Neural Information Processing Systems},
 editor = {H. Larochelle and M. Ranzato and R. Hadsell and M.F. Balcan and H. Lin},
 pages = {1702--1712},
 publisher = {Curran Associates, Inc.},
 title = {Graphon Neural Networks and the Transferability of Graph Neural Networks},
 url = {https://proceedings.neurips.cc/paper_files/paper/2020/file/12bcd658ef0a540cabc36cdf2b1046fd-Paper.pdf},
 volume = {33},
 year = {2020}
}

@article{maskey2023transferability,
  title={Transferability of graph neural networks: an extended graphon approach},
  author={Maskey, Sohir and Levie, Ron and Kutyniok, Gitta},
  journal={Applied and Computational Harmonic Analysis},
  volume={63},
  pages={48--83},
  year={2023},
  publisher={Elsevier}
}

@inproceedings{transferb1,
 author = {Maskey, Sohir and Levie, Ron and Lee, Yunseok and Kutyniok, Gitta},
 booktitle = {Advances in Neural Information Processing Systems},
 editor = {S. Koyejo and S. Mohamed and A. Agarwal and D. Belgrave and K. Cho and A. Oh},
 pages = {4805--4817},
 publisher = {Curran Associates, Inc.},
 title = {Generalization Analysis of Message Passing Neural Networks on Large Random Graphs},
 url = {https://proceedings.neurips.cc/paper_files/paper/2022/file/1eeaae7c89d9484926db6974b6ece564-Paper-Conference.pdf},
 volume = {35},
 year = {2022}
}

@inproceedings{graphop_transfer,
 author = {Le, Thien and Jegelka, Stefanie},
 booktitle = {Advances in Neural Information Processing Systems},
 editor = {A. Oh and T. Naumann and A. Globerson and K. Saenko and M. Hardt and S. Levine},
 pages = {41305--41342},
 publisher = {Curran Associates, Inc.},
 title = {Limits, approximation and size transferability for {GNNs} on sparse graphs via graphops},
 url = {https://proceedings.neurips.cc/paper_files/paper/2023/file/8154c89c8d3612d39fd1ed6a20f4bab1-Paper-Conference.pdf},
 volume = {36},
 year = {2023}
}

@inproceedings{transformers_time_series,
author = {Wen, Qingsong and Zhou, Tian and Zhang, Chaoli and Chen, Weiqi and Ma, Ziqing and Yan, Junchi and Sun, Liang},
title = {Transformers in time series: a survey},
year = {2023},
isbn = {978-1-956792-03-4},
url = {https://doi.org/10.24963/ijcai.2023/759},
doi = {10.24963/ijcai.2023/759},
booktitle = {Proceedings of the Thirty-Second International Joint Conference on Artificial Intelligence},
articleno = {759},
numpages = {9},
location = {Macao, P.R.China},
series = {IJCAI '23}
}

@article{LIN2022111,
title = {A survey of transformers},
journal = {AI Open},
volume = {3},
pages = {111-132},
year = {2022},
issn = {2666-6510},
doi = {https://doi.org/10.1016/j.aiopen.2022.10.001},
url = {https://www.sciencedirect.com/science/article/pii/S2666651022000146},
author = {Tianyang Lin and Yuxin Wang and Xiangyang Liu and Xipeng Qiu}
}

@article{diaz2025invariant,
  title={Invariant kernels: Rank stabilization and generalization across dimensions},
  author={D{\'i}az, Mateo and Drusvyatskiy, Dmitriy and Kendrick, Jack and Thomas, Rekha R},
  journal={arXiv preprint arXiv:2502.01886},
  year={2025}
}

@inproceedings{
zhou2024transformers,
title={Transformers Can Achieve Length Generalization But Not Robustly},
author={Yongchao Zhou and Uri Alon and Xinyun Chen and Xuezhi Wang and Rishabh Agarwal and Denny Zhou},
booktitle={ICLR 2024 Workshop on Mathematical and Empirical Understanding of Foundation Models},
year={2024},
url={https://openreview.net/forum?id=DWkWIh3vFJ}
}

@inproceedings{limit_transformers,
 author = {Huang, Xinting and Yang, Andy and Bhattamishra, Satwik and Sarrof, Yash and Krebs, Andreas and Zhou, Hattie and Nakkiran, Preetum and Hahn, Michael},
 booktitle = {International Conference on Learning Representations},
 editor = {Y. Yue and A. Garg and N. Peng and F. Sha and R. Yu},
 pages = {58095--58179},
 title = {A Formal Framework for Understanding Length Generalization in Transformers},
 url = {https://proceedings.iclr.cc/paper_files/paper/2025/file/928170bcb050fe64a63fe781b82265aa-Paper-Conference.pdf},
 volume = {2025},
 year = {2025}
}

@inproceedings{PE_transformers,
 author = {Kazemnejad, Amirhossein and Padhi, Inkit and Natesan Ramamurthy, Karthikeyan and Das, Payel and Reddy, Siva},
 booktitle = {Advances in Neural Information Processing Systems},
 editor = {A. Oh and T. Naumann and A. Globerson and K. Saenko and M. Hardt and S. Levine},
 pages = {24892--24928},
 publisher = {Curran Associates, Inc.},
 title = {The Impact of Positional Encoding on Length Generalization in Transformers},
 url = {https://proceedings.neurips.cc/paper_files/paper/2023/file/4e85362c02172c0c6567ce593122d31c-Paper-Conference.pdf},
 volume = {36},
 year = {2023}
}

@article{yang2026length,
  title={Length Generalization Bounds for Transformers},
  author={Yang, Andy and Bergstr{\"a}{\ss}er, Pascal and Zetzsche, Georg and Chiang, David and Lin, Anthony W},
  journal={arXiv preprint arXiv:2603.02238},
  year={2026}
}

@article{furuya2026function,
  title={Function graph transformers universally approximate operators between function spaces},
  author={Furuya, Takashi and Mis, David and Dokmani{\'c}, Ivan and de Hoop, Maarten V. and Lassas, Matti},
  journal={arXiv preprint arXiv:2605.17968},
  year={2026}
}

@inproceedings{
Yun2020Are,
title={Are Transformers universal approximators of sequence-to-sequence functions?},
author={Chulhee Yun and Srinadh Bhojanapalli and Ankit Singh Rawat and Sashank Reddi and Sanjiv Kumar},
booktitle={International Conference on Learning Representations},
year={2020},
url={https://openreview.net/forum?id=ByxRM0Ntvr}
}

@ARTICLE{graph_signal_processing,
  author={Ortega, Antonio and Frossard, Pascal and Kovačević, Jelena and Moura, José M. F. and Vandergheynst, Pierre},
  journal={Proceedings of the IEEE}, 
  title={Graph Signal Processing: Overview, Challenges, and Applications}, 
  year={2018},
  volume={106},
  number={5},
  pages={808-828},
  doi={10.1109/JPROC.2018.2820126}
  }

@inproceedings{miniLM,
 author = {Wang, Wenhui and Wei, Furu and Dong, Li and Bao, Hangbo and Yang, Nan and Zhou, Ming},
 booktitle = {Advances in Neural Information Processing Systems},
 editor = {H. Larochelle and M. Ranzato and R. Hadsell and M.F. Balcan and H. Lin},
 pages = {5776--5788},
 publisher = {Curran Associates, Inc.},
 title = {{MiniLM}: Deep Self-Attention Distillation for Task-Agnostic Compression of Pre-Trained Transformers},
 url = {https://proceedings.neurips.cc/paper_files/paper/2020/file/3f5ee243547dee91fbd053c1c4a845aa-Paper.pdf},
 volume = {33},
 year = {2020}
}

@inproceedings{tinybert,
    title = "{T}iny{BERT}: Distilling {BERT} for Natural Language Understanding",
    author = "Jiao, Xiaoqi  and
      Yin, Yichun  and
      Shang, Lifeng  and
      Jiang, Xin  and
      Chen, Xiao  and
      Li, Linlin  and
      Wang, Fang  and
      Liu, Qun",
    editor = "Cohn, Trevor  and
      He, Yulan  and
      Liu, Yang",
    booktitle = "Findings of the Association for Computational Linguistics: EMNLP 2020",
    month = nov,
    year = "2020",
    address = "Online",
    publisher = "Association for Computational Linguistics",
    url = "https://aclanthology.org/2020.findings-emnlp.372/",
    doi = "10.18653/v1/2020.findings-emnlp.372",
    pages = "4163--4174"
}

@article{lavenant2026continuous,
  title={Continuous transformations of probability measures and their transport representations},
  author={Lavenant, Hugo and Savar{\'e}, Giuseppe},
  journal={arXiv preprint arXiv:2604.16653},
  year={2026}
}

@article{bohbot2025token,
  title={Token sample complexity of attention},
  author={Bohbot, L{\'e}a and Letrouit, Cyril and Peyr{\'e}, Gabriel and Vialard, Fran{\c{c}}ois-Xavier},
  journal={arXiv preprint arXiv:2512.10656},
  year={2025}
}



\end{document}